\numberwithin{equation}{section}
\newtheorem{theorem}{Theorem}[section]
\newtheorem{lemma}[theorem]{Lemma}
\newtheorem{proposition}[theorem]{Proposition}
\newtheorem{corollary}[theorem]{Corollary}
\theoremstyle{definition}
\newtheorem{definition}[theorem]{Definition}
\newtheorem{example}[theorem]{Example}
\theoremstyle{remark}
\newcommand{\R}{{\mathbb{R}}}
\newcommand{\C}{{\mathbb{C}}}
\newcommand{\Z}{{\mathbb{Z}}}
\newcommand{\N}{{\mathbb{N}}}
\newcommand{\<}{{\langle}}
\renewcommand{\>}{{\rangle}}
\newcommand{\cg}{{\mathfrak{g}}}
\newcommand{\CC}{{\mathcal{C}}}
\newcommand{\CL}{{\mathcal{L}}}
\newcommand{\isom}{{\cong}}
\newcommand{\Ad}{{\rm Ad}}
\newcommand{\End}{{\rm End}}
\newcommand{\Sh}{{\rm Sh}}
\renewcommand{\ker}{{\rm{ker}}}
\newcommand{\tens}{\otimes}
\newcommand{\id}{{\rm id}}
\renewcommand{\o}{{}_{(1)}}
\renewcommand{\t}{{}_{(2)}}
\renewcommand{\th}{{}_{(3)}}
\newcommand{\four}{{}_{(4)}}
\newcommand{\extd}{{\rm d}}
\newcommand{\eps}{\epsilon}
\newcommand{\und}{\underline}
\newcommand{\la}{{\triangleright}}
\newcommand{\ra}{{\triangleleft}}
\newcommand{\lbiprod}{{>\!\!\!\triangleleft\kern-.33em\cdot}}
\newcommand{\rbiprod}{{\cdot\kern-.33em\triangleright\!\!\!<}}
\newcommand{\rcross}{{\triangleright\!\!\!<}}
\newcommand{\rcocross}{{\blacktriangleright\!\!<}}
\begin{document}

\title[Duality for Generalised Differentials on Quantum Groups]{Duality for Generalised Differentials on Quantum Groups and Hopf quivers}
\keywords{noncommutative geometry, quantum group, Hopf algebra, differential calculus, bicovariant, quiver, Hopf quiver, crossed module, duality}
\subjclass[2000]{Primary 81R50, 58B32, 20D05}

\author{Shahn Majid \& Wen-qing Tao}
\address{Queen Mary University of London\\
School of Mathematical Sciences, Mile End Rd, London E1 4NS, UK}
\email{s.majid@qmul.ac.uk, w.tao@qmul.ac.uk}
\thanks{The second author is supported by the China Scholarship Council}

\date{Version 3: May 2013}

\begin{abstract}
We study generalised differential structures $\Omega^1,\extd$ on an algebra $A$, where $A\tens A\to \Omega^1$ given by $a\tens b\to a\extd b$ need not be surjective.
The finite set case corresponds to quivers with embedded digraphs,  the Hopf algebra left covariant case to pairs $(\Lambda^1,\omega)$ where $\Lambda^1$ is a right module and $\omega$ a right module map, and the Hopf algebra bicovariant case corresponds to morphisms  $\omega:A^+\to \Lambda^1$ in the category of right crossed (or Drinfeld-Radford-Yetter) modules over $A$.  When $A=U(\cg)$ the generalised left-covariant differential structures are classified by cocycles $\omega\in Z^1(g,\Lambda^1)$.  We then  introduce and study the dual notion of a codifferential structure $(\Omega^1,i)$ on a coalgebra and for Hopf algebras the  self-dual notion of a strongly bicovariant differential graded algebra $(\Omega,\extd)$  augmented by a codifferential $i$ of degree $-1$.  Here  $\Omega$ is a graded super-Hopf algebra extending the Hopf algebra $\Omega^0=A$ and, where applicable, the dual super-Hopf algebra gives the same structure on the dual Hopf algebra.   We show how to construct such objects from first order data, with both a minimal construction using braided-antisymmetrizes and a maximal one using braided tensor algebras and with dual given via braided-shuffle algebras. The theory is applied to quantum groups with $\Omega^1(C_q(G))$  dually paired to $\Omega^1(U_q(\cg))$, and to finite groups in relation to (super) Hopf quivers.   \end{abstract}

\maketitle

\section{Introduction}

We recall that in noncommutative geometry a `space' is replaced by a `coordinate' algebra and we define the differential structure algebraically.  However, whereas on $\R^n$ and other Lie groups there is a unique translation-invariant calculus and this tends to be transferred throughout geometry, uniqueness is not the case in noncommutative geometry and this leads to a genuine degree of freedom. This can be formulated as a differential algebra, meaning an algebra $A$ equipped with an $A-A$ bimodule $\Omega^1$ and an `exterior derivative' $\extd: A\to \Omega^1$ obeying the Leibniz rule
\[ \extd (ab)= (\extd a)b+a\extd b,\quad \forall a,b\in A\]
along with a `surjectivity axiom' that $\phi:A\tens A\to \Omega^1$, $a\tens b\mapsto a\extd b$ is surjective. One says that the calculus is connected if $\ker\extd=k1$ where $k$ is the ground field. The surjectivity axiom ensures that any calculus is a quotient of the universal one $\Omega^1_{univ}=\ker(m:A\tens A\to A)$ by a sub-bimodule. This is because the above map remains surjective when restricted to $\Omega^1_{univ}$ and then becomes a bimodule map. One is also interested in extending $\Omega^1$ to a differential graded algebra $\Omega=\oplus_{i\ge 0}\Omega^i$ with $\Omega^0=A$ and $\extd$ extending to a degree 1 super-derivation such that $\extd^2=0$. The cohomology of this complex could be viewed as the `noncommutative de Rham cohomology' of the differential algebra $A$ and its extension (although this term is also used for other more specific constructions). The use of differential graded algebras goes back to Quillen and others in the 1970s and is now common to most approaches to noncommutative geometry.

In spite of its successes, this theory is unnecessarily restricted and we now consider a natural generalisation where the surjectivity is dropped. There turn out to be many natural situations where this occurs and where we still have a differential complex of interest. One still has a standard differential calculus $\bar\Omega^1\subseteq\Omega^1$ defined as the image of $\phi$ and our interest is in what happens to the rest of $\Omega^1$ particularly when we extend to $\Omega$. Section~2 covers this general theory at first order level including the example of $A$ the algebra of functions on a finite set $X$. Here standard calculi correspond to digraphs on $X$ while our generalised ones are given by quivers containing digraphs. Section~2 also generalises the theory of bicovariant differential calculi  on Hopf algebras $A$ which has been around for more than 20 years now\cite{Wor}. We recall that standard bicovariant  calculi correspond to right Ad-coaction stable right ideals in  the augmentation ideal $A^+$ (the latter is the kernel of the counit of the Hopf algebra). The  generalised bicovariant first order differential calculi case is similar but consists in the richer data $(\Lambda^1,\omega)$, where $\Lambda^1$ is a right  $A$-crossed module (or Drinfeld-Radford-Yetter-module, basically a module of Drinfeld's celebrated quantum double of $A$), and $\omega$ is a crossed-module morphism $\omega:A^+\to \Lambda^1$. Section~2.3  characterises when the generalised calculus is inner and the theory is illustrated by the case $A=U(\cg)$ where $\cg$ is a Lie algebra and $U(\cg)$ is its enveloping algebra regarded as a noncommutative space (quantising $\cg^*$ with its canonical Poisson structure). Here left-covariant calculi correspond to $\Lambda^1$ a right $\cg$-module and $\omega\in Z^1(\cg,\Lambda^1)$ (Proposition~\ref{Ug}) and are automatically bicovariant for a trivial coaction. The inner case is the case where $\omega$ is exact, which by the Whitehead lemma is always the case if $\cg$ is complex semisimple and $\Lambda^1$ is finite-dimensional.  Standard differential structures would require $\omega$ surjective, which would not be very natural on this context.

Section~3 studies the extension to a differential complex $\Omega$, in terms of extending $\Lambda^1$ to a graded algebra $\Lambda$ of left-invariant forms equipped with a degree $1$ super-derivation (Proposition~\ref{extalg}) obeying certain properties. For a full theory we are led to introduce (Section~3.2) a notion of a {\em strongly bicovariant} differential exterior algebra $(\Omega,\extd)$ on a Hopf algebra $A$, defined as a graded super-Hopf algebra equipped with $\extd$ such that $\extd^2=0$ and $\extd$ a degree 1 super-derivation and super-coderivation. It is known for standard calculi that the Woronowicz exterior algebra is a super-Hopf algebra\cite{Brz} and indeed this appears as an example in our new more general approach to bicovariant differential exterior algebras on Hopf algebras. Theorem~\ref{theorem-bi}  gives an equivalence with $(\Lambda,\delta)$ a  braided-super Hopf algebra in the braided category of crossed modules equipped with $\delta$ a degree $1$ super-derivation with certain properties. We also consider when the calculus is inner in the sense that there exists $\theta\in \Omega^1$ so that $\theta$ generates $\extd$ by graded commutator. Proposition~\ref{genworon} gives  sufficient data $(\Lambda^1,\theta)$ to generate such a generalised exterior super-Hopf algebra $\Omega(A,\Lambda^1)$. Here  $\Lambda^1$ as a crossed module and $\Omega=A\rbiprod B_-(\Lambda^1)$ where $B_\pm(\Lambda^1)$ is the respectively symmetric or antisymmetric braided(-super) Hopf algebra associated to an object in an abelian braided category cf\cite{Ma:fre,Ma:book,Ma:dcalc,Ma:dbos} (also called a Nichols-Worononowicz algebra in some contexts\cite{Baz}). Section~3.3 puts this into a wider context of a `universal inner calculus' $\Omega_\theta$ of which the generalised inner Woronowicz one is a quotient. We also find, remarkably, that the braided-shuffle algebra gives a canonical strongly bicovariant differential exterior algebra $A\rbiprod \Sh_-(\Lambda^1)$ for any right $A$-crossed module $\Lambda^1$ and any crossed module map $\omega:A^+\to\Lambda^1$ (Proposition~\ref{shuffle-diff}).

As a rather novel application of generalised bicovariant differentials on Hopf algebras we consider in Section~4 a theory of  triples $(\Omega, \extd, i)$ where $(\Omega,\extd)$ is a strongly bicovariant exterior algebra and $i$ is a super-derivation and super-coderivation of degree $-1$ with $i^2=0$, i.e. what is geometrically interior product by a vector field. When all components are finite-dimensional we have a dual such triple $(\Omega^*,i^*,\extd^*)$ on $A^*$, i.e. a super-Hopf algebra duality for generalised differential structures whereby  $\extd$ on $A$ is nothing but a `vector field' on $A^*$.  This duality of differential structures is a new construction in noncommutative geometry which can also apply even in the standard surjective setting (as dual to $i$ injective in a certain sense). Among the more unexpected results, Corollary~\ref{deltawor} shows that if an augmented first order calculus extends to higher order on $A\rbiprod B_-(\Lambda^1)$ then it does so uniquely  (this is surprising because we do not assume surjectivity of the first order $\extd$). Moreover, this super-Hopf algebra is dually paired with $H\rbiprod B_-(\Lambda^1{}^*)$ if $H$ is dually paired with $A$, a result which we also show for the symmetric version $B_+(\Lambda^1)$. Here $B_\pm(\Lambda^1)$ is dually paired with $B_\pm(\Lambda^1{}^*)$ as a version of the duality in \cite{Ma:book,Ma:dcalc,Ma:dbos}. We also find a `maximal' strongly bicovariant coinner codifferential exterior algebra $A\rbiprod B_\theta(\Lambda^1)$ (Proposition~\ref{subshuffle-codiff}) and give conditions for both this and $\Omega_\theta$ to be augmented. They turn out to be dual constructions to each other, Corollary~\ref{univdual}.

We believe that this duality provides a new point of view even in classical differential geometry. For example a 1-cocycle on a classical Lie group $G$ (expressed algebraically) provides a natural augmentation or strongly bicovariant codifferential on the classical exterior algebra $\Omega(G)$. The dual of this is a natural strongly bicovariant differential exterior algebra $\Omega(U(\cg))$ on the enveloping algebra. Conversely the classical differential calculus of $G$ corresponds to a particularly simple codifferential structure on $U(\cg)$, see Proposition~\ref{Ugclass} and Proposition~\ref{Gclass}.

In Section~5 we similarly apply this duality in the noncommutative case. Notably, in Corollary~\ref{Galmost} the almost commutative noncommutative differential calculus associated to a Laplacian\cite{Ma:bh},  in the case $\C[G]$ of the algebraic form of a semisimple Lie group $G$ with Lie algebra $\cg$ is shown to be augmented and  dual to a certain augmented differential calculus on the enveloping algebra $U(\cg)$ regarded as a noncommutative space. The latter deforms Proposition~\ref{Ugclass} in the inner case and its natural augmentation $i: \cg\oplus \C c\hookrightarrow U(\cg)^+$, where $c$ is the quadratic Casimir, giving a clear picture of the origin of the differential calculus on the $\C[G]$ side. Similarly, Corollary~\ref{coquasi} gives an understanding of the known construction\cite{Ma:cla,BauSch,Ma:dcalc} of a standard bicovariant differential calculus $\Omega^1(C_q(G))$ on quantum group coordinate algebras $C_q(G)$ from a matrix representation $\rho$ of $U_q(\cg)$, now as dual of an augmentation $i$ on $\Omega^1(U_q(\cg))$. Also, the standard construction depends on the quantum group enveloping algebras $U_q(\cg)$ being essentially factorisable, which is only true in a formal deformation theory setting. We find that the construction is much more natural now as a generalised differential calculus that does not depend on this.  We also find that $\Omega^1(C_q(G))$ is augmented in the formal deformation-theory setting.

Finally, Section~6 specialises this theory to the case of $A=k(G)$, $G$ a finite group and its dual $kG$, the group algebra. The data $(\Lambda^1,\omega)$ becomes a Hopf quiver datum $Q(G,R)$ in the sense of Cibils and Rosso\cite{cr2} containing a Cayley digraph  $\bar Q$ together with further data. Here $\Omega$ in the strongly bicovariant case becomes a quotient of the path super-Hopf algebra of \cite{Hua}. Section~6.2 gives the dual version with $A=kG$ and we illustrate our duality on the augmented bicovariant exterior algebras.

\section{First order generalised differentials}

We work over a general field $k$ of characteristic not 2 (the restriction here is for convenience and in most places is not necessary). We define a generalised differential algebra as an algebra $A$ equipped with an $A-A$-bimodule $\Omega^1$ and a linear map $\extd:A\to \Omega^1$ obeying the Leibniz rule.
This is a standard differential algebra if the induced map $\phi:A\tens A \to \Omega^1$ is surjective.  Note that $\bar\Omega^1=\phi(A\tens A)$ and $\extd$ provide a standard differential algebra contained in any generalised one. We say that a generalised differential calculus is `inner' if there exists an element $\theta\in \Omega^1$ such that
\[ [\theta, a]=\extd a,\quad\forall a\in A,\]
where $[\theta,a]=\theta a-a \theta.$ This is the same definition as in the standard case but can be weaker than saying that $(\bar\Omega^1,\extd)$ is inner as we do not require that $\theta\in \bar\Omega^1$.

\subsection{Quivers case}

We start for orientation purposes with the finite set case.

\begin{proposition}\label{setinner} Let $X$ be a finite set. The generalised first order differential calculi on $A=k(X)$ are in 1-1 correspondence with the following data:
\begin{enumerate}
\item $\Omega^1=\bigoplus_{x,y\in X}\Omega^1_{x,y}$ is a bigraded vector space with components labelled by $X\times X$
\item $\theta=\sum_{x,y\in X}\theta_{x,y}\in \Omega^1$ with $\theta_{x,x}=0$ for all $x\in X.$
\end{enumerate}
The calculus is necessarily inner. The standard sub-calculus has $\bar\Omega^1=\oplus_{x,y\in X} k\theta_{x,y}$, i.e. $\dim\bar\Omega_{x,y}=1$ whenever $\theta_{x,y}\ne 0$, and $\extd=[\theta,\ ]$.
\end{proposition}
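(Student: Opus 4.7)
The plan is to exploit that $A=k(X)$ has a complete set of orthogonal idempotents $\{e_x\}_{x\in X}$ with $e_xe_y=\delta_{x,y}e_x$ and $\sum_x e_x=1$. First, any $A$-$A$-bimodule $\Omega^1$ acquires a canonical bigrading $\Omega^1=\bigoplus_{x,y}\Omega^1_{x,y}$ via $\Omega^1_{x,y}:=e_x\Omega^1 e_y$; conversely, any bigraded vector space becomes a bimodule by letting $e_x$ act on the left as projection onto the $x$-th row and on the right as projection onto the $x$-th column. This identification between bimodules and $X\times X$-bigraded vector spaces is exactly the content of datum (1).

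For the direction from data to calculus, given $\theta\in\Omega^1$ with $\theta_{x,x}=0$, I set $\extd a:=[\theta,a]$. This automatically satisfies Leibniz since the commutator with a fixed element of a bimodule is always a derivation, and the resulting calculus is by construction inner with generator $\theta$. The normalisation $\theta_{x,x}=0$ is imposed because each $\theta_{x,x}\in\Omega^1_{x,x}$ commutes with every $e_y$, so the diagonal components of $\theta$ are invisible to $[\theta,\cdot]$; killing them is what makes the correspondence one-to-one.

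For the converse, suppose $(\Omega^1,\extd)$ is given and decompose $\extd e_z=\sum_{x,y}(\extd e_z)_{x,y}$. Applying Leibniz to $e_xe_y=\delta_{x,y}e_x$ and projecting onto the $(x,y)$-component yields, after a short case analysis, that $(\extd e_z)_{x,y}=0$ whenever $x=y$ or $z\notin\{x,y\}$, and that $(\extd e_x)_{x,y}+(\extd e_y)_{x,y}=0$ for $x\ne y$. Setting $\theta_{x,y}:=(\extd e_y)_{x,y}=-(\extd e_x)_{x,y}$ for $x\ne y$ and $\theta_{x,x}:=0$, and $\theta:=\sum_{x,y}\theta_{x,y}$, a direct computation using $\sum_x e_x=1$ gives $\extd e_z=[\theta,e_z]$ for each $z$, which extends linearly to $\extd a=[\theta,a]$ on all of $A$. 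The two assignments $(\Omega^1,\extd)\leftrightarrow(\Omega^1,\theta)$ are then manifestly mutually inverse, and the calculus is inner.

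Finally, $\bar\Omega^1=\phi(A\tens A)$ is spanned by the elements $e_x\extd e_y$, which from the explicit formula for $\extd e_y$ evaluate to $\theta_{x,y}$ when $x\ne y$ and vanish when $x=y$. Hence $\bar\Omega^1_{x,y}=k\theta_{x,y}$, of dimension $1$ precisely when $\theta_{x,y}\ne 0$ and zero otherwise. The only genuine bookkeeping is the Leibniz analysis in the third paragraph; once the idempotent decomposition is in hand every other step is immediate, so I expect the main obstacle to be solely the careful case-by-case handling of that decomposition.
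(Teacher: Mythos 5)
Your proof is correct and follows essentially the same route as the paper's: decompose the bimodule via the orthogonal idempotents, extract the component equations of the Leibniz rule applied to $e_xe_y=\delta_{x,y}e_x$, and observe that the diagonal components of $\theta$ are invisible to $[\theta,\cdot\,]$, which fixes the normalisation. One tiny slip in your last paragraph: $e_x\extd e_x=-\sum_{v\ne x}\theta_{x,v}$ rather than $0$, but since this already lies in the span of the off-diagonal $\theta_{x,v}$ the identification $\bar\Omega^1=\oplus_{x,y}k\theta_{x,y}$ is unaffected.
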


\proof By considering the Kronecker $\delta_x$ it is immediate the bimodules are of the bigraded form stated with $f\omega=f(x)\omega$ and $\omega f=f(y)\omega$ for all $f\in k(X)$ and $\omega\in \Omega^1_{x,y}$.  So the only issue is  the data that gives $\extd$. We let $\extd \delta_x=\theta(x)=\sum_{y,z}\theta_{y,z}(x)$ for some collection of 1-forms $\{\theta(x)\}$. The Leibniz rule is
\[ \delta_x\extd \delta_x=\extd\delta_x-(\extd\delta_x)\delta_x.\]
This equation $(\ )\delta_x$ gives $\delta_x(\extd \delta_x)\delta_x=0$ hence
\[ \theta_{x,x}(x)=0.\]
If $y\ne x$, the same equation $\delta_y(\ )$ gives $\delta_y\extd \delta_x=\delta_y(\extd\delta_x)\delta_x$ hence
\[ \sum_{z\ne x}\theta_{y,z}(x)=0,\quad\forall y\ne x\]
As these terms are all in different spaces we conclude that
\[ \theta_{y,z}(x)=0,\quad \forall y,z\ne x.\]
Next, if $y\ne x$ we have $\delta_y\extd \delta_x+(\extd \delta_y)\delta_x=0$. Given the result already obtained, this implies
\[ \theta_{y,x}(x)+\theta_{y,x}(y)=0, \quad\forall y\ne x.\]
Putting these results together we conclude that
\begin{eqnarray*} \extd f&=&\sum_{y\ne x} f(x)(\theta_{x,y}(x)+ \theta_{y,x}(x))=\sum_{y\ne x}( f(x)\theta_{x,y}(x)+ f(y) \theta_{x,y}(y))\\
&=&\sum_{y\ne x}( f(y)-f(x))\theta_{x,y}(y)=[\theta,f]\end{eqnarray*}
where
\[ \theta=\sum_{y\ne x}\theta_{x,y}(y).\]
Conversely, given any $\theta\in \Omega^1$ we define $\extd f=[\theta,f]$ or equivalently we define $\extd\delta_y$ according to $\theta_{x,y}(y)=\theta_{x,y}$ and $\theta_{x,y}(x)=-\theta_{x,y}$. If follows from the formula  for $\extd$ that the standard sub-calculus $\bar\Omega^1$ has a digraph form (as it must do), where $x\to y$ if $\theta_{x,y}(y)\ne 0$.

In fact, the data $\theta$ making $(\Omega^1,\extd)$ inner here is not unique. For any two data $\theta$ and $\theta'$ in $\Omega^1$, $\extd f=[\theta,f ]=[\theta',f]$ for all $f\in k(X)$ iff $\theta-\theta'\in\bigoplus_{x\in X}\Omega^1_{x,x}.$ So we can always assume $\theta_{x,x}=0,\,\forall\,x\in X$ for a given derivation map $\extd.$\endproof

\begin{corollary}\label{calcset} Up to isomorphism, the generalised first order differential calculi $(\Omega^1,\extd)$ on $k(X)$ are in 1-1 correspondence with pairs $(\bar Q,R)$ where $\bar Q$ is a digraph on $X$ and $R=(R_{x,y})$ is an assignment $R_{x,y}\in\N_0$ for all $x,y\in X$ with $R_{x,y}\ge 1$ if $x\to y$ is an arrow in $\bar Q$. Here $\bar Q$ corresponds to the standard sub-calculus $(\bar\Omega,\extd)$.  \end{corollary}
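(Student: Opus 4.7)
The plan is to bootstrap from Proposition~\ref{setinner} and analyse the resulting data $(\Omega^1,\theta)$ up to isomorphism. Given a generalised calculus $(\Omega^1,\extd)$ on $k(X)$, Proposition~\ref{setinner} supplies the bigraded decomposition $\Omega^1=\bigoplus_{x,y\in X}\Omega^1_{x,y}$ and an element $\theta=\sum_{x\ne y}\theta_{x,y}$ with $\theta_{x,x}=0$ for which $\extd=[\theta,\ ]$. I would read off the combinatorial invariants by setting $R_{x,y}=\dim_k\Omega^1_{x,y}$ and declaring $x\to y$ to be an arrow of $\bar Q$ precisely when $\theta_{x,y}\ne 0$. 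The last sentence of Proposition~\ref{setinner} identifies $\bar\Omega^1_{x,y}=k\theta_{x,y}$, so $R_{x,y}\ge 1$ automatically whenever $x\to y$ is an arrow, and $\bar Q$ has no loops because $\theta_{x,x}=0$.

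For the inverse direction, given $(\bar Q,R)$, I would pick vector spaces $\Omega^1_{x,y}$ of dimension $R_{x,y}$, fix a nonzero $\theta_{x,y}\in\Omega^1_{x,y}$ for each arrow of $\bar Q$, and set $\theta_{x,y}=0$ otherwise. Endowing $\Omega^1=\bigoplus\Omega^1_{x,y}$ with the bimodule structure described in Proposition~\ref{setinner} and defining $\extd=[\theta,\ ]$ immediately yields a valid generalised calculus whose associated data recover $(\bar Q,R)$.

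The main task is then to check that the correspondence is well-defined on isomorphism classes. Any morphism of calculi over $k(X)$ is a bimodule map intertwining the differentials, and since $k(X)$ acts on $\Omega^1_{x,y}$ through $\delta_x$ on the left and $\delta_y$ on the right, such a map necessarily respects the bigrading, giving components $\phi_{x,y}\colon\Omega^1_{x,y}\to{\Omega'}^1_{x,y}$; being an isomorphism forces $R_{x,y}=R'_{x,y}$. Applying $\phi\circ\extd=\extd'$ to the Kronecker functions and using that the off-diagonal part of $\theta$ is uniquely determined by $\extd$ (also by Proposition~\ref{setinner}), one obtains $\phi_{x,y}(\theta_{x,y})=\theta'_{x,y}$ for $x\ne y$, so the digraphs agree. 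Conversely, from matching $(\bar Q,R)$ one assembles the required isomorphism block by block: for each $(x,y)$ we just need a linear isomorphism of spaces of equal dimension sending one chosen nonzero vector to another (or $0$ to $0$), which always exists. The only subtlety to navigate is the harmless ambiguity in $\theta_{x,x}$ noted in Proposition~\ref{setinner}: these diagonal components play no role in $\bar Q$ or in $\extd$, so the free dimensions $R_{x,x}$ appear in the combinatorial data without any arrow constraint.
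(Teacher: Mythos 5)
Your proposal is correct and follows essentially the same route as the paper: both extract the invariants $R_{x,y}=\dim_k\Omega^1_{x,y}$ and the digraph from the support of $\theta$ via Proposition~\ref{setinner}, verify that bimodule isomorphisms respecting $\extd$ preserve exactly this data (using $\theta_{x,y}=\delta_x\extd\delta_y$ for $x\ne y$), and build the inverse isomorphism block by block. The only cosmetic difference is that the paper defers the existence of a calculus realising a given $(\bar Q,R)$ to the subsequent quiver-calculus corollary, whereas you construct it directly from the converse direction of Proposition~\ref{setinner}.
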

\proof
Given the corresponding data $(\Omega^1,\theta)$ in Proposition~\ref{setinner}, we construct data $R=(R_{x,y})_{x,y\in X}$ and $r=(r_{x,y})_{x,y\in X},$ where $R_{x,y}:=\dim_k\Omega^1_{x,y}\in\mathbb{N}_0$, $r_{x,y}:=1$ if $\theta_{x,y}\neq 0$, and $r_{x,y}:=0$ if $\theta_{x,y}=0$ for all $x,y\in X.$ Note that $r_{x,y}\le R_{x,y},\,r_{x,y}\in\{0,1\}$ and $r_{x,x}=0$ for all $x,y\in X.$  The datum $r$ is the same thing as specifying the digraph $\bar Q$, i.e. a quiver on $X$ with no self-loops and at most one arrow between any two vertices.

We claim that any two calculi $(\Omega^1,\theta)$ and $({\Omega^1}',\theta')$ are isomorphic iff the corresponding data $(R,r)$ and $(R',r')$ are equal. Note that $\theta_{x,y}=\delta_x\extd\delta_y$ for any $x,y\in X,\,x\neq y.$ Assume $\psi:\Omega^1\to{\Omega^1}'$ is a $k(X)$-bimodule isomorphism such that $\psi(\extd \delta_x)=\extd'\delta_x.$ Then
$\psi(\Omega^1_{x,y})=\Omega^1{}'_{x,y}$ and $\psi(\theta_{x,y})=\theta'_{x,y}$ for all $x,y\in X.$ Clearly, $R_{x,y}=R'_{x,y}$ and $\theta_{x,y}\neq0$ iff $\theta'_{x,y}\neq0,$ for any $x,y\in X.$ Conversely, suppose the data $(R_{x,y},r_{x,y})=(R'_{x,y},r'_{x,y})$ for any two calculi. When $r_{x,y}=0$ we can freely choose vector space isomorphism $\psi:\Omega^1_{x,y}\to \Omega^1{}'_{x,y}$ and when $r_{x,y}=1$ we can find a vector space isomorphism $\psi:\Omega^1_{x,y}\to \Omega^1{}'_{x,y}$ such that $\psi(\theta_{x,y})=\theta'_{x,y}$ for any choices of nonzero vectors $\theta_{x,y}$ and $\theta'_{x,y}$ in their respective spaces, these being nonzero precisely when $r_{x,y}=r'_{x,y}=1$. Then $\psi$ is a $k(X)$-bimodule isomorphism and $\psi(\extd\delta_y)=\sum_{x\in X}\psi(\delta_x\extd\delta_y)=\sum_{x\in X}\psi(\theta_{x,y})=\sum_{x\in X}\theta'_{x,y}=\extd'(\delta_y)$ for any $y\in X.$

Given the data $(R,r)$ such that $r_{x,y}\in\{0,1\},\,r_{x,x}=0,\,r_{x,y}\le R_{x,y}$ we can certainly construct a quiver pair $\bar Q\subseteq Q$ where $\bar Q$ is $r$ regarded as defining a digraph  and $Q$ has $R_{x,y}$ arrows from $x$ to $y$. Then the next corollary provides for the existence of $(\Omega,\extd)$ from the data $\bar Q\subseteq Q$. \endproof

In the following we will consider $\bar Q$ a digraph contained in a quiver $Q$ with the same base $Q_0=\bar Q_0=X$. One can represent this by marking some of the arrows of $Q$ with a $*$, namely those in $\bar Q$. For a quiver $Q$, we denote $kQ_1$ the space spanned by all the arrows of $Q,$ and ${}^xkQ_1{}^y$ is the subspace of $kQ_1$ spanned by all the arrows from $x$ to $y.$

\begin{corollary}\label{canonicalform}
Associated to a digraph-quiver pair  $\bar Q\subseteq Q$ on a finite set $X$ is a generalised differential calculus on $k(X)$ given by $\Omega^1=kQ_1=\bigoplus_{x,y\in X}{}^xkQ_1{}^y$ and $\extd=[\theta,\ ]:k(X)=kQ_0\to kQ_1$ where $\theta$ is the sum of all arrows in $\bar{Q}$. Every generalised differential calculus on $k(X)$ is isomorphic to such a  `quiver  calculus' canonical form. \end{corollary}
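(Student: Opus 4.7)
The plan is to read off the corollary as a repackaging of Proposition~\ref{setinner} and Corollary~\ref{calcset}, which together already contain all the technical content.

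First I would verify that the stated construction yields a valid generalised differential calculus. Given a pair $\bar Q\subseteq Q$ consisting of a quiver and a sub-digraph on the same vertex set $X$, the vector space $\Omega^1=kQ_1=\bigoplus_{x,y\in X}{}^xkQ_1{}^y$ is automatically a bigraded $k(X)$-bimodule of the form required in Proposition~\ref{setinner}, with $\delta_x\omega=\omega=\omega\delta_y$ for $\omega\in {}^xkQ_1{}^y$. Because a digraph has no self-loops, the element $\theta$ obtained by summing all arrows of $\bar Q$ satisfies $\theta_{x,x}=0$ for every $x\in X$, so the data $(\Omega^1,\theta)$ fulfil the hypotheses of Proposition~\ref{setinner}. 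That proposition then guarantees that $\extd=[\theta,\ ]$ defines an inner generalised first order differential calculus on $k(X)$.

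For the converse I would start from an arbitrary generalised calculus $(\Omega^1,\extd)$ on $k(X)$ and use Proposition~\ref{setinner} to produce a bigrading of $\Omega^1$ together with an inner generator $\theta$ satisfying $\theta_{x,x}=0$. From this I extract the invariants $R_{x,y}:=\dim_k\Omega^1_{x,y}$ and $r_{x,y}\in\{0,1\}$ with $r_{x,y}=1$ precisely when $\theta_{x,y}\ne 0$, as in Corollary~\ref{calcset}; these obey $r_{x,y}\le R_{x,y}$ and $r_{x,x}=0$. I would then build $Q$ by placing $R_{x,y}$ arrows from $x$ to $y$ and take $\bar Q\subseteq Q$ to be the sub-digraph that selects one arrow $x\to y$ whenever $r_{x,y}=1$. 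By construction the quiver calculus associated to this $(\bar Q,Q)$ has exactly the same invariants $(R,r)$ as the original, so by the isomorphism criterion already established inside the proof of Corollary~\ref{calcset} the two calculi are isomorphic.

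There is no real obstacle here: the statement is essentially a convenient rewording of the preceding results in the visually natural language of a quiver containing a marked sub-digraph, with the marked arrows playing the role of the inner generator $\theta$. The only minor point requiring attention is that the non-uniqueness of $\theta$ noted at the end of the proof of Proposition~\ref{setinner}—namely that $\theta$ is only determined modulo $\bigoplus_x\Omega^1_{x,x}$—does not affect the outcome, since changes of this form alter neither $R$ nor any $r_{x,y}$ with $x\ne y$, and hence neither $Q$ nor $\bar Q$.
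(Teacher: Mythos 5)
Your proof is correct and follows essentially the same route as the paper: the forward direction is just checking that the quiver data instantiate Proposition~\ref{setinner}, and the converse reduces to the $(R,r)$ classification and isomorphism criterion from Corollary~\ref{calcset}. Your extra remark that the ambiguity of $\theta$ modulo $\bigoplus_x\Omega^1_{x,x}$ does not affect the invariants is a sensible addition but not a departure from the paper's argument.
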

\proof This is immediate from Proposition~\ref{setinner} where $\Omega_{x,y}={}^xkQ_1{}^y$,  $\theta_{x,x}=0$ for any $x\in X,$ and $\theta_{x,y}=x{\buildrel *\over\to}y$ (the distinguished arrow from $\bar Q$). Clearly any other calculus is isomorphic to one of this form by  Corollary~\ref{calcset}. \endproof

Note also that a morphism of quiver generalised calculi in Corollary~\ref{canonicalform}, from one associated to $\bar Q\subseteq Q$ to one associated to $\bar Q'\subseteq Q'$ say, means a linear map ${}^xkQ_1{}^y\to {}^xkQ'_1{}^y$ for every $x,y\in X$ sending, where present, the distinguished arrow on one side to the distinguished arrow on the other side. This entails that $\bar Q\isom \bar Q'$ as digraphs. As calculi on $k(X)$ can be taken in this form,  isomorphism classes of calculi are therefore given by the choice of $\bar Q$ and the number of arrows $|{}^xQ_1{}^y|$ for each $x,y\in X$, which is the data in Corollary~\ref{calcset}. In particular, the different embeddings of $\bar Q\subseteq Q$ all give isomorphic calculi.

\begin{example} Let $X$ be a finite set, $\Omega^1(X)$ a symmetric digraph calculus, and
\[ (\Delta f)(x)=2\sum_{y:x\to y}(f(x)-f(y))g_{y\to x}\]
be the graph Laplacian for any nonzero weights edge $g_{y\to x}$. These coefficients have a geometrical interpretation as a metric with
\[ (\ ,\ ):\Omega^1\tens_{k(X)}\Omega^1\to k(X),\quad (\omega_{x\to y},\omega_{y'\to x'})=\delta_{x,x'}\delta_{y,y'} g_{y\to x}\delta_x\]
where $\omega_{x\to y}$ are the basis elements over $k$ of $\Omega^1$, labelled by directed edges. The Laplacian obeys
\[ \Delta(f g)=( \Delta f)g+f\Delta g+2(\extd f,\extd g).\]
Given such a second order operator one has a `quantisation' of this standard calculus to a generalised one $(\tilde\Omega^1,\tilde\extd)$ with $\tilde\Omega^1(X)=k(X)\theta'\oplus\Omega^1(X)$ with new bimodule structure \cite{Ma:bh}
\[ f\bullet \omega=f\omega,\quad \omega\bullet f= \omega f+ \lambda(\omega,\extd f)\theta',\quad \tilde\extd f= \extd f+ {\lambda\over 2}(\Delta f)\theta'\]
where $\lambda\in k$ is a parameter. According to the above, this is isomorphic to a quiver calculus. We show that this quiver consists of the original graph with the addition of all the identity loops $x\to x$.

Thus, in our example
\[ \tilde\extd \delta_x= \sum_{y\to x}\omega_{y\to x}-\sum_{x\to y}\omega_{x\to y}+\lambda g(x)\delta_x\theta'-\lambda\sum_{y\to x}g_{x\to y}\delta_y\theta'\]
where $g(x)=\sum_{x\to y} g_{y\to x}$. From this we compute $\delta_x(\tilde\extd \delta_y)\bullet\delta_y$ and arrive at
\[ \theta_{x,y}(y)=\omega_{x\to y}-\lambda g_{y\to x}\delta_x\theta',\quad \theta=\sum_{x\to y}\omega_{x\to y}- \lambda g \theta'\]
with zero if $x,y$ are  not adjacent. When $x\to y$ we have 1-dimensional $\tilde\Omega^1_{x,y}\subset\tilde\Omega^1$ spanned by $\theta_{x,y}(y)$, which is deformed from $\Omega^1_{x,y}$ (which was spanned by $\omega_{x\to y}$) by the $\lambda$ term. In addition we have $\tilde\Omega_{x,x}\subset\tilde\Omega^1$ which are
1-dimensional with basis $\delta_x\theta'$. These subspaces need to be in $\tilde\Omega^1$ but together we obtain a decomposition of all of it.  We see that it has the quiver form where we add the self-loops. Finally, the standard sub-calculus $\bar\Omega^1=\oplus_{x\to y}\tilde\Omega^1_{x, y}$ is clearly isomorphic to the original calculus $\Omega^1(X)$.  \end{example}

\subsection{Left covariant and bicovariant Hopf algebra case}

When $A$ is a Hopf algebra we can ask for $\Omega^1$ to be a bicomodule, i.e. there are commuting coactions
\[ \Delta_L:\Omega^1\to A\tens\Omega^1,\quad \Delta_R:\Omega^1\to \Omega^1\tens A\]
and we require these to be bimodule maps, where $A$ acts by the tensor product of the actions on  $\Omega^1$ and on $A$ by multiplication.
In addition we ask $\extd$ to be a bicomodule map. We then say that the generalised calculus is {\em bicovariant}.  If we are given only (say)
$\Delta_L$ then we say that the calculus is {\em left covariant}. Note that unlike the standard case, in the generalised theory covariance is additional structure not
a property as these coactions, if they exist, need not be unique.

We recall~\cite{Ma:book, Rad:pro} that a right $A$-crossed module (also called Drinfeld-Radford-Yetter or quantum double module) is a vector space $V$ which is both an $A$-right module, denoted $\ra$, and an $A$-right comodule, denoted $\Delta_R$, such that
\[ \Delta_R(v\ra a)=v_0\ra a\t\tens Sa\o v_1 a\th,\quad \forall a\in A,\ v\in V; \ \Delta_Rv=v_0\tens v_1\]
 Morphisms between crossed modules are maps which commute with both the action and coaction.  If $A$ has bijective antipode then the category of right $A$-crossed modules is braided with braiding
\[ \Psi:V\tens W\cong W\tens V,\quad \Psi(v\tens w)= w_0\tens v\ra w_1,\quad \forall v\in V,\ w\in W\]
between any two crossed modules. We will not need the braiding until later.

We let $A^+$ denote the augmentation ideal, defined as the kernel of the counit. This forms an $A$-crossed module with
\begin{equation}\label{Acrossed} a\ra b=ab,\quad \Delta_R(a)=a\t\tens Sa\o a\th ,\quad\forall a\in A^+,\ b\in A\end{equation}
i.e., the right regular action and adjoint coaction respectively. We let $\pi:A\to A^+$,  $\pi a=a-\eps(a)$ be the counit projection.

\begin{theorem}\label{Hopfcalc} Let $A$ be a Hopf algebra. Generalised left covariant differential calculi on $A$ are isomorphic to ones of the form $\Omega^1=A\tens\Lambda^1$ and $\extd a=a\o\tens\omega\circ\pi (a\t)$, given by any data $(\Lambda^1,\omega)$ where $\Lambda^1$ is a right $A$-module and $\omega:A^+\to \Lambda^1$ a right module map. Generalised bicovariant differential calculi are given by $(\Lambda^1,\omega)$ where $\Lambda^1$ is a right $A$-crossed module and $\omega$ is a morphism in the category of right crossed modules. The image $\bar\Lambda^1=\omega(A^+)$ and $\omega$ give the standard sub-calculus. \end{theorem}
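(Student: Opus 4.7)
The plan is a non-surjective adaptation of Woronowicz's correspondence~\cite{Wor}. The decisive observation is that the trivialisation of a left-covariant bimodule as a free module over its space of left-invariants relies only on the left action and left coaction, so it is completely unaffected by dropping surjectivity of $\phi$. Once this trivialisation is in place, the right bimodule and (in the bicovariant setting) right bicomodule structures, together with $\extd$, transport through it to yield precisely the data $(\Lambda^1,\omega)$.

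\textbf{Left covariant case.} I would set $\Lambda^1=\{v\in\Omega^1\mid\Delta_L v=1\tens v\}$ and apply the fundamental theorem of Hopf modules to $\Omega^1$ (viewed as a left $A$-Hopf module via left multiplication and $\Delta_L$) to get a bijection $\psi:A\tens\Lambda^1\cong\Omega^1$, $a\tens v\mapsto av$. The right $A$-multiplication on $\Omega^1$ then transports to $(a\tens v)\cdot b=ab\o\tens v\ra b\t$, with $v\ra b:=Sb\o\,v\,b\t$ seen to lie in $\Lambda^1$ by a short $\Delta_L$-computation, so $\Lambda^1$ becomes a right $A$-module. Left comodule compatibility of $\extd$ then forces $\extd a=a\o\tens\omega(a\t)$ for a unique $\omega:A\to\Lambda^1$ given explicitly by $\omega(a)=(\eps\tens\id)\psi^{-1}\extd a$; since $\extd 1=0$ yields $\omega(1)=0$, the map $\omega$ factors through $\pi$ to give $\omega:A^+\to\Lambda^1$. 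Applying $\eps\tens\id$ to the Leibniz identity rewritten in $A\tens\Lambda^1$ coordinates collapses it to $\omega(ab)=\omega(a)\ra b+\eps(a)\omega(b)$, which on $A^+$ is exactly the right module map condition. For the converse, starting from any such $(\Lambda^1,\omega)$ the displayed formulas define a left-covariant bimodule and a derivation, and the Leibniz verification is the same computation run backwards.

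\textbf{Bicovariant case and standard sub-calculus.} In the bicovariant case, the bicomodule identity $(\Delta_L\tens\id)\Delta_R=(\id\tens\Delta_R)\Delta_L$ applied to $v\in\Lambda^1$ shows $\Delta_R v\in\Lambda^1\tens A$, so $\Lambda^1$ inherits a right coaction; the requirement that $\Delta_R$ be a bimodule map becomes, on $\Lambda^1$, the crossed module identity $\Delta_R(v\ra b)=v_0\ra b\t\tens(Sb\o)v_1 b\th$, and right comodule compatibility of $\extd$ reduces once again via $\eps\tens\id$ to the statement that $\omega:A^+\to\Lambda^1$ intertwines the adjoint coaction~(\ref{Acrossed}) with the coaction on $\Lambda^1$, i.e.\ is a morphism of crossed modules. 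The converse uses $\Delta_R(a\tens v)=a\o\tens v_0\tens a\t v_1$ on $A\tens\Lambda^1$. Finally, in $\psi$-coordinates $\phi(a\tens b)=ab\o\tens\omega\pi(b\t)$, whose image is visibly $A\tens\bar\Lambda^1$ with $\bar\Lambda^1=\omega(A^+)$, identifying $\bar\Omega^1$ as the sub-calculus associated to the surjective pair $(\bar\Lambda^1,\omega|_{A^+})$. The main difficulty throughout is bookkeeping, namely separating which compatibilities on $\Omega^1$ are automatic from the Hopf-module trivialisation and which translate into genuine conditions on $(\Lambda^1,\omega)$; non-surjectivity of $\phi$ plays no role in any of these verifications and surfaces only at the very end, permitting $\Lambda^1$ to be strictly larger than $\bar\Lambda^1$.
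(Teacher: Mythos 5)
Your proposal is correct and follows essentially the same route as the paper: trivialise $\Omega^1$ via the Hopf-module lemma, transport the right (co)module structure to a (crossed) module structure on the left invariants, read off $\omega$ from $\extd$ by applying $\eps$ in the first factor, and translate Leibniz and right covariance into the (crossed) module morphism conditions on $\omega:A^+\to\Lambda^1$. The identification of the standard sub-calculus as $A\tens\bar\Lambda^1$ also matches the paper's argument.
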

\proof The first part of the proof is routine; A left covariant $\Omega^1$ is a left Hopf module (i.e. a left module, a left comodule with left coaction a left module map). By the Hopf module lemma, a left Hopf module $\Omega^1$ is isomorphic to $A\tens \Lambda^1$ where $\Lambda^1$ is given by the space of left invariant elements of $\Omega^1$. The map from $A\tens\Lambda^1$ is given by the left action and its inverse is $m\mapsto m_{-2}\tens (Sm_{-1}).m_0$ where we use a usual notation for the left coaction and $.$ is the left action.  In the other direction, given any vector space $\Lambda^1$ we have a left Hopf module on $A\tens \Lambda^1$ by left multiplication of $A$ and the left coaction of $A$. Next, and also well-known, under this isomorphism the right Hopf module structure on $\Omega^1$ transfers to a crossed module structure on $\Lambda^1$. Thus given a crossed module we give $A\tens\Lambda^1$ a left Hopf module and also a right Hopf module structure by
\[ (a\tens v).b=a b\o \tens v\ra b\t,\quad \Delta_R(a\tens v)=a\o\tens v_0\tens a\t v_1\]
in terms of the crossed module structure. The new part of the proof is to more carefully analyse the content of $\extd:A\to\Omega^1$. Under our isomorphism this transfers to a map $\extd: A\to A\tens\Lambda^1$ necessarily of the form $\extd a=a\o\tens \tilde\omega(a\t)$ for some map $\tilde\omega:A\to \Lambda^1$ defined by $\tilde\omega(a)=Sa\o \extd a\t$, the properties of which can then be deduced.

Equivalently and more explicitly, let $\Lambda^1$ be a crossed module and let $\extd:A\to A\tens \Lambda^1$  be a linear map, which we write as $\extd a=a^1\tens a^2$. We define $\tilde\omega=(\eps\tens\id)\extd:A\to \Lambda^1$ and conversely left covariance of $\extd$ in the form $a\o\tens a\t{}^1\tens a\t{}^2=a^1\o\tens a^1\t\tens a^2$ implies by applying $\eps$ in the middle factor that $a\o\tens\tilde\omega(a\t)=\extd a$, so left covariant $\extd$ is equivalent to a linear map $\tilde\omega$. That $\extd$ obeys the product rule is $(ab)^1\tens (ab)^2=a^1 b\o\tens a^2\ra b\t + a b^1\tens b^2$ which implies
\[ \tilde\omega(ab)=\tilde\omega(a)\ra b+\eps(a)\tilde\omega(b).\]
That $\extd$ is right covariant is $a\o{}^1\tens a\o{}^2\tens a\t=a^1\o\tens a^2{}_0\tens a^1\t a^2{}_1$. Applying the counit to the first factor gives $\tilde\omega(a\o)\tens a\t=\tilde\omega(a\t)_0\tens a\o \tilde\omega(a\t)_1$ which is equivalent to $\tilde\omega:A\to \Lambda^1$ being equivariant where $A$ has the right adjoint coaction.  Conversely, one can check that 	 these properties for $\tilde\omega$ imply that $\extd$ is a differential for $A\tens\Lambda^1$. Clearly the image $\bar\Lambda^1={\rm image}(\tilde\omega)$ is a sub-crossed module of $\Lambda^1$ and one can check that $A\tens\bar\Lambda^1$ is the standard sub-calculus inside $A\tens\Lambda^1$.

Finally, it is convenient (but not necessary) to note that $\tilde\omega(1)=0$ (due to $\extd(1)=0$ and hence $\tilde\omega=\omega\circ\pi$  and the two conditions on $\tilde\omega$ in terms of $\omega:A^+\to \Lambda^1$ become that it is a morphism in the category of right modules respectively crossed modules for the two cases.    \endproof

Note that $I=\ker\,\omega$ will be a right ideal in $A^+$ (ad-invariant in the bicovariant case) but this information determines only $\bar\Omega^1$ not all of $\Omega^1$ in the generalised case. This is the main difference in the generalised setting compared to the Woronowicz theory in \cite{Wor}.

\begin{corollary}\label{Hopfcorol} Up to isomorphism generalised (bi)covariant differential calculi in Theorem~\ref{Hopfcalc} are classified by isomorphism classes of pairs $(\Lambda^1,\omega)$ (two such pairs are isomorphic if they are as objects and the isomorphism forms a commutative triangle with $A^+$ in the category of $A$-(crossed) modules.) \end{corollary}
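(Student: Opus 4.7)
The plan is to deduce this classification immediately from Theorem~\ref{Hopfcalc}. By that theorem, every generalised left covariant (resp.\ bicovariant) calculus is isomorphic to one in the normal form $(A\tens\Lambda^1,\extd)$ with $\extd a=a\o\tens\omega\pi(a\t)$ determined by the data $(\Lambda^1,\omega)$. It therefore suffices to verify that morphisms between two such normal-form calculi correspond bijectively to morphisms of the classifying data forming a commutative triangle with $A^+$.

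For the forward direction, given a morphism $\psi:A\tens\Lambda^1\to A\tens\Lambda^1{}'$ of generalised covariant calculi (by which I mean an $A$-bimodule map, additionally bicolinear in the bicovariant case, with $\psi\circ\extd=\extd'$), I would first observe that $\psi$ is a left Hopf module map, so by the Hopf module lemma it is determined by its restriction $\phi:\Lambda^1\to\Lambda^1{}'$ to the left-invariant subspaces; explicitly $\psi=\id\tens\phi$. Translating the right $A$-module structure $(a\tens v)\cdot b=ab\o\tens v\ra b\t$ from $\psi$ to $\phi$ shows that right $A$-linearity of $\psi$ is equivalent to $\phi$ being a right $A$-module map; in the bicovariant case the analogous translation of right $A$-colinearity gives right $A$-colinearity of $\phi$, so $\phi$ is a morphism in the right module (resp.\ right crossed module) category. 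Finally, evaluating $\psi\circ\extd=\extd'$ in normal form yields $a\o\tens\phi(\omega\pi(a\t))=a\o\tens\omega'\pi(a\t)$ for every $a\in A$, which by applying $\eps$ to the first tensor factor is equivalent to $\phi\circ\omega=\omega'$ on $A^+$, i.e.\ the commuting triangle.

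Conversely, starting from a morphism $\phi:(\Lambda^1,\omega)\to(\Lambda^1{}',\omega')$ of pairs in the appropriate category, I would define $\psi:=\id\tens\phi$. It is manifest that $\psi$ is a bimodule map (and a bicomodule map in the bicovariant case) because the bimodule and bicomodule structures on $A\tens\Lambda^1$ and $A\tens\Lambda^1{}'$ use only the structure of $A$ together with the module/comodule structure of $\Lambda^1$ through which $\phi$ is equivariant. The commuting triangle then gives $\psi\circ\extd=\extd'$ as required. These two constructions $\psi\leftrightarrow\phi$ are by inspection mutually inverse, and $\psi$ is an isomorphism of calculi iff $\phi$ is an isomorphism of pairs, which proves the classification.

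I do not anticipate a serious obstacle: the only point requiring care is the translation of the twisted right $A$-action (and, in the bicovariant case, the twisted right coaction) on $A\tens\Lambda^1$ into the plain right (crossed) module structure on $\Lambda^1$, but this is already built into the proof of Theorem~\ref{Hopfcalc} and need only be invoked.
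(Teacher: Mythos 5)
Your proposal is correct and follows essentially the same route as the paper's proof: both directions reduce the calculus morphism to its restriction $\phi$ on left-invariant elements via the Hopf module lemma, identify the equivariance conditions, and extract the commuting triangle $\phi\circ\omega=\omega'$ from $\psi\circ\extd=\extd'$ (the paper does this by computing $\varphi\circ\tilde\omega(a)=Sa\o\,\extd'a\t=\tilde\omega'(a)$ rather than by applying $\eps$ to the first tensor factor, but these are equivalent manipulations). No gaps.
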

\proof
Use the previous notations, we show that $(\Omega^1,\extd)$ and $({\Omega^1}',\extd')$ are isomorphic as generalised (bi)covariant differential calculi if and only if the corresponding data $(\Lambda^1,\omega)$ and $({\Lambda^1}',\omega')$ are isomorphic in the category of (crossed) $A$-modules. On one hand, let $\varphi:\Omega^1\to {\Omega^1}'$ be the Hopf bimodule isomorphism, then $\varphi:\Lambda^1\to {\Lambda^1}'$ is an isomorphism of (crossed) modules, where $\varphi(\Lambda^1)={\Lambda^1}'$ since $\varphi$ is a left module map. Then $\varphi\circ\tilde\omega(a)
=\varphi(Sa_{(1)}\extd(a_{(2)}))=Sa_{(1)}\varphi(\extd(a_{(2)}))
=Sa_{(1)}\extd'(a_{(2)})=\tilde\omega'(a)$ implies $(\Lambda^1,\omega)$ and $({\Lambda^1}',\omega')$ are isomorphic pairs. On the other hand, if $\varphi:\Lambda^1\to {\Lambda^1}'$ is a (crossed) module isomorphism compatible with $\omega,\omega',$ one can define $\Phi: A\tens \Lambda^1\to A\tens {\Lambda^1}'$ maps $a\tens v$ to $a\tens \varphi(v).$ Then $\Phi$ is an $A$-bimdoule and left $A$-comodule map obviously. When $\varphi$ is right $A$-comodule map, then $\varphi(v)_0\tens\varphi(v)_1=\varphi(v_0)\tens v_1$ implies $a_{(1)}\tens \varphi(v)_0\tens a_{(2)}\varphi(v)_1=a_{(1)}\tens \varphi(v_0)\tens a\t  v_1.$ This shows $\Phi$ is right $A$-comodule map as request. Lastly, from $\Phi\circ\extd(a)=\Phi(a_{(1)}\tens \omega\circ\pi(a_{(2)}))=a_{(1)}\tens \varphi\circ\omega\circ\pi(a_{(2)})=a_{(1)}\tens\omega'\circ\pi(a_{(2)})=\extd'a$. We proved that $\Phi$ is a (bi)covariant differential calculus isomorphism.
\endproof

\subsection{Innerness of generalised bicovariant differential calculi}

For the next results we will refer to the invariant subspace under the right action,
\[ \Lambda^1_A=\{\eta\in\Lambda^1\ |\ \eta\ra a=\eta\eps(a),\quad\forall a\in A\}=\{\eta\in \Lambda^1\ |\ \eta\ra A^+=0\}.\]
This is the subspace in $\Omega^1$ which is left-invariant and central for the bimodule structure. In the bicovariant case the crossed module condition ensures that $\Delta_R(\Lambda^1_A)\subseteq \Lambda^1\square {}_{Ad}A$ where $A$ has the left adjoint action. We also have $\Lambda^1_A\tens 1\subseteq\Lambda^1\square A$. Here if $V_R,{}_LV$ are right, left $A$-modules the $\square$ is  defined as
\[ V_R\square {}_LV=\{\sum v_i\tens w_i\ |\ \sum v_i\ra a\tens w_i=\sum v_i\tens a\la w_i,\ \forall a\in A\}\subseteq V_R\tens {}_LV\]
(in the bimodule case this gives a new bimodule but we are not using that here). If the antipode of $A$ is bijective then one can also think of $\Lambda^1\square A=(\Lambda^1\tens A)_A$ the invariants for the tensor product action where $A$ acts on $A$ in our case by $b\ra a= S^{-1}a\t b a\o$ (this is the left adjoint action converted to right via the inverse antipode).

\begin{lemma}\label{propinner} A generalised left covariant first order differential calculus in Theorem~\ref{Hopfcalc} is inner if and only if there exists $\theta\in \Lambda^1$ such that $\omega(a)=\theta\ra\,a$ for any $a\in A^+$. This inner calculus is bicovariant iff  we have $\Delta_R$ making $\Lambda^1$ a crossed module with $\Delta_R\theta-\theta\tens 1\in \Lambda^1\square A$.
\end{lemma}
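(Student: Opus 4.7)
My plan is to use the isomorphism $\Omega^1\isom A\tens\Lambda^1$ from Theorem~\ref{Hopfcalc}, under which $b(a\tens v)=ba\tens v$, $(a\tens v)b= ab\o\tens v\ra b\t$, and $\extd a = a\o\tens \omega(\pi a\t)$. For the \emph{if} direction of the inner statement, given $\theta\in\Lambda^1$ with $\omega(b)=\theta\ra b$ for all $b\in A^+$, I test $1\tens\theta\in\Omega^1$ and compute
\[ [1\tens\theta,a]= a\o\tens(\theta\ra a\t) - a\tens\theta = a\o\tens(\theta\ra a\t - \eps(a\t)\theta) = a\o\tens\omega(\pi a\t)= \extd a, \]
so the calculus is inner. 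For the \emph{only if} direction, given any $\theta\in\Omega^1$ with $[\theta,a]=\extd a$, I set $\theta_0=(\eps\tens\id)\theta\in\Lambda^1$ and apply $\eps\tens\id$ to both sides: the left side collapses to $\theta_0\ra a-\eps(a)\theta_0=\theta_0\ra\pi(a)$ and the right to $\omega(\pi a)$, yielding $\omega(b)=\theta_0\ra b$ on $A^+$, i.e.\ the required left-invariant inner element.

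For the bicovariant statement, the right module map property of $\omega(b)=\theta\ra b$ is automatic, so by Theorem~\ref{Hopfcalc} bicovariance reduces to $\Lambda^1$ being a crossed module together with $\omega$ being a right comodule map from $A^+$ (with the restricted right adjoint coaction) to $\Lambda^1$. Using the crossed module axiom I expand $\Delta_R(\theta\ra b)= \theta_0\ra b\t\tens(Sb\o)\theta_1 b\th$, while $(\omega\tens\id)$ applied to the adjoint coaction of $b\in A^+$ yields $\theta\ra b\t\tens(Sb\o)b\th$; these agree on $1$, so the comodule-map condition extends to
\[ \theta_0\ra b\t\tens(Sb\o)\theta_1 b\th - \theta\ra b\t\tens(Sb\o)b\th = \eps(b)(\Delta_R\theta-\theta\tens 1),\quad \forall b\in A. \]
On the other hand the asserted condition $\Delta_R\theta-\theta\tens 1\in\Lambda^1\square A$, with $A$ carrying the left adjoint action, unfolds by definition of $\square$ to
\[ \theta_0\ra x\tens\theta_1-\theta\ra x\tens 1 = \theta_0\tens x\o\theta_1 Sx\t -\theta\tens\eps(x)1,\quad \forall x\in A. \]

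To match the two identities I use Sweedler manipulations. From the $\square$-form, right-multiply the second factor by $y$, substitute $x\tens y\to b\t\tens b\th$, left-multiply by $Sb\o$, and collapse using $\sum c\o Sc\t=\eps(c)$ at the appropriate positions to recover the comodule-map identity. Conversely, from the comodule-map identity I left-multiply the second factor by a fresh element, substitute via $\Delta$, and invoke $\sum a\o Sa\t\tens a\th\tens a\four=1\tens a\o\tens a\t$ to return to the $\square$-form. The main obstacle is just keeping the re-indexings straight across several applications of coassociativity and the antipode identities; no input beyond the crossed module axiom and standard Hopf algebra identities is needed.
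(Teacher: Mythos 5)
Your proposal is correct and follows essentially the same route as the paper: transfer everything to $A\tens\Lambda^1$, extract $\theta$ by applying $\eps\tens\id$ to the inner element for one direction and verify $[1\tens\theta,a]=\extd a$ for the other, then reduce bicovariance to $\omega$ being a comodule map and show this is equivalent to $\Delta_R\theta-\theta\tens1\in\Lambda^1\square A$ by the standard Sweedler/antipode manipulations. The paper compresses that last equivalence into an ``i.e.''\ whereas you sketch the sandwiching argument explicitly, but the content is identical.
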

\proof
If $(A\tens\Lambda^1,\extd)$ given by pair $(\Lambda^1,\omega)$ is inner, then there exists $\tilde{\theta}\in A\tens\Lambda^1$ such that $\extd=[\tilde{\theta},\ ].$ Set $\theta=\eps\tens\id (\tilde{\theta})\in \Lambda^1.$ Then
$\tilde{\omega}(a)=\eps\tens\id$ $(\tilde{\theta}.a-a.\tilde{\theta})$ $
=\theta\ra \,a-\eps(a)\theta=\theta\ra\,\pi(a).$ Hence $\omega(a)=\theta\ra\,a$ for $a\in A^+.$ Conversely, given such an element $\theta$ then clearly $\extd a=a\o\tens\tilde{\omega}a\t=a\o\tens\theta\ra \pi(a\t)=(1\tens\theta).a-a.(1\tens\theta)=[\theta,a]$ as required. Moreover,  if $\omega(a)=\theta\ra\,a$ and if we have a crossed module then the condition that $\omega$ is a right $A$-comodule map, which is is equivalent to $\tilde\omega:A\to \Lambda^1$ a right $A$-comodule map,  is $\theta\ra\pi(a\t)\tens (Sa\o) a\th= \theta_0\ra \pi(a)\t\tens S\pi(a)\o\theta_1 \pi(a)\th$  for all $a\in A$. Explicitly, this is $\theta_0\ra a\t\tens Sa\o \theta_1 a\th-\theta\ra a\t\tens Sa\o a\th= \eps(a)(\Delta_R\theta-\theta\tens 1)$ for all $a\in A$, i.e.  $\Delta_R\theta-\theta\tens1\in \Lambda^1\square A.$ \endproof

\begin{proposition}\label{propinner-iso} Inner generalised left covariant (resp. bicovariant) differential calculi on $A$ are classified by isomorphism classes of pairs $(\Lambda^1,[\theta])$ where $\Lambda^1$ is a right (resp. right crossed) $A$-module and $[\theta]\in \Lambda^1/\Lambda^1_A$ (with $\Delta_R[\theta]=[\theta]\tens 1$ in $(\Lambda^1\tens A)/(\Lambda^1\square A)$ for the bicovariant case). \end{proposition}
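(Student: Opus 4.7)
The plan is to combine Lemma~\ref{propinner} with the isomorphism classification in Corollary~\ref{Hopfcorol}. By those two results, inner (bi)covariant generalised first order calculi are precisely the pairs $(\Lambda^1,\omega)$ in Theorem~\ref{Hopfcalc} for which there is some $\theta\in\Lambda^1$ with $\omega(a)=\theta\ra a$ for all $a\in A^+$. The job is therefore to re-parameterise such an $\omega$ by the class of $\theta$ and to translate the isomorphism notion accordingly.

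First I would fix $\Lambda^1$ and consider the map $\Lambda^1\to \Hom(A^+,\Lambda^1)$, $\theta\mapsto (a\mapsto \theta\ra a)$. Its image lies in the right $A$-module maps, and its kernel is by definition exactly $\Lambda^1_A=\{\eta\in\Lambda^1\ |\ \eta\ra A^+=0\}$. Hence inner left-covariant calculi with fixed underlying $A$-module $\Lambda^1$ correspond bijectively to classes $[\theta]\in \Lambda^1/\Lambda^1_A$.

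For the bicovariant case, Lemma~\ref{propinner} gives the extra condition $\Delta_R\theta-\theta\tens 1\in \Lambda^1\square A$. I would check that this condition depends only on the class $[\theta]$: replacing $\theta$ by $\theta+\eta$ with $\eta\in\Lambda^1_A$ changes the displayed expression by $\Delta_R\eta-\eta\tens 1$, and the paragraph preceding Lemma~\ref{propinner} has already observed $\Delta_R(\Lambda^1_A)\subseteq \Lambda^1\square A$ and $\Lambda^1_A\tens 1\subseteq \Lambda^1\square A$. Consequently $\Delta_R$ induces a well-defined map $\Lambda^1/\Lambda^1_A\to (\Lambda^1\tens A)/(\Lambda^1\square A)$, under which the bicovariance condition becomes exactly $\Delta_R[\theta]=[\theta]\tens 1$ in the quotient.

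Finally I would handle isomorphisms. By Corollary~\ref{Hopfcorol}, two calculi with data $(\Lambda^1,\omega)$ and $({\Lambda^1}{}',\omega')$ are isomorphic iff there is a (crossed) $A$-module isomorphism $\varphi:\Lambda^1\to{\Lambda^1}{}'$ with $\varphi\circ\omega=\omega'$. Writing $\omega(a)=\theta\ra a$ and $\omega'(a)=\theta'\ra a$, this is $(\varphi(\theta)-\theta')\ra a=0$ for all $a\in A^+$, i.e.\ $\varphi(\theta)-\theta'\in {\Lambda^1}{}'_A$, i.e.\ $\varphi([\theta])=[\theta']$. This identifies the isomorphism classes of inner calculi with isomorphism classes of pairs $(\Lambda^1,[\theta])$ in the stated sense, and by the previous paragraph the bicovariance constraint is the one recorded. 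The only genuinely subtle step is the well-definedness on the quotient in the bicovariant case, and this is already paid for by the two inclusions involving $\Lambda^1_A$ coming from the crossed module axioms.
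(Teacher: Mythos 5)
Your proposal is correct and follows essentially the same route as the paper: one direction via Lemma~\ref{propinner}, the converse by setting $\omega(a)=\theta\ra a$ with kernel $\Lambda^1_A$, well-definedness of the bicovariance condition on the quotient from the inclusions $\Delta_R(\Lambda^1_A)\subseteq\Lambda^1\square A$ and $\Lambda^1_A\tens 1\subseteq\Lambda^1\square A$, and translation of isomorphisms of pairs $(\Lambda^1,\omega)$ into $\varphi(\theta)-\theta'\in\Lambda^1{}'_A$. The only cosmetic difference is that you cite Corollary~\ref{Hopfcorol} explicitly for the isomorphism step, which the paper leaves implicit.
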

\proof We chose representatives for $[\theta]$ and $[\theta']$ and obtain equivalence $(\Lambda^1,\theta)\sim (\Lambda^1{}',\theta')$ if there is a right module isomorphism $\varphi:\Lambda^1\to \Lambda^1{}'$ such that $\varphi(\theta)-\theta'\in\Lambda^1_A$. Similarly, we show that inner generalised bicovariant calculi up to isomorphism correspond to pairs $(\Lambda^1,\theta)$  where $\Lambda^1$ is an $A$-crossed module and $\Delta_R\theta-\theta\tens 1\in \Lambda^1\square A$ and equivalence requires in addition that $\varphi$ is a comodule map. One direction of the proof here is covered by Lemma~\ref{propinner}. Conversely, given $\Lambda^1$ and $\theta\in\Lambda^1$ we define $\omega:A^+\to \Lambda^1$ by $\omega(a)=\theta\ra\,a.$ It is obvious that $\omega$ is a right $A$-module map. If $\Lambda^1$ is a crossed module then $\Delta_R\theta-\theta\tens1\in \Lambda^1\square A$ implies that  $\omega$ is a morphism in the category of right $A$-crossed modules as required in the bicovariant case. The isomorphism classes of $(\Lambda^1,\omega)$ in the two cases reduce to the equivalences claimed. Note that by the remarks above the condition on $\theta$ for bicovariance depends only on $[\theta]$ as both $\Delta_R$ and $\id\tens 1$ descend. One can then interpret the result as stated, where an isomorphism class means a morphism $\varphi$ such that $\varphi([\theta])=[\theta']$ on the relevant quotient spaces. \endproof

Next we note that as $\bar\Omega^1\subseteq\Omega^1$, the latter being inner is a weaker statement than $\bar\Omega^1$ being inner as it may be that $\theta\notin\bar\Lambda^1$.

\begin{proposition}\label{quasint} For a generalised left covariant calculus on $A$ in Theorem~\ref{Hopfcalc}, the standard sub-calculus $\bar\Omega^1$  is inner  iff there exists an element $\mu\in A$ such that  $\mu A^+\subseteq \ker\omega$ and $\eps(\mu)=1$.  Then $\theta=\omega(1-\mu)$.
\end{proposition}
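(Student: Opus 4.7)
The plan is to reduce the statement to Lemma~\ref{propinner} applied to the standard sub-calculus, then rephrase the innerness condition in terms of an element of $A$.

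First I would observe that under the isomorphism $\Omega^1\cong A\tens\Lambda^1$ of Theorem~\ref{Hopfcalc}, the standard sub-calculus corresponds to $\bar\Omega^1\cong A\tens \bar\Lambda^1$, where $\bar\Lambda^1=\omega(A^+)$. Hence $\bar\Omega^1$ is itself a standard left covariant calculus classified by the pair $(\bar\Lambda^1,\bar\omega)$, with $\bar\omega:A^+\to \bar\Lambda^1$ the corestriction of $\omega$. By Lemma~\ref{propinner} applied to this sub-calculus, $\bar\Omega^1$ is inner iff there exists $\theta\in \bar\Lambda^1$ with $\omega(a)=\theta\ra a$ for all $a\in A^+$. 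The key difference from just asking $\Omega^1$ inner is the crucial requirement $\theta\in\bar\Lambda^1$, i.e., $\theta$ lies in the image of $\omega$.

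Next I would translate this into the condition on $A$. Write $\theta=\omega(b)$ for some $b\in A^+$, which is possible precisely because $\theta\in\bar\Lambda^1$. Since $\omega$ is a right $A$-module map where $A^+$ carries the right regular action (note $ba\in A^+$ whenever $b\in A^+,a\in A$, since $\eps(b)=0$), the inner condition $\omega(a)=\omega(b)\ra a=\omega(ba)$ for all $a\in A^+$ is equivalent to $\omega((1-b)a)=0$ for all $a\in A^+$. Setting $\mu:=1-b$, this becomes $\mu A^+\subseteq\ker\omega$, and $\eps(\mu)=1-\eps(b)=1$ since $b\in A^+$. Thus $\theta=\omega(b)=\omega(1-\mu)$, as claimed.

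For the converse, given $\mu\in A$ with $\eps(\mu)=1$ and $\mu A^+\subseteq\ker\omega$, put $b:=1-\mu$, so $b\in A^+$ and $\theta:=\omega(1-\mu)=\omega(b)$ lies in $\bar\Lambda^1$. For any $a\in A^+$ we then compute $\theta\ra a=\omega(b)\ra a=\omega(ba)=\omega(a)-\omega(\mu a)=\omega(a)$, so $\bar\Omega^1$ is inner with this $\theta$, again by Lemma~\ref{propinner}. Nothing here is a real obstacle; the only subtlety worth flagging is keeping track of the distinction between $\theta\in\Lambda^1$ (innerness of $\Omega^1$) and $\theta\in\bar\Lambda^1=\omega(A^+)$ (innerness of $\bar\Omega^1$), which is precisely what forces the existence of the witness $\mu\in A$.
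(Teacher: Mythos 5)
Your proposal is correct and follows essentially the same route as the paper: apply Lemma~\ref{propinner} to the standard sub-calculus to get $\theta\in\bar\Lambda^1=\omega(A^+)$, write $\theta=\omega(1-\mu)$ with $1-\mu\in A^+$, and use the right-module property of $\omega$ to convert $\omega(1-\mu)\ra a=\omega(a)$ into $\omega(\mu a)=0$ for all $a\in A^+$. Your version merely spells out the intermediate steps (and the converse) in more detail than the paper does.
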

\proof By Lemma~\ref{propinner} applied to the standard sub-calculus, being inner is equivalent to the existence of $\theta\in \bar\Lambda^1$ with $\omega(a)=\theta\ra a$ for all $a\in A^+$. Then $\theta=\omega(1-\mu)$ for some $1-\mu\in A^+$, such that $\omega(1-\mu)\ra a=\omega(a)$ for all $a\in A^+$. But $\omega$ is a right module map so this is equivalent to $\omega(\mu a)=0$ for all $a\in A^+$. \endproof

This observation appears to be new even for standard differential calculi. A corollary of it is, however, well-known. Namely, any finite-dimensional semisimple $A$ has a normalised integral $\mu$ so that $\mu a=0$ for all $a\in A^+$ and $\eps(\mu)=1$, hence any left-invariant calculus on such an $A$ is inner. Geometrically, such $\mu$  corresponds  to right-invariant integration $\int: A^*\to k$ and Proposition~\ref{quasint} says that more generally what we need is a `partially right invariant' integration, namely when restricted to $I^\perp$.

We conclude with an elementary example.

\begin{proposition}\label{Ug} Let $\cg$ be a Lie algebra and $A=U(\cg)$.  Generalised left covariant differential structures $\Omega^1(U(\cg))$ correspond to Lie algebra cocycles $\omega\in Z^1(\cg,\Lambda^1)$ where $\Lambda^1$ is a right $\cg$-module. Coboundaries correspond to inner calculi. The bimodule relations and exterior derivative are
\[ v\xi-\xi v=v\ra\xi,\quad \extd \xi=\omega(\xi),\quad\forall v\in \Lambda^1,\ \xi\in\cg.\]
\end{proposition}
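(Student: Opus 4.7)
The plan is to specialise Theorem~\ref{Hopfcalc} to $A=U(\cg)$ and translate the data $(\Lambda^1,\omega)$ into purely Lie-algebraic terms. First, since $U(\cg)$ is generated by $\cg$ as an algebra, right $U(\cg)$-modules are the same as right $\cg$-modules, giving us the module structure on $\Lambda^1$. It remains to identify the right $U(\cg)$-module maps $\omega:A^+\to\Lambda^1$ with 1-cocycles in $Z^1(\cg,\Lambda^1)$. Because $\cg\subseteq A^+$ and $A^+=\cg\cdot U(\cg)$, any such $\omega$ is determined by its restriction $\omega_0=\omega|_\cg$ via $\omega(\xi a)=\omega_0(\xi)\ra a$; I then need to show that $\omega_0$ extends to a right module map $A^+\to\Lambda^1$ iff it satisfies the 1-cocycle condition.

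To prove the extension statement cleanly I would lift to the tensor algebra: as a right $T(\cg)$-module the augmentation ideal $T(\cg)^+=\bigoplus_{n\ge 1}\cg^{\tens n}$ is free on $\cg$, so any linear $\omega_0:\cg\to\Lambda^1$ extends uniquely to a right $T(\cg)$-module map $\tilde\omega:T(\cg)^+\to\Lambda^1$. For this to descend to $U(\cg)^+$ we need $\tilde\omega$ to vanish on the two-sided PBW ideal $I=\langle\xi\eta-\eta\xi-[\xi,\eta]\rangle$. Since $\Lambda^1$ is already a $U(\cg)$-module, the action of any element of $I$ on $\Lambda^1$ is zero, so for $a\in T(\cg)^+$ we get $\tilde\omega(a(\xi\eta-\eta\xi-[\xi,\eta]))=\tilde\omega(a)\ra(\xi\eta-\eta\xi-[\xi,\eta])=0$ automatically. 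The only nontrivial constraint comes from $a=1$, namely $\tilde\omega(\xi\eta-\eta\xi-[\xi,\eta])=0$, which using $\tilde\omega(\xi\eta)=\omega_0(\xi)\ra\eta$ and $\tilde\omega(\eta\xi)=\omega_0(\eta)\ra\xi$ from the Leibniz rule in Theorem~\ref{Hopfcalc} becomes precisely $\omega([\xi,\eta])=\omega(\xi)\ra\eta-\omega(\eta)\ra\xi$, the 1-cocycle condition.

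The bimodule relation and the formula for $\extd$ then fall out of Theorem~\ref{Hopfcalc}. The right action on $A\tens\Lambda^1$ is $(a\tens v)\cdot b=ab\o\tens v\ra b\t$; applied to $a=1$, $v\in\Lambda^1$ and $b=\xi\in\cg$ with $\Delta\xi=\xi\tens 1+1\tens\xi$ this gives $v\cdot\xi=\xi v+v\ra\xi$, i.e.\ $v\xi-\xi v=v\ra\xi$. The formula $\extd a=a\o\tens\omega\circ\pi(a\t)$ evaluated on $\xi\in\cg$ reduces to $\extd\xi=\omega(\xi)$ since $\pi(1)=0$. Finally, by Lemma~\ref{propinner} the calculus is inner iff there exists $\theta\in\Lambda^1$ with $\omega(a)=\theta\ra a$ for all $a\in A^+$; restricting to $\cg$ this says $\omega(\xi)=\theta\ra\xi$, exhibiting $\omega$ as the Chevalley--Eilenberg coboundary of $\theta\in C^0(\cg,\Lambda^1)$. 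Conversely any coboundary extends to $\theta\ra a$ on $A^+$ by the right module property, hence is inner. The delicate step I expect is the extension argument in the second paragraph—making precise that the cocycle identity on $\cg$ is the \emph{only} obstruction, which the reduction via $T(\cg)^+$ handles transparently.
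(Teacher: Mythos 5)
Your proposal is correct and follows the same overall strategy as the paper: specialise Theorem~\ref{Hopfcalc}, identify right $U(\cg)$-module maps $\omega:U(\cg)^+\to\Lambda^1$ with their restrictions to $\cg$, show the restriction is a $1$-cocycle and conversely that a cocycle extends, and handle the inner case via Lemma~\ref{propinner}. The one place where you genuinely diverge is the well-definedness of the extension, which is indeed the only delicate step. The paper proves it by induction on the PBW filtration degree inside $U(\cg)$ itself, checking directly that the tentative definition $\omega(x\xi)=\omega(x)\ra\xi$ is consistent with the relation $\xi\eta-\eta\xi=[\xi,\eta]$ at each degree. You instead lift to the tensor algebra, use that $T(\cg)^+\cong\cg\tens T(\cg)$ is free as a right $T(\cg)$-module to get a unique module-map extension of any linear $\omega_0:\cg\to\Lambda^1$, and then show descent to $U(\cg)^+$ by observing that $\tilde\omega$ kills $a\cdot g\cdot b$ for $a\in T(\cg)^+$ automatically (the PBW relation $g$ acts as zero on $\Lambda^1$), leaving only the $a=1$ case, which is exactly the cocycle identity. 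This isolates the cocycle condition as the sole obstruction in a single clean statement rather than an inductive consistency check; the paper's induction has the minor advantage of never leaving $U(\cg)$. Both arguments are complete and the remaining verifications (the bimodule relation from primitivity of $\xi$, the formula for $\extd\xi$, and the coboundary--inner correspondence) match the paper's.
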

\proof Let $\Lambda^1$ be a right $\cg$-module. A right-module map $\omega:U(\cg)^+\to \Lambda^1$ is fully determined by its restriction to $\cg\subset U(\cg)^+$ since any other element is a sum of products of the form $\xi x$ where $\xi\in\cg$ and $x\in U(\cg)^+$. The restriction obeys $\omega([\xi,\eta])=\omega(\xi)\ra\eta-\omega(\eta)\ra\xi$ for all $\xi,\eta\in\cg)$, i.e. a 1-cocycle. Conversely, given a 1-cocycle $\omega:\cg\to \Lambda^1$ we extend this map as $\omega(x\xi)=\omega(x)\ra\xi$ for all $x\in U(\cg)^+$ and $\xi\in\cg$. Suppose this $\omega$ is well defined on degree $\le n$. Then if $x$ has degree $\le n-1$ we have $\omega$ defined on degree $\le n+1$ because $\omega(x(\xi\eta-\eta\xi)):=\omega(x\xi)\ra\eta-\omega(x\eta)\ra\xi=(\omega(x)\ra\xi)\ra\eta-(\omega(x)\ra\eta)\ra\xi=\omega(x)\ra [\xi,\eta]$ by definition of $\omega$ on degree $\le n+1$ for the first equality,  by the right module property already established at lower degree for the second equality, and by right $\cg$-module property of $\Lambda^1$ for the last equality. The result is consistent with the right module property at degree $\le n$.  The case of a coboundary is $\omega(\xi)=\theta\ra\xi$ for some $\theta\in \Lambda^1$. This implies that $\omega(\xi x)=\omega(\xi)\ra x=(\theta\ra\xi)\ra x=\theta\ra (\xi x)$ where we use the right action of $U(\cg)$ induced by the Lie algebra action. Hence by induction $\omega(x)=\theta\ra x$ for all $x\in U(\cg)^+$ and the calculus is inner. The converse is immediate. \endproof

In this context it is not particularly natural to require that $\omega$ is surjective, i.e our notion of a generalised differential calculus is more appropriate. One can also ask for $\Lambda^1$ to be a right $U(\cg)$-crossed module and $\omega$ to intertwine so as to give a bicovariant calculus. Thus we suppose a right coaction $\Delta_R:\Lambda^1\to \Lambda^1\tens U(\cg)$ which is a right module map where $\cg$ acts on $U(\cg)$ by right commutator (this is the content of the crossed-module condition in the cocommutative case). Then as the adjoint coaction on $U(\cg)^+$ is trivial, the condition for a bicovariant calculus is that $\Delta_R\circ\omega=\omega\tens 1$.  In the inner case this amounts to $\Delta_R\theta-\theta\tens 1 \in \Lambda^1\tens U(\cg)$ being killed under the right action of $\cg$. This illustrates many of the results above.

\section{Exterior algebra of a generalised bicovariant calculus}

A differential graded algebra or differential exterior algebra $(\Omega,\extd)$ (extending a given generalised differential calculus $(\Omega^1,\extd)$ over an algebra $A$) means a graded algebra $\Omega=\oplus_{n\ge 0}\Omega^n$ with given $\Omega^1$ and $\Omega^0=A,$ and a degree $1$ `super-derivation' $\extd:\Omega\to \Omega$ in the sense \[\extd(uv)=(\extd u)v+(-1)^nu\extd v\] for all $u\in\Omega^n,v\in\Omega$ extending $\extd:A\to\Omega^1$  such that $\extd^2=0$. The differential exterior algebra is inner if $\extd=[\theta, \ \}$ for some $\theta\in\Omega^1$, where $[\theta,u\}=\theta u-(-1)^n u\theta$ for any $u\in\Omega^n$. The standard case is with the surjectivity at first order and $\Omega$ generated by $A,\Omega^1$. The following is immediate:

\begin{lemma} Suppose that $\Omega^1$ extends to a differential exterior algebra $\Omega$ generated by $A,\Omega^1$. Then the graded subalgebra $\bar\Omega\subseteq \Omega$ generated by $A,\bar\Omega^1$ is a standard differential exterior algebra. 
\end{lemma}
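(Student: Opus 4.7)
The plan is to verify that the restriction of $\extd$ to $\bar\Omega$ makes $(\bar\Omega,\extd|_{\bar\Omega})$ into a differential exterior algebra on $A$ extending the standard sub-calculus $(\bar\Omega^1,\extd)$, and then note that by construction it is a standard one (generated in the required sense, with the surjectivity of $\phi$ holding at degree 1 by the very definition $\bar\Omega^1=\phi(A\tens A)$).

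The main point to establish is that $\bar\Omega$ is stable under $\extd$. First I would argue that $\bar\Omega$, being the graded subalgebra of $\Omega$ generated by $A$ and $\bar\Omega^1$, is spanned in degree $n\ge 1$ by sums of products of the form $a_0(\extd a_1)(\extd a_2)\cdots(\extd a_n)$ with $a_i\in A$. Indeed, a general generator in $\bar\Omega^1$ is of the shape $a\extd b$ since $\bar\Omega^1=\phi(A\tens A)$, and any product of such generators with elements of $A$ can be reduced, using the bimodule structure and the Leibniz rule (or rather, by just writing it out as iterated products), to a sum of expressions of the stated form. Then applying the graded Leibniz rule for $\extd$ on $\Omega$ to such a monomial gives
\[\extd\bigl(a_0(\extd a_1)\cdots(\extd a_n)\bigr)=(\extd a_0)(\extd a_1)\cdots(\extd a_n),\]
because $\extd^2=0$ kills every subsequent term. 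Since each factor $\extd a_i$ lies in $\bar\Omega^1$, the right hand side lies in $\bar\Omega^{n+1}$, so $\extd(\bar\Omega^n)\subseteq \bar\Omega^{n+1}$ as required.

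Once stability is established, the remaining properties are automatic by restriction: $\extd|_{\bar\Omega}$ is a degree-$1$ super-derivation because $\extd$ is one on $\Omega$ and $\bar\Omega$ is a graded subalgebra; $(\extd|_{\bar\Omega})^2=0$ since $\extd^2=0$ on $\Omega$; and it extends the given $\extd\colon A\to\bar\Omega^1$ by construction. Finally, $\bar\Omega^0=A$ since the degree-$0$ part of a subalgebra generated by $A$ (degree $0$) and $\bar\Omega^1$ (degree $1$) is just $A$, the surjectivity axiom $\phi\colon A\tens A\twoheadrightarrow \bar\Omega^1$ is built into the definition of $\bar\Omega^1$, and $\bar\Omega$ is generated by $A$ and $\bar\Omega^1$ by definition. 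So $(\bar\Omega,\extd|_{\bar\Omega})$ is a standard differential exterior algebra.

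I do not expect a serious obstacle: the only content is the stability of $\bar\Omega$ under $\extd$, and that reduces to the observation that $\extd(a\extd b)=\extd a\,\extd b\in\bar\Omega^2$, iterated via the Leibniz rule. No surjectivity of the original $\phi$ at higher degree is needed, since we are working inside the given $\Omega$ which already provides the ambient algebra and differential.
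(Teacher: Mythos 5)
Your proof is correct and is exactly the natural argument the paper has in mind: the paper simply declares the lemma ``immediate'' and supplies no proof, and the only real content is the stability $\extd(\bar\Omega^n)\subseteq\bar\Omega^{n+1}$, which you establish by reducing monomials to the form $a_0(\extd a_1)\cdots(\extd a_n)$ via the Leibniz rule and then using $\extd^2=0$.
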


Our main results will be in the Hopf algebra case, where we formulate the correct notion of `strongly bicovariant differential exterior algebra' applicable to the generalised case. Given $\Lambda^1$, we give in particular a canonical `universal' inner construction, a canonical shuffle algebra construction and an analogue of the standard `Woronowicz'  construction for exterior algebras.

\subsection{Left covariant and bicovariant differential exterior algebras}

When $A$ is a Hopf algebra,
we first consider the left covariant case where we suppose that $\Omega$ is a left comodule algebra with $\Delta_L$ graded and restricting to the coproduct on $A$ (and to $\Delta_L$ on $\Omega^1$ if we are extending a given left covariant $\Omega^1$).

\begin{proposition}\label{extalg} Let  $(\Lambda^1,\omega)$ give a generalised left covariant differential calculus in the setting of Theorem~\ref{Hopfcalc}. Left covariant differential exterior algebra $(\Omega,\extd)$ extending $\Omega^1$ are in correspondence with pairs $(\Lambda,\delta)$ where $\Lambda$ extends $\Lambda^1$ as a graded right $A$-module algebra with $\Lambda^0=k$, and $\delta:\Lambda\to \Lambda$ is a degree $1$ super-derivation such that
\[ \delta^2=0,\quad (\delta\eta)\ra a-\delta(\eta\ra a)=\omega(\pi(a\o))(\eta\ra a\t)-(-1)^{|\eta|}\eta\ra a\o \omega (\pi(a\t)),\quad \forall \,a\in A\]
\[ \delta\omega(a)+\omega(\pi(a\o))\omega(\pi(a\t))=0,\quad \forall a\in A^+.\]
\end{proposition}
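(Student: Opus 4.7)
The plan is to apply the Hopf module lemma degree by degree, as in the proof of Theorem~\ref{Hopfcalc}. Since $\Omega=\bigoplus_{n\ge 0}\Omega^n$ is a left comodule algebra whose coaction is graded and extends the coproduct of $A=\Omega^0$, each $\Omega^n$ is a left Hopf module for $A$, hence
\[\Omega^n\cong A\tens\Lambda^n,\quad \Lambda^n:=\{\eta\in\Omega^n\ |\ \Delta_L\eta=1\tens\eta\}.\]
Set $\Lambda=\bigoplus_n\Lambda^n$. Then $\Lambda^0=k$ and $\Lambda^1$ recovers the given right $A$-module of Theorem~\ref{Hopfcalc}. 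The multiplication on $\Omega$ restricts to a graded product on $\Lambda$ since products of left-invariants are left-invariant, and the right multiplication of $A$ on $\Omega$ gives a right action $\ra$ on $\Lambda$ determined by $\eta a=a\o(\eta\ra a\t)$ for $\eta\in\Lambda^n,\ a\in A$. Associativity in $\Omega$ applied to $(\eta\xi)a=\eta(\xi a)$ yields the right $A$-module algebra axiom $(\eta\xi)\ra a=(\eta\ra a\o)(\xi\ra a\t)$. Since $\extd$ is left covariant it preserves $\Lambda$; set $\delta:=\extd|_\Lambda$, a degree $1$ super-derivation with $\delta^2=0$.

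The two compatibility conditions arise from mixed applications of Leibniz and $\extd^2=0$. For the first, compute $\extd(\eta a)$ for $\eta\in\Lambda^n$ in two ways: once by first rewriting $\eta a=a\o(\eta\ra a\t)$ and applying Leibniz to get $a\o\omega(\pi a\t)(\eta\ra a\th)+a\o\delta(\eta\ra a\t)$, and once as $(\delta\eta)a+(-1)^n\eta\extd a$ then commuting past the $A$-factor to get $a\o((\delta\eta)\ra a\t)+(-1)^n a\o(\eta\ra a\t)\omega(\pi a\th)$. Equating in $A\tens\Lambda^{n+1}$ and applying $\eps\tens\id$ delivers the middle identity. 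For the second, $\extd^2 a=0$ for $a\in A$ expands via $\extd a=a\o\omega(\pi a\t)$ and Leibniz to $a\o[\omega(\pi a\t)\omega(\pi a\th)+\delta\omega(\pi a\t)]$; applying $\eps\tens\id$ and restricting to $a\in A^+$ yields the $\delta\omega$ identity.

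Conversely, given $(\Lambda,\delta)$, I construct $\Omega=A\tens\Lambda$ with product
\[(a\tens\eta)(b\tens\xi)=ab\o\tens(\eta\ra b\t)\xi,\]
a smash/cross product that is associative by the module-algebra axiom, graded with $\Omega^0=A$ and $\Omega^1=A\tens\Lambda^1$. The left coaction $\Delta_A\tens\id$ makes $\Omega$ a left comodule algebra. Define
\[\extd(a\tens\eta)=a\o\tens\omega(\pi a\t)\eta+a\tens\delta\eta,\]
which recovers the original first-order $\extd$ on $\Omega^0$ and $\Omega^1$ (using $\delta|_{\Lambda^0}=0$). The Leibniz rule for this $\extd$ reduces, after Sweedler expansion of $\extd((a\tens\eta)(b\tens\xi))$, to the first compatibility condition, while $\extd^2=0$ reduces to $\delta^2=0$ together with the second compatibility condition. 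Left covariance of $\extd$ is manifest, and the two constructions are mutually inverse up to canonical identifications.

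The main obstacle is the Sweedler bookkeeping: the right action $\ra$ appears whenever the Leibniz rule crosses an element of $A$ past a form in $\Lambda$, and one has to verify that every contribution in the resulting two- and three-factor expansions is absorbed by precisely the two stated compatibility identities on $(\Lambda,\delta)$, with nothing left over.
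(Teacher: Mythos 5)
Your proposal is correct and follows essentially the same route as the paper: apply the Hopf module lemma to identify $\Omega\cong A\rcross\Lambda$ as a smash product over the left-invariant forms, transfer $\extd$ to $\extd(a\tens\eta)=a\o\tens\omega(\pi a\t)\eta+a\tens\delta\eta$, and extract the two compatibility conditions from the Leibniz rule across $\eta a=a\o(\eta\ra a\t)$ and from $\extd^2a=0$, with the converse given by the same smash-product formula. The only cosmetic differences are that you invoke the Hopf module lemma degree by degree and derive the module-algebra axiom from associativity rather than by the explicit computation $(vw)\ra a=Sa\o.vw.a\t$, which changes nothing of substance.
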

\proof  By the Hopf module lemma, the left $A$-Hopf module $\Omega\cong A\rcross \Lambda$ for the graded algebra of left-invariant differential forms. To be specific, here $\Lambda=\oplus_{n\ge 0}\Lambda^n$ with $\Lambda^n:={}^{\mathrm{co} A}\Omega^n$ the left invariant subspaces of $\Omega^n$ for all $n.$ Note that $\Lambda^n$'s are right $A$-modules by $v\ra a=Sa\o.v.a\t.$ Because $\Delta_L(vw)=1\otimes vw$ for all $v\in\Lambda^n $ and $w\in\Lambda^m,$ so $vw\in\Lambda^{n+m}.$ Then $(vw)\ra a=Sa\o.vw.a\t=Sa\o.v.a\t Sa\th.w.a\four=(v\ra a\o)(w\ra a\t)$ for all $v,w\in\Lambda,\,a\in A$ implies that $\Lambda$ is an $\mathbb{N}$-graded right $A$-module algebra. In fact, the left $A$-Hopf modules as well as right $A$-modules isomorphism $\beta:\Omega\to A\rcross \Lambda$ is an algebra isomorphism. This follows from
$\beta(v)\beta(w)=(v_{-2}\otimes Sv_{-1}v_0)\cdot (w_{-2}\otimes Sw_{-1}w_0)
=v_{-2}w_{-3}\otimes (Sv_{-1}v_0)\ra w_{-2} Sw_{-1}w_0
=v_{-2}w_{-4}\otimes Sw_{-3}Sv_{-1}v_0w_{-2}Sw_{-1}w_0=v_{-2}w_{-2}\otimes Sv_{-1}Sw_{-1}v_0w_0=\beta(vw),$ where $\cdot$ denotes the multiplication in the smash product algebra $A\rcross \Lambda.$  Under this isomorphism, the super-derivation on $\Omega$ transfer to a super-derivation $\extd$ on $A\rcross\Lambda,$ which is also a left comodule map.

The super-derivation $\extd$ with $\extd^2=0$ on $A\rcross \Lambda$ necessarily has the form $\extd (a\tens \eta)=a\o \tens \tilde\omega(a\t)\eta+a\tens\delta\eta$ for the linear map $\delta:\Lambda\to \Lambda$ obtained as the restriction of $\extd$ to left-invariant forms. Obviously, $\delta^2=0$ as $\extd^2$ does. Since $\extd^2 a=\extd (a\o\otimes\tilde{\omega}(a\t))=a\o\otimes\tilde{\omega}(a\t)\tilde{\omega}(a\th)+a\o\otimes\delta\tilde{\omega}(a\t),$ so $\extd^2 a=0$ is equivalent to $\delta\tilde{\omega}(a)+\tilde{\omega}(a\o)\tilde{\omega}(a\t)=0$ for any $a\in A,$ which is also equivalent to $\delta\omega(a)+\tilde{\omega}(a\o)\tilde{\omega}(a\t)=0$ for any $a\in A^+.$

For any $\eta=1\otimes\eta$ and $a=a\otimes1$ in $A\rcross \Lambda,$ $\extd(\eta\cdot a)=(\delta\eta)\cdot a+(-1)^{|\eta|}\eta\cdot\extd a=a\o\otimes(\delta\eta)\ra a\t+(-1)^{|\eta|}a\o\otimes(\eta\ra a\t)\tilde\omega(a\th),$ while $\extd(a\o\otimes\eta\ra a\t)=a\o\otimes\tilde\omega(a\t)(\eta\ra a\th)+a\o\otimes\delta(\eta\ra a\t).$ Then $\extd(\eta\cdot a)=\extd(a\o\otimes\eta\ra a\t),$ from $\eta\cdot a=a\o\otimes\eta\ra a\t$ in $A\rcross \Lambda$, is equivalent to $(\delta\eta)\ra a-\delta(\eta\ra a)=\tilde\omega(a\o)\eta\ra a-(-1)^{|\eta|}(\eta\ra a\o)\tilde\omega(a\t).$

The `if part' is also true. For any pair $(\Lambda,\delta)$ in the setting, one can define the super-derivation on $A\rcross \Lambda$ by $\extd (a\tens \eta)=a\o \tens\tilde\omega(a\t)\eta+a\tens \delta\eta$ for all $a\in A$ and $\eta\in\Lambda.$ The only need to show $\extd^2(a\otimes\eta)=0$ for all $a\in A$ and $\eta\in\Lambda,$ which follows from $\extd^2(a\otimes\eta)=a\o\otimes\tilde\omega(a\t)\tilde\omega(a\th)\eta
+a\o\otimes\delta(\tilde\omega(a\t)\eta)+a\o\otimes\tilde\omega(a\t)\delta\eta
+a\tens\delta^2\eta=a\o\otimes\tilde\omega(a\t)\tilde\omega(a\th)\eta+a\o\otimes\delta\tilde\omega(a\t)\eta=0.$
\endproof

Next, we study the bicovariant case. We say the differential graded algebra $(\Omega,\extd)$ on a Hopf algebra $A$ is bicovariant if there exist $\Delta_L,\Delta_R$ on $\Omega$ making it a bicomodule algebra, and $\extd$ is a bicomodule map. Here $\Delta_{L,R}$ are required to be graded and restrict to the given coproduct of $A$ and to coactions on $\Omega^1$ if a given one is being extended.

\begin{proposition}\label{extalg-bi}
In the setting of Proposition~\ref{extalg} with $(\Lambda^1,\omega)$ bicovariant, the left covariant differential exterior algebra $(\Omega,\extd)$ is bicovariant iff $\Lambda$ is an algebra in the category of right $A$-crossed modules, and the super-derivation $\delta:\Lambda\to \Lambda$ satisfying the conditions in Proposition~\ref{extalg} is a right $A$-comodule map.
\end{proposition}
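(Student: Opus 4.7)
The plan is to build directly on Proposition~\ref{extalg}, where left-covariant extensions $(\Omega,\extd)$ are already identified with data $(\Lambda,\delta)$ via the Hopf-module isomorphism $\Omega\isom A\rcross\Lambda$ sending $\eta\in\Lambda^n$ to $1\tens\eta$ and using the left-invariants $\Lambda^n={}^{\mathrm{co} A}\Omega^n$. What remains to analyse is the extra structure in the bicovariant case: a right $A$-coaction on $\Omega$ that extends the given $\Delta_R$ on $\Omega^1$ and the coproduct on $\Omega^0=A$, makes $\Omega$ a right $A$-comodule algebra, and turns $\extd$ into a right $A$-comodule map.

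First I would transfer the right coaction across the isomorphism degree by degree. Each $\Omega^n$ is a bicovariant $A$-$A$-bimodule, and by the standard Hopf-bimodule to crossed-module correspondence (already used at $n=1$ in Theorem~\ref{Hopfcalc}) such a right coaction is equivalent to a right coaction on the left-invariants $\Lambda^n$ which, together with the existing right action $\ra$, makes $\Lambda^n$ into a right $A$-crossed module. Under $\Omega\isom A\rcross\Lambda$ the transferred coaction takes the standard form $\Delta_R(a\tens\eta)=a\o\tens\eta_0\tens a\t\eta_1$, and the grading ensures $\Lambda=\bigoplus_{n\ge 0}\Lambda^n$ is a graded right crossed module.

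Next I would translate the bicomodule-algebra condition into a condition on the product of $\Lambda$. Using the smash-product multiplication of $A\rcross\Lambda$ together with the cross relation $\eta\cdot a=a\o\tens\eta\ra a\t$, a short calculation reduces the statement $\Delta_R(XY)=\Delta_R(X)\Delta_R(Y)$ for $X,Y\in A\rcross\Lambda$ to the single identity $\Delta_R(vw)=v_0 w_0\tens v_1w_1$ for $v,w\in\Lambda$. In the presence of the crossed-module structure and the right module-algebra structure already supplied by Proposition~\ref{extalg}, this is exactly the assertion that the multiplication $\Lambda\tens\Lambda\to\Lambda$ is a morphism in the category of right $A$-crossed modules, i.e.\ that $\Lambda$ is an algebra in that category. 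Conversely, any such $\Lambda$ yields a bicomodule algebra $A\rcross\Lambda$ by the Radford biproduct construction.

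Finally, the condition that $\extd$ be a right $A$-comodule map is checked on generators. On $\Omega^0=A$ it is automatic, and on the $\omega$-part $a\o\tens\omega\pi(a\t)$ of the formula from Proposition~\ref{extalg} it follows from $\omega$ being a crossed-module morphism, which is the first-order bicovariance already hypothesised. On a left-invariant $\eta\in\Lambda$ one has $\extd(1\tens\eta)=1\tens\delta\eta$, so bicovariance of this piece reduces to $\Delta_R\circ\delta=(\delta\tens\id)\circ\Delta_R$ on $\Lambda$, i.e.\ $\delta$ is a right $A$-comodule map. The main obstacle is purely organisational: keeping the Hopf-bimodule $\leftrightarrow$ crossed-module dictionary consistent across all degrees and across both summands of $\extd$ simultaneously. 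No conceptual step beyond Proposition~\ref{extalg} and the Radford biproduct is needed.
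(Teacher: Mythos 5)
Your proposal is correct and follows essentially the same route as the paper: transfer the right Hopf-module structure across $\Omega\isom A\rcross\Lambda$ to a right crossed-module structure on $\Lambda$, identify the comodule-algebra condition with the product of $\Lambda$ being a morphism of crossed modules, and split the comodule-map condition for $\extd$ into the $\tilde\omega$-part (first-order bicovariance) and the $\delta$-part. The only cosmetic difference is that you check the $\extd$ condition on $a\tens 1$ and $1\tens\eta$ and extend using that $\Delta_R$ is a graded algebra map and $\extd$ a super-derivation, whereas the paper computes directly on $a\tens\eta$; both are fine.
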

\begin{proof}
The first part is routine, under the isomorphism $\Omega\cong A\rcross\Lambda$ in Proposition~\ref{extalg}, the right Hopf module structure on $\Omega$ is equivalent to a right crossed module structure on $\Lambda.$ If the product of $\Omega$ is a right $A$-comodule map, then so is the product of $\Lambda,$ and vice verse. Noting $\delta$ is a restriction of $\extd$, the super-derivation $\delta$ is a right $A$-comodule map as $\extd$ does. Conversely, if $\delta$ is a right $A$-comodule map, then $\Delta_R(\extd(a\otimes\eta))=a\o\otimes\tilde\omega(a\th)_0\eta_0\otimes a\t\tilde\omega(a\th)_1\eta_1+a\o\otimes(\delta\eta)_0\otimes a\t\otimes(\delta\eta)_1=a\o\otimes\tilde\omega(a\four)\eta_0\otimes a\t S(a\th) a_{(5)}\eta_1+a\o\otimes\delta\eta_0\otimes a\t\eta_1=a\o\otimes\tilde\omega(a\t)\eta_0\otimes a\th\eta_1+a\o\otimes\delta\eta_0\otimes a\t\eta_1=\extd(a\o\otimes\eta_0)\otimes a\t\eta_1=((\extd\otimes\mathrm{id})\circ\Delta_R)(a\otimes\eta),$ which means $\extd$ a right $A$-comodule. Recall that $\tilde\omega:A\to\Lambda^1$ is a right $A$-comodule map.
\end{proof}

We now study when the differential exterior algebra is inner. We have
\begin{proposition}\label{ext-inner}  Let $(\Lambda^1,\theta)$ define an inner generalised left covariant differential calculus and suppose that $\Lambda^1$ extends to an algebra $\Lambda$ in the category of right $A$-modules. Then $\delta\eta=[\theta,\eta\}$ meets the conditions in Proposition~\ref{extalg} and we have a left covariant differential exterior algebra $(\Omega,\extd)$ {\em iff}  $ \theta^2\in \Lambda_A\cap Z(\Lambda)$ (i.e. $\theta^2\ra A^+=0$ and $\theta^2$ central in $\Lambda$). This $(\Omega,\extd)$ is bicovariant {\em iff}
 $(\Lambda^1,\theta)$ defines an inner generalised bicovariant differential calculus,  $\Lambda$ is an algebra in the category of right $A$-crossed modules and $[\Delta_R\theta-\theta\otimes1,\Delta_R\eta\}=0$ for all $\eta\in\Lambda$.
\end{proposition}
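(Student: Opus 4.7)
The plan is to substitute $\delta = [\theta,\cdot\}$ and $\omega = \theta\ra(\cdot)$ into the three conditions of Proposition~\ref{extalg} and read off what each demands of $\theta$ in $\Lambda$. Beyond the super-derivation axiom, the only identities needed are the right module algebra rule $(\theta\eta)\ra a = (\theta\ra a\o)(\eta\ra a\t)$ and the splitting $\theta\ra\pi(a) = \theta\ra a - \eps(a)\theta$.

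First I would check that the Leibniz-type condition between $\delta$ and $\ra$ is automatic: expanding $(\delta\eta)\ra a$ via the module-algebra rule, $\delta(\eta\ra a)$ via the super-derivation property, and $\omega(\pi(a\o))(\eta\ra a\t)$, $(\eta\ra a\o)\omega(\pi(a\t))$ via the splitting together with $\eps(a\o)a\t = a$, the two sides cancel identically with no constraint on $\theta$. A short telescoping calculation using the super-derivation rule twice gives $\delta^2\eta = \theta^2\eta - \eta\theta^2$ (ungraded commutator, since $\theta^2$ has even degree), so $\delta^2 = 0$ on all of $\Lambda$ is equivalent to $\theta^2\in Z(\Lambda)$. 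For the curvature condition $\delta\omega(a) + \omega(\pi(a\o))\omega(\pi(a\t)) = 0$ with $a\in A^+$, I compute $\delta\omega(a) = \theta(\theta\ra a) + (\theta\ra a)\theta$ and, expanding $(\theta\ra\pi(a\o))(\theta\ra\pi(a\t))$ via the module-algebra rule and the splitting, obtain $\theta^2\ra a - (\theta\ra a)\theta - \theta(\theta\ra a)$ (the $\eps(a)\theta^2$ term dropping because $\eps(a)=0$); adding these leaves $\theta^2\ra a$, so this condition amounts to $\theta^2\in\Lambda_A$. Combining, the left covariant criterion is exactly $\theta^2\in\Lambda_A\cap Z(\Lambda)$.

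For the bicovariant version I invoke Proposition~\ref{extalg-bi}, which on top of the left covariant conditions requires $\Lambda$ to be an algebra in crossed modules (assumed) and $\delta$ a right $A$-comodule map; meanwhile the first-order datum $(\Lambda^1,\omega)$ being bicovariant is, by Lemma~\ref{propinner}, the hypothesis $\Delta_R\theta - \theta\otimes 1\in \Lambda^1\square A$ built into the statement. Using that $\Lambda$ is a graded right $A$-comodule algebra (with $A$ in degree $0$), I would compute $\Delta_R(\delta\eta) = [\Delta_R\theta,\Delta_R\eta\}$ as a graded commutator in $\Lambda\otimes A$, while $(\delta\otimes\id)\Delta_R\eta = [\theta\otimes 1,\Delta_R\eta\}$; equating these for all $\eta\in\Lambda$ is precisely $[\Delta_R\theta - \theta\otimes 1,\Delta_R\eta\} = 0$. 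The main care needed is sign bookkeeping, especially in unpacking the super-derivation rule twice for $\delta^2$ and in fixing a consistent grading convention on $\Lambda\otimes A$; once this is pinned down, the reductions above proceed mechanically.
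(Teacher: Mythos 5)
Your proposal is correct and follows essentially the same route as the paper: substitute $\delta=[\theta,\cdot\}$ and $\omega=\theta\ra(\cdot)$ into the conditions of Proposition~\ref{extalg}, find that the Leibniz-type compatibility with $\ra$ holds automatically by the module-algebra rule, that $\delta^2\eta=\theta^2\eta-\eta\theta^2$ forces $\theta^2\in Z(\Lambda)$, that the curvature condition reduces to $\theta^2\ra a=0$, and that in the bicovariant case the comodule-map requirement on $\delta$ from Proposition~\ref{extalg-bi} is exactly $[\Delta_R\theta-\theta\otimes1,\Delta_R\eta\}=0$. The computations match the paper's line for line.
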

\proof
We show that $\delta$ defined by $\theta$ meets all the conditions of Proposition~\ref{extalg} and \ref{extalg-bi}. Clearly $\delta^2\eta=\delta(\theta\eta-(-1)^{|\eta|}\eta\theta)=\theta^2\eta-\eta\theta^2=0$ for all $\eta\in\Lambda$ requires $\theta^2$ to be central in $\Lambda$.
Next, note that $\Lambda$ is right $A$-module algebra, we have
$(\delta\eta)\ra a-\delta(\eta\ra a)
=(\theta\eta)\ra a-(-1)^{|\eta|}(\eta\theta)\ra a-\theta(\eta\ra a)+(-1)^{|\eta|}(\eta\ra a)\theta
=(\theta\ra a\o)(\eta\ra a\t)-(-1)^{|\eta|}(\eta\ra a\o)(\theta\ra a\t)+(-1)^{|\eta|}(\eta\ra a)\theta
=\tilde\omega(a\o)(\eta\ra a\t)-(-1)^{|\eta|}$ $(\eta\ra a\o)\tilde\omega(a\t)$ for all $a\in A.$ Now, for any $a\in A^+,$ $\tilde\omega(a\o)\tilde\omega(a\t)=(\theta\ra\pi(a\o))(\theta\ra\pi(a\t))
$ $=(\theta\ra(a\o-\epsilon(a\o))(\theta\ra(a\t-\epsilon(a\t))
=(\theta\ra a\o)(\theta\ra a\t)+(\theta\ra \epsilon(a\o))(\theta\ra \epsilon(a\t))-(\theta\ra a)\theta-\theta(\theta\ra a)=\theta^2\ra a-(\theta\ra a)\theta-\theta(\theta\ra a)$  while $\delta\omega(a)=\delta(\theta\ra a)=\theta(\theta\ra a)+(\theta\ra a)\theta$ for any $a\in A^+,$ so $\delta\omega(a)+\tilde\omega(a\o)\tilde\omega(a\t)=0$ holds for all $a\in A^+$ precisely when $\theta^2\in \Lambda_A$ (this is also immediate from $\extd^2 a=0$).
For the bicovariant case, the equation $[\Delta_R\theta-\theta\otimes1,\Delta_R\eta\}=0$ is equivalent to $\delta=[\theta,\ \}$ being a right $A$-comodule map.
\endproof

We will give a construction shortly, but it will have a further `strongly bicovariant' property in the next subsection.

\subsection{Strongly bicovariant differential exterior algebras}

It is known that standard first order bicovariant differential calculi have a `minimal' extension to a bicovariant differential exterior algebra, due to Woronowicz\cite{Wor}, and which is known\cite{Brz} to be a super-Hopf algebra.  This motivates the following definition for generalised calculi:

\begin{definition}\label{extd} We say that a differential exterior algebra $(\Omega,\extd)$ over a Hopf algebra $A=\Omega^0$ is {\em  strongly bicovariant} if $\Omega$ is a graded super-Hopf algebra with odd/even part given by the parity of the grading and the super-derivation $\extd$ is also a `super-coderivation' in the sense
\begin{equation*}
\Delta\circ\extd(w)=(\extd\otimes\mathrm{id}+(-1)^{|\ |}\otimes\extd)\Delta(w),\quad \forall w\in\Omega,
\end{equation*} where $\Delta=(\ )^1\otimes(\ )^2$ is the graded-super coproduct of $\Omega$ and $(-1)^{|\ |}w=(-1)^{|w|}w$ according to the degree.
\end{definition}
By assumption the coproduct respects the grading so that $\Delta(\Omega^n)\subseteq\oplus_{i,j=0}^{i+j=n}\Omega^i\otimes \Omega^j.$ The super-coderivation condition in Definition~\ref{extd} appears to be a new observation even in the standard case and is key to what follows.  Our terminology is justified by the following lemma.

\begin{lemma}
Any strongly bicovariant differential exterior algebra is bicovariant.
\end{lemma}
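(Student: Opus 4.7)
The plan is to define the left and right coactions by projecting the graded super-coproduct $\Delta:\Omega\to\Omega\otimes\Omega$ onto its degree-zero components. Concretely, let $\pi_0:\Omega\to\Omega^0=A$ be the projection killing all strictly positive degree components, and set
\[
\Delta_L=(\pi_0\otimes\id)\circ\Delta:\Omega\to A\otimes\Omega,\qquad \Delta_R=(\id\otimes\pi_0)\circ\Delta:\Omega\to\Omega\otimes A.
\]
Because $\Delta$ preserves the grading in the sense $\Delta(\Omega^n)\subseteq\bigoplus_{i+j=n}\Omega^i\otimes\Omega^j$, each of these maps is degree-preserving. Moreover, since $A$ is a Hopf subalgebra, $\Delta|_A$ lands in $A\otimes A=\Omega^0\otimes\Omega^0$, so $\Delta_L|_A=\Delta_R|_A=\Delta_A$, and $\Delta_L,\Delta_R$ restrict on $\Omega^1$ to the coactions already present there (the left and right invariant parts are picked out correctly).

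The coaction axioms for $\Delta_L$ follow by applying $\pi_0\otimes\pi_0\otimes\id$ to coassociativity of $\Delta$ and observing that for a term $w^1\otimes w^2\otimes w^3$ of $(\Delta\otimes\id)\Delta(w)=(\id\otimes\Delta)\Delta(w)$ living in $\Omega^i\otimes\Omega^j\otimes\Omega^{n-i-j}$, the surviving summands on both sides are exactly those with $i=j=0$; this identifies $(\Delta_A\otimes\id)\Delta_L$ with $(\id\otimes\Delta_L)\Delta_L$. The counit axiom uses that $\eps_\Omega$ vanishes on $\Omega^n$ for $n>0$ (it is graded) and agrees with $\eps_A$ on $A$. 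The same projection argument with roles swapped gives the right coaction axioms, and applying $\pi_0\otimes\id\otimes\pi_0$ to coassociativity shows that $\Delta_L$ and $\Delta_R$ commute, so $\Omega$ becomes an $A$-bicomodule.

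For the bicomodule algebra axiom, $\Delta:\Omega\to\Omega\otimes\Omega$ is an algebra map for the \emph{super} tensor product $(u\otimes v)(u'\otimes v')=(-1)^{|v||u'|}uu'\otimes vv'$. The crucial observation is that after projecting with $\pi_0$ on the first (resp.\ last) factor, the surviving terms have $|u'|=0$ (resp.\ $|v|=0$), so the super sign is trivial; hence $\Delta_L$ (resp.\ $\Delta_R$) is an algebra map for the ordinary tensor product structure on $A\otimes\Omega$ (resp.\ $\Omega\otimes A$), making $\Omega$ a bicomodule algebra.

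Finally, for $\extd$ being a bicomodule map, apply $\pi_0\otimes\id$ to the super-coderivation identity
\[
\Delta\circ\extd=(\extd\otimes\id+(-1)^{|\ |}\otimes\extd)\circ\Delta.
\]
The key simplification is that $\pi_0\circ\extd=0$ by degrees (since $\extd$ raises degree by one and $\pi_0$ only sees degree zero), so the first summand $\pi_0\extd\otimes\id$ vanishes, while in the second summand the sign $(-1)^{|\ |}$ on the first tensor factor is $+1$ after projection. This yields $\Delta_L\circ\extd=(\id\otimes\extd)\circ\Delta_L$, and symmetrically $\Delta_R\circ\extd=(\extd\otimes\id)\circ\Delta_R$. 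The only potential obstacle is bookkeeping with the super signs, but the projection to degree zero collapses every sign to $+1$, which is what makes the argument go through cleanly.
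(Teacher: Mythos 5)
Your proof is correct and follows essentially the same route as the paper: both define $\Delta_L,\Delta_R$ by composing the graded super-coproduct with the projection onto $\Omega^0=A$, derive the bicomodule(-algebra) axioms from coassociativity and the algebra-map property of $\Delta$, and obtain bicomodule-equivariance of $\extd$ by applying the projection to the super-coderivation identity (where the degree-raising of $\extd$ kills one term and trivialises the sign). Your write-up merely fills in the sign bookkeeping that the paper leaves implicit.
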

\proof
Denote the projection from $\Omega=\oplus_{n\ge0}\Omega^n$ to $\Omega^0=A$ by $\Pi.$ Then $\Delta_L:=(\Pi\otimes\mathrm{id})\Delta$ and $\Delta_R:=(\mathrm{id}\otimes\Pi)\Delta$ make $\Omega$ a graded $A$-bicomodule, from the coassociativity of the graded coproduct $\Delta$. The Hopf bimodule and $A$-bicomodule algebra structure easily follow from the fact that $\Delta$ is algebra map. Apply $\Pi\otimes\mathrm{id}$ (resp. $\mathrm{id}\otimes\Pi$) to the both sides of $\Delta\extd=(\extd\otimes\mathrm{id}\pm\mathrm{id}\otimes\extd)\Delta,$ we have $\Delta_L\extd=(\mathrm{id}\otimes\extd)\Delta_L$ (resp. $\Delta_R\extd=(\extd\otimes\mathrm{id})\Delta_R$). Thus, $\extd$ is an $A$-bicomodule map.
\endproof

We now turn to the construction of examples.

\begin{theorem}\label{theorem-bi} Let $A$ have bijective antipode. 
In the setting of Proposition~\ref{extalg-bi}, the bicovariant differential exterior algebra $(\Omega,\extd)$ is strongly bicovariant iff $\Lambda$ is a graded braided-super Hopf algebra in the category of right $A$-crossed modules  and $\delta$ obeys in addition
\begin{equation}\label{delta}
(\delta\eta)^1\otimes(\delta\eta)^2=\delta\eta^1\otimes\eta^2+(-1)^{|\eta^1|}\eta^1\otimes\delta\eta^2+(-1)^{|\eta^1|}(\eta^1)_0\otimes\omega(\pi((\eta^1)_1))\eta^2
\end{equation}
for all $\eta\in\Lambda$. Here $\underline{\Delta}=(\ )^1\otimes(\ )^2$ denotes the braided coproduct of $\Lambda$ while $\Delta_R=(\ )_0\otimes(\ )_1$ denotes the right coaction on it.
\end{theorem}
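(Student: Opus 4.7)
My plan is to apply a super-Hopf version of Radford's biproduct (bosonization) theorem. By Proposition~\ref{extalg-bi} we already have $\Omega\isom A\rbiprod\Lambda$ as a graded bicomodule algebra with $\Lambda$ an algebra in the braided category of right $A$-crossed modules (the braiding being available since $A$ has bijective antipode). Promoting $\Omega$ to a graded super-Hopf algebra extending the Hopf structure on $A=\Omega^0$ with coproduct respecting the grading is equivalent, by super-bosonization, to promoting $\Lambda$ to a graded braided-super Hopf algebra in the same category; the coproduct on $\Omega$ is then the Radford formula
\[
\Delta(a\tens\eta)=\bigl(a\o\tens(\eta^1)_0\bigr)\tens\bigl(a\t(\eta^1)_1\tens\eta^2\bigr),
\]
where $\underline{\Delta}\eta=\eta^1\tens\eta^2$ is the braided coproduct on $\Lambda$ and $\zeta\mapsto\zeta_0\tens\zeta_1$ denotes the right $A$-coaction. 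The antipode on $\Omega$ is determined canonically from this data.

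Both $\Delta\circ\extd$ and $(\extd\tens\id+(-1)^{|\cdot|}\tens\extd)\circ\Delta$ are super-derivations from $\Omega$ into the $\Omega$-bimodule $\Omega\tens\Omega$ (with bimodule structure through $\Delta$), so it suffices to verify the super-coderivation identity on multiplicative generators, namely on $A$ and on $\Lambda\subset\Omega$. On $a\in A$, $\extd a=a\o\tens\tilde\omega(a\t)$, and the element $\tilde\omega(a\t)\in\Lambda^1$ is automatically primitive in the connected graded $\Lambda$ since any degree-one element $v$ satisfies $\underline{\Delta}v=v\tens 1+1\tens v$. Expanding $\Delta(\extd a)$ via the Radford formula and using the adjoint coaction $\Delta_R\tilde\omega(a)=\tilde\omega(a\t)\tens Sa\o a\th$ of Theorem~\ref{Hopfcalc} together with the antipode axiom, the two resulting pieces collapse to $\extd a\o\tens a\t+a\o\tens\extd a\t$; no extra condition on $\delta$ arises from this generator.

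On $\eta\in\Lambda$, $\extd(1\tens\eta)=1\tens\delta\eta$, and the super-coderivation identity in $\Omega\tens\Omega$, once unpacked via the Radford formula, becomes an equality in $\Lambda\tens\Lambda$. Its left-hand side $\Delta(1\tens\delta\eta)$ is the Radford expansion of $\underline{\Delta}(\delta\eta)=(\delta\eta)^1\tens(\delta\eta)^2$. Its right-hand side, obtained by applying $\extd$ to each factor of $\Delta(1\tens\eta)=(1\tens(\eta^1)_0)\tens((\eta^1)_1\tens\eta^2)$, yields three contributions: differentiating $(\eta^1)_0\in\Lambda$ gives the bosonized form of $\delta\eta^1\tens\eta^2$, using that $\delta$ is a right $A$-comodule map from Proposition~\ref{extalg-bi}; differentiating $\eta^2\in\Lambda$ gives the bosonized $\eta^1\tens\delta\eta^2$; and differentiating $(\eta^1)_1\in A$ gives, by coassociativity of the coaction, the bosonized $(\eta^1)_0\tens\tilde\omega((\eta^1)_1)\eta^2$. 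The sign $(-1)^{|\eta^1|}=(-1)^{|(\eta^1)_0|}$ on the latter two contributions arises from moving $\extd$ past the first tensor factor. Matching the two sides in $\Lambda\tens\Lambda$ yields precisely (\ref{delta}), and the computation is reversible. The main delicacy will be to keep careful track of the two distinct operations on $\eta^1$ — the braided coproduct of $\Lambda$ and the $A$-coaction of the crossed module — when translating between the Radford form in $\Omega\tens\Omega$ and the braided form in $\Lambda\tens\Lambda$; once that bookkeeping is in order the three terms of (\ref{delta}) align uniquely with the three contributions on the right of the super-coderivation identity.
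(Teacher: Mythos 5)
Your proposal is correct and follows the same overall strategy as the paper: identify $\Omega\isom A\rbiprod\Lambda$ via the super version of Radford's theorem, write the coproduct as $\Delta(a\tens\eta)=a\o\tens(\eta^1)_0\tens a\t(\eta^1)_1\tens\eta^2$, and match the super-coderivation identity against (\ref{delta}). The one genuine organizational difference is your reduction to generators: you note that $\Delta\circ\extd$ and $(\extd\tens\id+(-1)^{|\ |}\tens\extd)\circ\Delta$ are both degree-one super-derivations $\Omega\to\Omega\tens\Omega$ for the bimodule structure pulled back along $\Delta$, so they agree iff they agree on $A$ and on $\Lambda$. This is valid (every $a\tens\eta$ is $(a\tens 1)(1\tens\eta)$) and it cleanly isolates the content: the check on $A$ is automatic from primitivity of $\tilde\omega(a)$ in degree one together with $\Delta_R\tilde\omega(a)=\tilde\omega(a\t)\tens Sa\o a\th$, while the check on $1\tens\eta$, read back through the injective embedding $u\tens v\mapsto 1\tens u_0\tens u_1\tens v$ of $\Lambda\tens\Lambda$ into $\Omega\tens\Omega$, is exactly (\ref{delta}). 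The paper instead verifies the identity on a general $a\tens\eta$ in one computation, which subsumes both of your checks but needs the extra identities $\und\Delta(vw)=(-1)^{|v^2||w^1|}v^1(w^1)_0\tens(v^2\ra(w^1)_1)w^2$ and $\tilde\omega(ab)=\tilde\omega(a)\ra b+\eps(a)\tilde\omega(b)$ to recombine cross terms; your version trades those for the derivation lemma and is somewhat more modular. Both arguments are complete.
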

\proof
The correspondence between super-Hopf algebra structures is a super version  of Radford's Theorem~\cite{Rad:pro} in the braided-category interpretation due to the first author\cite[Appendix]{Ma:skl}.  Under this super-Hopf algebra isomorphism $\Omega\cong A\rbiprod\Lambda,$ the coproduct on $A\rbiprod\Lambda$ is $\Delta(a\otimes\eta)=a\o\otimes(\eta^1)_0\otimes a\t(\eta^1)_1\otimes\eta^2$ for any $a\in A$ and $\eta\in\Lambda.$ Then light computation shows the bicoactions constructed by $(\Pi\otimes\mathrm{id})\Delta$ and $(\mathrm{id}\otimes\Pi)\Delta$ are the same ones induced from the crossed module structure of $\Lambda$ if and only if  $\Lambda$ as a braided-super Hopf algebra is correspondingly graded. Note that the coproduct necessarily has form: $\underline{\Delta}\eta=1\otimes\eta+\cdots+\eta\otimes1$ for all $\eta\in\Lambda$ by the counity axiom of a coalgebra due to $\eps=0$ except on degree 0, in order to respect the grading. This means in particular that $\underline{\Delta}\eta=\eta\tens 1+1\tens\eta$ for $\eta\in \Lambda^1$.

Note that $\Delta(\eta)=1\otimes(\eta^1)_0\otimes(\eta^1)_1\otimes\eta^2$ in $A\rbiprod\Lambda,$ then the super-coderivation property in Definition~\ref{extd} implies (\ref{delta}) by direct computation. Conversely, recall $\extd(a\otimes\eta)=a\o\otimes \tilde\omega(a\t)\eta+a\otimes\delta\eta,$ and note that $\underline{\Delta}(\tilde\omega(a))=\tilde\omega(a)\otimes1+1\otimes\tilde\omega(a)$ and $\underline{\Delta}(vw)=(-1)^{|v^2||w^1|}v^1(w^1)_0\otimes (v^2\ra (w^1)_1)w^2$ in $\Lambda$ for any $a\in A$ and $\eta,v,w\in \Lambda.$ then the left hand side of the super-coderivation property $\Delta\extd(a\otimes\eta)=a\o\otimes(\tilde\omega(a\t)\eta)^1{}_0\otimes a\o(\tilde\omega(a\t)\eta)^1{}_1\otimes(\tilde\omega(a\t)\eta)^2+a\o\otimes(\delta\eta)^1{}_0\otimes a\t(\delta\eta)^1{}_1\otimes(\delta\eta)^2=a\o\otimes\tilde\omega(a\t)(\eta^1)_0\otimes a\th(\eta^1)_1\otimes\eta^2+(-1)^{|\eta^1|}a\o\otimes(\eta^1)_0\otimes a\t(\eta^1)_1\otimes(\tilde\omega(a\th)\ra (\eta^1)_2)\eta^2+a\o\otimes\delta((\eta^1)_0)\otimes a\t(\eta^1)_1\otimes\eta^2+(-1)^{|\eta^1|}a\o\otimes(\eta^1)_0\otimes a\t(\eta^1)_1\otimes \delta(\eta^2)+(-1)^{|\eta^1|}a\o\otimes(\eta^1)_0\otimes a\t(\eta^1)_1\otimes\tilde\omega((\eta^1)_2)\eta^2,$ which meets the right hand side of super-coderivation property, recalling again that $\tilde\omega(ab)=\tilde\omega(a)\ra b+\epsilon(a)\tilde\omega(b)$ for any $a,b\in A$. \endproof

\begin{proposition} \label{sbic-inner} Let $A$ have bijective antipode. The inner bicovariant differential exterior algebra $(\Omega,\extd)$ in Proposition~\ref{ext-inner} is strongly bicovariant iff $\Lambda$ is a graded braided-super Hopf algebra in the category of right $A$-crossed modules  and $\eta^1\theta_0\tens\eta^2\ra\theta_1=\eta^1\theta\tens\eta^2$ for all $\eta\in\Lambda$.
\end{proposition}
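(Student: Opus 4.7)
The plan is to derive this from Theorem~\ref{theorem-bi} applied with the specific choice $\delta\eta=[\theta,\eta\}$ coming from Proposition~\ref{ext-inner}. The structural requirement that $\Lambda$ be a graded braided-super Hopf algebra in the category of right $A$-crossed modules already appears as the first half of both statements, so the only work is to show that with this inner $\delta$ the super-coderivation identity (\ref{delta}) is equivalent to $\eta^1\theta_0\otimes\eta^2\ra\theta_1=\eta^1\theta\otimes\eta^2$ for all $\eta\in\Lambda$.

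The computation I would carry out is to expand $\underline{\Delta}(\delta\eta)=\underline{\Delta}(\theta\eta)-(-1)^{|\eta|}\underline{\Delta}(\eta\theta)$ directly, using the braided coproduct formula for products $\underline{\Delta}(vw)=(-1)^{|v^2||w^1|}v^1(w^1)_0\otimes(v^2\ra(w^1)_1)w^2$ already employed in the proof of Theorem~\ref{theorem-bi}, together with three auxiliary facts: (i) $\underline{\Delta}\theta=\theta\otimes1+1\otimes\theta$, forced by the counity/grading argument from that same proof since $\theta\in\Lambda^1$; (ii) $\theta\ra a=\omega(\pi(a))+\eps(a)\theta$, obtained by decomposing $a=\pi(a)+\eps(a)\cdot1$ and applying Lemma~\ref{propinner}; and (iii) $\Delta_R 1=1\otimes1$ and $1\ra a=\eps(a)$ for the unit of $\Lambda$, since the unit map $k\to\Lambda$ is a morphism of right crossed modules. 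Splitting $\underline{\Delta}\theta$ into its two summands in each factor yields
\[\underline{\Delta}(\theta\eta)=\theta\eta^1\otimes\eta^2+(-1)^{|\eta^1|}(\eta^1)_0\otimes\omega(\pi((\eta^1)_1))\eta^2+(-1)^{|\eta^1|}\eta^1\otimes\theta\eta^2,\]
\[\underline{\Delta}(\eta\theta)=(-1)^{|\eta^2|}\eta^1\theta_0\otimes\eta^2\ra\theta_1+\eta^1\otimes\eta^2\theta.\]

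On the other side, the right-hand side of (\ref{delta}) expands via $\delta\eta^i=\theta\eta^i-(-1)^{|\eta^i|}\eta^i\theta$ into five summands; using $|\eta|=|\eta^1|+|\eta^2|$ to rewrite $(-1)^{|\eta|+|\eta^2|}=(-1)^{|\eta^1|}$, the two expressions agree on the four common terms $\theta\eta^1\otimes\eta^2$, $(-1)^{|\eta^1|}\eta^1\otimes\theta\eta^2$, $(-1)^{|\eta^1|}(\eta^1)_0\otimes\omega(\pi((\eta^1)_1))\eta^2$ and $-(-1)^{|\eta|}\eta^1\otimes\eta^2\theta$. The remaining difference is $(-1)^{|\eta^1|}\bigl(\eta^1\theta\otimes\eta^2-\eta^1\theta_0\otimes\eta^2\ra\theta_1\bigr)$, whose vanishing is precisely the claimed condition. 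The whole argument is thus a single bookkeeping calculation; the only care needed is with Koszul signs, and no further structural ingredient beyond Theorem~\ref{theorem-bi} and the three auxiliary facts above is required.
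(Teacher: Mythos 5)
Your proposal is correct and follows essentially the same route as the paper: both reduce the statement to Theorem~\ref{theorem-bi}, substitute $\delta=[\theta,\ \}$ into (\ref{delta}), expand both sides using $\und\Delta\theta=\theta\tens 1+1\tens\theta$ and the braided product rule for $\und\Delta$, and identify the discrepancy as exactly $(-1)^{|\eta^1|}(\eta^1\theta\tens\eta^2-\eta^1\theta_0\tens\eta^2\ra\theta_1)$. The only (immaterial) difference is where the substitution $\tilde\omega(a)=\theta\ra a-\eps(a)\theta$ is performed: you apply it on the left-hand side, the paper applies it on the right-hand side and cancels the resulting pair of terms.
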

\proof We verify that (\ref{delta}) holds automatically in the inner case. The left hand side $\underline{\Delta}\delta(\eta)=\underline{\Delta}(\theta\eta-(-1)^{|\eta|}\eta\theta)=(-1)^{|\eta^1|}(\eta^1)_0\otimes(\theta\ra (\eta^1)_1)\eta^2+\theta\eta^1\otimes\eta^2-(-1)^{|\eta|}\eta^1\otimes\eta^2\theta-(-1)^{|\eta^1|}\eta^1\theta_0\otimes\eta^2\ra\theta_1.$
And the right hand side $\delta\eta^1\otimes\eta^2+(-1)^{|\eta^1|}\eta^1\otimes\delta\eta^2+(-1)^{|\eta^1|}(\eta^1)_0\otimes\tilde\omega((\eta^1)_1)\eta^2=\theta\eta^1\otimes\eta^2-(-1)^{|\eta^1|}\eta^1\theta\otimes\eta^2+(-1)^{|\eta^1|}\eta^1\otimes\theta\eta^2-(-1)^{|\eta|}\eta^1\otimes\eta^2\theta+(-1)^{|\eta^1|}(\eta^1)_0\otimes(\theta\ra (\eta^1)_1)\eta^2-(-1)^{|\eta^1|}\eta^1\otimes\theta\eta^2.$ This meets the left hand side after cancelling the third and last terms in it and provided the condition on $\theta$ holds. Note that the stated condition on $\theta$ means in particular that  $\theta_0\tens\eta\ra\theta_1=\theta\tens\eta$, i.e. $\Psi(\eta\tens\theta)=\theta\tens\eta$ for all $\eta\in\Lambda^1$ where we recall that the braiding in the right crossed-module case is $\Psi(\eta\tens\zeta)=\zeta_0\tens\eta\ra\zeta_1$. \endproof

It remains to construct $\Lambda$ and to do this we start with the tensor algebra $TV$ associated to any object $V$ in a braided abelian category. As an algebra the product is $\tens$ itself. Moreover, there is known to be a coproduct making $TV$ a Hopf algebra/super-Hopf algebra in the braided category, which we denote respectively as $T_\pm V$, with\cite{Ma:book,Ma:fre}
\[ {\und\Delta}(v_1\tens v_2\tens\cdots\tens v_n)=\sum_{r=0}^n\gamma_{r,n-r}\circ\left[ {n\atop r}; \pm\Psi\right](v_1\tens v_2\tens\cdots\tens v_n),\quad \und{\epsilon}(v_1\tens v_2\tens\cdots\tens v_n)=0\]
where $\Psi:V\tens V\to V\tens V$ is the braiding and we use the natural isomorphism
\[ \gamma_{r,n-r}:{V}^{\tens n}\to {V}^{\tens r}\tens{V}^{\tens (n-r)},\quad v_1\tens\cdots \tens v_n\mapsto (v_1\tens\cdots \tens v_r)\tens(v_{r+1}\tens\cdots \tens v_n)\]
 for any $0\le r\le n$. The braided binomials here are defined recursively by\cite{Ma:book,Ma:fre}
\begin{gather*}
\left[ {n\atop r}; \Psi\right] =\Psi_r\Psi_{r+1}\cdots\Psi_{n-1}(\left[ {n-1\atop r-1};\Psi\right] \tens\id)+\left[ {n-1\atop r}; \Psi\right]\tens\id,\\
\left[ {n\atop 0}; \Psi\right]=\id,\quad \left[ {n\atop r}; \Psi\right]=0\text{ if }r>n,
\end{gather*}
for all $n,r\in\mathbb{N}_0$ where $\Psi_i$ denotes $\Psi$ acting in the $i,i+1$ tensor factors. This particularly defines $\left[ {n\atop n}; \Psi\right]=\id$ for any $n\in\mathbb{N}_0$ and $\left[ {n\atop 1}; \Psi\right]=\Psi_1\Psi_2\cdots\Psi_{n-1}+\left[ {n-1\atop 1}; \Psi\right]\tens\id=\id+\Psi_1+\Psi_1\Psi_2+\cdots+\Psi_1\Psi_2\cdots\Psi_{n-1}=\left[n; \Psi\right]$ the `braided integers'. We have given the structure of $T_\pm V$ concretely but what us actually being specified by the braided binomials is a morphism (one does not need elements.) Also note that $\und\Delta v=v\tens 1+1\tens v$ in degree 1, so these are examples of additive braided(-super ) Hopf algebras. One may make quadratic and other quotients\cite{Ma:book}, notably
\[ B_\pm(V)=T_{\pm}V/\oplus_n \ker [n,\pm\Psi]!,\quad [n,\Psi]!=(\id\tens [n-1,\Psi]!)[n,\Psi]:\Lambda^1{}^{\tens n}\to \Lambda^1{}^{\tens n}\]
The symmetric version $B_+(V)$ in the special case of the category of $A$-crossed modules is sometimes called the Nichols-Woronowicz algebra\cite{Baz} associated to $V$, while the above approach based on braided Hopf algebras and braided factorials is due to the first author. Also, see \cite{Ma:dcalc} for the relationship with the work of Woronowicz\cite{Wor}.

Clearly, given bicovariant $(\Lambda^1,\omega)$ on $A$ and in the case where $\Lambda$ is generated by $\Lambda^1$ we can reduce Theorem~\ref{theorem-bi} to data $\delta_1$ on degree 1 obeying various properties such that this extends as a super-derivation with the required properties. For $\Lambda=B_-(\Lambda^1)$ we can do better and show that $\delta$ if it exists is uniquely determined by the first order calculus, a result which we defer to Section 4.  Here we limit ourselves to the important inner case where $\delta=[\theta,\ \}$, meaning super-commutator. Note that the bosonisation\cite{Ma:book} of  $B_+(\Lambda^1)$ is an ordinary Hopf algebra $A\rbiprod B_+(\Lambda^1)$. The parallel super-bosonization of
  $B_-(\Lambda^1)$ is necessarily a usual super-Hopf algebra which in our case we interpret by an extension of the isomorphism in Theorem~\ref{Hopfcalc}  as the exterior algebra $ \Omega(A)=A\rbiprod B_-(\Lambda^1)$.

\begin{proposition}\label{genworon} Let $A$ have bijective antipode and 
let $\Lambda^1$ be an object in the category of right $A$-crossed modules and $\theta\in\Lambda^1$ be such that $\Delta_R\theta-\theta\otimes1\in \Lambda^1\square A$ as in Lemma~\ref{propinner}. Suppose that
\[  \Psi(\eta\tens\theta)=\theta\tens\eta,\quad \{\Delta_R\theta-\theta\otimes1,\Delta_R(\eta)\}=0\]
 for all $\eta\in\Lambda^1$. Then  $\omega(a)=\theta\ra a$ for all $a\in A^+$ and $\delta=[\theta,\ \}$ provides an inner strongly bicovariant differential exterior algebra $\Omega=A\rbiprod B_-(\Lambda^1)$ according to Proposition~\ref{sbic-inner}.
\end{proposition}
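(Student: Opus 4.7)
The plan is to reduce Proposition~\ref{genworon} to Propositions~\ref{ext-inner} and~\ref{sbic-inner} by taking $\Lambda = B_-(\Lambda^1)$, then verifying each hypothesis by extracting it from the given data. First I would observe that the assumption $\Delta_R\theta - \theta\otimes 1 \in \Lambda^1\square A$ is precisely the hypothesis of Lemma~\ref{propinner}, so $\omega(a) := \theta\ra a$ is a morphism $A^+\to\Lambda^1$ in the category of right $A$-crossed modules; by Theorem~\ref{Hopfcalc} this yields an inner generalised bicovariant first-order differential calculus with $\extd a = [\theta,a]$. Since $\Lambda^1$ is an object of the braided abelian category of right $A$-crossed modules, the general theory of braided Hopf algebras produces the graded braided-super Hopf algebra $B_-(\Lambda^1) = T_-\Lambda^1/\oplus_n\ker[n,-\Psi]!$ generated in degree $1$, with $\und\Delta\eta = \eta\otimes 1 + 1\otimes\eta$ on $\Lambda^1$ and counit vanishing in positive degree. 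Super-bosonising yields the ordinary super-Hopf algebra $\Omega = A\rbiprod B_-(\Lambda^1)$.

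The heart of the proof is to check the remaining hypotheses of Propositions~\ref{ext-inner} and~\ref{sbic-inner} for $\delta = [\theta,\ \}$. I would first verify that $\theta^2 = 0$ in $B_-(\Lambda^1)$: specialising $\Psi(\eta\otimes\theta) = \theta\otimes\eta$ to $\eta = \theta$ forces $\Psi(\theta\otimes\theta) = \theta\otimes\theta$, so $\theta\otimes\theta \in \ker(\id - \Psi) = \ker[2,-\Psi]!$ is annihilated in $B_-(\Lambda^1)$. This makes $\theta^2 = 0$ trivially an element of $\Lambda_A \cap Z(\Lambda)$, covering the centrality condition in Proposition~\ref{ext-inner}.

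The two remaining conditions are $[\Delta_R\theta - \theta\otimes 1,\Delta_R\eta\} = 0$ from Proposition~\ref{ext-inner} and $\eta^1\theta_0\otimes\eta^2\ra\theta_1 = \eta^1\theta\otimes\eta^2$ from Proposition~\ref{sbic-inner}; both hold on $\Lambda^1$ by hypothesis and must be propagated to all of $\Lambda$. Since $B_-(\Lambda^1)$ is generated by $\Lambda^1$, I would argue by induction on degree. The second condition is equivalent to $\Psi(v\otimes\theta) = \theta\otimes v$ on a general $v\in\Lambda$ (applied to $v = \eta^2$), and the hexagon axiom together with naturality of $\Psi$ with respect to the algebra multiplication $m$ gives
\[ \Psi(ab\otimes\theta) = (\id\otimes m)\circ(\Psi\otimes\id)\circ(\id\otimes\Psi)(a\otimes b\otimes\theta) = \theta\otimes ab \]
whenever the condition holds on $a$ and $b$. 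A parallel inductive argument, using that $\Delta_R:\Lambda\to\Lambda\otimes A$ is a morphism of braided-graded algebras, propagates the bracket condition from degree $1$ to arbitrary $\eta\in\Lambda$. The main obstacle I expect is precisely this propagation of the bracket condition: it is the algebraic counterpart of showing that $\delta = [\theta,\ \}$ is a right $A$-comodule map on all of $\Lambda$, and it demands careful bookkeeping of super-signs together with the braided tensor-algebra structure on $\Lambda\otimes A$ inherited from the crossed-module category. Once these are in place, Proposition~\ref{ext-inner} delivers the inner bicovariant differential exterior algebra $(\Omega,\extd)$ with $\extd = [\theta,\ \}$, and Proposition~\ref{sbic-inner} upgrades it to strongly bicovariant, completing the argument.
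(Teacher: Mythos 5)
Your proposal is correct and follows essentially the same route as the paper: it reduces to Propositions~\ref{ext-inner} and~\ref{sbic-inner} with $\Lambda=B_-(\Lambda^1)$, gets $\theta^2=0$ from $\Psi(\theta\tens\theta)=\theta\tens\theta$ lying in $\ker[2,-\Psi]!$, and propagates the two degree-one conditions to all of $\Lambda$ by functoriality of the braiding with respect to the product and by the graded-derivation property of the bracket under the algebra map $\Delta_R$. The induction you flag as the main obstacle is exactly the (short) induction the paper carries out.
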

\proof  Note that  $B_-(\Lambda^1)$ is generated by $\Lambda^1$. Suppose $\theta_0\tens\xi\ra\theta_1=\theta\tens\xi$ for any $\xi\in\Lambda^1$, then $\theta_0\tens\theta_1\tens \xi\ra\theta_2=\theta_0\tens\theta_1\tens\xi$
implies $\theta_0\tens\eta\ra\theta_1\tens\xi\ra\theta_2=\theta_0\tens\eta\ra\theta_1\tens\xi=\theta\tens\eta\tens\xi,$
i.e. $\theta_0\tens \eta\ra\theta_1=\theta\tens \eta,\ \forall \eta\in{\Lambda^1}^{\tens 2}.$ Similarly (or by induction) for all  $\eta\in{\Lambda^1}^{\tens k}$ for any power and hence for $\eta\in \Lambda$.  In fact this is just functoriality of the braiding with respect to the product of $\Lambda$.  Then $\eta^1\theta_0\tens\eta^2\ra\theta_1=\eta^1\theta\tens\eta^2$ for any $\eta\in\Lambda.$

Likewise, suppose that  $[\Delta_R\theta-\theta\tens1,\Delta_R\xi\}=0$ is true for  any $\xi\in{\Lambda^1}^{\tens k}.$ Then, for any $\eta'=\eta\tens\xi\in{\Lambda^1}^{\tens (k+1)},$ we have $[\Delta_R\theta-\theta\tens1,\Delta_R(\eta\tens\xi)\}=\{\Delta_R \theta-\theta\tens 1, \Delta_R\eta\}\Delta_R\xi-\Delta_R\eta[\Delta_R\theta-\theta\tens 1,\Delta_R\xi\}=0.$ Hence $[\Delta_R\theta-\theta\tens1,\Delta_R\eta\}=0$ is valid for any $\eta\in\Lambda.$

 The first displayed condition ensures in particular that $\Psi(\theta\tens\theta)=0$ and hence  that $\theta^2=0$ in $B_-(\Lambda^1)$. The other requirements are from the analysis above.\endproof

In particular, all these conditions hold if $\Delta_R\theta=\theta\tens 1$. So any right-invariant element of $\Lambda^1$ gives a strongly bicovariant differential exterior algebra $\Omega=A\rbiprod  B_-(\Lambda^1)$. This  reworks and generalises the Woronowicz approach for standard calculi. The same proof as for Proposition~\ref{genworon}  also applies to any $\Lambda$ generated by $\Lambda^1$ where $\theta^2\ra A^+=0$ and $\theta^2$ central, for example it applies to $B_-^{quad}(\Lambda^1)$ where we just take the degree 2 relations, i.e. we  quotient by $\<\ker(\id-\Psi)\>$ and still have $\theta^2=0$ as in Proposition~\ref{genworon}.

\subsection{Universal and shuffle differential exterior algebras}

Although we have emphasised the `minimal' choice $\Lambda=B_-(\Lambda^1)$, at the other extreme one can also take the following `universal' choice, which we cover in the inner case:

\begin{proposition}\label{univ} Let $A$ have bijective antipode and let $(\Lambda^1,\theta)$ define an inner generalised first order bicovariant differential calculus with $\theta\in \Lambda^1$ such that $\Delta_R\theta=\theta\tens 1$. We take $\Lambda_\theta(\Lambda^1)=T_-\Lambda^1/\<\theta^2\ra a, [\theta^2,\eta]\ |\ a\in A^+,\ \eta\in \Lambda^1\>$. Then $\Omega_\theta(A)=A\rbiprod\Lambda_\theta(\Lambda^1)$ with $\extd=[\theta,\ \}$ is an inner strongly bicovariant calculus. Conversely, any inner strongly bicovariant differential exterior algebra on $A$ with $\theta$ right-invariant and which is generated by its degrees $0,1$ is isomorphic to a quotient of $\Omega_\theta(A)$ for some crossed module $\Lambda^1$.
\end{proposition}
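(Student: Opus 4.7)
The strategy is to verify that the two-sided ideal $I\subseteq T_-\Lambda^1$ generated by $\{\theta^2\ra a,\,[\theta^2,\eta]\mid a\in A^+,\,\eta\in\Lambda^1\}$ is a braided-super biideal in the category of right $A$-crossed modules, after which the remark following the proof of Proposition~\ref{genworon} applied to $\Lambda_\theta(\Lambda^1)=T_-\Lambda^1/I$ in place of $B_-(\Lambda^1)$ gives the first assertion.

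First I would verify that $I$ is a sub-crossed module. Under the right action, $(\theta^2\ra a)\ra b=\theta^2\ra(ab)\in I$ since $ab\in A^+$ when $a\in A^+$, and $[\theta^2,\eta]\ra a$ expands using that the product of $T_-\Lambda^1$ is a crossed module morphism; splitting $a\o=\pi(a\o)+\eps(a\o)1$ one reduces to $[\theta^2,\eta]\ra a\equiv [\theta^2,\eta\ra a]\pmod I$, which lies in $I$. Under the coaction, the hypothesis $\Delta_R\theta=\theta\tens 1$ and multiplicativity give $\Delta_R\theta^2=\theta^2\tens 1$, and the same splitting trick yields $\Delta_R(\theta^2\ra a)\in I\tens A$ and $\Delta_R[\theta^2,\eta]\in I\tens A$.

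The critical step is to check that $I$ is a braided-super coideal. This hinges on $\und\Delta(\theta^2)=\theta^2\tens 1+1\tens\theta^2$: expanding $\und\Delta(\theta)\cdot\und\Delta(\theta)$ in the braided-super tensor square $T_-\Lambda^1\,\und\tens\,T_-\Lambda^1$, the two cross terms are $\theta\tens\theta$ and $-\Psi(\theta\tens\theta)$ (the minus coming from the super-sign $(-1)^{|\theta||\theta|}$), and these cancel since $\Psi(\theta\tens\theta)=\theta_0\tens\theta\ra\theta_1=\theta\tens\theta$ by right-invariance. It then follows that $\und\Delta(\theta^2\ra a)=(\theta^2\ra a)\tens 1+1\tens(\theta^2\ra a)\in I\tens T_-\Lambda^1+T_-\Lambda^1\tens I$. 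A parallel expansion together with the splitting $\eta_1=\pi(\eta_1)+\eps(\eta_1)1$ yields
\[ \und\Delta[\theta^2,\eta]=[\theta^2,\eta]\tens 1+1\tens[\theta^2,\eta]+\eta_0\tens(\theta^2\ra\pi(\eta_1)), \]
also in $I\tens T_-\Lambda^1+T_-\Lambda^1\tens I$. The counit vanishes on $I$ since $I$ sits in positive degree, and because $\Lambda_\theta$ is a connected graded braided-super bialgebra it admits a unique braided-super antipode. The hypotheses of Proposition~\ref{sbic-inner} then follow: $\Delta_R\theta=\theta\tens 1$ reduces the compatibility $\eta^1\theta_0\tens\eta^2\ra\theta_1=\eta^1\theta\tens\eta^2$ to a tautology, and the requirement $\theta^2\in\Lambda_A\cap Z(\Lambda_\theta)$ of Proposition~\ref{ext-inner} is built into the defining relations of $\Lambda_\theta$.

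For the converse, let $(\Omega,\extd)$ be such an algebra. Theorem~\ref{theorem-bi} identifies $\Omega\isom A\rbiprod\Lambda$ with $\Lambda$ a connected graded braided-super Hopf algebra in right $A$-crossed modules, generated by $\Lambda^1$ (by the degrees-$0,1$ hypothesis), and with $\Delta_R\theta=\theta\tens 1$. Proposition~\ref{ext-inner} gives $\theta^2\in\Lambda_A\cap Z(\Lambda)$, and being central on an algebra-generating set implies central throughout since $\Lambda$ is an ordinary associative algebra (only its coproduct is braided). Hence the canonical surjection of graded braided-super bialgebras $T_-\Lambda^1\twoheadrightarrow\Lambda$ annihilates the defining relations of $\Lambda_\theta$, factors through $\Lambda_\theta\twoheadrightarrow\Lambda$, and after super-bosonisation gives the required surjection $\Omega_\theta(A)\twoheadrightarrow\Omega$ of inner strongly bicovariant differential exterior algebras. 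The main obstacle throughout is the coideal verification, particularly the formula for $\und\Delta[\theta^2,\eta]$, where one must carefully track both the braiding and the super-signs alongside the $k1\oplus A^+$ decomposition of $A$.
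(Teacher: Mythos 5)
Your proof is correct and follows essentially the same route as the paper: show that the ideal generated by $\theta^2\ra a$ and $[\theta^2,\eta]$ is a braided-super Hopf ideal via the primitivity of $\theta^2$ (using $\Psi(\theta\tens\theta)=\theta\tens\theta$ from right-invariance) and the computation of $\und\Delta[\theta^2,\eta]$, whose extra term $\eta_0\tens\theta^2\ra\pi(\eta_1)$ lands back in the ideal, and then invoke Proposition~\ref{sbic-inner}. Your handling of the converse --- using $\theta^2\in\Lambda_A\cap Z(\Lambda)$ from Proposition~\ref{ext-inner} to force the canonical surjection $T_-\Lambda^1\twoheadrightarrow\Lambda$ to factor through $\Lambda_\theta(\Lambda^1)$ --- is actually spelled out more fully than in the paper, which leaves that direction essentially implicit.
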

\proof We quotient the braided-super Hopf algebra $T_-\Lambda^1$ by the relations $\theta^2\ra a=0$ for all $a\in A^+$ and $[\theta^2,\eta]=0$ for all $\eta\in \Lambda^1$. Working in the tensor algebra we have $\und\Delta(\theta^2\ra a)=\und\Delta(\theta\ra a\o\tens \theta\ra a\t)=(\theta\ra a\o\tens 1+1\tens\theta\ra a\t)\und\cdot(\theta\ra a\t\tens 1+1\tens \theta\ra a\t)=\theta^2\ra a\tens 1+1\tens\theta^2\ra a$ where the crossed terms cancel in the braided-super tensor product. Here $\Psi(\theta\ra a\o\tens\theta\ra a\t)= \theta\ra a\o\tens\theta\ra a\t$ since $\Psi$  (the braiding in the category of $A$-crossed modules) is a morphism and $\Psi(\theta\tens\theta)=\theta\tens\theta$ by our assumption on $\theta$.  Similarly $\und\Delta(\theta^2 \eta-\eta\theta^2)=(\eta^1\tens\eta^2)\und\cdot(\theta^2\tens 1+1\tens\theta^2)-(\theta^2\tens 1+1\tens\theta^2)\und\cdot(\eta^1\tens\eta^2)=[\eta^1,\theta^2]\tens \eta^2+\eta^1\tens[\eta^2,\theta]-\eta^1{}_0\tens(\theta^2\ra\pi(\eta^1{}_1))\eta^2$ for all $\eta\in T_-\Lambda^1$ from the form of the braiding $\Psi$ and invariance of $\theta$. Hence we  see that the ideal $J$ generated by the relations has braided coproduct in $J \tens T_-\Lambda^1+T_-\Lambda^1\tens J$. Hence we obtain a braided-super Hopf algebra $\Lambda_\theta(\Lambda^1)$ and we then use Proposition~\ref{sbic-inner} to obtain an inner   bicovariant differential exterior algebra $\Omega=A\rbiprod\Lambda_\theta(\Lambda^1)$. \endproof

That $\theta^2\ra a=0$ for all $a\in A^+$ is equivalent to $\theta^2$ commuting with $A=\Omega^0$, so the relations are that $\theta^2$ is central in $\Omega$. Also note that isomorphic inner first order generalised differential calculi have corresponding $\Omega_\theta$ isomorphic as super-Hopf algebras since they are isomorphic as degree $0,1$ and have the corresponding relations coming from $\theta^2$. Clearly Proposition~\ref{genworon} in the case $\Delta_R\theta=\theta\tens 1$ is a quotient of this `universal' one. Another choice in between the two is  $A\rbiprod B_-^{quad}(\Lambda^1)$.

Next, let $V$ be an object of a braided abelian category. We use again the braided binomials but in a different convention, namely
  $\left( {n\atop r}; \Psi\right):{V}^{\tens n}\to{V}^{\tens n}$ defined for all $n,r\in\mathbb{N}_0$ by
\begin{gather*}
\left( {n\atop r}; \Psi\right) =(\left( {n-1\atop r-1};\Psi\right) \tens\id)\Psi_{n-1}\cdots\Psi_{r+1}\Psi_r+\left( {n-1\atop r}; \Psi\right)\tens\id,\\
\left( {n\atop 0}; \Psi\right)=\id,\quad \left( {n\atop r}; \Psi\right)=0\text{ if }r>n.
\end{gather*} Then $\left( {n\atop 1}; \Psi\right)=\id+\Psi_1+\Psi_2\Psi_1+\cdots+\Psi_{n-1}\cdots\Psi_1.$
It is easy to see that if $V$ is finite-dimensional and $\Psi^*:V^*\tens V^*\to V^*\tens V^*$ is the adjoint map of $\Psi,$ then $\left[ {n\atop r}; \Psi\right]^*=\left( {n\atop r}; \Psi^*\right)$ for all $n,r\in\mathbb{N}_0.$  We then define $\Sh_\pm(V)$ to be $TV$ as a graded vector space with coalgebra structure $\underline{\Delta},\,\underline{\epsilon}$ as for the usual shuffle algebra, namely with
\[ \underline{\Delta}(v_1\tens v_2\tens\cdots\tens v_n)=1\underline{\tens}(v_1\tens v_2\tens\cdots\tens v_n)
+\sum_{i=1}^{n-1}(v_1\tens\cdots\tens v_i)\underline{\tens}(v_{i+1}\tens \tens\cdots\tens v_n)
+(v_1\tens v_2\tens\cdots\tens v_n)\underline{\tens}1,\]
\[ \underline{\epsilon}(v_1\tens v_2\tens\cdots\tens v_n)=0\]
for $n\ge 1$, and we define $\bullet_\pm:\Sh_\pm(V)\underline{\tens} \Sh_\pm(V)\to \Sh_\pm(V)$ by $$(v_1\tens\cdots\tens v_r)\bullet_\pm(v_{r+1}\tens\cdots\tens v_n)=\left( {n\atop r}; \pm\Psi\right)(v_1\tens v_2\tens\cdots\tens v_n)$$

\begin{proposition}
Let $V$ be an object in a braided abelian tensor category with braiding $\Psi,$ then $\Sh_\pm(V)$ with product $\bullet_{\pm}$ is a graded braided(-super) Hopf algebra, called the braided shuffle (super-) Hopf algebra on $V$.
\end{proposition}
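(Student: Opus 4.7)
The plan is to verify the braided(-super) Hopf algebra axioms for $\Sh_\pm(V)$ directly as identities of morphisms in the braided category, with the duality to $T_\pm(V)$ as a guiding principle. First, the coalgebra structure is transparent: $\underline{\Delta}$ is deconcatenation, so that $(\underline{\Delta}\tens\id)\underline{\Delta}$ and $(\id\tens\underline{\Delta})\underline{\Delta}$ both equal the sum over all three-fold splittings of $v_1\tens\cdots\tens v_n$, and the counit axioms for $\underline{\epsilon}$ are immediate from the vanishing on positive degrees.

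The substantive content is in (i) associativity of $\bullet_\pm$ and (ii) the braided(-super) bialgebra compatibility
\[ \underline{\Delta}\circ\bullet_\pm=(\bullet_\pm\tens\bullet_\pm)\circ(\id\tens\Psi_{\pm}\tens\id)\circ(\underline{\Delta}\tens\underline{\Delta}),\]
where $\Psi_\pm$ is the braiding used to form the braided(-super) tensor product $\Sh_\pm(V)\tens\Sh_\pm(V)$ (with the super sign in the $-$ case). Both reduce to a single Pascal-type identity for the braided binomials expressing $\left({m+n\atop r};\pm\Psi\right)$ as a sum of morphisms $\left({m\atop i};\pm\Psi\right)\tens\left({n\atop r-i};\pm\Psi\right)$ preceded by the appropriate composition of braidings shuffling $V$-factors between the two blocks. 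I would prove this identity by induction on $n$ using the recursive definition of $\left({n\atop r};\Psi\right)$. Associativity of $\bullet_\pm$ then follows by expanding both bracketings into a common trinomial $\left({m+n+p\atop m,n,p};\pm\Psi\right)$ defined recursively, and the bialgebra compatibility is obtained by applying the Pascal identity to both sides of the compatibility equation and matching terms.

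An alternative, more conceptual route is by duality: the identity $\left[{n\atop r};\Psi\right]^{*}=\left({n\atop r};\Psi^{*}\right)$ already recorded above shows that for finite-dimensional $V$ the graded dual of $T_\pm(V^{*})$ is exactly $\Sh_\pm(V)$, transporting the braided(-super) Hopf algebra axioms. Since every axiom is a polynomial identity of morphisms on finite tensor powers of $V$, verifying it on finite-dimensional test objects is sufficient for the general case. The antipode then exists automatically since $\Sh_\pm(V)$ is connected graded with $\Sh_\pm^{0}(V)=k$: the convolution inverse of $\id$ is constructed recursively by degree in the standard way. The main obstacle is the careful bookkeeping of shuffle crossings and super signs in the Pascal identity; once that is pinned down, associativity and the bialgebra axiom drop out by induction, and the antipode is essentially free from the grading.
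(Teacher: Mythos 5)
Your main (direct) route is sound in outline but genuinely different from what the paper does: the paper gives no computation at all, remarking only that the result is the ``arrow reversal'' of the proof that $T_\pm V$ is a braided(-super) Hopf algebra, with product and coproduct swapped. That is a purely diagrammatic duality --- turn every commuting diagram in the tensor-algebra proof upside down --- and it is valid for an arbitrary object $V$ in an arbitrary braided category, with no finiteness hypothesis. Your direct verification via a braided Pascal identity for $\left({m+n\atop r};\pm\Psi\right)$ is essentially what one obtains by writing that reversed proof out explicitly (the Pascal recursion for $\left({n\atop r};\Psi\right)$ is the arrow-reversal of the recursion for $\left[{n\atop r};\Psi\right]$ used in \cite{Ma:fre}), so it proves the same statement at the cost of the sign and crossing bookkeeping you acknowledge; what it buys is an explicit formula-level argument rather than an appeal to diagram duality.

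Your alternative route, however, has a genuine gap. Passing to the graded linear dual of $T_\pm(V^*)$ requires $V$ finite-dimensional (indeed it requires the category to be one of vector spaces at all), and the claim that ``verifying the axioms on finite-dimensional test objects is sufficient for the general case'' is not justified: an object of an abstract braided abelian category need not be probed by finite-dimensional braided vector spaces, and the axioms are not literally identities in the braid group algebras $kB_n$ (the braided binomials are specific sums of braid words whose equality of composites must be checked as morphisms). If you want a duality-style argument that covers the general case, it is the categorical arrow-reversal the paper invokes, not linear duality, that does the job. The antipode remark is fine: connectedness of the grading gives it by the usual convolution-inverse recursion, and the paper does not even spell this out.
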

The proof is omitted as it is just the arrow reversal of the proof for the tensor algebra $T_\pm V$ being a braided(-super) Hopf algebra, with the roles of product and coproduct swapped. The braided shuffle Hopf algebra $\Sh_+(V)$ was also considered by Rosso\cite{Rosso}  where $\sum_{w\in\sum_{r,n-r}}T_w$ or $\mathcal{B}_{r,n-r}$ is used in stead of $\left( {n\atop r}; \Psi\right)$ here.

\begin{proposition}\label{shuffle-diff}
Let $A$ be a Hopf algebra with bijective antipode, $(\Lambda^1,\omega)$ define a generalised bicovariant first order differential calculus on $A.$ Then $\Omega_{sh}(A)=A\rbiprod \Sh_-(\Lambda^1)$ is a strongly bicovariant differential exterior algebra on $A$ with
\[ \extd(a\tens v_1\tens\cdots\tens v_n)=a_{(1)}\tens\omega\circ\pi(a_{(2)})\bullet_{_{-}}(v_1\tens\cdots\tens v_n)+a\tens\delta(v_1\tens\cdots \tens v_n),\]
and $\delta:\Sh_-(\Lambda^1)\to \Sh_-(\Lambda^1)$ defined recursively  by
 \[ \delta(v_1\tens\cdots\tens v_n)=\delta(v_1\tens\cdots\tens v_{n-1})\tens v_n+(-1)^n(v_1)_0\tens\cdots\tens(v_n)_0\tens\omega\circ\pi((v_1)_1\cdots(v_n)_1)\]
  for any $v_1\tens\cdots\tens v_n\in \Sh_-(\Lambda^1).$  Moreover, $\Omega_{sh}(A)$ is inner by $\theta\in \Lambda^1$ if and only if $\theta$ makes $(\Lambda^1,\omega)$ inner and $\Psi(v\tens\theta)=\theta\tens v$ for any $v\in \Lambda^1.$
\end{proposition}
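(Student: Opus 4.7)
The plan is to apply Theorem~\ref{theorem-bi} with $\Lambda = \Sh_-(\Lambda^1)$. The preceding proposition already establishes that $\Sh_-(\Lambda^1)$ is a graded braided-super Hopf algebra in the category of right $A$-crossed modules, so the object-level hypothesis of Theorem~\ref{theorem-bi} is free. The real work is to verify that the prescribed $\delta$ is a right $A$-comodule map, a super-derivation for the shuffle product $\bullet_-$, satisfies $\delta^2 = 0$ together with the compatibility conditions of Proposition~\ref{extalg}, and obeys the strong-bicovariance identity (\ref{delta}).

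I would first observe that $\delta$ is a right $A$-comodule map by induction on degree, using that $\omega\circ\pi$ is equivariant and that the right coaction on $v_1\tens\cdots\tens v_n \in \Sh_-(\Lambda^1)$ is $(v_1)_0\tens\cdots\tens(v_n)_0\tens (v_1)_1\cdots(v_n)_1$. Next, to see that $\delta$ is a super-derivation for $\bullet_-$, I would proceed by induction on total degree, reducing via the recursion for the braided binomials $\left({n\atop r};-\Psi\right)$ defining $\bullet_-$ and the two-term recursion defining $\delta$ to the base case where $w$ has degree $1$. Once this is in hand, the conditions of Proposition~\ref{extalg} reduce to their degree-one instance $\delta v = -v_0\tens\omega\pi(v_1)$, and $\delta^2 = 0$ follows from the super-derivation property together with the identity $\delta\omega\pi(a) + \omega\pi(a_{(1)})\bullet_-\omega\pi(a_{(2)}) = 0$ on $a\in A^+$, which is a direct consequence of the module property of $\omega$. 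By Proposition~\ref{extalg-bi} we then have at least a bicovariant differential exterior algebra.

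The main obstacle will be the strong-bicovariance identity (\ref{delta}). The braided coproduct on $\Sh_-(\Lambda^1)$ is deconcatenation, $\underline{\Delta}(v_1\tens\cdots\tens v_n) = \sum_{r=0}^n (v_1\tens\cdots\tens v_r)\tens(v_{r+1}\tens\cdots\tens v_n)$, and I expect the verification to amount to showing by induction on $n$ that the `append $v_n$' piece of $\delta$ splits across the two tensor factors as $\delta\eta^1\tens\eta^2 + (-1)^{|\eta^1|}\eta^1\tens\delta\eta^2$, while the `$\omega$-insertion at the tail' piece splits into one contribution landing in the right factor (absorbed into $\delta\eta^2$) and one contribution that is carried across the tensor split by the coaction on the left factor, producing exactly the correction $(-1)^{|\eta^1|}(\eta^1)_0\tens\omega\pi((\eta^1)_1)\eta^2$. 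With (\ref{delta}) in place, Theorem~\ref{theorem-bi} delivers the strongly bicovariant differential exterior algebra $\Omega_{sh}(A) = A\rbiprod\Sh_-(\Lambda^1)$.

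Finally, for the inner case, the forward direction is that if $\Omega_{sh}(A)$ is inner by $\theta\in\Lambda^1$ then $\extd a = [\theta,a]$ at degree $0$ forces $\omega(a) = \theta\ra a$, making $(\Lambda^1,\omega)$ inner, and comparing $\delta v = \theta\bullet_- v + v\bullet_- \theta$ on $v\in\Lambda^1$ with the stated formula $\delta v = -v_0\tens\omega\pi(v_1)$ forces $\Psi(v\tens\theta) = \theta\tens v$ by a direct calculation using the explicit form of $\bullet_-$ in degree $2$. For the converse, the argument of Proposition~\ref{genworon} applies verbatim: functoriality of the braiding propagates $\Psi(\eta\tens\theta) = \theta\tens\eta$ to all $\eta\in\Sh_-(\Lambda^1)$, and a direct induction on degree then gives $[\theta,\cdot\} = \delta$ throughout $\Omega_{sh}(A)$, establishing innerness.
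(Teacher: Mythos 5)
Your overall strategy coincides with the paper's: set $\Lambda=\Sh_-(\Lambda^1)$, check that the recursively defined $\delta$ satisfies the hypotheses of Propositions~\ref{extalg}, \ref{extalg-bi} and Theorem~\ref{theorem-bi}, and treat innerness by propagating $\Psi(v\tens\theta)=\theta\tens v$ to all degrees and invoking Proposition~\ref{sbic-inner}. Your inductive verification of the comodule property, of the super-derivation property of $\delta$ for $\bullet_-$, and of the identity (\ref{delta}) via the deconcatenation coproduct, as well as both directions of the innerness criterion, are all in the intended spirit.

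There is, however, one step that fails as written: the claim that, once $\delta$ is known to be a super-derivation, the remaining conditions of Proposition~\ref{extalg} (compatibility of $\delta$ with the right $A$-action, and $\delta^2=0$) ``reduce to their degree-one instance''. That reduction presupposes that $\Sh_-(\Lambda^1)$ is generated as an algebra by its degree-one part, which is false in general: the subalgebra of $\Sh_-(\Lambda^1)$ generated by $\Lambda^1$ is precisely $B_-(\Lambda^1)$ (the image of the braided antisymmetrizers $[n,-\Psi]!$), and this is already proper in degree $2$ whenever $\id-\Psi$ is not surjective --- for the trivial braiding, for instance, $\Lambda^1\bullet_-\Lambda^1$ is only the antisymmetric part of $\Lambda^1\tens\Lambda^1$. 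Moreover $\delta$ does not even preserve that subalgebra: $\delta v=-v_0\tens\omega\pi(v_1)$ is a general element of $\Lambda^1\tens\Lambda^1$ and need not lie in the image of $[2,-\Psi]$ (indeed Corollary~\ref{deltawor} records this membership as a nontrivial necessary condition for the $B_-$ construction), so $\delta^2$ cannot be controlled on the degree-one-generated part alone. Both conditions must therefore be checked on arbitrary tensors $v_1\tens\cdots\tens v_n$ by direct induction on $n$ using the recursion for $\delta$, together with the fact that the $A$-action and coaction on $\Lambda^1{}^{\tens n}$ are the diagonal ones; this is what the paper's terse ``one can check'' amounts to, and it does go through, so the gap is repairable but must actually be filled.
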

\proof  We set $\Lambda=\Sh_-(\Lambda^1)$ and one can check that $\delta$ defined above satisfied all the properties required in Proposition~\ref{extalg}, \ref{extalg-bi} and Theorem~\ref{theorem-bi}. We note in particular that, $\delta(v)=-v_0\tens\omega\circ\pi(v_1)$ for any $v\in \Lambda^1.$  In terms directly of $\extd$, the induction is
\begin{eqnarray*}
\extd(a\tens v_1\tens\cdots v_n))&=&\extd(a\tens v_1\tens\cdots\tens v_{n-1})\tens v_n\\
&&+(-1)^na_{(1)}\tens(v_1)_0\tens\cdots\tens(v_n)_0\tens\omega\circ\pi(a_{(2)}(v_1)_1\cdots(v_n)_1)\end{eqnarray*}
and $\extd$ must be of this form to be a super-coderivation. Also, one may use alternative formulae such as
\[ \left({n\atop r};\Psi\right)=\id\tens\left({n-1\atop r-1};\Psi\right)+(\id\tens\left({n-1\atop r};\Psi\right))\circ\Psi_1\Psi_2\cdots\Psi_r\]
 to compute the braided shuffle product $\bullet_{_{-}}$. For the innerness, one can show $\delta$ in form of $[\theta,\ \}$ if and only if $\theta_0\tens v\ra\theta_1=\theta\tens v$ for any $v\in \Lambda^1$ by induction. In fact, for any $\theta\in \Lambda^1$ such that $\Delta_R\theta-\theta\tens 1\in \Lambda^1\square A$ and $\Psi(v\tens\theta)=\theta\tens v$ for any $v\in \Lambda^1$, we have $\theta^2=\theta\bullet_-\theta=\theta\tens\theta-\theta_0\tens \theta\ra\theta_1=0$, $[\Delta_R\theta-\theta\tens1,\Delta_R\eta\}=0$ and $\eta^1\theta_0\tens\eta^2\ra\theta_1=\eta^1\theta\tens\eta^2$ for any $\eta\in \Sh_-(\Lambda^1)$ (the latter two can be shown by induction). Then from Proposition~\ref{sbic-inner}, we know $\delta=[\theta,\ \}$ provides an inner strongly bicovariant differential structure on $A\rbiprod \Sh_-(\Lambda^1)$ and this gives the same result. \endproof

\section{Augmented generalised bicovariant differentials and duality}

Let $\Omega$ be a strongly bicovariant differential exterior algebra on a Hopf algebra $A$.  If  the components $\Omega^i$ are all finite dimensional, the graded dual $\Omega^*$ becomes a strongly bicovariant codifferential exterior algebra over $A^*$ via $\extd^*$ as a super derivation and coderivation as before but being degree $-1$. We start with a short study of such `codifferential' calculi.

\subsection{Codifferential structures}

Let $C$ be a coalgebra over $k$. We define a {\em first order codifferential calculus} on $C$ to be a $C$-bicomodule $\Omega^1$ and a linear map $i:\Omega^1\to C$ such that
\[ \Delta\circ i(\eta)=(i\otimes\mathrm{id})\Delta_R\eta+(\id\otimes i)\Delta_L\eta,\quad \forall \eta\in\Omega^1.\]
One may deduce that $\eps\circ i=0$. This is the dual notion to a generalised first order differential algebra. Likewise, we define an {\em codifferential exterior   coalgebra} (or codifferential graded coalgebra) to be a graded coalgebra $\Omega=\oplus_{n\ge 0}\Omega^n$ with $\Omega^0=C$ and in the case where a first order structure is given, coalgebra $\Delta=\Delta_L+\Delta_R$ on degree 1, equipped with $i:\Omega\to \Omega$ of degree $-1$ with $i^2=0$ and obeying the super-coderivation property \[ \Delta\circ i(\eta)=(i\otimes\mathrm{id}+(-1)^{|\ |}\otimes i)\circ\Delta\eta,\quad \forall \eta\in\Omega.\]
One necessarily has $i=0$ when restricted to $C$. We say $\Omega$ is {\em coinner} if there exists an element $\theta^*\in{\Omega^1}^*$ such that
$$i(w)=\<\theta^*,w^1\>w^2+(-1)^n w^1\<\theta^*,w^2\>$$
for any $w\in\Omega^n.$ Here $\Delta=(\ )^1\tens (\ )^2$ denotes the coproduct of the underlying coalgebra of $\Omega$ and $\<\ ,\ \>$ the duality pairing.

In the case where $A$ is a Hopf algebra we have of course the notion of left, right and bi-covariant codifferential calculi with respect to left and right actions of $A$. Thus, a first order bicovariant codifferential structure on $A$ clearly means an $A$-biHopf module (or $A$-Hopf bimodule) $\Omega^1$ together with a bimodule map $i:\Omega^1\to A$ such that
\begin{equation}\label{foi} \Delta\circ i=(i\tens\id)\Delta_R+ (\id\tens i)\Delta_L.\end{equation}
We also note that for any Hopf algebra, $A$ is canonically a right $A$-crossed module in a different way from (\ref{Acrossed}), namely by the right adjoint action and
right regular coaction (given by the coproduct). This projects down to a second $A$-crossed module structure on $A^+$,
\begin{equation}\label{Acocrossed} a\ra b=Sb\o a b\t,\quad \Delta_R=\Delta-1\tens\id,\quad \forall a\in A^+,\,b\in A\end{equation}
different from the one used in Section~2.2.

\begin{lemma}\label{ilambda} A first order bicovariant codifferential calculus over $A$ is isomorphic to one of the form $\Omega^1=A\tens\Lambda^1$  where $\Lambda^1$ is a right $A$-crossed module and $i$ is determined by its restriction  $i:\Lambda^1\to A^+$ as an $A$-crossed module map. The calculus is coinner iff there exists $\theta^*\in {\Lambda^1}^*$ such that $i(\eta)=\<\theta^*,\eta_0\>\eta_1-\<\theta^*,\eta\>1_A$ for all $\eta\in\Lambda^1.$
 \end{lemma}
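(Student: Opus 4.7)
The plan is to mirror the proof of Theorem~\ref{Hopfcalc} using coaction--action duality. First I would apply the Hopf (bi)module structure theorem to the $A$-biHopf module $\Omega^1$, exactly as in the proof of Theorem~\ref{Hopfcalc}, to obtain $\Omega^1 \cong A \otimes \Lambda^1$ where $\Lambda^1 = {}^{\mathrm{co}A}\Omega^1$ is a right $A$-crossed module via
\[ (a\tens v)\cdot b = ab\o\tens v\ra b\t,\qquad \Delta_R(a\tens v)=a\o\tens v_0\tens a\t v_1. \]
The map $i$ is then determined by its restriction $\tilde\imath:\Lambda^1\to A$, $\tilde\imath(v)=i(1\tens v)$, since $i$ is a left module map: $i(a\tens v)=a\,\tilde\imath(v)$.

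Next I would extract the content of the remaining axioms. Right $A$-linearity of $i$ gives $b\o\tilde\imath(v\ra b\t)=\tilde\imath(v)b$ for all $b\in A$, which upon multiplying by $S(b_{(0)})$ on the left (using the antipode identity) rearranges to
\[ \tilde\imath(v\ra b)=S(b\o)\tilde\imath(v)b\t, \]
i.e.\ $\tilde\imath$ is equivariant for the right adjoint action on $A$, matching the module part of (\ref{Acocrossed}). For the coderivation axiom (\ref{foi}) on $1\tens v$ I would compute
\[ \Delta\tilde\imath(v)=\tilde\imath(v_0)\tens v_1 + 1\tens\tilde\imath(v); \]
applying $\eps\tens\id$ forces $\eps(\tilde\imath(v))v_1 + v_1\eps(\tilde\imath(v))\cdot\!\dots$ type manipulation, more cleanly $(\eps\tens\id)$ of both sides gives $\tilde\imath(v)=\eps(\tilde\imath(v_0))v_1+\tilde\imath(v)$ so $\eps\circ\tilde\imath=0$, i.e.\ $\tilde\imath$ takes values in $A^+$. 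Projecting by $\pi\tens\id$ then yields $\pi(\tilde\imath(v)\o)\tens\tilde\imath(v)\t=\tilde\imath(v_0)\tens v_1$, which is precisely the statement that $\tilde\imath$ is a right $A$-comodule map for the coaction in (\ref{Acocrossed}). Conversely, given any crossed-module map $\tilde\imath:\Lambda^1\to A^+$ for (\ref{Acocrossed}), setting $i(a\tens v)=a\,\tilde\imath(v)$ reverses each of these calculations and produces a first order bicovariant codifferential calculus; this is a routine verification.

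For the coinner part, I would substitute $w=\eta=1\tens v\in\Omega^1$ into the defining formula $i(w)=\<\theta^*,w^1\>w^2 + (-1)^{|w|}w^1\<\theta^*,w^2\>$, viewing $\theta^*\in{\Lambda^1}^*$ as a functional on $\Omega^1$ supported on the degree-$1$ part and vanishing on $A=\Omega^0$. Here $\Delta\eta=\Delta_L\eta+\Delta_R\eta = 1\tens\eta + (1\tens v_0)\tens v_1$, so the left piece contributes $-\<\theta^*,v\>\,1_A$ (the other summand vanishing because $\<\theta^*,1\>=0$) and the right piece contributes $\<\theta^*,v_0\>v_1$, giving the stated formula. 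The converse is immediate: the displayed formula defines a bimodule map $i$ that satisfies (\ref{foi}) by the same short calculation read backwards. The main thing to be careful about is tracking which of the two crossed-module structures (\ref{Acrossed}) vs.\ (\ref{Acocrossed}) appears on $A^+$, and this is the key dual feature distinguishing the codifferential case from Theorem~\ref{Hopfcalc}; beyond that, everything reduces to the standard biHopf module correspondence and direct use of the super-coderivation axiom.
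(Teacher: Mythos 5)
Your proposal is correct and follows essentially the same route as the paper's proof: the Hopf module lemma reduces $i$ to its restriction $\Lambda^1\to A$, right $A$-linearity gives the adjoint-action equivariance $i(\eta\ra a)=Sa\o\, i(\eta)\, a\t$, and the coderivation axiom applied to $1\tens v$ gives $\Delta i(\eta)=i(\eta_0)\tens\eta_1+1\tens i(\eta)$, which is exactly the comodule-map condition for the crossed module structure (\ref{Acocrossed}) together with $\eps\circ i=0$. The only cosmetic differences are that the paper applies $\id\tens\eps$ directly (rather than your $\eps\tens\id$, which needs one further application of the counit to conclude that the image lies in $A^+$) and that you write out the coinner computation which the paper leaves as `clear'.
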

\proof It is or less immediate from the structure in Section~2 (i.e. by application of the Hopf-module lemma)  that $i$ is determined by its restriction to the left-invariant 1-forms, where we require $i:\Lambda^1\to A^+$ such that
\[ Sa\o i(\eta) a\t=i(\eta\ra a),\quad \Delta i(\eta)=i(\eta_0)\tens\eta_1+1\tens i(\eta),\quad\forall a\in A,\ \eta\in \Lambda^1.\]
Note by applying $\id\tens\eps$ that the second condition indeed entails that the image of $i$ is in $A^+$. We interpret this as a crossed module morphism as stated. As in Section~2 the left $A$-(co)module structure on $\Omega^1$ is the regular (co)action on $A$ and the right one is the tensor product of the regular (co)action and the given one on $\Lambda^1$. We recover $i$ on $\Omega^1$ by extension as a left $A$-module map.  The condition for this to be coinner is clear. \endproof

As to higher degrees, we define a {\em strongly bicovariant codifferential exterior algebra} on $A$ as an $\mathbb{N}_0$-graded super-Hopf algebra extending $A$ in degree 0 and equipped with a degree $-1$, square zero super-derivation and super-coderivation.

We conclude with a canonical construction. We recall that for an object $\Lambda^1$ in an abelian braided category the tensor algebra $T_\pm\Lambda^1$ is a braided(-super) Hopf algebra in the category.

\begin{proposition}\label{tensor-codiff}
Let $(\Lambda^1,i)$ define a first order bicovariant codifferential calculus on a Hopf algebra $A$ with bijective antipode. Then (1) $\Omega_{tens}=A\rbiprod T_-\Lambda^1$ is a strongly bicovariant codifferential exterior algebra on $A$ where $i$ extends to higher degrees as a super-derivation.  Moreover, it is coinner with $\theta^*\in{\Lambda^1}^*$ if and only if its first order $\Omega^1=A\rbiprod\Lambda^1$ is coinner with same $\theta^*\in{\Lambda^1}^*$ and $\<\theta^*,v\>w=w_0\<\theta^*,v\ra w_1\>$ for any $v,w\in\Lambda^1.$
(2) A strongly bicovariant codifferential exterior algebra generated by its degree $0,\,1$ is isomorphic to a quotient of $(\Omega_{tens},i)$ for some crossed module $\Lambda^1$.
\end{proposition}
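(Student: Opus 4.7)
My plan is to dualise the approach used for the differential case (Proposition~\ref{genworon} and Proposition~\ref{univ}) by working in the category of right $A$-crossed modules and exploiting the braided(-super) Hopf algebra structure of $T_-\Lambda^1$. Via super-bosonization (the super version of Radford's theorem used in Theorem~\ref{theorem-bi}), $A\rbiprod T_-\Lambda^1$ is already a graded super-Hopf algebra extending $A$ in degree $0$, so the content of part (1) is to produce the degree $-1$ operator $i$ with all the required properties, starting from the first order data $i:\Lambda^1\to A^+$ given by Lemma~\ref{ilambda}.

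For the existence of $i$ on $\Omega_{tens}$, I would first define $i$ on left invariant elements $\Lambda=T_-\Lambda^1$ by declaring $i|_k=0$, $i|_{\Lambda^1}$ to be the given crossed-module map, and then extending inductively by the super-derivation rule
\[ i(v\tens w)=i(v)\ra w-(-1)^{|v|}v\cdot i(w),\]
where the action of $i(v)\in A^+\subset A$ on $w\in T_-\Lambda^1$ is via the crossed module structure moved past using the braiding (so that the answer lives in $\Lambda$). One then extends to $\Omega_{tens}$ as a left $A$-module map, i.e.\ $i(a\tens w)=a\cdot i(w)$ appropriately reordered using the cross relations of $A\rbiprod T_-\Lambda^1$. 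Consistency of the super-derivation rule with the algebra relations of the tensor (braided) algebra is automatic because there are no further relations; this is precisely the sense in which $T_-\Lambda^1$ is the `free' braided-super Hopf algebra on $\Lambda^1$, and is the reason I choose the tensor algebra rather than a quotient.

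The two key checks are then $i^2=0$ and the super-coderivation property. For $i^2$ I would argue by induction on degree, the base case being $i^2|_{\Lambda^1}=0$ automatically since $i$ lands in $A$ and $i|_A=0$, and the inductive step using that $i$ is a super-derivation, so the composition $i^2$ satisfies the graded Leibniz rule with a sign that forces it to vanish. For the super-coderivation property I would show it holds on $\Lambda^1$ by a direct translation of the second condition on $i:\Lambda^1\to A^+$ in Lemma~\ref{ilambda}, rewritten in the bosonised coproduct $\Delta(a\tens v)=a\o\tens v_0\tens a\t v_1\tens 1+a\o\tens 1\tens a\t\tens v$; then extend to higher degree by induction, using that both sides of the super-coderivation identity are super-derivations in a natural sense (product on the algebra side, convolution-like product coming from $\Delta$ on the other side), together with functoriality of the braiding $\Psi$ with respect to morphisms of crossed modules. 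This inductive coderivation check is what I expect to be the main technical obstacle, as it requires careful bookkeeping of signs and braidings in the super-bosonisation coproduct.

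For the coinnerness part of (1), if $i$ is coinner with $\theta^*\in{\Lambda^1}^*$ then on degree $1$ one recovers $i(\eta)=\<\theta^*,\eta_0\>\eta_1-\<\theta^*,\eta\>1_A$, matching Lemma~\ref{ilambda}. On degree $2$, evaluating $i$ by both the coinner formula and the super-derivation rule on $v\tens w\in \Lambda^1\tens \Lambda^1$ and using $\underline{\Delta}(v\tens w)=1\tens(v\tens w)+v\tens w+w_0\tens v\ra w_1+(v\tens w)\tens 1$ produces, after equating the two expressions, exactly the condition $\<\theta^*,v\>w=w_0\<\theta^*,v\ra w_1\>$. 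Conversely, assuming this identity together with first-order coinnerness I would show inductively that the coinner formula and the super-derivation formula agree on all of $T_-\Lambda^1$; the induction is bootstrapped by the same compatibility iterated along the braided coproduct, in close analogy with how $\theta_0\tens \eta\ra\theta_1=\theta\tens\eta$ was propagated in the proof of Proposition~\ref{genworon}.

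For part (2), let $(\Omega,i)$ be a strongly bicovariant codifferential exterior algebra generated by $\Omega^0=A$ and $\Omega^1$. Applying the super-version of Radford's bosonisation theorem as in Theorem~\ref{theorem-bi} gives $\Omega\isom A\rbiprod\Lambda$ for some graded braided-super Hopf algebra $\Lambda$ in the category of right $A$-crossed modules with $\Lambda^0=k$; generation by degrees $0,1$ forces $\Lambda$ to be generated by $\Lambda^1$ as a braided algebra, hence there is a surjection of braided-super Hopf algebras $T_-\Lambda^1\twoheadrightarrow\Lambda$. This lifts to a surjection of super-Hopf algebras $A\rbiprod T_-\Lambda^1\twoheadrightarrow A\rbiprod \Lambda\isom\Omega$. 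Taking $i$ on $\Lambda^1$ to be the restriction coming from $\Omega$ (which by Lemma~\ref{ilambda} is a crossed-module map $\Lambda^1\to A^+$), the super-derivation extension of $i$ to $A\rbiprod T_-\Lambda^1$ constructed in part (1) descends through this surjection, since $i$ on $\Omega$ is also a super-derivation and hence agrees with the extension wherever defined. Thus $(\Omega,i)$ is a quotient of $(\Omega_{tens},i)$ as claimed.
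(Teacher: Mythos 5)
Your proposal follows essentially the same route as the paper's proof: extend $i$ from the first order data to all of $A\rbiprod T_-\Lambda^1$ by the super-derivation rule (freeness of the braided tensor algebra removing any consistency issue with relations), verify $i^2=0$ and the super-coderivation property by induction from degree $1$, extract the coinnerness condition $\<\theta^*,v\>w=w_0\<\theta^*,v\ra w_1\>$ by comparing the coinner formula with the Leibniz extension on $\Lambda^1\tens\Lambda^1$, and obtain part (2) from the braided-super Hopf surjection $T_-\Lambda^1\twoheadrightarrow\Lambda$ together with the fact that the super-derivation property forces the codifferential to descend from the recursive formula. The only corrections needed are the sign in your displayed rule, which should read $i(v\tens w)=i(v)w+(-1)^{|v|}v\, i(w)$ to match the paper's recursion (\ref{univ-codiff}), and the remark that $i(v)w$ lies in $A\tens\Lambda$ rather than in $\Lambda$, so that $i$ is defined on the whole bosonisation (as a left $A$-module map, exactly as you then do) rather than restricting to the invariant forms.
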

\proof Extends the first order bicovariant codifferential $i:A\tens\Lambda^1\to A$ to $i:A\tens T_-\Lambda^1\to A\tens T_-\Lambda^1$ recursively by
\begin{equation}\label{univ-codiff}
i(a\tens v_1\tens\cdots\tens v_n)=i(a\tens v_1\tens\cdots\tens v_{n-1})\tens v_n-(-1)^na\tens v_1\tens v_2\tens\cdots\tens v_{n-1}\cdot i(v_n),
\end{equation}
where $\cdot$ here denotes the product of $A\rbiprod T_-\Lambda^1.$ One can show this $i$ has $i(a\tens v\cdot b\tens w)=i(ab\o\tens (v\ra b\t)\tens w)=i(a\tens v)\cdot b\tens w-a\tens v\cdot i(b\tens w)$ for any $a,b\in A,\,v,w\in\Lambda^1,$ thus provide a super-derivation of degree $-1$ by induction. Since $T_-\Lambda^1$ is a braided-super Hopf algebra and freely generated by $\Lambda^1.$  One can also check this $i$ is a super-coderivation with $i^2=0$ (define $i(A)=0$) by induction. The coinner case is shown by directly checking definition. The second part of the statement follows naturally as any such algebra is a quotient of $A\rbiprod T_-\Lambda^1$ as braided-super Hopf algebra and its codifferential must be induced from (\ref{univ-codiff}) as the super-derivation property matters here.\endproof

This is the dual construction to $\Omega_{sh}$ in Proposition~\ref{shuffle-diff}.  Another canonical example is a coinner strongly bicovariant codifferential exterior algebra  dual to $\Omega_\theta$ in Proposition~\ref{univ} constructed as follows. Let $A$ be a Hopf algebra with bijective antipode, $V$ an $A$-crossed module. We let $\theta^*\in V^*$ be such that $\<\theta^*,v\ra a\>=\epsilon_A(a)\<\theta^*,v\>$ for any $v\in V,\,a\in A$. This is a version the condition for $\theta^*$ being right invariant on the adjoint side, so we still call such $\theta^*$ `right invariant'. Then we  define
\begin{align*}
B_{\theta^*}(V)=\{b\in \Sh_-(V)|\ b^1\tens\<\theta^*\tens\theta^*,(b^2)_0\tens(b^3)_0\>(b^2)_1(b^3)_1&\tens b^4\\
=b^1\tens \<\theta^*\tens\theta^*,b^2\tens b^3\> 1_A&\tens b^4\\
=\<\theta^*\tens\theta^*,b^1\tens b^2\> b^3\tens 1_A&\tens b^4\ \}
\end{align*}
where $\underline{\Delta}=(\ )^1\tens(\ )^2$ denotes the coproduct of $\Sh_-(V)$ and $\Delta_R=(\ )_0\tens(\ )_1$ denotes the coaction.  One can check directly that $B_{\theta^*}(V)\subseteq \Sh_-(V)$ as a sub-graded-braided-super Hopf algebra.

\begin{proposition}\label{subshuffle-codiff}
Let $A$ be a Hopf algebra with bijective antipode, $\Lambda^1$ an $A$-crossed module, and $\theta^*\in \Lambda^{1*}$ right invariant.
Then $A\rbiprod B_{\theta^*}(\Lambda^1)$ is a coinner strongly bicovariant codifferential exterior algebra on $A,$ where $i$ is given by
\[ i(a\tens v_1\tens\cdots\tens v_n)=\<\theta^*,(v_1)_0\>a(v_1)_1\tens(v_2\tens\cdots \tens v_n)+(-1)^n a\tens(v_1\tens\cdots \tens v_{n-1})\<\theta^*,v_n\>.\]
Conversely, any coinner strongly bicovariant codifferential exterior algebra $(\Omega,i)$ with $\theta^*$ right invariant and which is cogenerated by its degree $0$ and $1$, is isomorphic to such a sub-codifferential exterior algebra  of $A\rbiprod B_{\theta^*}({\Lambda^1})$ for some crossed module $\Lambda^1$. 
\end{proposition}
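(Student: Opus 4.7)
The plan is to verify in the forward direction that the stated formula defines a super-derivation and super-coderivation of degree $-1$ squaring to zero on $A\rbiprod B_{\theta^*}(\Lambda^1)$, and then to invert the construction for the converse. I would begin at first order: the restriction of the stated formula to $v\in\Lambda^1$ is $i(v)=\<\theta^*,v_0\>v_1-\<\theta^*,v\>1_A$. The right-invariance $\<\theta^*,v\ra a\>=\epsilon(a)\<\theta^*,v\>$ combined with the crossed-module identity $(v\ra a)_0\tens(v\ra a)_1=v_0\ra a\t\tens Sa\o v_1 a\th$ yields $Sa\o i(v)a\t=i(v\ra a)$; coassociativity of $\Delta_R$ against $\Delta$ gives $\Delta i(v)=i(v_0)\tens v_1+1\tens i(v)$; and counity gives $\epsilon\circ i=0$, so $i(\Lambda^1)\subseteq A^+$. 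Lemma~\ref{ilambda} then yields the first order bicovariant codifferential calculus, coinner in $\theta^*$ by construction.

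Next I would extend $i$ by the stated formula to higher degrees and observe that this is exactly the coinner formula $i(w)=\<\theta^*,w^1\>w^2+(-1)^{|w|}w^1\<\theta^*,w^2\>$ applied inside the bosonisation coproduct $\Delta(a\rbiprod b)=a\o\rbiprod (b^1)_0\tens a\t(b^1)_1\rbiprod b^2$ on $A\rbiprod\Sh_-(\Lambda^1)$: because $\theta^*$ vanishes outside degree one, only the shuffle deconcatenation terms with $|b^1|=1$ (for the left term) or $|b^2|=1$ (for the right term) survive, and after collapsing the surrounding counits the stated formula drops out. Coinnerness therefore makes $i$ a super-coderivation automatically, while the super-derivation property follows from a direct calculation comparing $i$ applied to a product in $A\rbiprod\Sh_-(\Lambda^1)$ with the super-Leibniz expansion, using the right-invariance of $\theta^*$ and the crossed-module braiding entering the shuffle product $\bullet_-$.

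The crucial step is that $i^2=0$ and that $i$ preserves the subalgebra $A\rbiprod B_{\theta^*}(\Lambda^1)$. On a pure tensor $a\tens v_1\tens\cdots\tens v_n$ the square $i^2$ splits into four contributions $i_Li_L,\,i_Li_R,\,i_Ri_L,\,i_Ri_R$ according to which end each application acts on. The two mixed terms both produce $\pm\<\theta^*,(v_1)_0\>\<\theta^*,v_n\>a(v_1)_1\tens v_2\tens\cdots\tens v_{n-1}$ with the opposite signs $(-1)^{n-1}$ and $(-1)^n$, so they cancel identically. The remaining $i_L^2+i_R^2$ is then shown to vanish on $b\in B_{\theta^*}(\Lambda^1)$ by the defining relations: (i)$=$(ii) at the leftmost splitting $r_1=0$ rewrites $\<\theta^*,(v_1)_0\>\<\theta^*,(v_2)_0\>(v_1)_1(v_2)_1$ as $\<\theta^*,v_1\>\<\theta^*,v_2\>1_A$, and (ii)$=$(iii) at $r_4=0$ then identifies the resulting $\<\theta^*,v_1\>\<\theta^*,v_2\>(v_3\cdots v_n)$ with $(v_1\cdots v_{n-2})\<\theta^*,v_{n-1}\>\<\theta^*,v_n\>$, which matches $-i_R^2$ up to moving $a$ past a scalar. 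The main obstacle is the separate assertion that $i$ restricts $A\rbiprod B_{\theta^*}^n\to A\rbiprod B_{\theta^*}^{n-1}$; this uses the intermediate cases $0<r_1,r_4<n-2$ of the defining relations and is best handled by phrasing the whole computation in terms of the iterated coproduct $\underline\Delta^{(3)}$ of $\Sh_-(\Lambda^1)$ applied once to $b$, so that the $B_{\theta^*}$ membership conditions are read directly in the ambient $\Sh_-\tens A\tens\Sh_-$ rather than pure-tensor by pure-tensor.

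For the converse, bosonisation writes the given $(\Omega,i)$ as $A\rbiprod\Lambda$ for a graded braided-super Hopf algebra $\Lambda$ in the category of right $A$-crossed modules with $\Lambda^0=k$, and the cogeneration hypothesis transfers to $\Lambda$ being cogenerated by $\Lambda^1$. Since $\Sh_-(\Lambda^1)$ is cofree in the braided-super Hopf algebra sense among graded objects cogenerated in degree one, the identity on $\Lambda^1$ extends uniquely to an injective morphism $\Lambda\hookrightarrow\Sh_-(\Lambda^1)$ of graded braided-super Hopf algebras. The coinnerness of $i$ in $\theta^*$ together with $i^2=0$ on $\Omega$, read backwards against the forward-direction unpacking, translate precisely into the membership conditions defining $B_{\theta^*}(\Lambda^1)$, so the image lands in this sub-braided-super Hopf algebra and identifies $\Omega$ with a sub-codifferential exterior algebra of $A\rbiprod B_{\theta^*}(\Lambda^1)$.
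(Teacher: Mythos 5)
Your proposal is correct and follows essentially the same route as the paper: the coinner form gives the super-(co)derivation properties, $i^2=0$ reduces to the defining membership conditions of $B_{\theta^*}(\Lambda^1)$ via the four-term splitting, and the converse goes through the cotensor-coalgebra/bosonisation identification $\Omega\hookrightarrow A\rbiprod\Sh_-(\Lambda^1)$ with coinnerness and $i^2=0$ forcing the image into $B_{\theta^*}(\Lambda^1)$. You are in fact slightly more careful than the paper in separately verifying that $i$ preserves the subalgebra $A\rbiprod B_{\theta^*}(\Lambda^1)$, a point the paper's proof leaves implicit.
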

\proof It is easy to see that $i$ is a degree $-1$ map, both a super-derivation and super-coderivation as it's in 'coinner' form. The condition $i^2=0$ asks for $\<\theta^*\tens\theta^*,w^1\tens w^2\>w^3=w^1\<\theta^*\tens\theta^*,w^2\tens w^3\>$ for any $w\in A\rbiprod B_{\theta^*}(\Lambda^1),$ which is covered by the defining conditions of $B_{\theta^*}(\Lambda^1).$

Conversely, suppose $(\Omega,i)$ is a coinner strongly bicovariant codifferential exterior algebra on a Hopf algebra A, we say it is `cogenerated by its degree $0$ and $1$' if there is a coalgebra embedding $j$ from $\Omega$ to $\mathrm{CoT}_A{\Omega}^1$ the cotensor coalgebra and $j$ is the identity map when restricted to $A$ and $\Omega^1.$ Recall that the cotensor coalgebra $\mathrm{CoT}_A\Omega^1$ defined by a coalgebra $A$ and its bicomodule $\Omega^1$ is the graded dual construction to the tensor algebra. This graded coalgebra $\mathrm{CoT}_A{\Omega}^1$ admits a graded-super Hopf algebra structure (unique up to isomorphism) induced from the $A$-Hopf bimodule structure of $\Omega^1,$ and one can show by the universal property of the cotensor coalgebra that $j$ is a graded super-Hopf algebra embedding. By the  super version of Radford's theorem cf.\cite{Rad:pro,Ma:skl}, there is a super-Hopf algebra isomorphism $\varphi$ from $\mathrm{CoT}_A{\Omega^1}$ to $A\rbiprod \Sh_-(\Lambda^1)$ for crossed module $\Lambda^1={}^{co A}\Omega^1.$ Hence $\varphi\circ j(\Omega)=A\rbiprod\Lambda$ for some sub-braided-super Hopf algebra $\Lambda$ of $\Sh_-(\Lambda^1).$ In particular, $\varphi\circ j(\Omega^1)=A\tens\Lambda^1,$ meaning $\varphi\circ j$ provides the Hopf module isomorphism.

Now translate the coinner codifferential structure $i$ of $\Omega$ to $A\rbiprod\Lambda.$ Without loss of generality we can assume $\theta^*\in\Lambda^{1*}$ and we suppose as stated that this is also right invariant. One can see that $i$ then necessarily has the same form as  displayed in the statement for $A\rbiprod B_{\theta*}(\Lambda^{1*}).$ The condition that $i^2(w)=0$ for any element $w\in\Omega$ requires that $\<\theta^*\tens\theta^*,(\eta^1)_0\tens(\eta^2)_0\>(\eta^1)_1(\eta^2)_1\tens\eta^3=1_A\tens\eta^1\<\theta^*\tens\theta^*,\eta^2\tens\eta^3\>$ for any $\eta\in\Lambda.$ Note that the coproduct $\underline{\Delta}=(\ )^1\tens(\ )^2$ and the coaction $\Delta_R(\ )=(\ )_0\tens (\ )_1$ of $\Lambda$ are those of $\Sh_-(\Lambda^1).$  Then using the formula above, one can show that any element of $\Lambda$ belongs in $ B_{\theta^*}(\Lambda^1),$ i.e. $\Lambda\subseteq B_{\theta^*}(\Lambda^1)\subseteq \Sh_-(\Lambda^1).$ This finishes the proof.\endproof

This says that the sub-braided-super Hopf algebra $B_{\theta^*}(\Lambda^1)$ is the `maximal' one contained in $\Sh_-(\Lambda^1)$ that is compatible with a coinner codifferential structure.

\subsection{Augmented bicovariant differential structures}

We now consider both differential and codifferential structures at the same time, a self-dual concept.

\begin{theorem}\label{aug} Let $(\Omega,\extd,i)$ be a strongly bicovariant differential exterior algebra equipped with a degree $-1$ super-derivation and super-coderivation $i:\Omega\to \Omega$ with $i^2=0$ (we say $(\Omega,\extd)$ is {\em augmented}). Then (1) $\CL=\extd i + i \extd$ is a degree zero derivation and coderivation of the super-Hopf algebra structure and commutes with $i,\extd$. (2) If $(\Omega,\extd)$ is inner via $\theta\in \Omega^1$ then $\CL=[i(\theta),\ ]$ is also inner.  \end{theorem}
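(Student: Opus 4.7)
The plan is to verify everything in (1) by direct computation from the super-derivation/super-coderivation and square-zero properties of $\extd$ and $i$, then obtain (2) as an immediate consequence of the Leibniz rule for $i$.

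For (1), the degree claim is immediate since $\extd i$ and $i\extd$ both raise degree by $1+(-1)=0$. That $\CL$ commutes with $\extd$ and with $i$ follows from $\extd^2=0$ and $i^2=0$: one computes $\extd\CL = \extd i\extd = \CL\extd$ and $i\CL = i\extd i = \CL i$. For the derivation property I would take $u\in\Omega^n$ and $v\in \Omega$ and expand
\begin{align*}
\extd i(uv) &= (\extd iu)v + (-1)^{n-1}(iu)(\extd v) + (-1)^n(\extd u)(iv) + u(\extd iv),\\
i\extd(uv) &= (i\extd u)v + (-1)^{n+1}(\extd u)(iv) + (-1)^n(iu)(\extd v) + u(i\extd v),
\end{align*}
using the two super-Leibniz rules twice. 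Adding, the mixed terms $(iu)(\extd v)$ cancel because $(-1)^{n-1}+(-1)^n=0$, and similarly for $(\extd u)(iv)$, leaving
\[ \CL(uv) = (\CL u)v + u(\CL v), \]
i.e.\ a genuine (ungraded) derivation, as expected since $|\CL|=0$. The coderivation property is obtained by the arrow-reversed version of the same computation, applying the super-coderivation identities
\[ \Delta\circ \extd = (\extd\tens\id + (-1)^{|\ |}\tens \extd)\Delta,\quad \Delta\circ i = (i\tens\id + (-1)^{|\ |}\tens i)\Delta \]
twice and noting that the cross-terms cancel by the same sign combinatorics; hence $\Delta\CL = (\CL\tens\id + \id\tens \CL)\Delta$.

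For (2), assume $\extd = [\theta,\ \}$ with $\theta\in\Omega^1$. For $u\in\Omega^n$, using $|\theta|=1$ and the super-derivation rule for $i$,
\[ i\extd u = i(\theta u) - (-1)^n i(u\theta) = (i\theta)u - \theta(iu) - (-1)^n(iu)\theta - u(i\theta), \]
\[ \extd iu = \theta (iu) + (-1)^n (iu)\theta \]
(the last using that $iu$ has degree $n-1$). Adding these two expressions the $\theta(iu)$ and $(iu)\theta$ terms cancel, leaving
\[ \CL u = (i\theta)u - u(i\theta) = [i(\theta),u], \]
which is an ordinary commutator because $i\theta\in \Omega^0=A$ has degree zero; thus $\CL$ is inner via $i(\theta)\in A$.

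The whole argument is essentially a careful super-sign bookkeeping exercise, so the main (very mild) obstacle is simply tracking the signs consistently in the derivation and coderivation computations; the cancellations then take care of themselves, and (2) is a one-line consequence of the super-Leibniz rule once (1) is in place.
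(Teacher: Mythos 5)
Your proposal is correct and follows essentially the same route as the paper: expand $\CL(uv)$ using the two super-Leibniz rules and observe that the cross-terms cancel in pairs, obtain the coderivation property by arrow-reversal of the same sign bookkeeping, deduce commutation with $\extd,i$ from $\extd^2=i^2=0$, and derive (2) directly from the super-derivation property of $i$ applied to $\extd=[\theta,\ \}$ (the paper simply calls this step ``immediate''; your explicit computation confirms it).
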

\proof (1) For the derivation property of $\CL$ we expand $\CL(\omega\omega')=(i\extd+\extd i)(\omega\omega')$ using the derivation properties of $i,\extd$, cancel some terms and arrive as $\CL(\omega)\omega'+\omega \CL(\omega')$. The coderivation property follows by arrow-reversal of this calculation but can also be seen directly from the super-coderivation properties of $i,\extd$ (when doing it explicitly one should note that $(-1)^{i(\omega)\o}i(\omega)\o\tens i(\omega)\t=(-1)^{|i(\omega\o)|}i(\omega\o)\tens \omega\t+ (-1)^{|\omega\o|}\omega\o\tens i(\omega\t)=(-1)^{|\omega\o|}(-i(\omega\o)\tens \omega\t+ \omega\o\tens i(\omega\t))$, i.e. there is an extra minus sign from the degree of $i$ but only in one of the terms). In this way one shows $\Delta \CL=(\CL\tens\id+\id \tens \CL)\Delta$. Clearly $\CL\extd=\extd i \extd =\extd \CL$ by $\extd^2=0$, and similarly for $i$. (2) This is immediate if we assume $\extd=[\theta,\ \}$.  \endproof

We similarly define an augmented generalised first order bicovariant differential calculus as $(\Omega^1,\extd, i)$,  the two structures together defining $\CL$ on degree $0,1$. Note also that $i:\Omega^1\to A$ is a bimodule map so one should think of $i$ geometrically as interior product by a preferred vector field (and $\CL$ the Lie derivative along it). In the standard case of $\bar\Omega$ the existence of $\CL$ is equivalent to that of $i$ since given $\CL$ we can define $i$ inductively by $i(\extd\omega)=\CL(\omega)-\extd i(\omega)$. Also in this case $\CL$ if it exists is determined by $\CL$ on degree zero, i.e. by a single derivation $\CL$ of $A$ that extends to the bicovariant exterior algebra.  However, many Hopf algebras in noncommutative geometry have few derivations and even fewer that are also coderivations, so typically we may have $\CL=0$ and hence $i$ zero on $\bar\Omega$. That is why we need the more general setting of generalised differentials for this to be useful.

At the level of first order differential calculi the data for an augmented strongly bicovariant one are $(\Lambda^1,\omega,i)$ where $\omega:A^+\to\Lambda^1$ and $i:\Lambda^1\to A^+$ are morphisms for the appropriate right $A$-crossed module structures on $A^+$. This merely superposes Theorem~\ref{Hopfcalc} and Lemma~\ref{ilambda}.  We now consider the question of when the first order data extend to higher orders.

For the strongly bicovariant `generalised Woronowicz exterior algebra' in Proposition~\ref{genworon},  a given morphism $i:\Lambda^1\to A^+$ does not automatically extend to higher degrees. For example, for degree two one needs $\sum \eta\ra i(\zeta)=0$ for all $\sum \eta\tens\zeta\in \ker(\id-\Psi)$.
For the strongly bicovariant `universal calculus' in Proposition~\ref{univ} we have:

\begin{proposition}\label{univaug}
In the setting of Proposition~\ref{univ}, suppose $i:\Lambda^1\to A^+$ is an $A$-crossed module map as in Lemma~\ref{ilambda}. Then $\Omega_\theta(A)$ is augmented by $i$  if and only if $\theta\ra i(\theta)$ is graded central in $\Omega_\theta(A)$.
\end{proposition}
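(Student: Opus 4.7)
The strategy is to apply Proposition~\ref{tensor-codiff} to extend $i:\Lambda^1\to A^+$ to a strongly bicovariant codifferential on $A\rbiprod T_-\Lambda^1$, and then to determine when this extension descends to the quotient $\Omega_\theta(A) = A\rbiprod\Lambda_\theta(\Lambda^1)$. The quotient is by the super-Hopf ideal $J$ generated by the two families $\theta^2\ra a$ ($a\in A^+$) and $[\theta^2,\eta]$ ($\eta\in\Lambda^1$); since the super-derivation, super-coderivation and $i^2=0$ properties descend along any Hopf-ideal quotient, the entire question reduces to $i(J)\subseteq J$, which by the super-Leibniz rule reduces further to checking that $i$ sends the two families of generators into $J$.

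A preliminary observation is that the assumption $\Delta_R\theta=\theta\otimes 1$ of Proposition~\ref{univ} combined with the crossed-module compatibility $\Delta i(\theta) - 1\otimes i(\theta) = i(\theta_0)\otimes\theta_1$ of Lemma~\ref{ilambda} forces $c := i(\theta)$ to be primitive in $A$. Using this primitivity and the smash product formulas $c\theta = c\otimes\theta$ and $\theta c = c_{(1)}\otimes \theta\ra c_{(2)} = c\otimes\theta + 1\otimes\theta\ra c$, the super-Leibniz rule yields
\[ i(\theta^2) = c\theta - \theta c = -\theta\ra c = -\theta\ra i(\theta), \]
a degree-one element of $\Lambda^1\subset\Omega_\theta$.

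Since $i(A)=0$, the super-derivation $i$ is automatically an $A$-bimodule map, so $i(\theta^2\ra a) = Sa_{(1)} i(\theta^2) a_{(2)} = -(\theta\ra c)\ra a$, which (as $J$ has no degree-one part) lies in $J$ iff $(\theta\ra c)\ra a=0$ in $\Lambda^1$ for all $a\in A^+$; by the smash product bimodule formulas this is equivalent to $\theta\ra c$ commuting with every $a\in A$ in $\Omega_\theta$. For the second family, the super-Leibniz rule gives
\[ i([\theta^2,\eta]) = \{i(\theta^2),\eta\} + [\theta^2, i(\eta)] = -\{\theta\ra c,\eta\} + [\theta^2, i(\eta)], \]
and $[\theta^2, b] = b_{(1)}\otimes\theta^2\ra\pi(b_{(2)}) \in J$ for every $b\in A$ by the first family of relations, so the last term is automatically in $J$; descent on this family is thus equivalent to $\{\theta\ra c,\eta\}=0$ in $\Omega_\theta$ for every $\eta\in\Lambda^1$. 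The two conditions together say precisely that $\theta\ra i(\theta)$ supercommutes with each algebra generator in $A\cup\Lambda^1$, hence is graded central in $\Omega_\theta$. The converse direction is the same computation read in reverse: if $i$ augments $\Omega_\theta$ then the vanishing of $\theta^2\ra a$ and $[\theta^2,\eta]$ in $\Omega_\theta$ together with the super-Leibniz rule immediately yields the graded centrality. The only substantive step is the identity $i(\theta^2) = -\theta\ra i(\theta)$, which fails without the primitivity of $i(\theta)$; once this is in hand, the rest is direct bookkeeping with the smash-product multiplication and the definition of $J$.
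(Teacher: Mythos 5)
Your proof is correct and follows essentially the same route as the paper's: extend $i$ to $A\rbiprod T_-\Lambda^1$ via Proposition~\ref{tensor-codiff}, reduce the descent to the quotient to checking $i$ on the two families of ideal generators, and use primitivity of $i(\theta)$ to obtain the key identity $i(\theta^2)=-\theta\ra i(\theta)$. Your treatment of the $[\theta^2,\eta]$ generators (observing that $[\theta^2,i(\eta)]$ already lies in the ideal by the first family of relations) is slightly more explicit than the paper's, but the argument is the same.
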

\proof Graded-central means $[\theta\ra i(\theta),\eta\}=0$ for all $\eta\in\Omega_\theta(A)$. Since $\Omega_\theta(A)$ is a quotient of $A\rbiprod T_-\Lambda^1$ module by some Hopf ideal $J$ of $T_-\Lambda^1.$ It suffices to show that $i(A.J)=0$ is equivalent to stated condition by Proposition~\ref{tensor-codiff}. By construction, the former is equivalent to $i(\theta^2 a)=i(a\theta^2)$ and $i(\theta^2\eta)=i(\eta\theta^2)$ for any $a\in A,\ \eta\in T_-\Lambda^1/J,$ which equivalent to $i(\theta^2)a=ai(\theta^2)$ and $i(\theta^2)\eta=(-1)^{|\eta|}\eta i(\theta^2)$ for any $a\in A$ and $\eta\in T_-\Lambda^1/J$ as $i$ defined as super-derivation. Now, note that $i(\theta)$ is primitive in $A,$ so $i(\theta^2)=i(\theta).\theta-\theta.i(\theta)=i(\theta)\theta-i(\theta)_1\theta\ra i(\theta)_2=i(\theta)\theta-i(\theta)\theta-\theta\ra i(\theta)=-\theta\ra i(\theta).$ We obtain the condition displayed. \endproof

Finally, combining the differential in Proposition~\ref{shuffle-diff} with the codifferential exterior algebra in Proposition~\ref{subshuffle-codiff}, we have
\begin{proposition}\label{subshuffle-aug} In the setting of Proposition~\ref{subshuffle-codiff}, a  right $A$-crossed module map $\omega:A^+\to \Lambda^1$ makes $A\rbiprod B_{\theta^*}(\Lambda^1)$ an augmented strongly bicovariant exterior algebra with $\extd$ inherited from $A\rbiprod \Sh_-(\Lambda^1)$ if and only if
\begin{gather*}
\<\theta^*\tens\theta^*,w_0\tens \tilde{\omega}(w_1)\>w_2=\<\theta^*\tens\theta^*,w_0\tens \tilde{\omega}(w_1)\>,\\
\<\theta^*\tens\theta^*,(v_1)_0\tens \tilde{\omega}((v_1)_1)\> v_2\tens\cdots\tens v_n=(-1)^{n-1}v_1\tens\cdots\tens v_{n-1}\<\theta^*\tens\theta^*,(v_n)_0\tens \tilde{\omega}((v_n)_1)\>
\end{gather*}
for any $w\in \Lambda^1\subseteq B_{\theta^*}(\Lambda^1)$ and $v_1\tens\cdots\tens v_n\in B_{\theta^*}(\Lambda^1).$
\end{proposition}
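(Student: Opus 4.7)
My plan is to first reduce the statement to showing that the degree $+1$ super-derivation $\delta:\Sh_-(\Lambda^1)\to\Sh_-(\Lambda^1)$ of Proposition~\ref{shuffle-diff} preserves the sub-braided-super Hopf algebra $B_{\theta^*}(\Lambda^1)$. Writing the formula of Proposition~\ref{shuffle-diff} as
\[ \extd(a\tens\eta)=a_{(1)}\tens\tilde\omega(a_{(2)})\bullet_{-}\eta+a\tens\delta(\eta),\]
the first summand automatically lies in $A\rbiprod B_{\theta^*}(\Lambda^1)$ because $\tilde\omega(a_{(2)})\in\Lambda^1\subseteq B_{\theta^*}(\Lambda^1)$ and $B_{\theta^*}(\Lambda^1)$ is closed under $\bullet_{-}$. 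Hence $\extd$ descends to $A\rbiprod B_{\theta^*}(\Lambda^1)$ iff $\delta(\eta)\in B_{\theta^*}(\Lambda^1)$ for every $\eta\in B_{\theta^*}(\Lambda^1)$. Once this is in place, the super-derivation, super-coderivation and $\extd^2=0$ properties of $\extd$ and $i^2=0$ for $i$ are inherited from Propositions~\ref{shuffle-diff} and \ref{subshuffle-codiff}, so $(A\rbiprod B_{\theta^*}(\Lambda^1),\extd,i)$ is automatically augmented.

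The core of the argument is then to substitute $\delta(\eta)$ into the three defining identities of $B_{\theta^*}(\Lambda^1)$. For this I would compute the iterated coproduct $(\delta\eta)^1\tens(\delta\eta)^2\tens(\delta\eta)^3\tens(\delta\eta)^4$ by applying (\ref{delta}) of Theorem~\ref{theorem-bi} three times; each application breaks the result into summands that either keep $\delta$ on one of the existing coproduct factors $\eta^i$ or insert a new $\tilde\omega$ after a coaction. Pairing against $\theta^*\tens\theta^*$ in the middle two slots and collecting terms, the summands where $\delta$ acts only on $\eta^i$ (without a $\tilde\omega$-insertion adjacent to the paired slots) reproduce the $B_{\theta^*}$-identities for $\eta$ itself applied inside a larger expression, and hence cancel by the assumption $\eta\in B_{\theta^*}(\Lambda^1)$. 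What remains are precisely the terms where the extra $\tilde\omega$-insertion sits in one of the two slots paired with $\theta^*$.

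For the base case $\eta=w\in\Lambda^1$, $\delta(w)=-w_0\tens\tilde\omega(w_1)$ and only the length-two shuffle-coproduct term placing $(w_0,\tilde\omega(w_1))$ into the two middle slots survives the $\theta^*\tens\theta^*$ pairing; right-invariance of $\theta^*$ then collapses the three defining identities of $B_{\theta^*}$ to the single requirement which is the first displayed condition. For $\eta=v_1\tens\cdots\tens v_n\in B_{\theta^*}^n(\Lambda^1)$ with $n\ge 2$ the recursion for $\delta$ in Proposition~\ref{shuffle-diff} and the assumption $\eta\in B_{\theta^*}^n$ reduce the residual conditions to the contributions where $\tilde\omega$ has been inserted adjacent to $v_1$ or to $v_n$; comparing these two contributions gives exactly the second displayed identity. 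The converse is the same calculation read in reverse, proceeding by induction on degree using the super-derivation property of $\delta$.

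The main obstacle is the sign and bookkeeping in the iteration of (\ref{delta}): one must organise the $\binom{n+3}{3}$-many summands of $\underline\Delta{}^3(\delta\eta)$ according to where the extra $\tilde\omega$-insertion lands and then isolate those that genuinely survive the $\theta^*\tens\theta^*$-pairing in the middle slots. Once the shuffle-theoretic pattern of $\underline\Delta{}^3$ on $\Sh_-(\Lambda^1)$ is laid out systematically, the two displayed identities emerge as the minimal conditions ensuring that every $\tilde\omega$-insertion summand respects the three defining identities of $B_{\theta^*}(\Lambda^1)$, and no additional identities are forced.
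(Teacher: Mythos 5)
Your proposal is correct and follows essentially the same route as the paper: the paper's (very terse) proof consists precisely of the observation that $\extd$ descends to $A\rbiprod B_{\theta^*}(\Lambda^1)$ iff $\delta(B_{\theta^*}(\Lambda^1))\subseteq B_{\theta^*}(\Lambda^1)$, which is equivalent to the two displayed conditions. Your additional remarks --- that the $a_{(1)}\tens\tilde\omega(a_{(2)})\bullet_-\eta$ term is automatic since $B_{\theta^*}(\Lambda^1)$ is a subalgebra containing $\Lambda^1$, and that the augmentation properties are then inherited by restriction --- supply detail the paper omits but do not change the argument.
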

\proof The map $\delta$ defined in Proposition~\ref{shuffle-diff} such that $\delta(B_{\theta^*}(\Lambda^1))\subseteq B_{\theta^*}(\Lambda^1)$ if and only if the conditions displayed in the statement hold. \endproof

We conclude with an elementary example, extending Proposition~\ref{Ug}. Thus, let $\cg$ be a Lie algebra and $(\ra, \Lambda^1)$ a right representation of $\cg$ regarded as a crossed module with trivial coaction and let $\omega:\cg\to \Lambda^1$ be a 1-cocycle extended to a crossed module morphism $U(\cg)^+\to \Lambda^1$ as in Proposition~\ref{Ug}.

\begin{proposition}\label{Ugclass} The `generalised Woronowicz calculus' extending Proposition~\ref{Ug} is a strongly  bicovariant  differential exterior algebra $\Omega(U(\cg))=U(\cg)\rcross\Lambda(\Lambda^1)$ with the usual  Grassmann algebra $\Lambda$ and $\delta=0$. Augmentations on this correspond to $\cg$-module maps $i: \Lambda^1\to \cg$, where $\cg$ has the right adjoint action, obeying  $v\ra i(v)=0$ for all $v\in \Lambda^1$. The Lie derivative is given by $\CL\xi=i\circ\omega(\xi)$ and $\CL v=\omega\circ i(v)$ on $\xi\in\cg$ and $v\in \Lambda^1$.  The canonical choice is $\Lambda^1=\cg$ with augmentation $i:\cg\hookrightarrow U(\cg)^+$. \end{proposition}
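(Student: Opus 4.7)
The plan is first to identify the universal braided-super Hopf algebra with its classical counterpart, then to check the axioms at low degree and propagate them. Since $U(\cg)$ is cocommutative and the coaction on $\Lambda^1$ is trivial, the braiding on the category of right $U(\cg)$-crossed modules restricts on powers of $\Lambda^1$ to the ordinary transposition, so $B_-(\Lambda^1)$ coincides with the classical Grassmann algebra $\Lambda(\Lambda^1)$, regarded as an ordinary super-Hopf algebra with primitive generators and $\cg$ acting by derivations. Applying Theorem~\ref{theorem-bi} with $\delta=0$, equation~(\ref{delta}) holds trivially: every term on the right vanishes, the only potentially surviving one collapsing to $(-1)^{|\eta^1|}\eta^1\tens\omega(\pi(1))\eta^2=0$. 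The compatibility from Proposition~\ref{extalg} evaluated on a primitive $a=\xi\in\cg$ reads $\omega(\xi)\eta-(-1)^{|\eta|}\eta\omega(\xi)=0$, which is exactly the super-commutativity of $\Lambda(\Lambda^1)$; higher $a$ follow by induction from the right $U(\cg)$-module algebra structure, together with $\extd^2=0$ reducing on primitive generators to $\omega(\xi)\omega(\zeta)+\omega(\zeta)\omega(\xi)=0$.

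For the classification of augmentations I would invoke Lemma~\ref{ilambda}: a first-order bicovariant codifferential corresponds to an $A$-crossed module map $i\colon\Lambda^1\to U(\cg)^+$, where $U(\cg)^+$ carries the adjoint-side structure~(\ref{Acocrossed}). Triviality of the coaction on $\Lambda^1$ then forces $\Delta i(v)=i(v)\tens 1+1\tens i(v)$, so $i(v)$ is primitive, i.e.\ $i(v)\in\cg$. Evaluating the module condition $i(v\ra b)=Sb\o i(v)b\t$ on $b=\xi\in\cg$ reduces to $i(v\ra\xi)=[i(v),\xi]$, so $i$ is a right $\cg$-module map into $\cg$ with its adjoint action, as claimed.

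Next I would extend $i$ to higher degrees. By Proposition~\ref{tensor-codiff} the universal super-derivation extension of $i$ is well-defined on $U(\cg)\rbiprod T_-\Lambda^1$, and the question becomes whether it descends to the Grassmann quotient. A direct computation on a quadratic relation $v\tens w+w\tens v$, using the smash-product rule $v\cdot i(w)=i(w)\tens v+1\tens v\ra i(w)$ (valid because $i(w)\in\cg$ is primitive), yields
\[
i(v\tens w+w\tens v)=-1\tens\bigl(v\ra i(w)+w\ra i(v)\bigr),
\]
which lies in degree one. Vanishing of this for all $v,w\in\Lambda^1$ is equivalent by polarisation to the stated condition $v\ra i(v)=0$. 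The super-derivation property then propagates vanishing across arbitrary products so the entire Grassmann ideal is preserved, and $i^2=0$ descends from Proposition~\ref{tensor-codiff}.

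Finally I would read off $\CL=\extd i+i\extd$ on generators: for $\xi\in\cg$ we have $i(\xi)=0$ and $\extd\xi=\omega(\xi)$, giving $\CL\xi=i\circ\omega(\xi)$; for $v\in\Lambda^1$, $\delta=0$ gives $\extd v=0$ while $i(v)\in\cg$ yields $\extd i(v)=\omega\circ i(v)$, so $\CL v=\omega\circ i(v)$. For the canonical case $\Lambda^1=\cg$ with $i$ the identity, the obstruction reduces to $v\ra i(v)=[v,v]=0$, which is automatic. The main obstacle of the argument is the descent in the third step: although the universal codifferential lives on the full tensor algebra, its compatibility with the antisymmetric Grassmann relations must be verified, and this is precisely where the condition $v\ra i(v)=0$ enters as the sole extra datum beyond the first order.
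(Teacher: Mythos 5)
Your proposal is correct and follows essentially the same route as the paper: identify $B_-(\Lambda^1)$ with the Grassmann algebra via the trivial braiding, verify $\delta=0$ satisfies Proposition~\ref{extalg} and Theorem~\ref{theorem-bi}, derive the form of $i$ from Lemma~\ref{ilambda}, obtain $v\ra i(v)=0$ from degree two, and read off $\CL$ on generators. Your explicit descent of the codifferential from $T_-\Lambda^1$ to the Grassmann quotient merely spells out the paper's ``one can check that $i$ then extends as a super-derivation,'' and the computation is right.
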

\proof The exterior algebra in the inner case is clear from Proposition~\ref{genworon} as the coaction is trivial, so $\extd=[\theta,\ \}$ for any $\theta\in\Lambda^1$ and $B_-(\Lambda^1)$ is the usual exterior algebra of a vector space, i.e. the Grassmann algebra. However, it is easy to see that $\delta=0$ still meets the requirements of Proposition~\ref{extalg} and Theorem~\ref{theorem-bi} to give a strongly bicovariant differential exterior algebra even if the cocycle is not a coboundary, i.e. if we are not in the inner case.  Then $\Omega$ as super-bosonisation of this is the cross product algebra and tensor product coalgebra. For a coderivation we seek $i:\Lambda^1\to U(\cg)^+$ so that $\Delta i(v)=i(v)\tens 1+1\tens i(v)$ and $i(v\ra\xi)=[i(v),\xi]$ for all $v\in \Lambda^1, \xi\in\cg$. This dictates the form of $i$ as stated. To extend this to $\Lambda^2$ we require $v\ra i(v)=0$ for all $v\in \Lambda^1$, and one can check that $i$ then extends as a super-derivation. This means a coderivation is given as stated and the Lie derivative is from the general theory as in Theorem~\ref{aug}.  In the inner case the Lie derivative stated reduces correctly to $\CL=[i(\theta),\ ]$. \endproof

\subsection{Duality results}

We now give some general results on the duality of augmented strongly bicovariant differential calculi. We start at first order:

\begin{lemma}\label{dualcodif} In the finite-dimensional case, if $(\Lambda^1,i)$ defines a bicovariant codifferential structure on $A$ then $(\Lambda^1{}^*,i^*)$ defines a generalised first order differential structure on $A^*$. The latter being inner via $\theta^*\in \Lambda^1{}^*$ corresponds to $i$ being coinner via $\theta^*$. 
\end{lemma}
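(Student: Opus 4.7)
The plan is to dualise the data $(\Lambda^1,i)$ of Lemma~\ref{ilambda} to produce the data $(\Lambda^{1*},i^*)$ of Theorem~\ref{Hopfcalc} on $A^*$, and then to match innerness with coinnerness separately.

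First, since $\Lambda^1$ is finite-dimensional, its right $A$-action dualises to a right $A^*$-coaction on $\Lambda^{1*}$ determined by $\langle \psi^{(0)},v\rangle \langle \psi^{(1)},a\rangle = \langle \psi, v\ra a\rangle$, and its right $A$-coaction dualises to a right $A^*$-action on $\Lambda^{1*}$ determined by $\langle \psi\ra \phi, v\rangle = \langle \psi, v_0\rangle \langle \phi, v_1\rangle$. A direct check (or appeal to the standard equivalence of finite-dimensional right $A$-crossed modules with right $A^*$-crossed modules) shows that these satisfy the crossed module compatibility, making $\Lambda^{1*}$ a right $A^*$-crossed module.

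Second, I would identify the transpose of the crossed module structure (\ref{Acocrossed}) on $A^+$ (the one used in Lemma~\ref{ilambda}) with the crossed module structure (\ref{Acrossed}) on $(A^*)^+$ (the one used in Theorem~\ref{Hopfcalc}, applied to the Hopf algebra $A^*$). Concretely, the right adjoint action $a\ra b=(Sb\o)ab\t$ of (\ref{Acocrossed}) dualises to the right adjoint coaction $\phi\mapsto \phi\t\tens S\phi\o\phi\th$ of (\ref{Acrossed}) over $A^*$, while the restricted-coproduct coaction $\Delta-1\tens\id$ of (\ref{Acocrossed}) dualises to the multiplication of $(A^*)^+$, which is the regular action of (\ref{Acrossed}) over $A^*$. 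Consequently the crossed module morphism $i:\Lambda^1\to A^+$ transposes to a crossed module morphism $i^*:(A^*)^+\to \Lambda^{1*}$, and Theorem~\ref{Hopfcalc} then produces the required generalised first order bicovariant differential calculus $(\Lambda^{1*},i^*)$ on $A^*$.

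For the innerness correspondence, suppose $i$ is coinner via $\theta^*\in\Lambda^{1*}$. Then for $\phi\in (A^*)^+$ (so $\phi(1_A)=0$) and $\eta\in\Lambda^1$,
\[ \langle i^*(\phi),\eta\rangle = \phi(i(\eta)) = \langle\theta^*,\eta_0\rangle\langle\phi,\eta_1\rangle - \langle\theta^*,\eta\rangle \phi(1_A) = \langle\theta^*\ra \phi,\eta\rangle, \]
which by Lemma~\ref{propinner} applied to $A^*$ is exactly the statement that $(\Lambda^{1*},i^*)$ is inner via $\theta^*$. Conversely, if $i^*$ is inner via $\theta^*$, then $\phi(i(\eta))=\phi(\langle\theta^*,\eta_0\rangle\eta_1)$ for all $\phi\in (A^*)^+$; since these $\phi$ span the annihilator of $k\cdot 1_A$ in $A$, the element $i(\eta) - \langle\theta^*,\eta_0\rangle\eta_1$ is a multiple of $1_A$, and this multiple is pinned down by applying $\epsilon_A$, using $\epsilon_A\circ i =0$ and the counit axiom for $\Delta_R$, to recover exactly the coinner formula of Lemma~\ref{ilambda}. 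The main obstacle I anticipate is keeping straight the two distinct crossed module structures (\ref{Acrossed}) and (\ref{Acocrossed}) on $A^+$ and their swap under duality; once that bookkeeping is in place, the rest is routine transposition of linear maps between finite-dimensional Hopf-module categories.
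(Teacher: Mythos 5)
Your proposal is correct and follows essentially the same route as the paper: dualise the action/coaction pair on $\Lambda^1$ to a right $A^*$-crossed module structure on $\Lambda^{1*}$, observe that the crossed module (\ref{Acocrossed}) on $A^+$ transposes to the crossed module (\ref{Acrossed}) on $(A^*)^+$ so that $i^*$ becomes the morphism required by Theorem~\ref{Hopfcalc}, and match inner with coinner by direct evaluation. Your treatment of the inner/coinner equivalence (in particular the converse direction, pinning down the multiple of $1_A$ via $\eps_A\circ i=0$) is slightly more explicit than the paper's, which simply asserts the equivalence ``by reversing the above dualisation''.
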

\proof This is mainly a matter of checking conventions since the content is clear from the discussion above. Thus if $\Lambda^1$ is a right $A$-crossed module then  the right action of $A$ can be viewed equivalently as a left coaction $A^*$ and this dualises to a right coaction of $A^*$ on $\Lambda^1{}^*$ with $\Delta_R(\phi)=\sum_a \<\phi,(\ )\ra e_a\>\tens f^a$ for all $\phi\in \Lambda^1{}^*$, where $\{e_a\}$ is a basis of $A$ and $\{f^a\}$ a dual basis of $A^*$. Similarly a right coaction of $A$ can be viewed equivalently as a left action of $A^*$ and this dualises to a right action on $\Lambda^1{}^*$ with $\phi\ra f=\<\phi, (\ )_0\> f((\ )_1)$ for all $\phi\in \Lambda^1{}^*$ and $f\in A^*$. It is a nice exercise to check that we obtain $\Lambda^1{}^*$ as right $A^*$-crossed module. Also note that $(A^+)^*=(A^*)^+$ under the splitting $A=k1\oplus A^+$ provided by the counit projection $\pi(a)=a-1\eps(a)$ and even more, $A^+$ as a right $A$-crossed module by the right adjoint action and right coaction $\Delta-1\tens \id$ dualises by the above to $A^*{}^+$ by the right adjoint coaction of $A^*$ and right multiplication of $A^*$. One can check finally that  $i^*: A^*{}^+\to \Lambda^1{}^*$ is a morphism of right $A^*$-crossed modules as a consequence of Lemma~\ref{ilambda}. If this is inner, i.e of the form $i^*(f)=\theta^*\ra f$ for $f\in A^*{}^+$, this is equivalent to $i$ as $i(\eta)=\<\theta^*,\eta_0\>\eta_1-\<\theta^*,\eta\>1_A$ for all $\eta\in \Lambda^1$ by reversing the above dualisation.  \endproof

In the finite dimensional case the notion of a first order bicovariant differential calculus augmented by a codifferential is then self-dual as an extension of Hopf algebra duality: if $(\Omega^1,\extd,i)$ is an augmented first order calculus over $A$ then $(\Omega^{1*},i^*,\extd^*)$ is one over $A^*$.

We now consider higher degrees in the same way. If $\Omega$ is an augmented strongly bicovariant exterior algebra on $\Omega^0=A$ and has finite-dimensional components then the super-Hopf algebra graded dual $(\Omega^*,i^*,\extd^*)$ is an augmented strongly bicovariant exterior algebra on $A^*$. The dualisation is clear from the self-duality of the axioms of an augmented strongly bicovariant exterior algebra, but again may be explicitly verified by constructing the adjoints. Here $A^*$ is an algebra adjoint to the coalgebra structure of $A$ and so on for the exterior super-Hopf algebra. The wedge product of the differential forms on $A$ becomes the coproduct expressing the covariance of $\Omega^*(A^*)$ and so forth. Clearly $\extd^*:(\Omega^n)^*\to (\Omega^{n-1})^*$ is adjoint to $\extd$ and provides the codifferential in $\Omega^*$. For avoidance of doubt we adopt the conventional duality of super-Hopf algebras not the duality of braided-Hopf algebras\cite{Ma:book} which avoids unnecessary braidings.  It is clear from the discussion above that if $\Omega,\Omega^*$ are strongly bicovariant differential exterior algebras on $A,A^*$ respectively then they are necessarily both augmented by mutual dualisation of $\extd$.

Next, we don't want to confine duality to the case of finite-dimensional components. More generally, let $H,A$ be dually paired Hopf algebras. We say similarly that an object $V$ in the category of $A$-crossed modules is {\em mutually adjoint} to $V'$ in the category of $H$-crossed modules if $V',V$ are dually paired as vector spaces so that the action on one is adjoint to the coaction of the other in the sense
\[\< \phi\ra h,v\>=\<\phi,v_0\>\<v_1,h\>,\quad \<\phi, v\ra a\>=\<\phi_0,v\>\<a,\phi_1\>\quad\forall \phi\in V',\ v\in V,\ h\in H,\ a\in A.\]
As an example, $A^+$ as a right $A$-crossed module by right multiplication is mutually adjoint to $H^+$ as a right $H$-crossed module by the adjoint action.  Then we can say that a first order generalised bicovariant differential calculus $(\Omega^1(A),\extd)$ is dually paired to a bicovariant first order codifferential calculus $(\Omega^1(H),i)$ if the associated $\Lambda^1$ in the two cases (let us denote the latter one by $\Lambda^1{}^*$) are mutually adjoint as crossed modules and $\omega:A^+\to \Lambda^1$ defining $\extd$ is adjoint to $i: \Lambda^1{}^*\to H^+$.  Similarly when both structures are present for a dual pairing of augmented strongly bicovariant first order differential calculi. The same applies to all orders, with mutual duality being expressed in terms of the maps on the associated left invariant braided Hopf algebras $\Lambda,\Lambda^*$. We are typically interested in $A,H$ infinite-dimensional but $\Lambda,\Lambda^*$ with finite-dimensional graded components.

\begin{proposition}\label{dualext} Let $A,H$ be non-degenerately dually paired Hopf algebras with bijective antipode, $\Omega^1(A)=A\rbiprod \Lambda^1$ a generalised first order bicovariant differential calculus dually paired to a first order bicovariant codifferential structure $\Omega^1(H)=H\rbiprod\Lambda^1{}^*$ and suppose that $\Lambda^1$ extends to a graded braided-super Hopf algebra $\Lambda$ with finite-dimensional components in the category of $A$-crossed modules and is mutually adjoint to $\Lambda^*$ as an $H$-crossed module.  In this setting $\extd$ extends to $\Omega(A)=A\rbiprod \Lambda$ in the setting of Theorem~\ref{theorem-bi} iff $i$ extends to $\Omega(H)=H\rbiprod \Lambda^*$ as a strongly bicovariant codifferential structure.  \end{proposition}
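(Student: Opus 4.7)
The plan is to realise extensions of $\extd$ on $A\rbiprod\Lambda$ and of $i$ on $H\rbiprod\Lambda^*$ as adjoint data under a canonical graded super-Hopf pairing of the two super-bosonisations. First I would assemble a pairing
\[ \langle h\tens\phi,a\tens\eta\rangle=\langle h,a\rangle\langle\phi,\eta\rangle \]
between $H\rbiprod\Lambda^*$ and $A\rbiprod\Lambda$ from the given Hopf pairing $\langle\ ,\ \rangle_{H,A}$ together with the graded pairing $\Lambda^*\tens\Lambda\to k$, the latter being a braided super-Hopf pairing between the relevant crossed-module categories (since $\Lambda^*$ is the graded dual, with finite-dimensional components). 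The verification that this is a super-Hopf pairing — i.e. that the super-bosonisation product $(a\tens\eta)(b\tens\zeta)=ab\o\tens(\eta\ra b\t)\zeta$ on one side is adjoint to the super-bosonisation coproduct $\Delta(a\tens\eta)=a\o\tens(\eta^1)_0\tens a\t(\eta^1)_1\tens\eta^2$ on the other, and vice versa — amounts to unpacking both bosonisation structures and applying the four mutual-adjointness identities in concert with the braided Hopf duality $\Lambda\leftrightarrow\Lambda^*$.

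Second, by Theorem~\ref{theorem-bi} an extension of $\extd$ to $\Omega(A)=A\rbiprod\Lambda$ corresponds to a degree $+1$ super-derivation $\delta:\Lambda\to\Lambda$ that is a right $A$-comodule map, squares to zero, obeys the bimodule compatibility of Proposition~\ref{extalg}, and satisfies the twisted super-coderivation identity (\ref{delta}), and which restricts on degree $1$ to $\omega$. I would first record the arrow-reversed analogue of Theorem~\ref{theorem-bi}, generalising the first-order-to-tensor-algebra picture of Proposition~\ref{tensor-codiff} to an arbitrary graded braided-super Hopf algebra $\Lambda^*$ in $H$-crossed modules: a strongly bicovariant extension of $i$ to $\Omega(H)=H\rbiprod\Lambda^*$ corresponds to a degree $-1$ super-coderivation $\iota:\Lambda^*\to\Lambda^*$ that is a right $H$-module map, squares to zero, obeys the arrow-reversed bimodule compatibility and the twisted super-derivation analogue of (\ref{delta}) with $i$ replacing $\omega$, and which restricts on degree $1$ to $i$.

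Third, the claim is that graded transpose under the pairing above sends $\delta$ to $\iota:=\delta^*$ and exchanges every condition with its arrow-reversed counterpart. Under a graded super-Hopf pairing, transpose flips the sign of the degree, swaps super-derivations with super-coderivations, swaps module maps with comodule maps, preserves the square-zero condition, and is involutive; moreover it turns the twisted equation (\ref{delta}) into its arrow-reversed version precisely because $\omega$ and $i$ are mutually adjoint as first-order crossed-module maps, while agreement in degree $1$ is self-dual under the same adjointness. Applying this in each direction gives the two halves of the stated equivalence.

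The principal obstacle is the first step: verifying that the four mutual-adjointness identities, interlocked with the Hopf duality of $A,H$ and the braided super-Hopf duality of $\Lambda,\Lambda^*$, really do make the super-biproduct product on one side adjoint to the twisted super-biproduct coproduct on the other, with correct braided super-signs. Once this super version of Radford-biproduct duality is in place — and once the codifferential analogue of Theorem~\ref{theorem-bi} is written down — steps two and three reduce to a matched diagram chase.
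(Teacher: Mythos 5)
Your proposal follows essentially the same route as the paper: dualise the extension data $\delta$ on $\Lambda$ under the nondegenerate pairing with $\Lambda^*$ to a degree $-1$ map obeying the arrow-reversed conditions (the paper writes the codifferential as $i(\phi)=i_1(\phi^1)\phi^2+\mathfrak i(\phi)$ with $\mathfrak i$ vanishing on degree $1$, rather than your ``$\iota$ restricts to $i$'' — note $\delta$ cannot literally restrict to $\omega$ since $\omega$ has domain $A^+$, not $\Lambda^1$), and use nondegeneracy to pass back and forth. The only packaging difference is that you front-load the full super-Hopf pairing of the bosonisations, which the paper defers (it appears in Lemma~\ref{wordual} for the $B_\pm$ case) and does not actually need for this proposition, working instead directly with the mutual adjointness of $\Lambda$ and $\Lambda^*$.
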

\proof Here the assumption is that the action and coaction of $A$ extend to a braided-super Hopf algebra $\Lambda$ and that this is mutually adjoint to $\Lambda^*$  in the category of $H$-crossed modules. Also by assumption $\omega:A^+\to \Lambda^1$ is mutually adjoint to $i:\Lambda^*{}^1\to H^+$. According to Theorem~\ref{theorem-bi} the only additional data we need is $\delta:\Lambda\to \Lambda$ with various properties, equivalent to $\extd(a\tens \eta)=a\o\tens\omega(\pi a\t)\eta+a\tens\delta \eta$ for all $a\in A$, $\eta\in \Lambda$ providing a strongly bicovariant differential calculus. However, this data dualises to $\mathfrak i:\Lambda^*\to \Lambda^*$ of degree -1 obeying a dual set of properties such that
\[ i(\phi)= i_1(\phi^1)\phi^2+ {\mathfrak i}(\phi)\]
gives a bicovariant codifferential structure. Here $\und\Delta=(\ )^1\tens(\ )^2$ is the braided-super coproduct on $\Lambda^*$,  $i_1:\Lambda^*{}^1\to H^+$ is the given $i$ adjoint to $\omega$ and zero in other degrees, and $\mathfrak i$ is zero on degree 1. This is sufficient to complete the proof but for completeness we note these dual properties. Thus, the dual of the super-coderivation property of $\delta$ in Theorem~\ref{theorem-bi} is
\begin{equation}\label{ideriv}{\mathfrak i}(\phi\psi)={\mathfrak i}(\phi)\psi+(-1)^{|\phi|} \phi\, {\mathfrak i}(\psi) +(-1)^{|\phi|}(\phi\ra i_1(\psi^1))\psi^2,\quad\forall \phi,\psi\in \Lambda^*\end{equation}
and corresponds to $i$ a super-derivation when extended as a bimodule map to $\Omega$. Similarly the super-derivation and nilpotence  properties of $\delta$ dualise to
\[ \und\Delta {\mathfrak i}={(\mathfrak i}\tens \id + (-1)^{|\ |}\tens{\mathfrak i})\und\Delta, \quad {\mathfrak i}^2=0,\quad i_1{\mathfrak i}(\phi)+i_1(\phi^1)i_1(\phi^2)=0,\quad\forall \phi\in \Lambda^2{}^*\]
and correspond to $i$ is a super-coderivation and $i^2=0$. Finally, the property under the aciton of $A$ in Proposition~\ref{extalg} dualises to
\[ ({\mathfrak i}\tens\id)\Delta_R\phi-\Delta_R{\mathfrak i}\phi=\phi^2_0\tens i_1(\phi^1)\phi^2_1-(-1)^{|\phi^1|}\phi^1_0\tens\phi^1_1 i_1(\phi^2)\]
for all $\phi\in\Lambda^*$.  Now, if $\extd$ on degree 1 extends to $\Omega(A)$ we have the $\delta$ data well defined on $\Lambda$ and is compatible with $\omega$, and hence equivalently the data $\mathfrak i$ extends to $\Lambda^*$ and is compatible with $i_1$. Nondegeneracy of the pairing ensures that if an adjoint map exists it is uniquely determined and this is needed to deduce the axioms involving $H$ form the dual axioms involving $A$ and vice-versa.  \endproof

Clearly the roles of $A,H$ here are symmetric so a codifferential on $\Omega(A)$ corresponds in this context to a differential on $\Omega(H)$ and hence an augmented strongly bicovariant calculus on one side corresponds to the same on the other side. Also, an inner differential calculus via $\theta\in\Lambda^1$ corresponds  on the dual side to
\[ {\mathfrak i}(\phi)=\<\phi^1,\theta\>\phi^2-(-1)^{|\phi^1|}\phi^1\<\phi^2,\theta\>,\quad\forall \phi\in \Lambda^*.\]
We now apply these duality ideas to our general constructions, starting with the generalised Woronowicz one in Proposition~\ref{genworon}.

\begin{lemma}\label{wordual} Let $A,H$ be nondegenerately dually paired Hopf algebras with bijective antipode and $\Lambda^1$ a finite-dimensional $A$-crossed module mutually adjoint to $\Lambda^1{}^*$ an $H$-crossed module. Then $A\rbiprod B_\pm(\Lambda^1)$ and $H\rbiprod B_\pm(\Lambda^1{}^*)$ are nondegenerately dually paired as (super-)Hopf algebras.
\end{lemma}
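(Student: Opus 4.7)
The plan is to establish the duality in three stages: (i) pair the tensor algebra with the shuffle algebra as braided (super-)Hopf algebras in the two mutually adjoint categories, (ii) descend this pairing to $B_\pm$ on each side, and (iii) bosonise to obtain the stated pairing of ordinary (super-)Hopf algebras.

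For stage (i), I define $\langle\,,\,\rangle:T_\pm\Lambda^1\tens \Sh_\pm(\Lambda^1{}^*)\to k$ on each degree $n$ as the natural tensor evaluation pairing $\langle v_1\tens\cdots\tens v_n,\phi_1\tens\cdots\tens\phi_n\rangle=\prod_i\langle v_i,\phi_i\rangle$, and zero across different degrees. Because $\Lambda^1$ and $\Lambda^1{}^*$ are mutually adjoint as crossed modules (so that $\Psi^*$ is the braiding in the $H$-crossed module category applied to $\Lambda^1{}^*$), and because $\left[{n\atop r};\pm\Psi\right]^{*}=\left({n\atop r};\pm\Psi^*\right)$ as noted in the text, concatenation on $T_\pm\Lambda^1$ is adjoint to the deconcatenation coproduct on $\Sh_\pm(\Lambda^1{}^*)$, while the braided (super-)coproduct of $T_\pm\Lambda^1$ (built from the $\left[\,\right]$-binomials) is adjoint to the braided shuffle product $\bullet_\pm$ on $\Sh_\pm(\Lambda^1{}^*)$ (built from the $\left(\,\right)$-binomials). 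This is a direct check degree-by-degree and is nondegenerate in each degree since the pairing $\Lambda^1\tens\Lambda^1{}^*\to k$ is.

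For stage (ii), recall $B_\pm(V)=T_\pm V/\bigoplus_n\ker[n,\pm\Psi]!$. Iterating the recursive definition of the binomials and taking adjoints gives $[n,\pm\Psi]!{}^*=(n,\pm\Psi^*)!$. Hence under the pairing of stage (i), the subspace $\bigoplus_n\ker[n,\pm\Psi]!\subseteq T_\pm\Lambda^1$ annihilates the image $\bigoplus_n\operatorname{Im}(n,\pm\Psi^*)!\subseteq \Sh_\pm(\Lambda^1{}^*)$, and conversely. The image realisation of $B_\pm(\Lambda^1{}^*)$ inside the shuffle algebra, which is equivalent to the quotient realisation by \cite{Ma:book,Ma:fre,Ma:dcalc}, now allows the pairing of stage (i) to descend to a nondegenerate pairing $B_\pm(\Lambda^1)\tens B_\pm(\Lambda^1{}^*)\to k$; the braided (super-)Hopf algebra structure is inherited on both sides, and it remains a pairing of braided (super-)Hopf algebras in mutually adjoint categories.

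For stage (iii), the (super-)bosonisation $A\rbiprod B_\pm(\Lambda^1)$ has product, coproduct, antipode, and $\Z/2$-grading determined by the $A$-crossed module structure of $B_\pm(\Lambda^1)$ combined with the Hopf algebra structure of $A$, and similarly on the $H$ side. The tensor pairing $\langle a\tens b,h\tens\phi\rangle:=\langle a,h\rangle\langle b,\phi\rangle$ is then a pairing of (super-)Hopf algebras by the standard duality of bosonisations cf.\ \cite{Ma:book}; the mutual adjointness of action and coaction is precisely what is needed to match the cross relations on the two sides. Nondegeneracy follows from nondegeneracy of the $A$–$H$ pairing, of the stage-(ii) pairing on each graded piece, and from finite dimensionality of each $B_\pm(\Lambda^1)^n$. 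The main obstacle is stage (ii): one must verify carefully, using the recursive definition of the braided binomials and with sign bookkeeping in the super case, that $[n,\pm\Psi]!$ and $(n,\pm\Psi^*)!$ are adjoint so that their kernel and image land on opposite sides of the pairing; but this is an arrow-reversal of the constructions in \cite{Ma:book,Ma:fre,Ma:dcalc} and presents no essentially new difficulty.
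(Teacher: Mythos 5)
Your proof is correct and rests on the same two pillars as the paper's argument: the adjointness $[n,\pm\Psi]!^{*}=[n,\pm\Psi^*]!$ of the braided factorials and the duality of (super-)bosonisations. The difference lies in how stages (i)--(ii) are packaged. The paper pairs the two \emph{tensor} algebras $T_\pm\Lambda^1{}^*$ and $T_\pm\Lambda^1$ directly, using the evaluation pairing twisted by $[n,\pm\Psi]!$, observes (via the reduced-expression/$S_n$-inversion argument) that the factorial may equivalently be placed on either tensor factor, and then notes that the radical of this twisted pairing on each side is exactly the defining ideal $\oplus_n\ker[n,\pm\Psi]!$, so that the pairing descends nondegenerately to the quotients $B_\pm$. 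You instead pair $T_\pm\Lambda^1$ with the shuffle algebra $\Sh_\pm(\Lambda^1{}^*)$ by the plain evaluation pairing and realise $B_\pm(\Lambda^1{}^*)$ as the image of the braided symmetrizer inside $\Sh_\pm(\Lambda^1{}^*)$. The two routes are related by factoring the paper's twisted pairing through the symmetrizer $T_\pm\Lambda^1{}^*\to\Sh_\pm(\Lambda^1{}^*)$, so they produce the same pairing on $B_\pm$. What your route buys is that the pairing is manifestly a Hopf pairing from the outset (concatenation against deconcatenation, braided coproduct against shuffle product, with no twist); what it costs is the extra input that the image realisation of $B_\pm(\Lambda^1{}^*)$ inside $\Sh_\pm(\Lambda^1{}^*)$ agrees with the quotient realisation \emph{as a braided(-super) Hopf algebra}, i.e.\ that the symmetrizer is a morphism of braided(-super) Hopf algebras. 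That fact is standard and is in the sources you cite, but it is the one step you lean on that the paper's direct quotient-to-quotient pairing avoids, and it deserves an explicit sentence rather than being folded into ``equivalent to the quotient realisation''. With that made explicit, your stage (iii) is exactly the paper's final step.
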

\proof The `super' applies in the $B_-$ case, which is the case we need but the proof for both versions is the same. Here $T_\pm\Lambda^1$ is a braided(-super) Hopf algebra in the category of right $A$-crossed modules and $T_\pm\Lambda^1{}^*$ a braided-(super) Hopf algebra in the category of right $H$-crossed modules. The latter has coproduct given by braided-binomial matrices using braided-integers $[n,\pm\Psi^*]$ where $\Psi^*:\Lambda^1{}^*\tens\Lambda^1{}^*\to \Lambda^1{}^*\tens\Lambda^1{}^*$ is the braiding on $\Lambda^1{}^*$. We first check that the latter is indeed adjoint to $\Psi$ the braiding on $\Lambda^1$ in its category. Thus $\<\Psi^*(\phi\tens\psi),\eta\tens \zeta\>=\<\psi_0\tens \phi\ra \psi_1,\eta\tens \zeta\>=\<\psi_0\tens \phi,\eta\tens \zeta_0\>\<\psi_1,\zeta_1\>=\<\psi\tens\phi,\eta\ra \zeta_1\tens \zeta_0\>=\<\phi\tens\psi,\Psi(\eta\tens \zeta)\>$ where $\<\ ,\ \>$ is the evaluation pairing on degree 1. Next, we define the pairing between $T_\pm\Lambda^1{}^*$ and $T_\pm\Lambda^1$ to be cf.\cite{Ma:fre,Ma:book,Ma:dcalc}
\[ \<\<\phi_1\tens\phi_2\tens\cdots\tens\phi_n, \eta_1\tens \eta_2\tens \cdots\tens \eta_m\>\>=\delta_{n,m}\<\phi_1\tens\phi_2\tens\cdots\tens\phi_n, [n,\pm\Psi]!(\eta_1\tens \eta_2\tens \cdots\tens \eta_m)\>\]
where the right hand side is the usual evaluation pairing of tensor products of dual spaces. One can also show that
\[ [n,\Psi]!^*=[n,\Psi]^*(\id\tens [n-1,\Psi]!^*)=(\id\tens [n-1,\Psi^*]!)[n,\Psi^*]=[n,\Psi^*]!\]
where
\[ [n,\Psi]^*=\id+ \Psi^*_1+\Psi^*_2\Psi^*_1+\cdots+\Psi^*_{n-1}\cdots\Psi^*_1.\]
One way to see this is that $[n,\Psi]!$ can be written as a sum over $S_n$ with reduced expressions replaced by $\Psi_i$. Reversing the order of compositions of $\Psi_i$ then corresponds to inversion in $S_n$ and hence gives the same sum after a change of variables. Similarly $[n,-\Psi]!^*=[n,-\Psi^*]!$. This means that the pairing $\<\<\ ,\ \>\>$ can also be written with $[n,\pm\Psi^*]!$ on the $\phi_i$. As a result $\<\<\ ,\ \>\>$ descends to a pairing of $B_\pm(\Lambda^1{}^*)$ with $B_\pm(\Lambda^1)$. As these are defined exactly by setting to zero the kernel of the pairing on each side, they are now nondegenerately paired. Finally, the tensor product $A$-crossed module structure on each degree of $T_\pm\Lambda^1$ descends to $B_\pm(\Lambda^1)$ and the tensor product $H$-crossed module structure on each degree of $T_\pm\Lambda^1{}^*$ descends to $B_\pm(\Lambda^1{}^*)$ and these two remain adjoint with respect to $\<\<\ ,\ \>\>$ because $[n,\pm\Psi]!$ is a morphism of $A$-crossed modules (or similarly on the dual side). It follows that $A\rbiprod B_\pm(\Lambda^1)$ and $H\rbiprod B_\pm(\Lambda^1{}^*)$ are dually paired as (super-)Hopf algebras by $\<h\tens \phi,a\tens \eta\>=\<h,a\>\<\<\phi,\eta\>\>$ for all $a\in A$, $h\in H$, $\eta\in B_\pm(\Lambda^1)$ and $\phi\in B_\pm(\Lambda^1{}^*)$ since the semidirect product and coproduct structures are by mutually adjoint actions and coactions respectively. This can easily be verified from the definitions of the cross product and coproduct, using the conventions in \cite[Ch. 1]{Ma:book}. \endproof

This applies to Proposition~\ref{genworon} with dual given by Lemma~\ref{wordual} so that we have a `coinner' codifferential structure on $\Omega(H)=H\rbiprod B_-(\Lambda^1{}^*)$ using the pairing $\<\<\ ,\ \>\>$ for the duality. Note also that if $\Lambda$ is generated by $\Lambda^1$ then clearly $\delta$ is uniquely determined by its degree 1 part $\delta_1$ as a super-derivation. Similarly if $\Lambda^*$ is generated by $\Lambda^*{}^1$ then $\mathfrak i$ is uniquely determined  by $i_1$ via (\ref{ideriv}) and hence in this case all of $\delta$ is uniquely determined. Again, this applies to $B_-(\Lambda^1)$ by Lemma~\ref{wordual}.

\begin{corollary}\label{deltawor} In the setting of Lemma~\ref{wordual}, if a generalised first order bicovariant differential calculus $(\Lambda^1,\omega)$ extends to a strongly bicovariant differential exterior algebra on $\Omega(A)=A\rbiprod B_-(\Lambda^1)$ then it does so uniquely with $\delta$  given by
\[ \delta \eta=-\cdot [2,-\Psi]^{-1}(\eta_0\tens\omega(\pi \eta_1)),\quad\forall \eta\in \Lambda^1\]
in degree 1. Here $[2,-\Psi]$ is not assumed to be invertible but the choice of inverse element does not change the result in $\Lambda^2$.
\end{corollary}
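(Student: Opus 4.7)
My plan is to obtain the formula for $\delta$ on $\Lambda^1$ directly from the super-coderivation constraint (\ref{delta}) in Theorem~\ref{theorem-bi}, and then propagate uniqueness to all of $B_-(\Lambda^1)$ by the super-derivation property. I will first specialise (\ref{delta}) to $\eta\in\Lambda^1$. Since $\underline{\Delta}\eta=\eta\tens 1+1\tens\eta$ and $\pi(1)=0$, only the summand $\eta^1=\eta,\ \eta^2=1$ contributes to the coaction term, so the right-hand side collapses to
\[\delta\eta\tens 1+1\tens\delta\eta-\eta_0\tens\omega(\pi\eta_1).\]
On the left, for any lift $\tilde\alpha\in\Lambda^1\tens\Lambda^1$ of $\delta\eta\in\Lambda^2$, the explicit braided-super coproduct of $B_-(\Lambda^1)$ applied to the degree-$2$ element gives $\underline\Delta(\delta\eta)=\delta\eta\tens 1+1\tens\delta\eta+[2,-\Psi](\tilde\alpha)$, with the middle term lying in $\Lambda^1\tens\Lambda^1$. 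Matching the two expressions yields $[2,-\Psi](\tilde\alpha)=-\eta_0\tens\omega(\pi\eta_1)$ in $\Lambda^1\tens\Lambda^1$.

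Since $[2,-\Psi]!=\id-\Psi=[2,-\Psi]$, the canonical projection $\cdot:\Lambda^1\tens\Lambda^1\to\Lambda^2$ kills precisely $\ker[2,-\Psi]$, so any two preimages of $-\eta_0\tens\omega(\pi\eta_1)$ descend to the same element of $\Lambda^2$. This gives the displayed formula and the claim that the choice of $[2,-\Psi]^{-1}$ is immaterial. For uniqueness at higher degrees, any two extensions $\delta,\delta'$ in the setting of Theorem~\ref{theorem-bi} agree on $\Lambda^1$ by the above; being super-derivations on the algebra $B_-(\Lambda^1)$, which is generated by $\Lambda^1$, they therefore agree on all of $\Lambda$. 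Alternatively, one can argue via Proposition~\ref{dualext} and Lemma~\ref{wordual}: $\delta$ corresponds dually to a codifferential $\mathfrak i$ on $H\rbiprod B_-(\Lambda^{1*})$, and by the super-derivation identity (\ref{ideriv}) together with generation of $B_-(\Lambda^{1*})$ by $\Lambda^{1*}$, $\mathfrak i$ is uniquely determined by its degree-$1$ component $i_1$ (the adjoint of $\omega$).

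The one subtlety I expect is the existence of a preimage: \emph{a priori} $[2,-\Psi]$ need not be surjective on $\Lambda^1\tens\Lambda^1$, so one must verify that $\eta_0\tens\omega(\pi\eta_1)$ lies in its image. This is ensured precisely by the standing assumption that the first-order calculus extends to a strongly bicovariant exterior algebra on $A\rbiprod B_-(\Lambda^1)$: the lift $\tilde\alpha$ of the extant $\delta\eta$ furnishes the needed preimage. Independence of the choice of preimage is then automatic because the ambiguity $\ker[2,-\Psi]$ is exactly what is factored out in passing from $\Lambda^1\tens\Lambda^1$ to $\Lambda^2\subseteq B_-(\Lambda^1)$.
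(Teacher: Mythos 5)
Your argument is correct. It is worth noting that the paper's own write-up leads with the dual picture: it states that $\delta$ is determined inductively by the pairing identity $\<\<\delta(w),\phi\psi\>\>=\<\<\delta w^1,\phi\>\>\<w^2,\psi\>+(-1)^{|w|}\<w_0\tens\omega(\pi w_1),\phi\tens\psi\>$ against products of degree-one elements of $B_-(\Lambda^1{}^*)$, and then extracts the degree-one formula from the explicit pairing $\<\<\ ,\ \>\>$ of Lemma~\ref{wordual}, which is where the braided factorial $[2,-\Psi]!=[2,-\Psi]$ enters. You instead read off the $(1,1)$-component of the super-coderivation identity (\ref{delta}) directly, using that $\und\Delta$ on a lift $\tilde\alpha\in\Lambda^1\tens\Lambda^1$ of $\delta\eta$ contributes $[2,-\Psi](\tilde\alpha)$ in that bidegree; this is precisely the alternative the paper flags in passing (``or directly from the super-coderivation property in Theorem~\ref{theorem-bi}'') but does not carry out. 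Your route is the more elementary of the two: it needs neither the dual Hopf algebra $H$ nor nondegeneracy of the pairing for the degree-one formula, only the explicit form of the braided coproduct of $T_-\Lambda^1$ in degree two and the fact that the product $\Lambda^1\tens\Lambda^1\to\Lambda^2$ has kernel exactly $\ker[2,-\Psi]$. Your uniqueness argument in higher degrees (two extensions agree on $\Lambda^1$ and are super-derivations on an algebra generated in degree one) is likewise simpler than the paper's inductive pairing argument, and you correctly isolate the one genuine subtlety --- that $\eta_0\tens\omega(\pi\eta_1)$ must lie in the image of $[2,-\Psi]$, which is exactly what the hypothesis that an extension exists supplies. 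Both approaches buy the same conclusion; the paper's duality formulation has the advantage of generalising immediately to an inductive formula in all degrees and of making the link with Proposition~\ref{dualext} explicit, which is the theme of that section.
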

\proof Whenever $\Lambda^1{}^*$ generates, $\delta$ is determined inductively by
\[ \<\<\delta(w),\phi\psi\>\>=\<\<\delta w^1,\phi\>\>\<w^2,\psi\>+ (-1)^{|w|}\<w_0\tens\omega(\pi w_1),\phi\tens\psi\>\]
for all $w\in \Lambda^{n},\ \phi\in \Lambda^*{}^n,\ \psi\in\Lambda^*{}^1$. This is obtained either by extending $\mathfrak i$ by (\ref{ideriv}) as  explained above or directly from the super-coderivation property in Theorem~\ref{theorem-bi}. In the case of $\Lambda=B_-(\Lambda^1)$ the formula in Lemma~\ref{wordual} gives the result for $\delta$ on degree 1. One can similarly write down a formula for general degree. \endproof

Note that this entails that $\eta_0\tens\omega(\pi \eta_1)\in {\rm Image}[2,-\Psi]$ for all $\eta\in\Lambda^1$. For example, if $(\Lambda^1,\theta)$ defines an inner differential calculus and $\Psi(\eta\tens\theta)=\theta\tens \eta$ then $-\eta_0\tens \omega(\pi \eta_1)=-\eta_0\tens\theta\ra \eta_1+\eta\tens\theta)=\eta\tens\theta-\Psi(\theta\tens \eta)=[2,-\Psi](\theta\tens \eta+\eta\tens \theta)$. This gives $\delta \eta=\{\theta,\eta\}$ as expected.

We conclude with the universal and shuffle constructions of Section~3.3.

\begin{lemma}\label{univpair}
Let $A, H$ be non-degenerately dually paired Hopf algebras with bijective antipode and $\Lambda^1$ a finite-dimensional $A$-crossed module mutually adjoint to ${\Lambda^1}^*$ an $H$-crossed module with $\theta\in\Lambda^1$ right invariant, then $A\rbiprod \Lambda_\theta(\Lambda)$ and $H\rbiprod B_{\theta}({\Lambda^1}^*)$ are non-degenerately dually paired as super-Hopf algebra.
\end{lemma}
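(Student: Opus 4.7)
The strategy is to realise both $\Lambda_\theta(\Lambda^1)$ and $B_\theta({\Lambda^1}^*)$ inside a naturally paired ambient pair of braided-super Hopf algebras and show the pairing descends non-degenerately --- a quotient-versus-subobject duality, as opposed to the quotient-versus-quotient duality of Lemma~\ref{wordual}. The ambient pair is $T_-\Lambda^1$ and $\Sh_-({\Lambda^1}^*)$, paired degreewise.

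First I would check that the degreewise evaluation pairing $\<\,,\,\>\colon \Sh_-({\Lambda^1}^*)\times T_-\Lambda^1\to k$, $\<\phi_1\tens\cdots\tens\phi_n,\eta_1\tens\cdots\tens\eta_m\>=\delta_{n,m}\prod_i\<\phi_i,\eta_i\>$, makes $\Sh_-({\Lambda^1}^*)$ and $T_-\Lambda^1$ mutually dual as braided-super Hopf algebras in the categories of right $H$- and right $A$-crossed modules respectively. The deconcatenation coproduct of $\Sh_-$ is dual to the concatenation product of $T_-$, while the shuffle product $\bullet_-$ built from $({n\atop r};-\Psi^*)$ is dual to the coproduct of $T_-$ built from $[{n\atop r};-\Psi]$; this uses the identity $[{n\atop r};-\Psi]^*=({n\atop r};-\Psi^*)$ from Section~3.3 together with the adjointness of $\Psi^*$ and $\Psi$ established in the proof of Lemma~\ref{wordual}. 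Extending via $\<h\tens\phi,a\tens\eta\>=\<h,a\>\<\phi,\eta\>$ and using mutual adjointness of the $H$- and $A$-crossed module structures on ${\Lambda^1}^*$ and $\Lambda^1$, one obtains a super-Hopf algebra pairing $H\rbiprod\Sh_-({\Lambda^1}^*)\times A\rbiprod T_-\Lambda^1\to k$, exactly as at the end of the proof of Lemma~\ref{wordual}.

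The core step is to identify $B_\theta({\Lambda^1}^*)$ with the annihilator $J^\perp\subseteq \Sh_-({\Lambda^1}^*)$ of the Hopf ideal
\[ J=\<\theta^2\ra a,\ [\theta^2,\eta]\ \mid\ a\in A^+,\ \eta\in\Lambda^1\>\ \subseteq\ T_-\Lambda^1 \]
defining $\Lambda_\theta(\Lambda^1)$. As $J$ is a Hopf ideal whose generators sit in low degree, $J^\perp$ is a sub-braided-super Hopf algebra cut out by pairing with those generators. Using $\theta^2\ra a=(\theta\ra a_{(1)})\tens(\theta\ra a_{(2)})$ from the tensor-product crossed module structure and the mutual adjointness of action and coaction, for $\phi=\phi_1\tens\phi_2$ of degree $2$ one computes
\[ \<\phi,\theta^2\ra a\>=\<\theta\tens\theta,(\phi_1)_0\tens(\phi_2)_0\>\<(\phi_1)_1(\phi_2)_1,a\>, \]
and a parallel computation for $[\theta^2,\eta]$ places $\theta\tens\theta$ against either the outer or the inner pair of tensor factors of the iterated coproduct of $\phi$. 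Using $(A^+)^\perp=k\cdot 1_H$ (from the non-degenerate Hopf pairing) and the counit property of the $H$-coaction, the vanishing of these pairings for all $a\in A^+$ and $\eta\in\Lambda^1$ translates precisely into the three equalities that define $B_\theta({\Lambda^1}^*)$ in Proposition~\ref{subshuffle-codiff}. Induction on the degree of $\phi$, combined with the Hopf-ideal property of $J$ so that only the generators need be checked, gives both inclusions and hence $B_\theta({\Lambda^1}^*)=J^\perp$.

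Non-degeneracy of $\<\,,\,\>$ on each finite-dimensional graded component of $\Sh_-({\Lambda^1}^*)\times T_-\Lambda^1$ (using that $\Lambda^1$ is finite-dimensional) then descends to non-degeneracy of the induced pairing $B_\theta({\Lambda^1}^*)\times\Lambda_\theta(\Lambda^1)\to k$, and tensoring with the non-degenerate pairing $H\times A\to k$ delivers the desired non-degenerate super-Hopf algebra pairing of the bosonisations. The principal obstacle I anticipate is the identification $B_\theta({\Lambda^1}^*)=J^\perp$ in the third paragraph: the bookkeeping involves translating the defining triple-coproduct equations for $B_\theta({\Lambda^1}^*)$ --- written with the right $H$-coaction and the deconcatenation coproduct of $\Sh_-$ --- into the adjoint form expressing annihilation of the $A$-orbit of $\theta^2$ and of the graded commutator $[\theta^2,\eta]$ in $T_-\Lambda^1$, while keeping careful track of degrees, signs and the four-fold coproduct occurring in the definition of $B_\theta$.
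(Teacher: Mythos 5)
Your proposal is correct and follows essentially the same route as the paper: the degreewise evaluation pairing exhibiting $\Sh_-({\Lambda^1}{}^*)$ as the graded dual of $T_-\Lambda^1$ via $\left[{n\atop r};-\Psi\right]^*=\left({n\atop r};-\Psi^*\right)$, the identification of $B_\theta({\Lambda^1}{}^*)$ as the annihilator of the ideal $J_\theta$ defining $\Lambda_\theta(\Lambda^1)$, and the descent of non-degeneracy to the quotient--subobject pair before bosonising. The paper asserts the annihilator identification ``by construction''; your third paragraph simply supplies the verification it leaves implicit.
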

\proof First, $\Sh_\pm({\Lambda^1}^*)$ is the graded dual of braided tensor (super-)Hopf algebra $T_\pm\Lambda^1$ under the pairing
$\<v_1\tens\cdots\tens v_n,\phi_1\tens\cdots\tens \phi_m\>=\delta_{n,m}\<v_1,\phi_1\>\cdots\<v_n,\phi_n\>.$
By construction,
$\<(v_1\tens\cdots\tens v_r)\bullet_\pm(v_{r+1}\tens\cdots\tens v_n),(\phi_1\tens\cdots\phi_n)\>
=\<\left( {n\atop r}; \pm\Psi\right)(v_1\tens\cdots\tens v_r)\underline{\tens}(v_{r+1}\tens\cdots\tens v_n),(\phi_1\tens\cdots\phi_n)\>
=\<(v_1\tens\cdots\tens v_r)\underline{\tens}(v_{r+1}\tens\cdots\tens v_n),\gamma_{r,n-r}\circ\left[ {n\atop r}; \pm\Psi^*\right](\phi_1\tens\cdots\phi_n)\>$ for any $0\le r\le n,$ where we use $\left[ {n\atop r}; \pm\Psi^*\right]=\left({n\atop r}; \pm\Psi\right)^*.$ Thus
$\<(v_1\tens\cdots\tens v_r)\bullet_\pm(v_{r+1}\tens\cdots\tens v_n),(\phi_1\tens\cdots\phi_n)\>=\<(v_1\tens\cdots\tens v_r)\underline{\tens}(v_{r+1}\tens\cdots\tens v_n),{\Delta}(\phi_1\tens\cdots\phi_n)\>$.The rest of proving they are dually paired is obviously.

Then one can check $B_{\theta}({\Lambda^1}^*)\subseteq \Sh_-({\Lambda^1}^*),$ by construction, is exactly the annihilator of $J_{\theta}\subseteq T_-\Lambda^1,$ where $J_\theta=\{\xi({\theta}^2\ra a-\epsilon_A(a){\theta}^2)\eta)\,, [{\theta}^2,\xi]\eta\ |\,a\in A,\,\xi,\eta\in T_-\Lambda^1\}$ is the ideal that $\Lambda_\theta(\Lambda^1)$ quotient out. Thus $\Lambda_{\theta}(\Lambda^1)$ and $B_{\theta}({\Lambda^1}^*)$ are non-degenerately dually paired under the same pairing. \endproof

\begin{corollary}\label{univdual}
In the setting of Lemma~\ref{univpair},  $A\rbiprod \Lambda_\theta(\Lambda)$ in Proposition~\ref{univaug} is augmented if and only if $H\rbiprod B_{\theta}({\Lambda^1}^*)$ in Proposition~\ref{subshuffle-aug} is augmented. If so, they are mutually dual with the differential on one side dual to the codifferential on the other side.
\end{corollary}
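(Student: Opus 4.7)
The plan is to reduce the corollary to Proposition~\ref{dualext} using the non-degenerate super-Hopf algebra pairing of Lemma~\ref{univpair}. At first order, Lemma~\ref{dualcodif} gives a bijection between right $A$-crossed module maps $i\colon\Lambda^1\to A^+$ (the augmentation data on the $A$-side) and right $H$-crossed module maps $\omega=i^*\colon H^+\to\Lambda^{1*}$ (the first-order differential data on the $H$-side). So the corollary reduces to matching the extendability of $i$ from degree $1$ to all of $\Lambda_\theta(\Lambda^1)$ with the extendability of $\omega$ from degree $1$ to all of $B_\theta(\Lambda^{1*})$.

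To compare the higher-degree extensions, I would first observe that the degree $-1$ super-derivation extension of $i$ to the ambient $A\rbiprod T_-\Lambda^1$ from Proposition~\ref{tensor-codiff} is adjoint, under the Lemma~\ref{univpair} pairing, to the degree $+1$ super-derivation extension of $\omega$ to $H\rbiprod\Sh_-(\Lambda^{1*})$ from Proposition~\ref{shuffle-diff}. This is because the shuffle product and coproduct of $\Sh_-(\Lambda^{1*})$ are by construction the adjoints of the tensor coproduct and product of $T_-\Lambda^1$, and the degree $1$ generators $i$ and $\omega$ are adjoint by construction, so the inductive formulas in those propositions match under $\<\delta\phi,w\>=\pm\<\phi,i(w)\>$ on all degrees (with signs determined by the super-Hopf algebra pairing conventions).

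Now I would invoke the description, from the proof of Lemma~\ref{univpair}, of $B_\theta(\Lambda^{1*})$ as exactly the annihilator of the Hopf ideal $J_\theta\subseteq T_-\Lambda^1$ by which $\Lambda_\theta(\Lambda^1)=T_-\Lambda^1/J_\theta$ is defined. By Proposition~\ref{univaug} (and its proof), augmentation of $A\rbiprod \Lambda_\theta(\Lambda^1)$ amounts to $i(A.J_\theta)\subseteq A.J_\theta$, i.e.\ $i$ descends to the quotient. By the adjointness above, this is equivalent to $\delta(B_\theta(\Lambda^{1*}))\subseteq B_\theta(\Lambda^{1*})$, which is precisely the condition of Proposition~\ref{subshuffle-aug} for augmentation of $H\rbiprod B_\theta(\Lambda^{1*})$. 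Once both sides are augmented, the mutual duality of the resulting differentials and codifferentials is a direct translation of the same adjointness identity and is in line with Proposition~\ref{dualext}.

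The main obstacle I expect is the orthogonality bookkeeping in the middle step: the equivalence $i(J_\theta)\subseteq J_\theta\Leftrightarrow \delta(B_\theta(\Lambda^{1*}))\subseteq B_\theta(\Lambda^{1*})$ rests on $J_\theta=B_\theta(\Lambda^{1*})^{\perp}$ inside $T_-\Lambda^1$, which follows from the non-degeneracy of the pairing between $\Lambda_\theta(\Lambda^1)$ and $B_\theta(\Lambda^{1*})$ already recorded in Lemma~\ref{univpair}. Sign tracking in the super-Hopf algebra pairing and care with the smash-product factor $A$ (respectively $H$) in the statement $i(A.J_\theta)\subseteq A.J_\theta$ will be required, but once the adjointness of Paragraph~2 and the annihilator description of Paragraph~3 are in place, no genuinely new ideas are needed beyond the duality machinery of Proposition~\ref{dualext} and Lemma~\ref{univpair}.
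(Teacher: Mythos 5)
Your proposal is correct and follows essentially the same route as the paper: the paper's own (very terse) proof likewise dualises the extension formula (\ref{univ-codiff}) against the shuffle differential of Proposition~\ref{shuffle-diff} and uses the annihilator description $B_{\theta}({\Lambda^1}^*)=J_\theta^{\perp}$ from Lemma~\ref{univpair} to match the two augmentation conditions. Your write-up just makes explicit the orthogonality bookkeeping ($i$ preserving $J_\theta$ versus $\delta$ preserving $J_\theta^{\perp}$) that the paper leaves to the reader.
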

\proof The proof is dualising formula from one side to the other.
One can show the condition for one being augmented is dual to one for another. Then inner differential calculus $d=[\theta,\ \}$ of $A\rbiprod \Lambda_\theta(\Lambda)$ is dual to the coinner codifferential calculus of $H\rbiprod B_{\theta}({\Lambda^1}^*)$ given in Proposition~\ref{subshuffle-codiff}, Also, the codifferential calculus of $A\rbiprod \Lambda_\theta(\Lambda)$ is dual to the differential calculus of $H\rbiprod B_{\theta}({\Lambda^1}^*)$ by comparing the formula (\ref{univ-codiff}) and the formula in Proposition~\ref{shuffle-diff}.
\endproof

As an elementary application,  let $k[G]$ be a linear algebraic group in the form of commutative Hopf algebra and let $\Lambda^1$ be a finite-dimensional right $k[G]$-comodule with coaction $\Delta_R$. Equivalently, $\Lambda^{1*}$ is a left $k[G]$-comodule with coaction $\Delta_L$. We view 1-cocycles $\zeta\in Z^1(G,\Lambda^{1*})$ algebraically as $\zeta\in k[G]\tens \Lambda^{1*}$ obeying
\[ (\Delta\tens\id)\zeta = 1\tens\zeta+(\id\tens\Delta_L)\zeta.\]
By applying $\id\tens\eps\tens\id$ one sees that in this case $\zeta\in k[G]^+\tens \Lambda^{1*}$. We can equally view $\zeta:\Lambda^{1}\to k[G]^+$. The coboundary case is $\zeta=\Delta_L\theta^*-1\tens\theta^*$ for some $\theta\in \Lambda^{1*}$. We define $\cg^*:=k[G]^+/(k[G]^+)^2$ as the classical dual Lie algebra and $\pi_{\cg^*}$ the associated projection from $k[G]^+$. The right adjoint coaction descends to $\Ad_R:\cg^*\to\cg^*\tens k[G]$. We view $\Lambda^1$ as a $k[G]$-crossed module with the trivial action given by the counit of $k[G]$.

\begin{proposition}\label{Gclass} For any 1-cocycle $\zeta\in Z^1(G,\Lambda^{1*})$,  $\Omega=k[G]\rcocross \Lambda(\Lambda^1)$ is a strongly bicovariant codifferential algebra over $k[G]$ with $i=\zeta$ viewed as a map $\Lambda^1\to k[G]^+$ and extended to $\Omega$ as a $k[G]$-module map. Moreover, any intertwiner $\iota:\cg^*\to \Lambda^1$  makes $(\Omega,\extd,i)$ an augmented strongly bicovariant exterior algebra on $k[G]$ by
\[ \extd a= a\o \iota(\pi_{\cg^*}\pi(a\t)),\quad \delta \eta=-{1\over 2}\eta_0\wedge\iota(\pi_{\cg^*}\pi(\eta_1))\]
provided $(\id\tens\iota\pi_{\cg^*}\pi)\Delta_R$ is antisymmetric in its output. In this case $\CL$ on $k[G]$ is the derivation $\CL(a)=a\o \zeta(\pi_{\cg^*}\pi(a\t))$ for all $a\in k[G]$. The classical calculus is given by $\Lambda^1=\cg^*$ and $\iota=\id$.
\end{proposition}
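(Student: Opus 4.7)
The plan is to organise the proof into three stages: first, set up the classical/commutative specialisation so that everything reduces to familiar exterior-algebra data; second, verify the codifferential structure from the cocycle $\zeta$; and third, verify the differential structure from the intertwiner $\iota$ and show the two fit together into an augmentation. Throughout, we rely on the fact that since $k[G]$ is commutative, the right adjoint action in (\ref{Acocrossed}) is trivial: $Sb\o\, a\, b\t=\eps(b)a$. Consequently every right $k[G]$-comodule $V$ (with the trivial action) is automatically a crossed module, and the braiding on such a crossed module is just the ordinary flip $\Psi(v\tens w)=w\tens v$. In particular $B_-(\Lambda^1)=\Lambda(\Lambda^1)$ is the usual Grassmann algebra and $k[G]\rcocross\Lambda(\Lambda^1)$ is Radford's biproduct in this simplified form.

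For the codifferential, I would first re-interpret $\zeta$ as a linear map $\Lambda^1\to k[G]$: write $\zeta=\sum_i\zeta^i_1\tens\phi^i$ and set $\zeta(\eta)=\sum_i\langle\phi^i,\eta\rangle\zeta^i_1$. Applying $\id\tens\eps\tens\id$ to the cocycle identity gives $\eps(\zeta^i_1)\phi^i=0$, so $\zeta$ lands in $k[G]^+$. Dualising $\Delta_L$ on $\Lambda^{1*}$ against $\Delta_R$ on $\Lambda^1$ turns the cocycle identity into
\[ \Delta\zeta(\eta)\;=\;1\tens\zeta(\eta)\;+\;\zeta(\eta_0)\tens\eta_1,\qquad \forall\eta\in\Lambda^1,\]
which is exactly the condition in Lemma~\ref{ilambda} for a first order bicovariant codifferential (the bimodule/action condition is automatic because the action is trivial). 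I then invoke Proposition~\ref{tensor-codiff}, noting that on the quotient $\Lambda(\Lambda^1)=B_-(\Lambda^1)\subseteq T_-\Lambda^1$ the recursive formula for $i$ reduces to extending $\zeta$ as a graded derivation in the Grassmann product, which is well defined because $\zeta$ kills $\ker[n,-\Psi]!$ trivially (antisymmetry of the target is automatic in a Grassmann algebra). The super-coderivation and $i^2=0$ properties then descend.

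For the differential, observe that $\pi_{\cg^*}\pi:k[G]\to\cg^*$ is a map of right $k[G]$-comodules (the adjoint coaction on $\cg^*$), so the composite $\omega:=\iota\circ\pi_{\cg^*}\circ\pi|_{k[G]^+}$ is a right crossed module map $k[G]^+\to\Lambda^1$ (the action-compatibility is automatic by triviality of the action). By Theorem~\ref{Hopfcalc} this yields a first order bicovariant calculus with the stated $\extd$. To extend to $\Lambda(\Lambda^1)$ I use Corollary~\ref{deltawor}: for trivial braiding $[2,-\Psi]$ is $\id-\tau$, which doubles antisymmetric tensors. The hypothesis that $(\id\tens\iota\pi_{\cg^*}\pi)\Delta_R$ has antisymmetric output means precisely that $\eta_0\tens\omega\pi\eta_1$ lies in the image of $[2,-\Psi]$; choosing $\tfrac12\eta_0\tens\omega\pi\eta_1$ as the preimage and applying $-\,\cdot\,$ (the wedge product) gives the formula $\delta\eta=-\tfrac12\eta_0\wedge\iota(\pi_{\cg^*}\pi\eta_1)$, well defined in $\Lambda^2$ by the uniqueness clause of Corollary~\ref{deltawor}. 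The remaining verifications of Proposition~\ref{extalg}, Proposition~\ref{extalg-bi} and Theorem~\ref{theorem-bi} then amount to checking $\delta^2=0$ and the super-coderivation identity on generators, which I expect to be the main technical obstacle: the nilpotency $\delta^2=0$ is the algebraic Maurer--Cartan identity and requires combining the antisymmetry hypothesis with coassociativity of $\Delta_R$ (and the relation between $\pi_{\cg^*}$ and the coproduct modulo $(k[G]^+)^2$). The super-coderivation property follows once $\delta$ is extended as a super-derivation on $\Lambda(\Lambda^1)$, using that the coproduct of $\Lambda(\Lambda^1)$ is the shuffle/antisymmetric coproduct of primitives.

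Finally, the Lie derivative formula is immediate from Theorem~\ref{aug}: on $k[G]$ we have $i=0$, so $\CL(a)=i(\extd a)=i(a\o\tens\iota\pi_{\cg^*}\pi a\t)=a\o\, \zeta(\iota\pi_{\cg^*}\pi a\t)$, which is the stated formula (with the convention that $\zeta$ is understood as its composition with $\iota$). That $\CL$ is a derivation of $k[G]$ is then automatic from Theorem~\ref{aug}(1). For the classical case, take $\Lambda^1=\cg^*$, $\iota=\id$, and note that $(\id\tens\pi_{\cg^*}\pi)\Delta_R$ on $\cg^*$ is antisymmetric precisely because $(k[G]^+)^2$ captures the symmetric part of $\Delta$ modulo first order, giving the Chevalley--Eilenberg differential as $\delta$ and recovering the de Rham complex of $G$.
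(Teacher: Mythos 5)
Your proposal is correct and follows essentially the same route as the paper's own proof: Lemma~\ref{ilambda} plus the dualised cocycle identity for the first-order codifferential, Proposition~\ref{tensor-codiff} with descent to the Grassmann quotient for higher degrees, Theorem~\ref{Hopfcalc} with $\omega=\iota\circ\pi_{\cg^*}\circ\pi$ for the first-order differential, Corollary~\ref{deltawor} (with $[2,-\Psi]=\id-\tau$ on antisymmetric output) for $\delta$, and Theorem~\ref{aug} for $\CL$. The extra detail you supply on dualising the cocycle condition and on identifying the preimage under $\id-\tau$ is consistent with, and slightly more explicit than, the paper's argument.
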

\proof The codifferential structure  at first order is immediate from Lemma~\ref{ilambda} applied to $A=k[G]$. Adjoint action on $k[G]^+$ is trivial and so is the action on $\Lambda^1$ and the other condition for $i:\Lambda^1\to k[G]^+$ is clearly equivalent to the cocycle condition for $\zeta$. This codifferential structure extends to $k[G]\rcocross T_-\Lambda^1$ by Proposition~\ref{tensor-codiff} and is easily seen to descend to the usual Grassmann algebra $\Lambda(\Lambda^1)=B_-(\Lambda^1)$ in our case as the braiding $\Psi$ is the trivial flip on $\Lambda^1\tens\Lambda^1$. For the differential structure at first order we need $\omega:k[G]^+\to \Lambda^1$ that intertwines $\Ad_R$ with $\Delta_R$ and which vanishes on $(k[G]^+)^2$ (this is so as to also intertwine the actions, the action on $\Lambda^1$ being trivial). This means that it factors through a map $\iota$ as stated via $\pi_{\cg^*}$. The exterior derivative and $\CL$ are then immediate from the general constructions. The extension to $k[G]\rcocross \Lambda(\Lambda^1)$ requires $\delta$ from Corollary~\ref{deltawor} and one can check that this works using Theorem~\ref{theorem-bi}, where the displayed condition reduces to  the stated antisymmetry condition involving $\iota$. \endproof

Although not necessary for the above, we can also illustrate the duality. For this, suppose that $G$ has a finite-dimensional Lie algebra $\cg$ dual to $\cg^*$ with $k[G],U(\cg)$ dually paired in a compatible way and that $V$ is a finite-dimensional right $\cg$-module (regarded with trivial coaction as a $U(\cg)$-crossed module) mutually dual to $V^*$ as a right $k[G]$-crossed module with trivial action. Thus we suppose that $\Delta_R \phi=(\ )\la \phi\in V^*\tens k[G]$ (here the Lie algebra acts from the left on $\phi\in V^*$ and we assume that this extends to the group in algebraic form).
We also require a Lie algebra 1-cocycle extended to $\zeta^*:U(\cg)^+\to V$ (say) mutually dual to a map $V^*\to k[G]^+$ which we can view as an algebraic cocycle $\zeta\in k[G]^+\tens V$. Finally, we require an intertwiner $\iota^*:V\to \cg$ (say) obeying $v\ra\iota^*(v)=0$ for all $v\in V$. Then on the one hand setting $\Lambda^1=V$ we have $U(\cg)\rcross\Lambda(V)$ an augmented strongly bicovariant differential algebra on $U(\cg)$ by Proposition~\ref{Ugclass} using the data provided.  The mutually dual data on the other hand gives $k[G]\rcocross \Lambda(V^*)$ as an augmented strongly bicovariant exterior algebra on $k[G]$ with
\[ \extd f =\sum_i(\tilde{e_i}f) \iota(f^i),\quad \delta\phi=-{1\over 2} \sum_i  e_i\la\phi\wedge \iota(f^i),\quad i(\phi)=(\id\tens\phi)(\zeta),\quad \forall f\in k[G],\ \phi\in V^*\]
where $\la$ is the left action of $\cg$ on $V^*$, $\{e_i\}$ is a basis of the Lie algebra, $\{f^i\}$ is a dual basis and $ \tilde e_i$ denotes the associated left invariant vector field on $k[G]$ defined by $\tilde e_i(f)=f\o\<e_i,f\t\>$ via the pairing. The Lie derivative on functions works out as $\CL(f)=\sum_i\tilde{e}_i(f)(\id\tens\iota(f^i))(\zeta)$. This gives the same results as Proposition~\ref{Gclass} taken with $\Lambda^1=V^*$ but in a less algebraic and more conventional geometric form.  In the coinner case $\CL$ on functions is the vector field $\widetilde{\iota^*(\theta^*)}^R-\widetilde{\iota^*(\theta^*)}$, where  $\tilde{\ }^R$ similarly denotes the construction of a right-invariant vector field.

\section{Generalised calculi on q-deformation quantum groups}

We now give quantisations of the elementary $U(\cg)$ and $k[G]$ examples, starting with a semiclassicalisation of the quantum group case to come later. To keep things simple we give these examples over $\C$ and using the geometric notations following Proposition~\ref{Gclass}. 

\begin{proposition}\label{Ugalmost} Let $\cg$ be a complex semisimple Lie algebra and $c$ the quadratic Casimir. We take $i:\cg\oplus \C c\hookrightarrow U(\cg)^+$ with the right adjoint action and coaction $\Delta_R=\Delta-1\tens\id$. Any element $\theta^*\in \cg$ defines an augmented inner strongly bicovariant calculus $\Omega(U(\cg))=U(\cg)\rbiprod B_-^{quad}(\cg\oplus\C c)$ with
\[ v\xi-\xi v=[v,\xi],\quad \extd \xi=\theta^*\ra\xi,\quad\delta v=0,\quad \delta c=[\theta^*,t_i]t^i,\quad \forall v\in\cg\subset  \cg\oplus\C c,\ \xi\in\cg\subset U(\cg).\]
 The Lie derivative is  $\CL=[ \theta^*,\ ]$.  \end{proposition}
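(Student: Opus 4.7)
The plan is to assemble the data in the framework of Proposition~\ref{genworon} (adapted to $B_-^{quad}$) and the augmentation results of Section~4.2, with all nontrivial computations reducing to $\cg$-invariance of the Killing form.

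First, I would check that $\Lambda^1=\cg\oplus\C c$ is a right $U(\cg)$-sub-crossed-module of $U(\cg)^+$ with the structure~(\ref{Acocrossed}). On the coaction side this is immediate: $\Delta_R\xi=\xi\tens 1$ for $\xi\in\cg$ and $\Delta_R c=c\tens 1+t_i\tens t^i+t^i\tens t_i\in\Lambda^1\tens U(\cg)$. On the action side, $\xi\ra\eta=[\xi,\eta]\in\cg$ for $\xi,\eta\in\cg$, while $c\ra U(\cg)^+=0$ since $c$ is central, and $\xi\ra c=(\xi\ra t_i)\ra t^i=[[\xi,t_i],t^i]\in\cg$ by the module property, so stability holds. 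Since $\theta^*\in\cg$ is right-invariant ($\Delta_R\theta^*=\theta^*\tens 1$), the braiding satisfies $\Psi(\eta\tens\theta^*)=\theta^*_0\tens\eta\ra\theta^*_1=\theta^*\tens\eta$, and the cocycle condition $\{\Delta_R\theta^*-\theta^*\tens 1,\Delta_R\eta\}=0$ is automatic. Hence the hypotheses of Proposition~\ref{genworon} (in the $B_-^{quad}$ variant remarked there) are met and $\Omega=U(\cg)\rbiprod B_-^{quad}(\cg\oplus\C c)$ is an inner strongly bicovariant exterior algebra with $\extd=[\theta^*,\ \}$.

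Then the bimodule relation $v\xi-\xi v=[v,\xi]$ and $\extd\xi=\omega(\xi)=\theta^*\ra\xi$ follow directly from Theorem~\ref{Hopfcalc} and Proposition~\ref{Ug} specialised to the inner case $\omega(a)=\theta^*\ra a$. For $v\in\cg\subset\Lambda^1$, the inner formula gives $\delta v=\theta^*\cdot v+v\cdot\theta^*$ in $B_-^{quad}_2$; since both $\theta^*,v\in\cg$, the braiding $\Psi$ acts as the ordinary flip on $\theta^*\tens v$, so $\theta^*\tens v+v\tens\theta^*$ is $\Psi$-fixed, lies in $\ker(\id-\Psi)$, and represents $0$ in $B_-^{quad}_2$. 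The key nontrivial computation is $\delta c$: I would show that $\theta^*\tens c+c\tens\theta^*-[\theta^*,t_i]\tens t^i$ is $\Psi$-fixed using the two Killing-form invariance identities
\begin{equation*}
[\theta^*,t_i]\tens t^i+t_i\tens[\theta^*,t^i]=0,\qquad [\theta^*,t_i]\tens t^i=[\theta^*,t^i]\tens t_i
\end{equation*}
in $\cg\tens\cg$ (valid for semisimple $\cg$ with dual bases $\{t_i\},\{t^i\}$ for the Killing form). Expanding $\Psi(\theta^*\tens c+c\tens\theta^*)=c\tens\theta^*+\theta^*\tens c+t_i\tens[\theta^*,t^i]+t^i\tens[\theta^*,t_i]$ and substituting these identities produces exactly the required $\Psi$-fixedness, yielding $\delta c=[\theta^*,t_i]t^i$ in $B_-^{quad}_2$.

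For the augmentation, $i:\cg\oplus\C c\hookrightarrow U(\cg)^+$ is a crossed module map by construction, so by Lemma~\ref{ilambda} we have a first order bicovariant codifferential. To extend to all of $\Omega$ I would use the super-derivation extension of Proposition~\ref{tensor-codiff} and then verify that $i$ vanishes on the degree-2 relations $\ker(\id-\Psi)$, using the smash-product formula $(1\tens v)(a\tens 1)=a\o\tens v\ra a\t$ to unpack the products $[v]_1\cdot[c]_0$. The computation for the relation $v\cdot c+c\cdot v-[v,t_i]\cdot t^i=0$ produces cancellations controlled by exactly the same Killing-form identities as in Step~2 and gives $i(\text{relation})=0$; the relations $v\cdot w+w\cdot v$ and $c\cdot c$ are handled by direct term-by-term cancellation. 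Finally, Theorem~\ref{aug}(2) applied in the inner case with $\theta=\theta^*$ gives $\CL=[i(\theta^*),\ ]=[\theta^*,\ ]$ since $i$ is the inclusion. The main obstacle is the computation at $c$ in $B_-^{quad}_2$: both the identification of $\delta c$ and the consistency of the augmentation with the quadratic relations hinge on the two Killing-form identities above, so I would record these identities once at the outset and invoke them as the single analytical input.
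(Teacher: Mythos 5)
Your proposal is correct and follows essentially the same route as the paper: restrict the canonical crossed module structure on $U(\cg)^+$ to $\cg\oplus\C c$, invoke the quadratic variant of Proposition~\ref{genworon} with the right-invariant $\theta^*\in\cg$, compute $\delta$ on generators from the quadratic relations coming from $\ker(\id-\Psi)$ (your two Killing-form identities are exactly what underlies the paper's displayed relation $c\wedge v+v\wedge c+t^i\wedge[v,t_i]=0$ and the braiding $\Psi(v\tens c)=c\tens v+2t^i\tens[v,t_i]$), and verify the augmentation by checking $\sum\eta\ra i(\zeta)=0$ on the degree-2 relations. The only item in the paper's proof you omit is the $\Lambda^3$ identity used for a direct check of $\delta^2c=0$, but that is redundant given $\theta^{*2}=0$ in $B_-^{quad}$, so nothing is missing.
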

\proof  The crossed module structure is the canonical one on $U(\cg)^+$ which restricts
 so that
\[ v\ra\xi=[v,\xi],\quad c\ra\xi=0,\quad \Delta_R \xi=\xi\tens 1,\quad \Delta_R c=c\tens 1+ 2 t,\quad \forall  \xi\in\cg,\quad v\in\cg\subset\cg\oplus\C c\]
where $t\in \cg\tens\cg$ is the split Casimir. The crossed module algebra $B_-^{quad}(\Lambda^1)$ here is generated by $\cg,c$ with relations at the quadratic level
\[ v\wedge w+w\wedge v=0,\quad c\wedge v+v\wedge c+\sum t^i\wedge [v,t_i]=0,\quad c\wedge c=0,\quad\forall v,w\in \cg\subset\cg\oplus\C c.\]
This comes from the crossed module braiding, for which the only nontrivial one is $\Psi(v\tens c)=c\tens v+ 2 t^i\tens [v,t_i]$ for $v\in \cg$. Here $B_-^{quad}(\cg\oplus\C c)$ has the structure of a super cross product of the form $(\C[c]/\<c^2\>)\rcross \Lambda(\cg)$ with cross relations as shown (one can think of this action as a certain super-vector field acting on $\Lambda(\cg)$ as a Grassmann algebra).  Next, in order to apply Proposition~\ref{genworon} (in the quadratic version) we need $\Psi(v\tens \theta^*)=\theta^*\tens v$ which requires $\theta^*\in\cg$  by the above. As the coaction is trivial on $\cg$ we have an inner strongly bicovariant differential exterior algebra $\Omega=U(\cg)\rbiprod B^{quad}_-(\cg\oplus\C c)$ by the quadratic version of Proposition~\ref{genworon}.     We then find $\delta=[\theta^*,\ \}$ as stated. Note that in $\Lambda^3(\cg)$ one has
\[ \sum_{i,j}[v,t_i]t'_j[t^i,t'{}^j]=0,\quad\forall v\in \cg\subset\cg\oplus\C c \]
where $t'$ is another copy of $t$, $[\ ,\ ]$ is the Lie bracket. This observation follows from the Jacobi identity and ad-invariance of $t$, and is needed if one wants directly to verify $\delta^2c=0$. Next, we turn to the augmentation. Clearly $i:\cg\oplus\C c\hookrightarrow U(\cg)^+$ is a morphism by construction and we check that it extends to products as a super-derivation to all of $\Omega$. As explained before Proposition~\ref{univaug} we check $c\ra i(v)+v\ra i(c)+t_i\ra i([v,t^i])= 0+v\ra c-\sum_i[v,t_i]\ra i(t^i)=v\ra c-\sum_i[[v,t_i],t^i]=0$ for the non-standard relation in degree 2, the other relations being equally clear. \endproof

Dually, we take $\cg^*\oplus \C\theta'$, say, where $\theta'$ is dual to $c$. As before we assume $\C[G]$ dually paired with $U(\cg)$ and a right coadjoint coaction $\Delta_R(\phi)={\rm Ad}^*_{(\ )}(\phi)\in\C[G]$ on $\phi\in \cg^*$ corresponding algebraically to the left coadjoint action of $G$ on $\cg^*$. As a $\C[G]$-crossed module we have
\[ \phi\ra f=\phi f(1)+ 2 (\widetilde{t(\phi)}f)(1)\theta',\quad \theta'\ra f=\theta' f(1),\quad \Delta_R\theta'=\theta'\tens 1,\quad\forall \phi\in\cg^*.\]
We regard the split Casimir or inverse Killing form here as a map $t:\cg^*\to\cg$. As in Proposition~\ref{Gclass} we denote by $\bar\extd$ the classical exterior derivative on $G$, $\{e_i\}$ a basis of $\cg$ and $\{f^i\}$ a dual basis. In principle there may be relations of  higher degree in $B_-(\cg\oplus\C c)$ beyond quadratic but these are typically hard to compute. Similarly we now work with $B_-^{quad}(\cg^*\oplus\C\theta')$ which is dually paired but may not be nondegenerately paired.

\begin{corollary}\label{Galmost} Dual to Proposition~\ref{Ugalmost}, let $\C[G]$ be a linear algebraic group with semisimple Lie algebra $\cg$ and $\Lambda^1=\cg^*\oplus\C\theta'$ as above, and $\theta^*\in\cg$. We have an augmented `coinner' strongly bicovariant calculus $\Omega(\C[G])=\C[G]\rbiprod \Lambda(\cg^*\oplus\C\theta')$ with
\[ [\phi,f]=2\theta' \widetilde{t(\phi)}(f),\quad [\theta',f]=0,\quad\forall \phi\in\cg^*,\ f\in \C[G]\]
where  $\widetilde{\ }$ denotes extension as a left-invariant vector field. The exterior derivative is
\[ \extd f=\bar\extd f+(\square f)\theta',\quad \delta \phi=-{1\over 2}\sum_i {\rm ad}^*_{e_i}(\phi)\wedge f^i,\quad \delta\theta'=0\]
where $\square$ is the Laplace-Beltrami operator given by the action of $c$ and ${\rm ad}^*$ is the left coadjoint action of $\cg$ on $\cg^*$. The augmentation and  Lie derivative have the same form as in the coinner case of Proposition~\ref{Gclass}. \end{corollary}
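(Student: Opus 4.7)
The plan is to dualise Proposition~\ref{Ugalmost} via the machinery of Section~4.3. First I would verify that $V:=\cg^*\oplus\C\theta'$ with the $\C[G]$-crossed module structure specified in the statement is mutually adjoint to $W:=\cg\oplus\C c$ with the $U(\cg)$-crossed module structure of Proposition~\ref{Ugalmost}. The adjointness on the $\cg^*/\cg$ summands is the standard pairing of coadjoint coaction with adjoint action; on the extra summand, the coaction $\Delta_R c=c\tens 1+2t$ on $W$ dualises (using $(\widetilde X f)(1)=\<X,f\>$ and symmetry of the split Casimir $t$) to precisely the stated action $\phi\ra f=\phi\eps(f)+2(\widetilde{t(\phi)}f)(1)\theta'$. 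Lemma~\ref{wordual} then furnishes a nondegenerate duality of super-Hopf algebras $\C[G]\rbiprod B_-(V)\leftrightarrow U(\cg)\rbiprod B_-(W)$.

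Second, I would identify $B_-^{quad}(V)$ with the classical Grassmann algebra $\Lambda(V)$. Directly computing, $\Psi$ on $V\tens V$ is the flip on all summands except $\cg^*\tens\cg^*$, where $\Psi(\phi\tens\psi)=\psi\tens\phi+2\<t(\phi),\psi_1\>\psi_0\tens\theta'$. Ad-invariance of the split Casimir gives $\<t(\phi),\psi_1\>\psi_0+\<t(\psi),\phi_1\>\phi_0=0$, and a linear change of basis $\phi\tens\psi\mapsto\phi\tens\psi-\<t(\phi),\psi_1\>\psi_0\tens\theta'$ conjugates $\Psi$ to the classical flip and so identifies $\ker(\id-\Psi)$ with the classical symmetric ideal. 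Hence $B_-^{quad}(V)\cong\Lambda(V)$ as a super-Hopf algebra in the category of $\C[G]$-crossed modules, with dimensions matching $B_-^{quad}(W)\cong(\C[c]/\<c^2\>)\rcross\Lambda(\cg)$ under the pairing.

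Third, I would transport the augmentation data through the duality. By Lemma~\ref{dualcodif} the inner differential $\extd=[\theta^*,\ \}$ on $U(\cg)\rbiprod B_-^{quad}(W)$ with $\theta^*\in\cg\subset W$ dualises to a coinner codifferential $i$ on $\C[G]\rbiprod\Lambda(V)$ with the same $\theta^*\in\cg=V^*$, while the augmentation $\iota:W\hookrightarrow U(\cg)^+$ dualises to $\omega:\C[G]^+\to V$ characterised by $\<\omega(a),w\>=\<a,\iota(w)\>$, yielding $\omega(a)=\pi_{\cg^*}\pi(a)+(\square a)(1)\theta'$ and thence $\extd f=f\o\tens\omega(\pi f\t)=\bar\extd f+(\square f)\theta'$ as claimed. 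The bimodule relations read off immediately from the crossed module action on $V$ via the identification $\Omega^1\cong\C[G]\tens V$ of Theorem~\ref{Hopfcalc}. Corollary~\ref{deltawor} then determines $\delta$ uniquely on degree~$1$: $\delta\theta'=0$ from the triviality of the coaction on $\theta'$, while on $\cg^*$ the formula reduces (as in Proposition~\ref{Gclass} with intertwiner $\iota:\cg^*\hookrightarrow V$) to $\delta\phi=-\tfrac12\sum_i\mathrm{ad}^*_{e_i}(\phi)\wedge f^i$. Finally the Lie derivative on functions $\CL=i\extd$ reproduces the stated coinner form of Proposition~\ref{Gclass}.

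The main obstacle will be the middle step: the braiding on $V$ is genuinely twisted by $t$, and establishing that ad-invariance of $t$ exactly cancels the $\theta'$-correction in the quadratic relations (so that the relations reduce to the undeformed classical Grassmann ones rather than giving a strictly modified quadratic algebra) is the key calculation on which the entire duality identification rests.
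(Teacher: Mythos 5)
Your proposal is correct and follows essentially the same route as the paper: dualise the data of Proposition~\ref{Ugalmost} to get $\omega$ and hence $\extd f=\bar\extd f+(\square f)\theta'$, show via ad-invariance of $t$ that the quadratic relations are undeformed so $B_-^{quad}(\cg^*\oplus\C\theta')=\Lambda(\cg^*\oplus\C\theta')$, obtain $\delta$ from Corollary~\ref{deltawor}, and dualise $\extd=[\theta^*,\ \}$ to the coinner codifferential and Lie derivative. Two small inaccuracies in your middle and final steps: the map $\phi\tens\psi\mapsto\phi\tens\psi-{\rm ad}^*_{t(\phi)}(\psi)\tens\theta'$ does not actually conjugate $\Psi$ to the flip (the correction terms of $T\Psi$ and $\tau T$ land in the different summands $\cg^*\tens\C\theta'$ and $\C\theta'\tens\cg^*$); what is true, and what the paper uses, is simply that the antisymmetry ${\rm ad}^*_{t(\phi)}(\psi)=-{\rm ad}^*_{t(\psi)}(\phi)$ makes the correction vanish on symmetric tensors, so $\ker(\id-\Psi)$ coincides with the classical symmetric ideal directly. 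Likewise the $\delta\phi$ formula does not reduce verbatim to Proposition~\ref{Gclass}, since here the braiding is nontrivially twisted: one must check that $\phi_0\tens\omega(\pi\phi_1)$ acquires the extra term ${\rm ad}^*_c(\phi)\tens\theta'$ and that $[2,-\Psi]$ applied to ${\rm ad}^*_{e_i}(\phi)\tens f^i$ produces exactly twice this combination, which is the computation the paper carries out; your plan would force you into this calculation anyway, and it closes without difficulty.
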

\proof  The dual construction to the inclusion in Proposition~\ref{Ugalmost} is a generalised first order differential calculus with, in particular, $\omega(f)(\xi)=f(\xi)$ and $\omega(f)(c)=f(c)$ for all $f\in \C[G]^+$ and $\xi\in\cg$. As a result one has $\extd f=\bar\extd f+(\square f)\theta'$ where $\square$ is the Laplacian given by the left action of $c$ on $\C[G]$. For the relations of $B^{quad}_-(\cg^*\oplus\C\theta')$, the only non-trivial braiding is $\Psi(\phi\tens\psi)=\psi\tens\phi+2{\rm ad}^*_{t(\phi)}(\psi)\tens\theta'$ but ${\rm ad}^*_{t(\phi)}(\psi)=-{\rm ad}^*_{t(\psi)}(\phi)$ by ad-invariance of $t$, so that the quadratic relations retain the classical form $B_-^{quad}(\cg^*\tens\C\theta')=\Lambda(\cg^*\oplus\C\theta')$.  Next, $\phi_0\tens\omega(\pi\phi_1)={\rm ad}^*_{e_i}(\phi)\tens f^i+{\rm ad}^*_{c}(\phi)\tens\theta'$ by a similar computation as in Proposition~\ref{Gclass}, as $\phi_0\tens \omega(\pi\phi_1)(\eta)=\phi_0\tens \phi_1(\eta)={\rm ad}^*_{\eta}(\phi)$ for all $\eta\in \cg\oplus \C c$. Now $[2,-\Psi]( {\rm ad}^*_{e_i}(\phi)\tens f^i)={\rm ad}^*_{e_i}(\phi)\tens f^i-f^i\tens{\rm ad}^*_{e_i}(\phi)-2{\rm ad}^*_{t({\rm ad}^*_{e_i}(\phi))}(f^i)\tens\theta'=2{\rm ad}^*_{e_i}(\phi)\tens f^i+2{\rm ad}^*_{t(f_i)e_i}(\phi) \tens\theta'=2({\rm ad}^*_{e_i}(\phi)\tens f^i +{\rm ad}^*_{c}(\phi)\tens\theta')$ where we used antisymmetry of ${\rm ad}^*_{e_i}(\phi)\tens f^i$ and of ${\rm ad}^*_{t(\ )}(\ )$ as noted above. We also have $\theta'_0\tens\omega(\pi\theta'_1)=0$. Hence Corollary~\ref{deltawor} gives $\delta$ as stated. This then extends as before to $\Lambda(\cg^*\oplus\C\theta')$.

Next, dualising the differential structure on $U(\cg)$, we have a codifferential $i(\theta')=0$ and $i(\phi)=\<{\rm Ad}^*_{(\ )}(\phi),\theta^*\>-\<\phi,\theta^*\>$ where we use the left coadjoint action of $G$ on $\cg^*$ and we regard the right hand side as an element of $\C[G]$ assuming all operations are algebraic. The Lie derivative is $ \CL(f)=i(\extd f)=i(\bar\extd f+(\square f)\theta')=(\widetilde{e_i}f)i(f^i)=(\widetilde{e_i}f)\<{\rm Ad}^*_{(\ )}(f^i)-f^i,\theta^*\>$ if $\{e_i\}$ is a basis of $\cg$ and $\{f^i\}$ a dual basis. This is the same $\CL_X$ as the coinner case of Proposition~\ref{Gclass} with $i=\id$ there and using the more geometric formulation.  We verify directly that $i$ as stated extends as a superderivation. Thus, $\phi\ra i(\phi)=2\theta'\<t(\phi),i(\phi)\>=2\theta'\<{\rm ad}^*_{t(\phi)}(\phi),\theta^*\>=0$ in view of the antisymmetry of ${\rm ad}^*_{t(\ )}(\ )$.  \endproof

The calculus $\Omega(\C[G])$ here is of the `almost commutative' class of calculi on Riemannian manifolds studied recently in \cite[Sec. 3]{Ma:bh} and we see how it arises from a codifferential structure on $U(\cg)$. We see moreover that it is augmented by a vector field expressed as a Lie derivative for any choice of $\cg\in\theta^*$ and indeed arises as the dual of a calculus $\Omega(U(\cg))$.

We now turn to the quantum group case, at least at first order.  Recall that if $H$ is a finite-dimensional quasitriangular Hopf algebra then its dual $A$ can be viewed as equipped with a coquasitriangular structure $R:A\tens A\to k$. We also have a quantum Killing form $Q(a,b)=R(b\o, a\o)R(a\t,b\t)$ for $a,b\in A$. Moreover, the quasitriangular structure can be viewed as maps $R_1,R_2:A\to H$, where $R_1(a)=R(a,\  )$ and  $R_2(a)=R(\ ,a)$, and similarly for $Q_1, Q_2$ \cite[Ch 2]{Ma:book}. Following Drinfeld one says that the Hopf algebra is factorizable if $Q$ as a map is an isomorphism. The notion of coquasitriangular Hopf algebras works well when $A$ is infinite dimensional while for $H$ one typically has to work over formal power series $\C[[\hbar]]$ in a deformation parameter. It is known that standard bicovariant calculi on $A$ in the factorisable case are classified by  irreducible representations of $H$\cite{Ma:cla, BauSch, Ma:dcalc}.   We now show that in fact the construction gives more  naturally a generalised differential calculus without assuming factorizability.

Note that in the form of maps the axioms for $R$ make sense for a mutually dual pair of Hopf algebras $A,H$ (a quasitriangular dual pair) without assuming finite-dimensionality but assuming that the pairing is nondegenerate. Viz, $R_1:A\to H$ is an algebra and anticoalgebra homomorphism, $R_2:A\to H$ is an coalgebra and antialgebra hom, both are convolution-invertible and\cite{Ma:dbos}
\[ R_2(a\o)h\o\<h\t,a\t\>=\<h\o,a\o\>h\t R_2(a\t)\]
\[  h\o \<h\t,a\t\>R_1(a\t)=R_1(a\o)\<h\o,a\t\> h\t\]
\[ \<R_1(a),b\>=\<a,R_2(b)\>,\quad \forall h\in H,\ a,b\in A.\]
Likewise we define $Q_i:A\to H$ by
\[ Q_1(a)=R_2(a\o)R_1(a\t),\quad Q_2(a)=R_1(a\o)R_2(a\t),\quad\forall a\in A\]
These axioms imply that $A$ is coquasitriangular but are a little stronger. In this context $A$ with
\begin{equation*}\label{Hcrossfull} a\ra h=a\t\<a\o Sa\th,h\>,\quad \Delta_R(a)=a\t\tens R_1(a\o)R_2(a\th),\quad \forall h\in H,\ a\in A\end{equation*}
becomes a right $H$-crossed module as one may check using the axioms for the $R_i$. We will see momentarily that $Q_2$ in fact becomes a crossed module morphism where $H$ has the right adjoint action/coproduct crossed module structure cf (\ref{Acocrossed}). These and similar facts also hold if  $R_1$ in the crossed module structures and in $Q_i$ comes from a second independent quasitriangular structure and accordingly the results that follow can be made slightly more general. Of relevance to us is the projection of this crossed module structure to $A^+$,
\begin{equation}\label{Hcross} a\ra h=a\t\<a\o Sa\th,h\>,\quad \Delta_R(a)=a\t\tens R_1(a\o)R_2(a\th)-1\tens Q_2(a),\quad \forall h\in H,\ a\in A^+\end{equation}

On the dual side we suppose that
the right coadjoint coaction of $A$ on $H$ is well-defined. Here $H\subseteq A^*$ via the nondegenerate pairing and  $\Delta_Rh\in A^*\tens A$ is defined by $\<\Delta_R(h),a\>=\<h,a\t\>a\o Sa\th$, so the issue is whether this lies in $H\tens A$. If so we say that the dual pair is {\em regular} and in this case $H$ becomes a right $A$-crossed module mutually adjoint to (\ref{Hcross}). Some explicit  formulae are
\begin{equation}\label{Across} \Delta_R(h)=\sum_a e_a\o h S e_a\t\tens f^a,\quad  h\ra a=  R_2(a\o) h R_1(a\t),\quad\forall a\in A,\ h\in H\end{equation}
where $\{e_a\}$ is a basis of $H$ and $\{f^a\}$ is a dual basis in the finite-dimensional case.  In the infinite-dimensional case the coaction is by our regularity assumption.  We will see momentarily that $Q_1$ becomes a crossed module morphism where $A$ has the right adjoint coaction/right multiplication crossed module structure (\ref{Acrossed}).

\begin{proposition}\label{quasi} Let  $(H,A)$ be a regular quasitriangular dual pair and $I\subseteq H^+$ a 2-sided ideal. We take  $I^\perp \subseteq A^+$ with the $H$-crossed module structure from (\ref{Hcross}). Then augmentation by $Q_2:I^\perp \to H^+$  makes $\Omega^1(H)=H\rbiprod I^\perp$ a first order bicovariant codifferential calculus.  Any element $\theta^*\in I^\perp$ obeying
\[ \theta^*\t\tens R_1(\theta^*\o)R_2(\theta^*\th)=\theta^*\tens 1+1\tens Q_2(\theta^*)\]
provides an augmented inner first order bicovariant calculus with Lie derivative $\CL=[Q_2(\theta^*),\ ]$. Here $Q_2(\theta^*)$ is primitive in $H^+$.
  \end{proposition}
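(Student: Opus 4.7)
The plan has four parts: (i) the codifferential structure, (ii) the inner differential from $\theta^*$, (iii) the Lie derivative, (iv) primitivity of $Q_2(\theta^*)$. The first part requires the bulk of the work. Building on the two crossed-module structures already announced in the excerpt, I would verify that $Q_2:A\to H$ is a morphism of right $H$-crossed modules, with $A$ carrying the structure preceding (\ref{Hcross}) and $H$ the right-adjoint/coproduct structure of type (\ref{Acocrossed}). For the coaction-intertwining, combining the (anti)coalgebra properties of $R_1,R_2$ with coassociativity yields
\[
\Delta Q_2(a)=R_1(a\t)R_2(a\th)\tens R_1(a\o)R_2(a\four)=Q_2(a\t)\tens R_1(a\o)R_2(a\th),
\]
which equals $(Q_2\tens\id)\Delta_R(a)+1\tens Q_2(a)$, so $Q_2$ intertwines the coactions $\Delta_R$ and $\Delta-1\tens\id$. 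For the module side one uses the pair of identities stated for $R_1,R_2$ to match $Q_2(a\ra h)$ with $Sh\o Q_2(a)h\t$. Since $\eps\circ R_i=\eps$, we have $Q_2(A^+)\subseteq H^+$, so the projected structure (\ref{Hcross}) on $A^+$ inherits a crossed-module morphism $Q_2:A^+\to H^+$. Restricting further to $I^\perp$ (which is a sub-crossed module once $I$ is adjoint-stable and satisfies the coideal-type condition that makes $I^\perp$ closed under the coaction), Lemma~\ref{ilambda} then yields the first-order bicovariant codifferential calculus $\Omega^1(H)=H\rbiprod I^\perp$ with $i=Q_2$.

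For (ii), I would read off from (\ref{Hcross}) that $\Delta_R\theta^*=\theta^*\t\tens R_1(\theta^*\o)R_2(\theta^*\th)-1\tens Q_2(\theta^*)$, so the displayed hypothesis is exactly $\Delta_R\theta^*=\theta^*\tens 1$, i.e.\ strict right-invariance of $\theta^*$ in the $H$-crossed module $I^\perp$. Lemma~\ref{propinner} applied to the Hopf algebra $H$ with crossed module $I^\perp$ then says that $\omega:H^+\to I^\perp$, $\omega(h)=\theta^*\ra h$, is a crossed-module morphism, which by Theorem~\ref{Hopfcalc} defines an inner bicovariant first-order differential on $H$ with the same underlying Hopf bimodule $H\rbiprod I^\perp$. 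Combined with (i), this gives the augmented inner calculus.

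For (iii) and (iv): on degree $0$, $i=0$, so $\CL h=i(\extd h)=i\bigl(h\o\tens\omega(\pi h\t)\bigr)$. Expanding $\omega(\pi h\t)=\theta^*\ra h\t-\eps(h\t)\theta^*$ and using that $i$ is left $H$-linear with $i(1\tens\eta)=Q_2(\eta)$ gives $\CL h=h\o\,Q_2(\theta^*\ra h\t)-h\,Q_2(\theta^*)$. The module-intertwining from (i) rewrites $Q_2(\theta^*\ra h\t)=S(h\t)\o Q_2(\theta^*)(h\t)\t$, and the antipode identity $h\o Sh\t h\th=h$ collapses the first term to $Q_2(\theta^*)h$, yielding $\CL h=[Q_2(\theta^*),h]$. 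For primitivity, applying $Q_2\tens\id$ to $\Delta_R\theta^*=\theta^*\tens 1$ and using the coaction-intertwining from (i) gives $\Delta_R Q_2(\theta^*)=Q_2(\theta^*)\tens 1$ in $H^+\tens H$; unpacking the coaction $\Delta-1\tens\id$ on $H^+$ this reads $\Delta Q_2(\theta^*)=Q_2(\theta^*)\tens 1+1\tens Q_2(\theta^*)$, as claimed.

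The main obstacle is the module side of step (i): the adjoint action $Sh\o Q_2(a)h\t$ must be matched with $Q_2$ of the coregular action via the less transparent of the two quasitriangular axioms for $R_1,R_2$, and this is a genuine Sweedler calculation. The coaction side is essentially automatic, as is the passage from the unprojected crossed module on $A$ to the projected one on $A^+$ and then to $I^\perp$, once appropriate stability of $I$ has been assumed. Everything downstream is routine bookkeeping given (i).
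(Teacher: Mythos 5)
Your proposal is correct and follows essentially the same route as the paper's proof: the explicit computation $\Delta Q_2(a)=Q_2(a\t)\tens R_1(a\o)R_2(a\th)$ for the coaction-intertwining, deferral of the module-side Sweedler calculation, reduction of the displayed condition on $\theta^*$ to $\Delta_R\theta^*=\theta^*\tens 1$, and the primitivity and Lie-derivative conclusions all match (the paper invokes Theorem~\ref{aug} for $\CL$ where you compute it directly, but it notes both are available). The only point to tighten is that you treat the stability of $I^\perp$ under the crossed-module structure as an extra hypothesis on $I$, whereas it already follows from $I$ being a 2-sided ideal: adjoint stability is automatic since $h\o x Sh\t\in HIH\subseteq I$, and closure under the coaction follows from $\<\<a\t,x\>R_1(a\o)R_2(a\th),b\>=\<a,R_2(b\o)\,x\,R_1(b\t)\>$ with $R_2(b\o)\,x\,R_1(b\t)\in I$, so no additional assumption is needed.
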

\proof  We first show that $Q_2:A\to H$ is a crossed module morphism as claimed. That it intertwines the action in (\ref{Hcross}) with the right adjoint action as a variant of  \cite[Prop~2.1.14]{Ma:book}. This works the same way in the quasitriangular dual pair setting, so we omit details. The new feature is
\begin{eqnarray*} \Delta Q_2(a)&=&\Delta(R_1(a\o)R_2(a\t))=(R_1(a\t)\tens R_1(a\o))(R_2(a\th)\tens R_2(a\four))\\
&=& R_1(a\t)R_2(a\th)\tens R_1(a\o)R_2(a\four)=Q_2(a\t)\tens R_1(a\o)R_2(a\th)\end{eqnarray*}
so $Q_2$ intertwines the coaction on $A$ with the coproduct of $H$, which together with the right adjoint action is the $H$-crossed module structure on $H$. We then have also that
$Q_2:A^+\to H^+$ is a crossed module morphism for the $H$-crossed module structures (\ref{Hcross}) on $A^+$ and (\ref{Acocrossed}) on $H^+$.  The crossed module structure on $A^+$ in (\ref{Across}) clearly restricts to $I^\perp$ hence $i=Q_2: I^\perp\to H^+$ is a morphism of crossed modules and provides an augmentation on  $\Omega^1(H)$ according to Lemma~\ref{ilambda}.

The condition for $\theta^*$ merely explicates that $\Delta_R\theta^*=\theta^*\tens 1$. Since $Q_2:I^\perp\to H^+$ is a morphism of crossed modules, this implies that $Q_2(\theta^*)$ is primitive.  The Lie derivative has the stated form by Theorem~\ref{aug} but can also be computed. \endproof

So on $H$ we have a natural codifferential structure, which means that on the dual we naturally have a generalised differential structure on $A$. In some cases we may have a differential structure on $H$ as well. The nicest case is if $Q_2:I^\perp\hookrightarrow H^+$ then finding $\theta^*$ is equivalent to finding a primitive element in its image.

Note next that $(H^+)^*\supseteq A^+$ etc due to the nondegenerate pairing and that $(H^+/I)^*\supseteq I^\perp$. What this means is that technically we rework the theory in dual form rather than literally apply Lemma~\ref{dualcodif}.

\begin{corollary}\label{coquasi} Dual to Proposition~\ref{quasi}, let $(H,A)$ be a regular quasitriangular dual pair. Then $H^+/I$ is a crossed module from (\ref{Across}) and $Q_1:A^+\to H^+/I$ gives a generalised first order bicovariant differential calculus $\Omega^1(A)=A\rbiprod (H^+/I)$.  This is inner if we have a central element $\theta\in H^+, \theta\notin I$ such that $(\theta-1)H^+\subseteq I$.  If  $\theta^*\in I^\perp$ obeys the condition in Propsition~\ref{quasi}  then
\[ i: H^+/I\to A^+,\quad i(\eta)=\<\eta,\theta^*\t\>\theta^*\o S\theta^*\th -\<\eta,\theta^*\>,\quad \forall \eta\in H^+/I\]
makes $\Omega^1(A)$ augmented with Lie derivative $\CL(a)=Q(a\o,\theta^*)a\t- a\o Q(a\t,\theta^*)$ for all $a\in A$. \end{corollary}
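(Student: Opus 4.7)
The plan is to obtain the corollary as the precise dualisation of Proposition~\ref{quasi} in the framework of mutual adjointness of crossed modules introduced after Corollary~\ref{deltawor}. The three ingredients are (i) the pairing between $I^\perp\subseteq A^+$ (an $H$-crossed module via (\ref{Hcross})) and $H^+/I$ (to be shown an $A$-crossed module via (\ref{Across})), (ii) the identity $\<Q_1(a),b\>=\<a,Q_2(b)\>$ which follows from $\<R_1(a),b\>=\<a,R_2(b)\>$ and the symmetric statement for $R_2$, and (iii) mutual adjointness of the two crossed module structures on $A^+$ and $H^+$. First one checks that (\ref{Across}) induces a well-defined right $A$-crossed module structure on $H^+/I$: both the action $h\ra a=R_2(a\o)hR_1(a\t)$ and the right coadjoint coaction preserve $I$ because $I$ is a two-sided ideal of $H$ (with regularity of the pair handling the coaction in the infinite-dimensional case). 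Under the restricted pairing, $I^\perp$ and $H^+/I$ are then mutually adjoint.

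Next, I verify that $Q_1:A^+\to H^+/I$ is a morphism of right $A$-crossed modules; this is either a direct calculation mirroring the argument in the proof of Proposition~\ref{quasi} that established $Q_2$ as a crossed module morphism, or a transport along mutual adjointness. Note that $\eps_H\circ Q_1=\eps_A$ ensures $Q_1(A^+)\subseteq H^+$. Theorem~\ref{Hopfcalc} then provides the generalised first order bicovariant differential calculus $\Omega^1(A)=A\rbiprod(H^+/I)$. For the inner case Lemma~\ref{propinner} requires an element of $H^+/I$ whose action reproduces $Q_1$; taking the class of the assumed central $\theta$, centrality collapses $\theta\ra a=R_2(a\o)\theta R_1(a\t)$ to $\theta Q_1(a)$, and $(\theta-1)H^+\subseteq I$ together with $Q_1(A^+)\subseteq H^+$ then gives $\theta Q_1(a)\equiv Q_1(a)$ modulo $I$, as required.

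For the augmentation, the displayed formula for $i$ is precisely the coinner form of Lemma~\ref{ilambda} applied to $\theta^*\in I^\perp\subseteq(H^+/I)^*$: expanding $\<\theta^*,\eta_0\>\eta_1$ via the coaction in (\ref{Across}) and the standard coproduct-pairing identity produces $\<\theta^*_{(2)},\eta\>\theta^*_{(1)}S\theta^*_{(3)}$, while $-\<\theta^*,\eta\>1_A$ is the second term. Well-definedness of $i$ on the quotient $H^+/I$ and its landing in $A^+$ follow directly from Lemma~\ref{ilambda} once $\theta^*$ is taken in $I^\perp$. The crossed module morphism property of $i$ is then forced by the hypothesis $\Delta_R\theta^*=\theta^*\tens1$ in the crossed module (\ref{Hcross}), which is precisely dual to $Q_2(\theta^*)$ being primitive in Proposition~\ref{quasi}. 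Finally the Lie derivative $\CL=\extd\circ i+i\circ\extd$ reduces on $a\in A=\Omega^0$ to $\CL(a)=i(\extd a)=a\o\, i([Q_1(\pi a\t)])$; substituting the formula for $i$, using $\<Q_1(b),c\>=\<b,Q_2(c)\>=Q(b,c)$, and invoking the condition on $\theta^*$ to simplify the $\theta^*_{(1)}S\theta^*_{(3)}\<\theta^*_{(2)},\ \>$ contribution into $Q(a\o,\theta^*)a\t$ yields the stated formula. The main technical point is verifying that $i$ is a crossed module morphism, for which the hypothesis $\Delta_R\theta^*=\theta^*\tens1$ enters essentially; all other steps are routine transports of facts already established in Proposition~\ref{quasi}.
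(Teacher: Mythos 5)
Your proposal is correct and follows essentially the same route as the paper: establish that $I$ being a two-sided ideal makes $H^+/I$ a quotient crossed module under (\ref{Across}), show $Q_1$ is a crossed-module morphism by the same kind of computation used for $Q_2$ in Proposition~\ref{quasi}, invoke Theorem~\ref{Hopfcalc}, reduce innerness to $\theta\ra a=\theta Q_1(a)\equiv Q_1(a)$ mod $I$ via centrality, verify $i$ directly as a crossed-module morphism using right-invariance of $\theta^*$, and compute $\CL=i\circ\extd$ on degree $0$. The only slight overstatement is that $\Delta_R\theta^*=\theta^*\tens 1$ is needed only for the action-intertwining half of the morphism property of $i$ (the compatibility with the coaction is automatic from $\Delta_R$ being a coaction), but this does not affect the argument.
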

\proof  Here $Q_1:A\to H$ intertwines the right adjoint coaction on $A$ with the coaction in (\ref{Across}) as essentially proven in \cite[Prop~2.1.14]{Ma:book}. The new part is
\begin{eqnarray*} Q_1(ab)&=&R_2(a\o b\o)R_1(a\t b\t)=R_2(b\o)R_2(a\t)R_1(a\t)R_1(b\t)\\
&=& R_2(b\o)Q_1(a)R_1(b\t)=Q_1(a)\ra b,\quad \forall a,b\in A.\end{eqnarray*}
So $Q_1$ is also a morphism of crossed modules. Clearly, the crossed module structure on $H^+$ in (\ref{Hcross}) descends to $H^+/I$ and   $\omega=Q_1:A^+\to H^+/I$ being a morphism gives us a generalised bicovariant differential calculus on $A$ by Theorem~\ref{Hopfcalc}.   To be inner we look for $\theta\in H^+/I$ so that  $\omega(a)=\theta\ra a$,  i.e. so that $Q_1(a)-R_2(a\o)\theta R_1(a\t)\in I$  for all $a\in A^+$. The simplest way to do this, which also immediately implies that $\Delta_R\theta=\theta\tens 1$ is to assume that $\theta$ is central and that $(1-\theta)H^+\subseteq I$.

For the augmentation we obtain the formula by dualising Proposition~\ref{quasi} but after that one can verify directly that $i:H^+/I\to A^+$ is a morphism of crossed modules. Indeed, $i(\eta)=\<\eta_0,\theta^*\>\eta_1-\<\eta,\theta*\>$ in terms of the coadjoint coaction on $\Lambda^1$ and $\<\eta\ra a,\theta^*\>=\<R_2(a\o),\theta^*\o\>\<R_1(a\t),\theta^*\th\>\<\eta,\theta^*\t\>=\<R_1(\theta^*\o)R_2(\theta^*\th),a\>\<\eta,\theta^*\t\>$ $= \<\eta,\theta^*\>\eps(a)$ by assumption on $\theta^*$ (this expresses that the action on $H^+/I$ is adjoint to the coaction on $\theta^*$ and the latter is trivial). With these facts, we check $i(\eta\ra a)=\<(\eta\ra a)_0,\theta^*\>(\eta\ra a)_1-\<\eta\ra a,\theta^*\>=\<\eta_0\ra a\o,\theta^*\>Sa\o\eta_1 a\th-\<\eta,\theta^*\>\eps(a)=Sa\o(\<\eta,\theta^*\>\eta_1-\<\eta,\theta^*\>)a\t=Sa\o i(\eta)a\t$ as required. Also $i(\eta_0)\tens\eta_1=\<\eta_{00},\theta^*\>\eta_{01}\tens\eta_1-\<\eta_0,\theta^*\>1\tens\eta_1=\<\eta_0,\theta^*\>\Delta\eta_1-\<\eta_0,\theta^*\>1\tens\eta_1=\Delta i(\eta)-1\tens i(\eta)$ as $\Delta_R\eta=\eta_0\tens\eta_1$ is a right coaction. Hence $i$ provides an augmentation.  One can compute the associated Lie derivative on $\Omega^1(A)$ as
\begin{eqnarray*}\CL(a)&=&i(a\o\omega(\pi a\t))=a\o \<(Q_1(\pi a\t))_0,\theta^*\>(Q_1(\pi a\t))_1-a\o\<Q_1(\pi a\t),\theta^*\>\\
&=& a\o\<Q_1(\pi a\t\t),\theta^*\>Sa\t\o a\t\th-a\o\<Q_1(\pi a\t),\theta^*\>\\ &=&\<Q_1(\pi a\o),\theta^*\>a\t-a\o \<Q_1(\pi a\t),\theta^*\>.\end{eqnarray*} We used the definition of $i$ in terms of the coadjoint coaction on $H^+/I$, the morphism property of $Q_1$, and that the counit projection $\pi$ commutes with the right adjoint coaction on $A$. We cancel $\pi$ between the two terms in the result to obtain the answer stated.  \endproof

Clearly, we have a standard calculus on $A$  precisely when $Q_1:A^+\twoheadrightarrow H^+/I$ which, due to the nondegenerate pairing, implies $Q_2:I^\perp \hookrightarrow H^+$ is injective so this also holds in the standard calculus case.  The Corollary~\ref{coquasi} applies certainly when $H$ finite-dimensional with dual $A$, for example reduced quantum groups at $q$ a root of unity. If $e$ is a central idempotent in the block decomposition of $H$, we let $I=(1-e)H^+$ so that $H^+/I \isom eH^+$. This recovers the formulae in \cite{Ma:dcalc} in this case. Again, the new feature is that we do not need factorizability, provided we work with generalised differential calculi, and this applies for example to reduced quantum groups at even roots of unity. Since $e^2=e$, the counit $\eps(e)=0,1$ and in the 0 case $\theta=e\in H^+$ is central and makes the calculus inner. We do not in general have the primitive element $Q_2(\theta^*)$ but an analogue of the exponential of $\CL$ namely $S^4$ see below.

For the usual quantum groups $A=C_q(G)$, $H=U_q(\cg)$ one can take $I=\ker\rho$ where $\rho:H^+\to \End(V)$ is the restriction to $H^+$ of an irreducible representation then $\Lambda^1\isom{\rm im}\rho\isom \End(V)$ for generic $q$. This then reproduces the standard bicovariant differential calculus on $C_q(G)$ in the known classification for generic $q$ with classical limit\cite{Ma:cla}. The calculus is inner with $\theta\in U_q(\cg)^+$ any central element that can be normalised so that $\rho(\theta)=1$. Then $(\theta-1)U_q(\cg)^+\subseteq \ker\rho$ as required. For example we can take here the $q$-deformed quadratic Casimir or in the formal deformation-theoretic setting where $q=e^{ h\over 2}$ and we work over $\C[[h]]$, we can take $\theta\propto 1-\nu$, where $\nu$ is the ribbon element.  Also in this setting  there is a canonical element $g\in U_q(\cg)$ built from the quasitriangular structure  which is group-like and implements $S^4$ by conjugation\cite{Dri, Ma:book}. In Drinfeld's formal power series setting we have a logarithm $D$ so that $g=e^{ {h \over 2}D}$. We take its inverse image under $Q_2$ as $\theta^*$. The Lie derivative in Proposition~\ref{quasi} is then $[D,\ ]$ and hence its exponential $e^{{h\over 2}\CL}=S^4$ on $U_q(\cg)$, where $S$ is the antipode. Clearly one can also take for $D$ any direction in the Cartan subalgebra of $U_q(\cg)$ as these generators are all primitive. Dually, $C_q(G)$ in a formal setting has $i$ defined by $D$ and Lie derivative $\CL(a)=\<D,a\o\>a\t- a\o \<D,a\t\>$ for all $a\in C_q(G)$. Although these constructions are formal, one can think of them as reducing to Corollary~\ref{Galmost} in leading nontrivial order.

We have focussed on $\Omega^1(H)$ and $\Omega^1(A)$. In the inner cases we see that respectively $\theta^*,\theta$ are right invariant so Proposition~\ref{genworon} holds and both $\Omega(H)=H\rbiprod B_-(I^\perp)$ and $\Omega(A)=A\rbiprod B_-(H^+/I)$ are strongly bicovariant exterior algebras and by construction dually paired. The augmentations and their extensions to the higher degrees will be looked at elsewhere.

\section{Generalised calculi on finite groups and Hopf quivers}
Here we apply previous general theory to the Hopf algebra $k(G)$, the function algebra on a (finite) group $G$, and the group algebra $kG$. This makes a link with Hopf quivers and also allows us to explore the duality in this very concrete case. We denote by $\mathcal{C}$ the set of all the conjugacy classes of $G.$ If $V$ is a left $G$-module we denote by ${}_GV$ the space of invariant elements. Similarly $V_G$ in the right module case.

\subsection{Generalised differentials on group function algebras and Hopf quivers}

Firstly, we specialise Theorem~\ref{Hopfcalc}, Corollary~\ref{Hopfcorol} and Lemma~\ref{ilambda} to $A=k(G).$

\begin{proposition}\label{w_c}
Let $A=k(G)$ on a finite group, the generalised bicovariant differential calculus data $(\Lambda^1,\omega)$ in Theorem~\ref{Hopfcalc} are equivalent to the following data:
\begin{itemize}
\item[1)] $\Lambda^1$ a $G$-graded space i.e. $\Lambda^1=\oplus_{g\in G}\Lambda^1_g;$
\item[2)] $\Lambda^1$ also a left $G$-module s.t. $h\la\Lambda^1_g=\Lambda^1_{hgh^{-1}},\,\forall g,h\in G;$
\item[3)]  a set of pairs $\{(c, \omega_c)\ |\ c\in C,\  \omega_c\in{}_{Z_c}\!\Lambda^1_c\}_{C\in\CC,C\neq\{e\}}.$
\end{itemize}
Here $Z_c$ is the centralizer of  $c$ in $G$. Bicovariant codifferential data $(\Lambda^1,i)$ in Lemma~\ref{ilambda} are given by 1),2) and
\begin{itemize}
\item[4)] $\{\iota_g\in \Lambda^1{}^*_e\}_{g\in G}$ a cocycle in the sense $\iota_{gh}=\iota_g\ra h+\iota_h$ for all $g,h\in G$
\end{itemize}
where $\Lambda^1{}^*$ is canonically a right $G$-module.  When both exist, the Lie derivative is $\CL=0$.
\end{proposition}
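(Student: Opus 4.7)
The plan is to specialize Theorem~\ref{Hopfcalc} and Lemma~\ref{ilambda} to the commutative Hopf algebra $A=k(G)$, exploiting that $A$ has the primitive idempotent basis $\{\delta_g\}$.

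First I would identify right $k(G)$-modules with $G$-graded vector spaces via $\Lambda^1_g:=\Lambda^1\ra\delta_g$ (giving~(1)), and right $k(G)$-comodules with left $G$-modules via $h\la v:=$ coefficient of $\delta_h$ in $\Delta_R v$. Then I would check that the right $A$-crossed module compatibility of Section~2.2, once unpacked on the $\delta_g$ and simplified using commutativity of $A$, reduces to the single condition $h\la\Lambda^1_g\subseteq\Lambda^1_{hgh^{-1}}$ in~(2).

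Next, for the differential data $\omega\colon A^+\to\Lambda^1$: right $A$-linearity forces $\omega(\delta_g)\in\Lambda^1_g$, and comodule-linearity against the adjoint coaction $\Delta_R\delta_g=\sum_x\delta_{xgx^{-1}}\otimes\delta_{x^{-1}}$ forces $\omega(\delta_{xgx^{-1}})=x\la\omega(\delta_g)$. Hence $\omega$ is determined by one value $\omega_c:=\omega(\delta_c)$ per non-trivial conjugacy class, and specializing to $z\in Z_c$ forces $\omega_c\in{}_{Z_c}\!\Lambda^1_c$, recovering~(3). For the codifferential data $i\colon\Lambda^1\to A^+$, I would use the alternate crossed module~(\ref{Acocrossed}): commutativity collapses $a\ra b=Sb_{(1)}ab_{(2)}$ to $\eps(b)a$, a trivial action, so module-linearity of $i$ forces $i|_{\Lambda^1_g}=0$ for every $g\ne e$ and reduces everything to $i|_{\Lambda^1_e}\colon\Lambda^1_e\to A^+$. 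Writing $i(v)=\sum_{g\ne e}\iota_g(v)\,\delta_g$ and comparing coefficients of $\delta_g\otimes\delta_h$ in $(\Delta-1\otimes\id)i(v)=(i\otimes\id)\Delta_R v$ (with the convention $\iota_e:=0$) should yield $\iota_{gh}(v)=\iota_g(h\la v)+\iota_h(v)$, which is~(4) under the standard duality $(\phi\ra h)(v)=\phi(h\la v)$ on $\Lambda^{1*}$.

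The vanishing of the Lie derivative is then immediate: on $A$ we have $i|_A=0$, so $\CL=i\extd$; but by Theorem~\ref{Hopfcalc}, $\extd\delta_g=\sum_{xy=g,\,y\ne e}\delta_x\otimes\omega(\delta_y)$, and by the previous step $i\omega(\delta_y)=0$ since $\omega(\delta_y)\in\Lambda^1_y$ with $y\ne e$. The main obstacle is purely bookkeeping, namely lining up sign and side conventions so that the adjoint coaction on $A^+$ matches the conjugation left action on $\Lambda^1$ and the cocycle relation in~(4) emerges in the stated orientation; once those conventions are fixed, all four parts are direct unwindings of the general theorems.
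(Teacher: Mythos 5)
Your proposal is correct and follows essentially the same route as the paper: identify right $k(G)$-crossed modules with $G$-graded left $G$-modules, read off $\omega$ on the $\delta_g$ to get one $Z_c$-invariant element per nontrivial class, collapse the adjoint action on $A^+$ by commutativity to force $i$ to be supported on $\Lambda^1_e$ and extract the cocycle $\{\iota_g\}$, and kill $\CL$ because $\omega_h\in\Lambda^1_h$ with $h\ne e$ while $i$ vanishes there. The only point not spelled out is the converse direction of the equivalence, namely that an arbitrary choice of $(c,\omega_c)$ with $\omega_c\in{}_{Z_c}\Lambda^1_c$ yields a well-defined $\omega$ via $\omega_{hch^{-1}}:=h\la\omega_c$ (well-definedness being exactly the $Z_c$-invariance you already isolated), plus the orientation of the adjoint coaction, which you rightly flag as convention bookkeeping.
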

\proof It is well known that a vector space is a right $k(G)$-crossed module if and only if it is a left $kG$-crossed module. So the right $k(G)$-crossed module $\Lambda^1$ is equivalent to the data 1) and 2). Note here $\Lambda^1_g:=\Lambda^1\ra\,\delta_g$ and $\Delta_R(v)=\sum_{h\in G}h\la v\tens \delta_h.$

Also, the right $k(G)$-module map $\omega:k(G)^+\to \Lambda^1$ is  uniquely defined by $\{\omega_g=\omega(\delta_g)\in\Lambda^1_g|\,g\in G\setminus\{e\}\}$ since the right-module structure corresponds to the grading. We also need $h\la \omega_g=\omega_{hgh^{-1}}$ for all $h\in G$ for $\omega$ to be a right comodule map where $k(G)^+$ has the right adjoint coaction. This is equivalent to the data stated in 3).

Indeed, given the collection $\{\omega_g\in \Lambda^1_g\}_{g\ne e}$ such that $h\la \omega_g=\omega_{hgh^{-1}}$ for all $h\in G$, clearly $\omega_g\in {}_{Z_g}\Lambda^1_g$. We can choose an element $c\in C$ and its associated $\omega_c$ for each nontrivial $C\in \CC$. Conversely, suppose we are given the data 3) consisting of $c\in C$ and  $\omega_c$  for each $C$. For any $g\in G\setminus\{e\}$ write $g=hch^{-1}$ for some  $C$ and its chosen $c\in C$ and some $h\in G.$ One can set $\omega_g=h\la\omega_c$ which in $\Lambda^1_g,$. This is well-defined because  if also $g=h'ch'^{-1},$ then $h'=hu$ for some $u\in Z_c,$ so $h'\la \omega_c=h\la(u\la \omega_c)=h\la \omega_c.$

For the codifferential structure we let
\[ i(\eta)=\sum_{g\in G}\delta_g\<\iota_g,\eta\>\]
 as the most general linear map $\Lambda^1\to k(G)^+$, where $\iota_g\in \Lambda^1{}^*$ and $\iota_e=0$. That this is a module map means $i(\eta_h)=\delta_{h,e}i(\eta)$ for all $h\in G$ from which we deduce that $\iota_g\in \Lambda^1{}^*_e$. That $i$ is a comodule map means $i(h\la \eta)=i(\eta)(\  h)-1i(\eta)(h)$ for all $h\in G$, which means $\sum_{g\in G}\delta_g\<\iota_g\ra h,\eta\>=\sum_{g\in G}\delta_{gh^{-1}}\<\iota_g,\eta\>-\<\iota_h,\eta\>$ which after a change of variables is the condition stated. The cocycle condition stated entails that $\iota_e=0$. The Lie derivative is $\CL(\delta_g)=\sum_{h\ne e}(\delta_{gh^{-1}}-\delta_g)i(\omega_h)=0$ as $\omega_h\in \Lambda^1_h$ and $i$ has support only on $\Lambda^1_e$.  \endproof

Note that the data in (3) are equivalent if they define the same map $\omega$, meaning that for  each $C$ we have $(c,\omega_c)\sim (c',\omega'_{c'})$ where the equivalence is
\begin{equation}\label{comega}(c,\omega_c)\sim (c',\omega'_{c'}),\quad {\rm iff}\quad c'=kck^{-1},\quad \omega'_{c'}=k\la\omega_c\end{equation}
for some $k\in G$.

\begin{proposition}\label{clagroup}
Let $A=k(G)$, the data $(\Lambda^1,\theta)$ for an inner generalised bicovariant calculus in Lemma~\ref{propinner} amounts to $\Lambda^1$ in Proposition~\ref{w_c} and $|\CC|$ elements
$$\{\theta_e\in\Lambda^1_e\}\cup\{(c,\theta_c)\ |\ c\in C, \theta_c\in{}_{Z_c}\!\Lambda^1_c\}_{C\in\CC,C\neq\{e\}}.$$
Up to isomorphism, the data is $\{(c,\theta_c)\}$ modulo equivalence as in (\ref{comega}). The calculus in Proposition~\ref{w_c} is always inner, namely one can take $(c,\theta_c)=(c,\omega_c)$ for all $C\ne\{e\}$. If $|G|$ is invertible a codifferential structure is coinner with $\iota_g=\theta^*\ra(g-1)$ for some $\theta^*\in \Lambda^1{}^*_e$ and all $g\in G\setminus\{e\}$. \end{proposition}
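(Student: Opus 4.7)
The plan is to specialise the general inner criteria of Lemma~\ref{propinner} and Proposition~\ref{propinner-iso} to $A=k(G)$ using the concrete description of right $k(G)$-crossed modules from Proposition~\ref{w_c}, then read off the data $\theta_g$ by decomposing $\theta$ along the grading. A generic $\theta \in \Lambda^1$ decomposes as $\theta=\sum_{g\in G}\theta_g$ with $\theta_g\in\Lambda^1_g$, and since $\omega(a)=\theta\ra a$, evaluation on the basis gives $\omega(\delta_g)=\theta\ra\delta_g=\theta_g$ for $g\ne e$. Comparing with the data $\{\omega_c\}$ of Proposition~\ref{w_c}, bicovariance of the resulting calculus amounts to $h\la\theta_g=\theta_{hgh^{-1}}$ for all $g\ne e$, $h\in G$, i.e. $\theta_c\in{}_{Z_c}\Lambda^1_c$ for each representative $c$ of a nontrivial conjugacy class. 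The component $\theta_e\in\Lambda^1_e$ plays no role in $\omega$ because $\delta_e\notin A^+$, so it is unconstrained; this accounts for the $|\mathcal{C}|$ pieces of data listed. For the isomorphism classification, observe that $\Lambda^1_A=\Lambda^1_e$ (as $v\ra\delta_g=0$ for all $g\ne e$ iff $v\in\Lambda^1_e$), so Proposition~\ref{propinner-iso} identifies two $\theta$'s whenever they differ by an element of $\Lambda^1_e$; this kills the $\theta_e$ degree of freedom and leaves the $\{(c,\theta_c)\}$ data modulo the equivalence (\ref{comega}).

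For the second assertion (any calculus of Proposition~\ref{w_c} is inner), starting from the data $\{(c,\omega_c)\}_{C\ne\{e\}}$ I simply set $\theta_c=\omega_c$ and $\theta_e=0$, extend by $\theta_g=h\la\omega_c$ when $g=hch^{-1}$ (well-defined by the $Z_c$-invariance of $\omega_c$ as established in the proof of Proposition~\ref{w_c}), and let $\theta=\sum_{g\ne e}\theta_g$. Then $\theta\ra\delta_g=\theta_g=\omega_g$ for every $g\ne e$, so $\omega(a)=\theta\ra a$ recovers the original $\omega$ on $A^+$.

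For the codifferential statement, I match the coinner formula of Lemma~\ref{ilambda}, $i(\eta)=\langle\theta^*,\eta_0\rangle\eta_1-\langle\theta^*,\eta\rangle 1_A$, against the expansion $i(\eta)=\sum_g\delta_g\langle\iota_g,\eta\rangle$ of Proposition~\ref{w_c}. Using $\eta_0\otimes\eta_1=\sum_k(k\la\eta)\otimes\delta_k$ and $1=\sum_g\delta_g$, comparison of coefficients gives
\[
\langle\iota_g,\eta\rangle=\langle\theta^*,g\la\eta\rangle-\langle\theta^*,\eta\rangle=\langle\theta^*\ra g-\theta^*,\eta\rangle,
\]
so $\iota_g=\theta^*\ra(g-1)$, and one checks directly that this is automatically a 1-cocycle and lies in $\Lambda^1{}^*_e$ when $\theta^*\in\Lambda^1{}^*_e$.

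Conversely, given any 1-cocycle $\{\iota_g\}_{g\in G}$ in $\Lambda^1{}^*_e$ I would construct $\theta^*$ by the standard averaging trick, valid precisely when $|G|$ is invertible in $k$. Explicitly set
\[
\theta^*=-\frac{1}{|G|}\sum_{h\in G}\iota_h\ \in\ \Lambda^1{}^*_e,
\]
and verify $\theta^*\ra g-\theta^*=\iota_g$ using the cocycle identity $\iota_{hg}=\iota_h\ra g+\iota_g$ together with the change of variables $h\mapsto hg^{-1}$ in one of the sums; the telescoping cancellation is routine. The only step that could be considered an obstacle is keeping the grading/action conventions straight when converting between the right $k(G)$-coaction on $\Lambda^1$ and the resulting left $G$-action, and noting that $\theta^*$ must be taken in $\Lambda^1{}^*_e$ (which is forced anyway since $\iota_g\in\Lambda^1{}^*_e$ and the average preserves this subspace); beyond this the argument is a direct specialisation of the general theory.
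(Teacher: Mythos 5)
Your proposal is correct and follows essentially the same route as the paper's proof: decomposing $\theta$ along the $G$-grading, identifying $\Lambda^1_A=\Lambda^1_e$ so that Proposition~\ref{propinner-iso} discards $\theta_e$, taking $\theta_c=\omega_c$ for innerness, and recovering $\theta^*$ from a cocycle by the averaging $\theta^*=-|G|^{-1}\sum_h\iota_h$. The only cosmetic difference is that you phrase the bicovariance condition via $\omega$ being a comodule map rather than directly as $\Delta_R\theta-\theta\tens 1\in\Lambda^1\square A$, but these are equivalent by Lemma~\ref{propinner}.
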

\proof
 Here, because $A=k(G)$ is commutative, we have $\Lambda^1\square A=\Lambda^1_A\tens A$. The elements of $\Lambda^1_A$ are $\eta=\sum_h\eta_h$ such that $\eta_h\ra\delta_g=\epsilon(\delta_g)\eta_h=\delta_{g,e}\eta_h$ for all $h\in G,$ which implies $\Lambda^1_A=\Lambda^1_e$.  So we need $\Delta_R\theta-\theta\tens 1\in \Lambda^1_e\tens k(G)$ as the condition in Lemma~\ref{propinner} for an inner bicovariant calculus. This means  $\theta=\sum_{g\in G}\theta_g\in\Lambda^1$, where $\theta_g=\theta\ra \delta_g\in\Lambda^1_g$ such that $h\la\theta_g=\theta_{hgh^{-1}}$ for any $h\in G$ and $g\in G\setminus\{e\}.$ As in the proof of Proposition~\ref{w_c}, such $\{\theta_g\}_{g\in G\atop g\neq e}$ are uniquely determined by a set of pairs $\{(c,\theta_c)\}$ as stated, and we also have a free choice of  $\theta_e$.  From Proposition~\ref{propinner-iso}, up to isomorphism we need only the $\{(c,\theta_c)\}$ part of the data and up to the stated equivalence whereby they define the same $\theta$.  Given a bicovariant calculus in Proposition~\ref{w_c}, one can take  $\theta=\sum_{g\in G\setminus\{e\}}\omega_g$ or in terms of the data, $\theta_e=0,$ and $\theta_c=\omega_c$ for one $c$ in each nontrivial conjugacy class. For the last part, we easily check that this is a cocycle for any $\theta^*$. The resulting $i$ is $i(\eta)=\sum_{g\in G}\delta_g\<\theta^*, (g-1)\la \eta_e\>$ where $\eta_e$ is the component of $\eta$ in $\Lambda^1_e$. Given the cocycle $\{\iota_g\}$ we define $\theta^*=-|G|^{-1}\sum_{h\in G}\iota_h$.  Then $\theta^*\ra(g-1)=-|G|^{-1}\sum_{h\in G} (\iota_{gh}-\iota_h-\iota_g)=\iota_g$ using the cocycle condition. \endproof

Here $\theta^*$ just corresponds to an inner calculus on $kG$ and that only its class in $\Lambda^1{}^*_e/(\Lambda^1{}^*_e)_G$ is relevant, while for the inner calculus on $k(G)$  the component $\theta_e$ is irrelevant.

\begin{corollary}\label{clagroupOmega}
Let $A=k(G)$ and $(\Lambda^1,\theta)$ define an inner generalised bicovariant differential calculus as in Proposition~\ref{clagroup}. This extends to an inner strongly bicovariant differential exterior algebra $(\Omega(G),\extd)$ in the setting of Proposition~\ref{genworon} iff $\Delta_R\theta=\theta\tens 1$, i.e. $\theta_e\in{}_G\Lambda^1_e$. In this case $\Omega(G)=k(G)\rbiprod B_-(\Lambda^1)$ is generated by $k(G),\Lambda^1$ with relations, coproduct and exterior derivative
\[ v\delta_g=\delta_{g|v|^{-1}}v, \quad \Delta\delta_g=\sum_{h\in G}\delta_{gh^{-1}}\tens\delta_h,\quad \Delta v=1\tens v+\sum_{h\in G}h\la v\tens \delta_h,\quad \extd=[\theta,\ \}\]
 for all homogeneous $v\in\Lambda^1$ of $G$-degree $|v|$ and all $g\in G$. \end{corollary}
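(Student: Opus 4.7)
The plan is to apply Proposition~\ref{genworon} directly in the case $A=k(G)$, making essential use of the explicit description of the right $k(G)$-crossed module $\Lambda^1$ from Proposition~\ref{w_c}. Commutativity of $k(G)$ gives $\Lambda^1\square A=\Lambda^1_A\tens k(G)=\Lambda^1_e\tens k(G)$ (as already used in the proof of Proposition~\ref{clagroup}), so the ambient hypothesis $\Delta_R\theta-\theta\tens 1\in\Lambda^1\square A$ inherited from Lemma~\ref{propinner} is automatic for any bicovariant inner $\theta$ coming from Proposition~\ref{clagroup}. What remains is to analyse the two displayed conditions of Proposition~\ref{genworon} and to read off the concrete formulae for $k(G)\rbiprod B_-(\Lambda^1)$.

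First I would compute $\Delta_R\theta-\theta\tens 1$ explicitly. Writing $\theta=\theta_e+\sum_{g\ne e}\theta_g$ and using $\Delta_R(v)=\sum_h(h\la v)\tens\delta_h$ together with the bicovariance identity $h\la\theta_g=\theta_{hgh^{-1}}$ (which holds for $g\ne e$), the change of summation variable $g\mapsto h^{-1}gh$ in the $g\ne e$ part makes that part telescope, leaving
\[ \Delta_R\theta-\theta\tens 1=\sum_{h\in G}(h\la\theta_e-\theta_e)\tens\delta_h. \]
This lies in $\Lambda^1_e\tens k(G)$, reconfirming the ambient hypothesis, and it vanishes precisely when $\theta_e\in{}_G\Lambda^1_e$, giving the equivalence $\Delta_R\theta=\theta\tens 1\iff\theta_e\in{}_G\Lambda^1_e$.

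For the forward implication, assuming $\Delta_R\theta=\theta\tens 1$, the braiding condition is immediate from $\Psi(\eta\tens\theta)=\theta_0\tens\eta\ra\theta_1=\theta\tens\eta$, and the anticommutator condition of Proposition~\ref{genworon} is vacuous. Proposition~\ref{genworon} then provides the inner strongly bicovariant exterior algebra $\Omega(G)=k(G)\rbiprod B_-(\Lambda^1)$ with $\extd=[\theta,\ \}$. For the converse I would unpack the braiding condition $\Psi(\eta\tens\theta)=\theta\tens\eta$ using the same calculation to obtain $\sum_h(h\la\theta_e-\theta_e)\tens\eta_h=0$ for every $\eta\in\Lambda^1$; testing against a nonzero $\eta\in\Lambda^1_h$ for each relevant $h$ forces $h\la\theta_e=\theta_e$ on the support of $\Lambda^1$, which combined with the freedom to adjust the inessential component $\theta_e$ (noted after Proposition~\ref{clagroup}) yields $\Delta_R\theta=\theta\tens 1$.

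The explicit formulae then follow mechanically from the (super-)bosonisation. The biproduct multiplication $(1\tens v)(\delta_g\tens 1)=\delta_g{}_{(1)}\tens(v\ra\delta_g{}_{(2)})$, evaluated with $\Delta\delta_g=\sum_h\delta_{gh^{-1}}\tens\delta_h$ and $v\ra\delta_h=\delta_{h,|v|}v$ for $v$ homogeneous of $G$-degree $|v|$, produces the bimodule relation $v\delta_g=\delta_{g|v|^{-1}}v$. The coproduct on $k(G)\subseteq\Omega(G)$ is that of $k(G)$, while on degree $1$ the biproduct formula combined with $\underline{\Delta}v=1\tens v+v\tens 1$ in $B_-(\Lambda^1)$ yields $\Delta v=1\tens v+\sum_h(h\la v)\tens\delta_h$. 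The main subtlety is the careful bookkeeping for the "only if" direction, since the braiding condition produces an equation in $\Lambda^1\tens\Lambda^1$ that must be tested against sufficiently many degree components of $\Lambda^1$; all other steps are direct applications of the machinery already established in Sections~2 and~3.
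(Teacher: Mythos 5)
Your forward direction and the explicit formulae for $k(G)\rbiprod B_-(\Lambda^1)$ are correct and follow the paper's route (a direct application of Proposition~\ref{genworon} plus unpacking the super-bosonisation), and your reduction $\Delta_R\theta-\theta\tens 1=\sum_{h}(h\la\theta_e-\theta_e)\tens\delta_h$ is exactly the right first step. The problem is the converse. From the braiding condition $\Psi(\eta\tens\theta)=\theta\tens\eta$ you correctly extract $h\la\theta_e=\theta_e$ only for those $h$ with $\Lambda^1_h\ne 0$, whereas the assertion to be proved is $\theta_e\in{}_G\Lambda^1_e$, i.e.\ invariance under \emph{all} of $G$; nothing forces the support of the $G$-grading of $\Lambda^1$ to exhaust (or generate) $G$. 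Your attempt to close this by invoking ``the freedom to adjust the inessential component $\theta_e$'' is a non sequitur: replacing $\theta_e$ by an invariant representative produces a \emph{different} element $\theta'$ defining an isomorphic first-order calculus, whereas the corollary asserts that the \emph{given} $\theta$ satisfies $\Delta_R\theta=\theta\tens 1$ whenever the hypotheses of Proposition~\ref{genworon} hold for that $\theta$. An isomorphism-class argument cannot substitute for a property of a fixed element.

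The paper closes exactly this gap using the second displayed hypothesis of Proposition~\ref{genworon}, namely $\{\Delta_R\theta-\theta\tens 1,\Delta_R\eta\}=0$, which you set aside as vacuous --- it is vacuous only in the forward direction, where $\Delta_R\theta-\theta\tens 1=0$. In the converse direction $\Delta_R\theta-\theta\tens 1=\sum_{h\in G}(h\la\theta_e-\theta_e)\tens\delta_h$ has a $\delta_h$-component for \emph{every} $h\in G$, and since the $\delta_h$ are orthogonal idempotents each component yields a separate relation in $\Lambda^2$ for each $h\in G$, not only for $h$ in the support of $\Lambda^1$. The paper specialises to $\eta=\theta_e$, obtaining $2(h\la\theta_e)^2=\theta_e(h\la\theta_e)+(h\la\theta_e)\theta_e$, and extends $\theta_e$ to a basis of $\Lambda^1_e$ to conclude $h\la\theta_e=\theta_e$ for all $h\in G$ (this degree-$2$ analysis needs care, since $\Lambda^1_e$ generates a Grassmann subalgebra of $B_-(\Lambda^1)$, but it is the step your argument is missing entirely). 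To repair your proof you must reinstate the anticommutator condition and carry out this analysis for all $h\in G$, rather than appealing to a change of representative for $\theta$.
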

\proof If $\theta_e\neq 0,$ the condition $\Psi(\eta\otimes\theta)=\theta\otimes\eta$ for all $\eta$ means $\sum_{h\in G}(h\la\theta_e)\otimes(\eta\ra\delta_h)=\theta_e\otimes\eta$ for all $\eta\in\Lambda^1.$ This means $h\la\theta_e=\theta_e$ for $h$ where $\Lambda^1_h\neq0.$ The condition $\{\Delta_R\theta-\theta\otimes1,\Delta_R(\eta)\}=0$ implies $\{\Delta_R(\theta_e)-\theta_e\otimes1,\Delta_R(\eta)\}=0$ for all $\eta\in\Lambda^1.$ Choose $\eta=\theta_e,$ we have $2(h\la\theta_e)^2=\theta_e(h\la\theta_e)+(h\la\theta_e)\theta_e.$ Since $h\la\theta_e\in\Lambda^1_e,$ we can extend $\theta_e$ to a basis of $\Lambda^1_e$ to prove that $h\la\theta_e=\theta_e$ for all $h\in G,$
 which means $\Delta_R(\theta_e)=\theta_e\otimes1$ and thus $\Delta_R\theta=\theta\otimes1.$ The rest of $(\Omega(g),\extd)$ is an elaboration of the general construction of Proposition~\ref{genworon} in our case. \endproof

\begin{corollary}\label{kofGunivaug}
Let $A=k(G)$ and $(\Lambda^1,\theta,\theta^*)$ define an augmented first order inner and coinner bicovariant calculus as in Proposition~\ref{clagroup}, with $\theta\in{}_G\Lambda^1_e, \theta^*\in{\Lambda^1}^*_e$. The inner strongly bicovariant differential exterior algebra $\Omega_\theta(k(G))$ in Proposition~\ref{univ} 
is augmented with codifferential $i$ given by (\ref{univ-codiff}) by extending the first order.
\end{corollary}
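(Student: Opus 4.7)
The plan is to reduce everything to verifying the single hypothesis of Proposition~\ref{univaug}, namely that $\theta\ra i(\theta)$ is graded central in $\Omega_\theta(k(G))$. Once this is confirmed, the codifferential $i$ automatically extends from first order to all of $\Omega_\theta(k(G))$ via the universal formula (\ref{univ-codiff}), as guaranteed by combining Proposition~\ref{tensor-codiff} with the descent criterion of Proposition~\ref{univaug}. So no independent check that $i$ respects the relations $\theta^2\ra a=0$ and $[\theta^2,\eta]=0$ of $\Omega_\theta$ is needed beyond the central-element condition.

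To carry this out, I would first compute $i(\theta)$ by combining the description of the first order codifferential in Proposition~\ref{w_c}, namely $i(\eta)=\sum_{g\in G}\delta_g\<\iota_g,\eta\>$, with the coinner cocycle $\iota_g=\theta^*\ra(g-1)$ from Proposition~\ref{clagroup}. Since the right $G$-action on $\Lambda^{1*}$ is dual to the left $G$-action on $\Lambda^1$ via $\<\phi\ra g,\eta\>=\<\phi,g\la\eta\>$, evaluation on $\theta$ gives $\<\iota_g,\theta\>=\<\theta^*,g\la\theta\>-\<\theta^*,\theta\>$. The hypothesis $\theta\in{}_G\Lambda^1_e$ means $g\la\theta=\theta$ for every $g\in G$, so every coefficient of $\delta_g$ vanishes and hence $i(\theta)=0$.

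Consequently $\theta\ra i(\theta)=0$, which is trivially graded central, and Proposition~\ref{univaug} delivers the required augmentation with $i$ extended as a degree $-1$ super-derivation by (\ref{univ-codiff}). In a more general Hopf-algebra setting the main obstacle would be to establish graded centrality of $\theta\ra i(\theta)$ in the presence of the defining quotient relations of $\Omega_\theta$, but in the present $A=k(G)$ setting the combined grading/invariance hypotheses on $\theta$ and $\theta^*$ collapse this obstacle to the vacuous case $i(\theta)=0$, which is why the result holds essentially for free once the general machinery of Section~4 is in place.
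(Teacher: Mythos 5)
Your proposal is correct and follows essentially the same route as the paper: both reduce to the criterion of Proposition~\ref{univaug}, compute $i(\theta)=\sum_{g}\delta_g\<\theta^*,(g-1)\la\theta\>$ from the coinner cocycle, and observe that $\theta\in{}_G\Lambda^1_e$ forces $i(\theta)=0$, hence $\theta\ra i(\theta)=0$ is trivially graded central. No gaps.
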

\begin{proof}
From $\theta_e\in {}_G\Lambda^1_e,$ corresponding to $\Delta_R\theta=\theta\tens 1$, we know $(g-1)\la \theta_e=0.$ This means $i(\theta)=\sum_{g\in G}\delta_g\<\theta^*,(g-1)\la\theta_e\>=0,$ so $\theta\ra i(\theta)=0.$ Hence Proposition~\ref{univaug} applies.
\end{proof}

Next we classify the isomorphism classes of generalised bicovariant differential calculi on $k(G)$ by Hopf quivers. We already know from Corollary~\ref{calcset} or Corollary~\ref{canonicalform} that calculi on $k(G)$ are given by digraph-quiver pairs $\bar Q\subseteq Q$. We elaborate the bicovariant case where $Q=G$ a group and explicitly identify with the data in Proposition~\ref{w_c} and Proposition~\ref{clagroup}.

Recall that  a digraph $\bar Q$ is a {\rm Cayley digraph} if it is of  the form $\bar Q(G,\bar C)$ where $\bar Q_0=G$ is a group, $\bar C\subseteq G\setminus\{e\}$ is an ad-stable subset  and the digraph has an arrow $x\to y$ iff $x^{-1}y\in\bar C$.  The set of arrows of a Cayley digraph has canonical and mutually commuting left and right action $h\ast(x\to y)=(xh^{-1}\to y h^{-1})$ and  $(x\to y)\ast h=h^{-1}x\to h^{-1}y$ for all $h\in G$. This underlies the standard bicovariant calculus on $k(G)$ in the finite group case.

Similarly, we say that a quiver is  a {\em coloured Hopf quiver} if it is of the form $Q(G,R)$ where $Q_0=G$ is a group, $R$ (the `ramification datum') is an assignment of a natural number $R_C\in \N_0$ to each conjugacy class $C$, and the quiver has precisely $R_C$ arrows $x\to y$ if $x^{-1}y\in C$ and if these arrows are labelled by index $i=1,\cdots, R_C$.
In this case we have a canonical right action  $(x \xrightarrow[]{(i)}y)\ast h=h^{-1}x\xrightarrow[]{(i)} h^{-1}y$. We also have a canonical and commuting left action defined similarly but we don't want to be limited to it. We say that a digraph-quiver pair is $\bar Q\subseteq Q$ is {\em coloured} if the above applies and the arrows of $\bar Q$ are all one colour. Without loss of generality we shall assume that this colour is 1. Clearly, a coloured Hopf quiver in the case where $R_C\in\{0,1\}$ and $R_{\{e\}}=0$ is the same thing as a Cayley digraph and our convention is compatible with that.

Finally, we define a {\rm Hopf digraph-quiver triple} $(\bar Q\subseteq Q,*)$ to be the above data together with a left action $\ast$ of $G$ on $kQ_1$ such that
\begin{enumerate}
\item $h\ast {}^xkQ_1{}^y={}^{xh^{-1}}kQ_1{}^{yh^{-1}}$ for all $h,x,y\in G$.
\item $\ast$ restricts on $\bar Q_1$ to the canonical left action.
\item $\ast$ commutes with the canonical right action on $kQ_1$.
\end{enumerate}

Clearly we are making these definitions so that the following holds.

\begin{proposition}\label{Hopfquivercalc} Let $(\bar{Q}\subseteq Q,\ast)$ be a Hopf digraph-quiver triple on a finite group $G.$ The associated `quiver calculus' in Corollary~\ref{canonicalform} is bicovariant and every generalised bicovariant differential calculus on $k(G)$ is isomorphic one of this form.  Its structure is inner with
\begin{gather*}
\Omega^1=kQ_1,\quad x\xrightarrow[]{(i)} y.f=x\xrightarrow[]{(i)} y.f(y),\quad f.x\xrightarrow[]{(i)} y=f(x)x\xrightarrow[]{(i)} y,\quad \theta=\sum_{a\in \bar{C}}\sum_{x\in G}x\xrightarrow[]{(1)} xa\\
\Delta_L(x\xrightarrow[]{(i)} xg)=\sum_{h\in G}\delta_h\otimes (h^{-1}x\xrightarrow[]{(i)} h^{-1}xg),\quad
\Delta_R(x\xrightarrow[]{(i)} xg)=\sum_{h\in G}h\ast (x\xrightarrow[]{(i)} xg) \tens \delta_h\\
\end{gather*}
\end{proposition}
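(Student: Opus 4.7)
The plan is to combine Corollary~\ref{canonicalform}, which reduces any generalised calculus on $k(X)$ to a quiver form, with the bicovariance classification of Proposition~\ref{w_c} and the inner structure of Proposition~\ref{clagroup}, translating the left-invariant data $(\Lambda^1,\omega)$ and the ambient bimodule $\Omega^1$ into the geometric triple $(\bar Q\subseteq Q,\ast)$.  First, given a generalised bicovariant calculus on $k(G)$, Corollary~\ref{canonicalform} supplies a $k(G)$-bimodule isomorphism $\Omega^1\cong kQ_1$ for a quiver $Q$ with vertex set $G$ containing a sub-digraph $\bar Q$ that encodes the standard sub-calculus, together with an inner presentation $\extd=[\theta,\ ]$ where $\theta$ is supported on $\bar Q_1$.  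The task is to upgrade this quiver datum to a Hopf digraph-quiver triple.

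To do so I pass through the Hopf module identification $\Omega^1\cong k(G)\tens\Lambda^1$ of Theorem~\ref{Hopfcalc}.  Left-invariant forms in $kQ_1$ with $\Delta_L\omega=1\tens\omega$ are forced by the formula $\Delta_L(x\xrightarrow{(i)}y)=\sum_h\delta_h\tens(h^{-1}x\xrightarrow{(i)}h^{-1}y)$ to be linear combinations of $v^{(i)}_g:=\sum_{x\in G}(x\xrightarrow{(i)}xg)$, indexed by a ``difference'' $g\in G$ and a colour $i$.  Hence the $G$-grading $\Lambda^1=\oplus_g\Lambda^1_g$ of Proposition~\ref{w_c} matches the decomposition of arrows of $Q$ by $x^{-1}y$.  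The compatibility $h\la\Lambda^1_g=\Lambda^1_{hgh^{-1}}$ forces the multiplicity $R_g=\dim\Lambda^1_g$ to depend only on the conjugacy class of $g$, making $Q$ a coloured Hopf quiver $Q(G,R)$.  The left $G$-action on $\Lambda^1$ then extends uniquely to a left action $\ast$ on $kQ_1$ through the bimodule structure, giving condition~(1) automatically.  Commutativity of the two coactions $\Delta_L,\Delta_R$ on $\Omega^1$ translates to condition~(3), commutativity of $\ast$ with the canonical right action.  Finally, the $Z_c$-invariance $\omega_c\in{}_{Z_c}\Lambda^1_c$ from Proposition~\ref{w_c}, combined with the propagation $\omega_{hch^{-1}}=h\la\omega_c$, expresses exactly that $\ast$ preserves $\bar Q_1$ and restricts there to the canonical left translation, which is condition~(2).

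Having established the correspondence, the explicit formulas in the statement follow by tracking the identification $\Omega^1\cong k(G)\tens\Lambda^1$ on the arrow basis.  The bimodule relations $x\xrightarrow{(i)}y\cdot f=f(y)\,x\xrightarrow{(i)}y$ are those of any quiver calculus; $\Delta\delta_g$ is the coproduct of $k(G)$; the displayed $\Delta_L$ is the left coaction on $k(G)$ tensored with the identity on left-invariants; the displayed $\Delta_R$ is the dualisation of the left $G$-action $\ast$ as a right $k(G)$-coaction; and the choice $\theta=\sum_{g\ne e}\omega_g$ of Proposition~\ref{clagroup}, after identifying $\omega_c$ with the distinguished (colour~$1$) arrow of $\bar Q$, yields $\theta=\sum_{a\in\bar C}\sum_x(x\xrightarrow{(1)}xa)$.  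The converse, that every Hopf triple produces a bicovariant quiver calculus, is immediate once one reads these formulas as definitions.  The main technical point, and where care is needed, is the uniqueness of the extension of the left $G$-action from $\Lambda^1$ to all of $kQ_1$ as an action compatible with (1) and (3); this follows from the fact that $kQ_1$ is freely generated as a $k(G)$-bimodule by its left-invariants, so that $\ast$ on general arrows is forced by its values on the $v^{(i)}_g$ together with condition~(1).
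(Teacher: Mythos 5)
Your argument is correct, but it is organised differently from the paper's. The paper proves the forward direction by a direct computation on the quiver: it checks that the displayed $\Delta_L,\Delta_R$ are $k(G)$-bimodule maps (this is exactly where conditions (1) and (3) of the triple enter) and then verifies by an explicit change of variables that $\extd=[\theta,\ ]$ is a bicomodule map (this is where condition (2) and the Cayley form of $\bar Q$ enter). For the converse it starts from the quiver form of Corollary~\ref{canonicalform}, converts the coactions into commuting $G$-actions, deduces that $\dim{}^xkQ_1{}^y$ depends only on the class of $x^{-1}y$, and then performs an explicit basis change on each ${}^xkQ_1{}^y$ to put the right action into the canonical coloured form. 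You instead route everything through Theorem~\ref{Hopfcalc} and Proposition~\ref{w_c}, identifying the left-invariant forms $v^{(i)}_g=\sum_x (x\xrightarrow{(i)}xg)$ and matching the grading, the $G$-action and the data $\{\omega_c\}$ with the quiver, the colouring and $\ast$. This buys you the canonical form of $\Delta_L$ (hence of the right action) for free from the Hopf-module lemma, avoiding the paper's explicit basis change, and it effectively absorbs the content of the paper's subsequent Corollary~\ref{quiverisom} into the proof of the proposition. The price is that your forward direction (``immediate once one reads these formulas as definitions'') hides the one genuinely computational step of the paper, namely that $\extd=[\theta,\ ]$ intertwines $\Delta_R$; under your dictionary this reduces to checking that $\{v^{(1)}_a\}_{a\in\bar C}$ assembles into a crossed-module morphism $\omega$, i.e.\ to the $Z_c$-invariance required in Proposition~\ref{w_c}(3), and that step should at least be named. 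One small imprecision: the uniqueness of the extension of $\ast$ from $\Lambda^1$ to all of $kQ_1$ is forced by $\Delta_R$ being a \emph{left module} map, which gives $h\ast(\delta_x v)=\delta_{xh^{-1}}(h\ast v)$, rather than by condition (1) as you state; condition (1) is then a consequence of this formula together with the grading of $\Lambda^1$.
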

\proof
To show $(kQ_1,\extd=[\theta,\ ])$ in Corollary~\ref{canonicalform} is bicovariant, it suffices to show that $\Delta_L$ and $\Delta_R$ are $k(G)$-bimodule map and $\extd=[\theta,\ ]$ is $k(G)$-bicomodule map. Consider $\Delta_R(\delta_k.(x\xrightarrow[]{(i)} xg))=\delta_{k,x}\Delta_R(x\xrightarrow[]{(i)} xg)=\delta_{k,x}\sum_{h\in G}h\ast(x\xrightarrow[]{(i)} xg)\otimes\delta_h$, then $\Delta_R$ is left module map iff the last expression equals to $\sum_{h\in G}\delta_{kh^{-1}}.h\ast(x\xrightarrow[]{(i)} xg)\otimes\delta_h,$ i.e. $h\ast(x\xrightarrow[]{(i)} xg)$ is a linear combination of arrows in $Q$ starting from $xh^{-1}.$ Similarly, $\Delta_R$ is right module map iff $h\ast (x\xrightarrow[]{(i)} xg)$ is a linear combination of arrows in $Q$ ending with $xgh^{-1}.$ Therefore, $\Delta_R$ is bimodule map iff $h\ast{}^xkQ_1^{y}\subseteq {}^{xh^{-1}}kQ_1^{yh^{-1}}$, which is the case under our assumptions. Similarly $\Delta_L$ is a bimodule map. Both are coactions as they correspond to actions of $G$.  Next, $\Delta_R(\extd\delta_x)=\Delta_R(\theta\delta_x-\delta_x\theta)=\Delta_R\sum_{a\in\bar C}xa^{-1}\xrightarrow[]{(1)}x-x\xrightarrow[]{(1)}xa=\sum_{a,h}(xa^{-1}h^{-1}\xrightarrow[]{(1)}xh^{-1}-xh^{-1}\xrightarrow[]{(1)}xah^{-1})\tens\delta_h$. On the other hand $(\extd\tens\id)\Delta\delta_x=\sum_{h}(\theta\delta_{xh^{-1}}-\delta_{xh^{-1}}\theta)\tens\delta_h=\sum_{a,h}(xh^{-1}a^{-1}\xrightarrow[]{(1)}xh^{-1}-xh^{-1}\xrightarrow[]{(1)}xh^{-1}a)\tens\delta_h$. The two expressions agree after a change of variables $h^{-1}ah\mapsto a$, hence $\extd$ is a right comodule map.  Similarly for $\Delta_L$. So the associated `quiver calculus' is bicovariant.

Conversely, suppose $\Omega^1$ is a bicovariant calculus. As $G$ is a finite set we know that up to isomorphism $\Omega^1$ is of the `quiver form' associated to some $\bar Q\subseteq Q$ in Corollary~\ref{canonicalform} and hence without loss of generality we assume this. We also know that $\bar\Omega^1\subseteq\Omega^1$ being a standard bicovariant calculus requires $\bar Q$ to be a Cayley digraph. Hence the bimodule structure and $\theta$ have the form shown. Moreover we are given a bicomodule $\Delta_{L,R}$ structure on $kQ_1$ compatible with the bimodule structure and the extending the bicomodule structure of $k\bar Q_1$. Clearly these $\Delta_{L,R}$ are equivalent to commuting left and right actions $*$ of $G$ on $kQ_1$ restricting to the canonical ones on $\bar Q_1$. By the arguments in the preceding paragraph if these actions respect the bimodule structure then $h\ast {}^xkQ_1{}^y\subseteq {}^{xh^{-1}}kQ_1{}^{yh^{-1}}$ for all $h,x,y\in G$ and similarly for $\Delta_L$ we have ${}^xkQ_1{}^y\ast h\subseteq {}^{h^{-1}x}kQ_1{}^{h^{-1}y}$ for all $h,x,y\in G$. As the actions of $h^{-1}$ provide inclusions going the other way, both of these are isomorphisms. From ${}^xkQ_1{}^y\ast h= {}^{h^{-1}x}kQ_1{}^{h^{-1}y}$ we conclude that $\dim({}^xkQ_1^y)=\dim({}^ekQ_1^{x^{-1}y})=R_c$, say, where $c=x^{-1}y$. From ${}^xkQ_1{}^y\ast h= {}^{h^{-1}x}kQ_1{}^{h^{-1}y}$ we conclude that $R_c$ depends only on the conjugacy class of $c$, so we denote it $R_C$ where $C$ is the class of $x^{-1}y$. Finally, we colour the arrows ${}^eQ_1^x$ from $1,\cdots,R_C$ with the arrow from  $\bar Q$ numbered 1.  The vectors $(e\xrightarrow[]{(i)}x^{-1}y)\ast x^{-1}$ provide a basis of ${}^xkQ_1^y$ including $x\xrightarrow[]{(1)}y$ as the right action restricts to the canonical one. There is a linear transformation of ${}^xkQ_1^y$ sending this basis to a basis of arrows which we label correspondingly. These linear maps together constitute a bimodule map on $kQ_1$ that respects $\theta$, i.e. a map of differential calculi. Hence up to isomorphism we can suppose that $\ast$ from the right has the canonical form of a coloured Hopf quiver $Q(G,R)$. \endproof

\begin{corollary} \label{quiverisom} To a Hopf digraph-quiver triple $(\bar Q\subseteq Q,\ast)$ we can associate the data 1),2),3) in Proposition~\ref{clagroup} as follows. Let  $e_g^{(i)}=\sum_{x\in G} x \xrightarrow[]{(i)}xg$ for $i=1,\cdots,R_C$ where $g\in C$ and let $\Lambda^1_g\subset kQ_1$ be spanned by the $\{e_g^{(i)}\}$. Then $\Lambda=\oplus_g\Lambda^1_g$ is a crossed module with action $\ast$ and  $\theta=\sum_{a\in\bar C} e_a^{(1)}$.  Conversely, to the data 1),2),3) in Proposition~\ref{clagroup} we can associate a Hopf digraph-quiver triple.
\end{corollary}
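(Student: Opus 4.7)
The plan is to verify each direction of the correspondence by direct computation in the quiver model.

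\textbf{Forward direction.} Given $(\bar Q\subseteq Q,\ast)$, I first compute the left coinvariants of $kQ_1$ under $\Delta_L$ as in Proposition~\ref{Hopfquivercalc}. A general element $m=\sum_x c_x\,(x\xrightarrow[]{(i)}xg)$ satisfies $\Delta_L m=1\tens m$ iff $c_x$ is constant in $x$, so the coinvariants are spanned by the $e_g^{(i)}$. By the Hopf module lemma, this recovers the crossed module $\Lambda^1$ of Theorem~\ref{Hopfcalc}. The induced right $k(G)$-action gives the $G$-grading $\Lambda^1_g$, while the right $k(G)$-coaction equivalently becomes the restriction of $\ast$ to $\Lambda^1$; axiom~(1) of the triple then delivers $h\la\Lambda^1_g=\Lambda^1_{hgh^{-1}}$. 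Setting $\omega_c=e_c^{(1)}$ for conjugacy class representatives $c\in\bar C$ and $\omega_c=0$ for classes disjoint from $\bar C$, axiom~(2) (that $\ast$ is canonical on $\bar Q_1$) yields $h\ast e_c^{(1)}=e_c^{(1)}$ for $h\in Z_c$, so $\omega_c\in{}_{Z_c}\!\Lambda^1_c$. The formula $\theta=\sum_{a\in\bar C}e_a^{(1)}$ matches $\theta=\sum_{g\neq e}\omega_g$ from Proposition~\ref{clagroup}.

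\textbf{Backward direction.} Given data (1)--(3), set $R_C:=\dim\Lambda^1_c$ (well-defined by axiom~(2)) and take $Q:=Q(G,R)$. The set $\bar C:=\{g\neq e:\omega_g\neq 0\}$ is ad-stable by the morphism condition $h\la\omega_g=\omega_{hgh^{-1}}$, giving $\bar Q:=\bar Q(G,\bar C)\subseteq Q$. Choose a $Z_c$-stable basis $\{v_c^{(i)}\}_{i=1}^{R_C}$ of $\Lambda^1_c$ for each class representative, normalised so that $v_c^{(1)}=\omega_c$ when $c\in\bar C$, and extend to $\Lambda^1_g$ for $g=hch^{-1}$ using fixed coset representatives of $G/Z_c$. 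The resulting action matrix $h\la v_g^{(i)}=\sum_j\alpha_{h,g}^{(i,j)}v_{hgh^{-1}}^{(j)}$ is used to declare
\[ h\ast(x\xrightarrow[]{(i)}xg):=\sum_j\alpha_{h,g}^{(i,j)}\bigl(xh^{-1}\xrightarrow[]{(j)}xh^{-1}\cdot hgh^{-1}\bigr),\]
which tautologically obeys axiom~(1), commutes with the canonical right action (axiom~(3)), and defines a genuine $G$-action by associativity of $\la$ on $\Lambda^1$. Axiom~(2) reduces to $\alpha_{h,g}^{(1,j)}=\delta_{j,1}$ for $g\in\bar C$, which is exactly $h\la\omega_g=\omega_{hgh^{-1}}$ applied to the chosen normalisation.

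\textbf{Main obstacle.} The subtle step is the single-valuedness of $v_g^{(i)}:=h\la v_c^{(i)}$ across different $h$ with $hch^{-1}=g$. For $i=1$ and $c\in\bar C$ this is precisely the $Z_c$-invariance of $\omega_c$ built into data~(3); for $i\geq 2$ one sidesteps the issue by fixing coset representatives of $G/Z_c$ once and for all. This reflects the fact that the correspondence is stated up to isomorphism of Hopf digraph-quiver triples, since relabelling the non-distinguished colours $2,\ldots,R_C$ produces equivalent triples carrying the same crossed module data.
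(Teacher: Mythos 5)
Your proof is correct and follows essentially the same route as the paper: the forward direction uses commutativity of $\ast$ with the canonical right action to show that $\ast$ restricts to the left-invariant forms as a crossed-module action (with $h\la\theta=\theta$ coming from axiom~(2)), and the backward direction transfers $\la$ to $\ast$ via a choice of bases normalised on the distinguished arrows. The only quibble is your phrase ``$Z_c$-stable basis'', which is neither available in general nor needed --- as you note yourself, any basis of $\Lambda^1_c$ with $v_c^{(1)}=\omega_c$ transported by fixed coset representatives (or, as the paper does, any independent choice of basis of each $\Lambda^1_g$ subject to $e_a^{(1)}=\theta_a$ for $a\in\bar C$) suffices.
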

\proof Because the left and right actions $\ast$ commute, we gave $h\ast e_g^{(i)}=\sum_{x\in G} h\ast(x\xrightarrow[]{(i)}xg)=\sum_{x\in G} h\ast((e\xrightarrow[]{(i)}g)\ast x^{-1})=\sum_{x\in G}(h\ast(e\xrightarrow[]{(i)}g))\ast x^{-1}=\sum_{x\in G, j}\lambda_{i,j}(h^{-1}\xrightarrow[]{(j)}gh^{-1})\ast x^{-1}=\sum_{x\in G, j}\lambda_{i,j}xh^{-1}\xrightarrow[]{(j)}xgh^{-1}=\sum_j \lambda_{i,j}e^{(j)}_{hgh^{-1}}$ after a change of variables $xh^{-1}\mapsto x$ in the sum. Here $\lambda_{ij}$ are some coefficients that depend on $h,g$ but not on $x$. Hence the left $\ast$ restricts on $\Lambda^1$ to a left action $\la$ making it a crossed module. We also see from Proposition~\ref{Hopfquivercalc} that $\theta\in \Lambda^1$ and given as stated. Then $h\la \theta=\sum_{a\in\bar C}\sum_{x\in G} h\ast (x\xrightarrow[]{(1)}xa)=\sum_{a\in\bar C}\sum_{x\in G} xh^{-1}\xrightarrow[]{(1)}xah^{-1}=\sum_{a\in\bar C}e^{(1)}_{hah^{-1}}=\theta$ because $\ast$ restricts to the canonical left action on $\bar Q$. This is equivalent to $\Delta_R\theta=\theta\tens 1$.

Going the other way we can by Proposition~\ref{Hopfquivercalc} construct a Hopf digraph-quiver triple associated to any datum $(\Lambda^1,\theta)$ in Proposition~\ref{clagroup}. We take $\bar C$ to be the union of nontrivial conjugacy classes where $\theta_c\ne 0$, which defines the Cayley digraph $\bar Q$. We take $R_C=\dim(\Lambda^1_g)$ for any $g\in C$, which defines the quiver $Q$. We take a basis $\{e^{(i)}_g\}_{i=1,\cdots,R_C}$ for each $\Lambda^1_g$ which we choose so that $\theta_a=e_a^{(1)}$ for all $a\in \bar C$. Finally, we enumerate the arrows of $kQ_1$ and define a bimodule map $\varphi: k(G)\Lambda^1\to kQ_1$ by comparing bases, i.e. so that $\varphi(\delta_x e^{(i)}_g)=x \xrightarrow[]{(i)} xg$ for all $i=1,\cdots,R_C$. This then transfers the given left action $\la$ on $\Lambda^1$ to the required left action $\ast$ for our Hopf digraph-quiver triple. One may then verify all the requirements in detail.  \endproof

The associations are not unique in either direction, but are when both sides are taken up to isomorphism of bicovariant differential calculi. Here an isomorphism  $(\Lambda^1,\{\theta_c\})\isom (\Lambda^1{}',\{\theta'_{c'}\})$ in Proposition~\ref{clagroup} means an isomorphism $\varphi:\Lambda^1\to \Lambda^1{}'$ of $G$-graded $G$-modules such that $\{\varphi(\theta_c)\}\sim \{\theta'_{c'}\}$ in the sense of defining the same element of $\Lambda^1{}'$.  Similarly, two Hopf digraph-quiver triples $(\bar{Q}\subseteq Q,\ast)$ and $(\bar{Q'}\subseteq Q',\ast')$ are isomorphic if the data $R_C$ and $\bar C$ are the same in the two cases and there exists a left $G$-module map $\varphi:kQ_1\to kQ'_1$ such that $\varphi({}^xkQ_1^y)={}^xk{Q'}_1^y$ and $\varphi(x\xrightarrow[]{(1)}xa)=x\xrightarrow[]{(1)}xa$ for any $x\in G,\,a\in\bar{C}.$

Also, we know from these identifications and Proposition~\ref{w_c} that the calculus has an augmentation. In terms of the Hopf digraph-quiver triple it is given by a matrix representation $\lambda: G\to M_{R_{\{e\}}\times R_{\{e\}}}$ where
\[ g\ast(e \xrightarrow[]{(i)}e)=\sum_j\lambda(g)_{ij}(e\xrightarrow[]{(j)}e)\ast g\]
then from the above one finds
\[ i(x\xrightarrow[]{(i)}y)=\delta_{x,y}(\sum_j\lambda_{ij}(x)\theta^*_j-\theta^*_i)\delta_x\]
for any coefficients $\theta^*_i$, $i=1,\cdots,R_{\{e\}}$. Here $\lambda$ represents the action of $G$ on $\Lambda^1_e$.

Finally, we know from these identifications that $\Omega^1$ in Proposition~\ref{Hopfquivercalc} extends to an inner strongly bicovariant differential exterior algebra $\Omega(\bar Q\subseteq Q,\ast)$ via Proposition~\ref{genworon}. Namely we can use the invariant 1-forms $\Lambda^1=\oplus_g\Lambda^1_g$ with bases $\{e^{(i)}_g\}$ and $B_-(\Lambda^1)$ defined by antisymmetization, so in degree 2 we quotient the tensor algebra on $\Lambda^1$ by the kernel of $\id-\Psi$ where $\Psi(e^{(i)}_g\otimes e^{(j)}_h)=e^{(j)}_{ghg^{-1}}\otimes e^{(i)}_g$. The element $\theta$ in Proposition~\ref{Hopfquivercalc} is right-invariant and defines $\extd$ by graded-commutator.

Meanwhile, associated to a usual Hopf quiver $Q(G,R)$ and commuting coactions $\Delta_{L,R}$ making a $k(G)$-Hopf bimodule one has  a super-Hopf algebra which we denote $T_{k(G)}kQ_1$ and defined as follows\cite{Hua}. We take the tensor algebra in the category of $k(G)$-bimodules, so an element is a formal linear combination of paths of the form $x_0\to x_1\to\cdots \to  x_d$ where we take arrows from the quiver,  which we can take to be enumerated to distinguish them. We consider $\delta_x$ as paths of length zero, so we include $k(G)$ itself. The product is the concatenation of paths or evaluation of a function at the endpoints in the case of a product of a path with a function (so the algebra is the path algebra of the quiver). The super-coproduct structure is $\Delta =\Delta_L+\Delta_R$ on $kQ_1$ and $\Delta$ of $k(G)$ on functions. By part of the arguments in the proof of Proposition~\ref{Hopfquivercalc} the coactions correspond to mutually commuting group actions on $kQ_1$ and that up to an isomorphism of Hopf bimodules we can take one of the actions, say the right one corresponding to $\Delta_L$ to be in canonical form with respect to a colouring (or $\Delta_L$ has the form stated in Proposition~\ref{Hopfquivercalc}).  At least when given in this standard form we have the left-invariant forms $\Lambda^1$ spanned by the $\{e^{(i)}_g\}$ as above and clearly the path super-Hopf algebra of \cite{Hua} becomes isomorphic to $k(G)\rbiprod T_-\Lambda^1$ where we take the tensor super-Hopf algebra $T_-\Lambda^1$ in the braided category of $k(G)$-crossed modules.

\begin{corollary}\label{pathQ} (1), Let $(\bar Q\subseteq Q,\ast)$ be a Hopf digraph-quiver triple on a finite group $G$. The path super-Hopf algebra has a quotient $\Omega_\theta$ where we impose the relation that the element $\sum_{x\in G, a,b\in\bar C}x \xrightarrow[]{(1)} xa\xrightarrow[]{(1)}xab$ is central in the path algebra. This is an augmented inner strongly bicovariant exterior algebra over $k(G)$. (2), If $|G|^{-1}$ exists, then any strongly bicovariant calculus on $k(G)$ generated by its degrees 0,1 is isomorphic to a quotient of $\Omega_\theta$ for some Hopf digraph-quiver triple.
\end{corollary}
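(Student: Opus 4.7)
The plan for part (1) is to identify the quotient with the universal inner strongly bicovariant exterior algebra of Proposition~\ref{univ}. First, I invoke Corollary~\ref{quiverisom} to translate the Hopf digraph-quiver triple $(\bar Q\subseteq Q,\ast)$ into a right $k(G)$-crossed module $\Lambda^1=\bigoplus_{g\in G}\Lambda^1_g$ spanned by the left-invariant elements $e^{(i)}_g=\sum_{x\in G}x\xrightarrow[]{(i)}xg$, together with $\theta=\sum_{a\in\bar C}e^{(1)}_a$ satisfying $\Delta_R\theta=\theta\tens 1$. This is precisely the input for Proposition~\ref{univ} and gives $\Omega_\theta(k(G))=k(G)\rbiprod\Lambda_\theta(\Lambda^1)$; meanwhile the path super-Hopf algebra is exactly $k(G)\rbiprod T_-\Lambda^1$ via the identification discussed in the paragraph preceding the statement.

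Next, one computes $\theta^2$ directly in the path algebra: the concatenation rule $(x\xrightarrow[]{(1)}xa)(y\xrightarrow[]{(1)}yb)=\delta_{xa,y}\,(x\xrightarrow[]{(1)}xa\xrightarrow[]{(1)}xab)$ yields $\theta^2=\sum_{x\in G,\,a,b\in\bar C}x\xrightarrow[]{(1)}xa\xrightarrow[]{(1)}xab$, matching the displayed element. The two defining relations of $\Lambda_\theta(\Lambda^1)$, namely $\theta^2\ra a=0$ for $a\in k(G)^+$ and $[\theta^2,\eta]=0$ for $\eta\in\Lambda^1$, combine via the semidirect structure of $k(G)\rbiprod T_-\Lambda^1$ into precisely the statement that $\theta^2$ is graded central in the path algebra, as remarked after the proof of Proposition~\ref{univ}. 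For the augmentation, Corollary~\ref{kofGunivaug} applies after noting that $\theta_e=0$ because $e\notin\bar C$; choosing any $\theta^*\in\Lambda^{1*}_e$ then gives $i(\theta)=0$ automatically, so the condition $\theta\ra i(\theta)=0$ in Proposition~\ref{univaug} holds trivially and yields the augmentation.

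For part (2), I start from an arbitrary strongly bicovariant exterior algebra $\Omega$ on $k(G)$ generated by its degrees $0$ and $1$. By Theorem~\ref{Hopfcalc} and Proposition~\ref{clagroup} the first-order calculus is inner via some $\theta$, and by Proposition~\ref{propinner-iso} we may adjust the free component $\theta_e$ to arrange $\Delta_R\theta=\theta\tens 1$ (concretely by taking $\theta_e=0$). The converse direction of Proposition~\ref{univ} will then exhibit $\Omega$ as a quotient of $\Omega_\theta(k(G))$ for the resulting crossed module $\Lambda^1$, provided one first verifies that the given super-derivation $\extd$ on $\Omega$ equals the super-commutator $[\theta,\ \}$ at all degrees. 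This is the step where the assumption $|G|^{-1}\in k$ enters: because $k(G)$ is then semisimple, an averaging argument over $G$ applied to the generators in $\Lambda^1$ allows one to modify $\theta$ within its class modulo $\Lambda^1_A$ so that $\delta=\extd|_\Lambda$ agrees with super-commutator by $\theta$ on $\Lambda^1$, and the super-derivation property propagates this equality to all higher degrees. Finally, the converse direction of Corollary~\ref{quiverisom} encodes the resulting datum $(\Lambda^1,\theta)$ back into a Hopf digraph-quiver triple. The main obstacle is precisely this averaging step, which is really the content of part (2): the compatibility conditions of Theorem~\ref{theorem-bi} relating $\delta$ to $\omega$, combined with the projection onto $G$-invariants furnished by $|G|^{-1}\sum_{g\in G}g\la(\cdot)$, must be shown to force the higher-order differential to be inner whenever the first-order one is.
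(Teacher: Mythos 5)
Your part (1) is correct and takes essentially the paper's route: translate the triple into $(\Lambda^1,\theta)$ via Corollary~\ref{quiverisom}, identify the path super-Hopf algebra with $k(G)\rbiprod T_-\Lambda^1$, compute $\theta^2$ as the displayed sum of length-two paths, and recognise the defining relations of $\Lambda_\theta(\Lambda^1)$ as graded-centrality of $\theta^2$. Your handling of the augmentation (any coinner cocycle gives $i(\theta)=0$ because $\theta_e=0$, so Proposition~\ref{univaug} applies via Corollary~\ref{kofGunivaug}) is correct and in fact more explicit than the paper, whose proof leaves the augmentation claim implicit.

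Part (2) is where there is a genuine gap. You follow the same skeleton as the paper --- normalise $\theta_e=0$ so $\Delta_R\theta=\theta\tens 1$, encode the data as a Hopf digraph-quiver triple by Corollary~\ref{quiverisom}, and invoke the converse half of Proposition~\ref{univ} --- but you then make the entire argument conditional on the unproven assertion that the given $\extd$ equals $[\theta,\ \}$ in all degrees, to be supplied by an ``averaging argument'' that you describe but never execute, and which you yourself label as the main obstacle. A proof cannot end by naming its missing step. Moreover the mechanism you sketch is doubtful on its own terms: replacing $\theta$ by $\theta+\theta'_e$ with $\theta'_e\in\Lambda^1_A=\Lambda^1_e$ leaves $\extd$ unchanged on $A$ but does change $[\theta,\ \}$ on $\Lambda$ in positive degree (elements of $\Lambda^1_e$ commute with $A$ but need not super-commute with $\Lambda$), and the constraints of Proposition~\ref{extalg} together with (\ref{delta}) determine $\delta$ only on the image of $\omega$ and, in general, only modulo terms killed by $\id-\Psi$; projecting onto $G$-invariants with $|G|^{-1}\sum_g g\la(\cdot)$ does not obviously remove this residual freedom. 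Note also that the paper's own proof of (2) never performs such an averaging and does not visibly use $|G|^{-1}$ at all: it simply observes that the first-order calculus is automatically inner with $\theta$ right-invariant once $\theta_e=0$ (Proposition~\ref{clagroup}), matches the first-order data to a triple, and applies Proposition~\ref{univ}, using that isomorphic inner first-order data have isomorphic $\Omega_\theta$'s. So either prove the innerness of the full higher-order differential (which is the substantive content you have left open), or argue as the paper does at the level of first-order data and the universal property of $\Omega_\theta$ as a quotient of $k(G)\rbiprod T_-\Lambda^1$; as written, your part (2) is incomplete.
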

\proof (1) We construct $\Omega_\theta$ from Proposition~\ref{univ} noting that $\theta$ in Proposition~\ref{Hopfquivercalc} is invariant under $\Delta_R$, the element stated being $\theta^2$. Here the path super-Hopf algebra is isomorphic to $k(G)\rbiprod T_-\Lambda^1$ as explained. (2) Any first order inner calculus given by $( \Lambda^1,\theta)$ can be taken with $\theta_e=0$ so that $\Delta_R\theta=\theta\tens 1$ from Proposition~\ref{clagroup}, and is isomorphic by Corollary~\ref{quiverisom}  to one given by a Hopf digraph-quiver triple $(\bar Q\subseteq Q,\ast)$. Taking the universal super-Hopf algebra in Proposition~\ref{univ} on both sides we obtain isomorphic super-Hopf algebras.  \endproof

Clearly the `minimal' quotient $k(G)\rbiprod B_-(\Lambda^1)$ associated to a Hopf digraph-quiver triple is a quotient of $\Omega_\theta$, but we can also take the more computable $k(G)\rbiprod B_-^{quad}(\Lambda^1)$ as a quotient.

\begin{example}
Let $G=\mathbb{Z}_2=\langle g \rangle,$ with $Q=Q(\Z_2,R)$ given by $R=2\{g\}$ and $\bar Q=e{\to\atop\leftarrow}g$. Consider the Hopf digraph-quiver triple $(\bar{Q}\subset Q,\ast)$ with $\ast$  the canonical left action. Denote the arrows $\alpha_i:e\xrightarrow[]{(i)}g$ and $\beta_i: g\xrightarrow[]{(i)}e,\ i=1,2.$ The path super-Hopf algebra is $kQ^a=k\langle 1,\delta_e,\alpha_i,\beta_i\rangle$ modulo the  the relations
\[ \delta_e^2=\delta_e,\quad \delta_e\alpha_i=\alpha_i,\quad \alpha_i\delta_e=\delta_e\beta_i=0,\quad \beta_i\delta_e=\beta_i,\quad \alpha_i\alpha_j= \beta_i\beta_j=0,\quad\forall i,j,\]
with grading $|\alpha_i|=|\beta_i|=1$ and super-coproduct defined on generators by
\[  \Delta\delta_e=\delta_e\tens\delta_e+\delta_g\tens \delta_g,\quad \Delta\alpha_i=\delta_e\tens\alpha_i+\delta_g\tens\beta_i+\alpha_i\tens\delta_e+\beta_i\tens\delta_g,\]
\[ \Delta\beta_i=\delta_e\tens\beta_i+\delta_g\tens\alpha_i+\beta_i\tens\delta_e+\alpha_i\tens\delta_g\]
where $\delta_g=1-\delta_e$. The counit is $\epsilon(\delta_e)=1,\ \epsilon(\alpha_i)=0,\ \epsilon(\beta_i)=0.$ The left-invariant 1-forms are $\Lambda^1=\Lambda^1_g=k\text{-}\mathrm{span}\{e^{(i)}\}$ where $e^{(i)}=\alpha_i+\beta_i,$ with (co)action given by $e^{(i)}\ra\delta_e=0$ and $\Delta_R(e^{(i)})=e^{(i)}\tens 1$. Then \[ kQ^a\isom k(\Z_2)\rcross T_-\Lambda^1=k(\Z_2)\rcross k\langle e^{(1)},e^{(2)}\rangle\] with cross relations $e^{(i)}\delta_e=\delta_g e^{(i)}$ for all $i$, and the tensor product coalgebra as the coaction is trivial. Hence $\Delta e^{(i)}=e^{(i)}\tens 1+1\tens e^{(i)}$ and $\eps(e^{(i)})=0$.

Next we compute $\Omega_\theta$ from Proposition~\ref{univ}. Here $\theta=e^{(1)}=\alpha_1+\beta_1$ so we have $\theta^2\ra\delta_g=(\theta\ra\delta_e)(\theta\ra\delta_g)+(\theta\ra\delta_g)(\theta\ra\delta_e)=0$ and $[\theta^2,e^{(1)}]=0$, so
\[ \Omega_\theta=k(\Z_2)\rbiprod k\langle e^{(1)},e^{(2)}\rangle/{\langle e^{(1)}e^{(1)}e^{(2)}-e^{(2)}e^{(1)}e^{(1)}\rangle}.\] Equivalently, as $\delta_e(e^{(1)}e^{(1)}e^{(2)}-e^{(2)}e^{(1)}e^{(1)})=\alpha_1\beta_1\alpha_2-\alpha_2\beta_1\alpha_1$ and $\delta_g(e^{(1)}e^{(1)}e^{(2)}-e^{(2)}e^{(1)}e^{(1)})=\beta_1\alpha_1\beta_2-\beta_2\alpha_1\beta_1,$ it follows that $\Omega_\theta$ is $kQ^a$ modulo the additional relations
\[ \alpha_1\beta_1\alpha_2=\alpha_2\beta_1\alpha_1,\quad  \beta_1\alpha_1\beta_2=\beta_2\alpha_1\beta_1.\]
Here $\theta^2=\alpha_1\beta_1+\beta_1\alpha_1$ is central and requiring this is equivalent to imposing these relations as in Corollary~\ref{pathQ}. The new feature not present for the path algebra is the super-derivation  $\extd=[\theta,\ \}$. Thus
\[ \extd \delta_e=\beta_1-\alpha_1,\quad \delta \theta=2\theta^2,\quad \delta e^{(2)}=e^{(1)}e^{(2)}+e^{(2)}e^{(1)}\]
or equivalently to the latter two,
\[ \extd\alpha_1=\beta_1\alpha_1-\alpha_1\beta_1,\quad \extd\alpha_2=\beta_1\alpha_2-\alpha_2\beta_1,\quad
\extd\beta_1=\alpha_1\beta_1-\beta_1\alpha_1,\quad \extd\beta_2=\alpha_1\beta_2-\beta_2\alpha_1\]
 extended as a super-derivation with $\extd^2=0$.

Finally, since the braiding is trivial on $\Lambda^1,$  $B_{-}(\Lambda^1)=B_{-}^{quad}(\Lambda^1)=\Lambda(e^{(1)},e^{(2)})$, the usual Grassmann algebra on generators $\{e^{(i)}\}$ with anticommutative relations and basis $\{1,e^{(1)},e^{(2)},e^{(1)}\wedge e^{(2)}\}.$ Thus the canonical `minimal' calculus as in Proposition~\ref{genworon} is $k(\Z_2)\rcross B_-(\Lambda^1)=k(\Z_2)\rcross\Lambda(e^{(1)},e^{(2)})$ with cross relations as above.  Equivalently, as $\delta_e e^{(i)}=\alpha_i$ and $\delta_g e^{(i)}=\beta_i,$ it follows that we have a quotient of the path algebra version of $\Omega_{{univ}}$ by the further relations \[ \alpha_2\beta_1=-\alpha_1\beta_2,\quad \beta_2\alpha_1=-\beta_1\alpha_2,\quad \alpha_i\beta_i=0,\quad \beta_i\alpha_i=0,\quad i=1,2.\]
Here $\theta^2=\alpha_1\beta_1+\beta_1\alpha_1=0$ as we know from Proposition~\ref{genworon}. In this quotient we see that $\delta e^{(i)}=0$ or equivalently \[ \extd\alpha_1=\extd\beta_1=0,\quad \extd\alpha_2=\extd\beta_2=\beta_1\alpha_2+\alpha_1\beta_2.\]\end{example}

\subsection{Generalised calculi on group algebras}

The group Hopf algebra case $A=kG$ of a group $G$ is dual to $k(G)$ already covered above. Hence the results are clear by dualisation. The only difference is that we do not necessarily assume that $G$ is finite. Note that $\Lambda^1$ in this section is dual to $\Lambda^1$ in Section~6.1.

\begin{proposition}\label{kG} Let $A=k G$. The generalised bicovariant differential calculus data  $(\Lambda^1,\omega)$ in Theorem~\ref{Hopfcalc} amount to the following data.
\begin{itemize}
\item[1)] $\Lambda^1$ a $G$-graded vector space $\Lambda^1=\sum_g \Lambda^1_g$;
\item[2)] $\Lambda^1$ is also a right $G$-module s.t. $\Lambda^1_g\ra h= \Lambda^1_{h^{-1}gh}$, \  $\forall g,h\in G$;
\item[3)] $\{\omega_g\in \Lambda^1_e\}_{g\in G}$ a cocycle in the sense $\omega_{gh}=\omega_h+\omega_g\ra h$,  $\forall g,h\in G$.
\end{itemize}
The data for a bicovariant codifferential in Lemma~\ref{ilambda} is 1),2) and
\begin{itemize}
\item[4)]   a set of pairs $\{(c, \iota_c)\ |\ c\in C,\  \iota_c\in{}_{Z_c}\!\Lambda^1{}^*_c\}_{C\in\CC,C\neq\{e\}}.$
\end{itemize}
\end{proposition}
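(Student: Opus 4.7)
The plan is to dualise the analysis carried out in Proposition~\ref{w_c} for $k(G)$, applying Theorem~\ref{Hopfcalc} and Lemma~\ref{ilambda} directly to $A=kG$. Since $kG$ is cocommutative with $\Delta g=g\tens g$ for $g\in G$, a right $kG$-comodule is the same as a $G$-graded vector space $\Lambda^1=\oplus_{g\in G}\Lambda^1_g$ (with $\Delta_R v=v\tens g$ iff $v\in \Lambda^1_g$), while a right $kG$-module is a right $G$-action on $\Lambda^1$. The crossed module compatibility $\Delta_R(v\ra a)=v_0\ra a\t\tens Sa\o v_1 a\th$ specialises at $a=h\in G$ to $\Delta_R(v\ra h)=v_0\ra h\tens h^{-1}v_1 h$, which is equivalent to $\Lambda^1_g\ra h\subseteq \Lambda^1_{h^{-1}gh}$. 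This gives the data 1),2).

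For the differential part, I would write out the crossed module structure on $A^+$ from (\ref{Acrossed}): the action is right multiplication and the coaction is $\Delta_R(a)=a\t\tens Sa\o a\th$, which on the basis $\{g-1:g\ne e\}$ gives $(g-1)\ra h=gh-h$ and $\Delta_R(g-1)=(g-1)\tens 1$, so $A^+$ is concentrated in grade $e$. The right $kG$-comodule map condition on $\omega:A^+\to\Lambda^1$ then forces $\omega_g:=\omega(g-1)\in \Lambda^1_e$. Rewriting $(g-1)h=(gh-1)-(h-1)$ in $A^+$, the right module map condition $\omega((g-1)h)=\omega_g\ra h$ yields the cocycle identity
\[\omega_{gh}=\omega_h+\omega_g\ra h,\qquad\forall g,h\in G,\]
which also forces $\omega_e=0$. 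Conversely any such cocycle defines, by linearity, a unique $\omega$ obeying both equivariance conditions, giving item 3).

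For the codifferential part, I switch to the different crossed module structure on $A^+$ from (\ref{Acocrossed}): the action becomes $(g-1)\ra h=h^{-1}(g-1)h=h^{-1}gh-1$ (conjugation), and the coaction $\Delta_R(g-1)=(g-1)\tens g$ places $g-1$ in grade $g$. Writing the most general linear map as
\[i(v)=\sum_{g\in G}(g-1)\<\iota_g,v\>,\qquad \iota_g\in\Lambda^{1*},\]
(with $\iota_e$ irrelevant as $e-1=0$), the comodule map condition $\Delta_R i(v)=i(v_0)\tens v_1$ forces $\iota_g$ to vanish off $\Lambda^1_g$, hence $\iota_g\in\Lambda^{1*}_g$. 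Let $\triangleright$ denote the induced left $G$-action on $\Lambda^{1*}$ adjoint to $\ra$, so $\<h\triangleright\phi,v\>=\<\phi,v\ra h\>$. Then the module-map condition $i(v\ra h)=h^{-1}i(v)h$, after a change of variables $g\mapsto hgh^{-1}$ in the sum, becomes $h\triangleright\iota_g=\iota_{hgh^{-1}}$. In particular $\iota_g\in{}_{Z_g}\Lambda^{1*}_g$, and picking one representative $c$ from each nontrivial conjugacy class $C$ determines the whole family; conversely any choice of $(c,\iota_c)$ extends uniquely to an equivariant family, giving item 4).

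The verification is routine Hopf algebra bookkeeping once the two distinct crossed module structures on $A^+$ are written out explicitly. There is no substantive obstacle; the only thing to be careful about is distinguishing which crossed module structure on $A^+$ is being used on each side, and keeping track of the duality between right actions on $\Lambda^1$ and left actions on $\Lambda^{1*}$ when transcribing the cocycle data from $\omega$-form to $\iota$-form (these ending up on opposite sides because (\ref{Acrossed}) is cotrivial whereas (\ref{Acocrossed}) is not, exactly the mirror of the $k(G)$ situation).
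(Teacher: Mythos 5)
Your proposal is correct and follows essentially the same route as the paper's proof: identifying the crossed module with a $G$-graded right $G$-module, using the triviality of the adjoint coaction on $(kG)^+$ from (\ref{Acrossed}) to place $\omega_g$ in $\Lambda^1_e$ and extract the cocycle identity from $(g-1)h=(gh-1)-(h-1)$, and using the grading from (\ref{Acocrossed}) to force $i(\eta)=\sum_g\<\iota_g,\eta\>(g-1)$ with $h\la\iota_g=\iota_{hgh^{-1}}$, reduced to one representative per nontrivial conjugacy class. No gaps.
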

\proof Parts (1)-(2) are a crossed module. This is a self-dual notion so the data for $\Lambda^1$ is essentially the same as in Proposition~\ref{w_c} but in different conventions. The coaction of $kG$ is given by the grading, $\Delta_R\eta=\eta_0\tens\eta_1$ for $\eta\in\Lambda^1$. For (3)  $(kG)^+$ is spanned by $\{g-1\ | g\in G\setminus\{e\}\}$ and we write $\omega:(kG)^+\to \Lambda^1$ as $\omega(g-1)=\omega_g$. The crossed module coaction on $(kG)^+$ is trivial as the Hopf algebra is cocommutative hence we need each $\omega_g\in \Lambda^1_e$ so as to have trivial coaction. The right module map property $\omega((g-1)h)=(\omega(g-1))\ra h$ for all $h\in G$ is the cocycle condition stated. Note that this entails $\omega_e=0$. For (4) write $i(\eta)=\sum_{h\ne e} i(\eta)_h(h-1)$ say.  Suppose $\eta\in \Lambda^1_g$ then the comodule map property of $i$ requires $i(\eta)\tens g=\sum_{h\ne e} i(\eta)_h (h-1)\tens h$ or $\sum_{h\ne e} i(\eta)_h(h-1)\tens (g-h)=0$ requires $i(\eta)=\<\eta, \iota_g\> (g-1)$ for some $\iota_g\in \Lambda^1{}^*_g$. Hence in general,
\[ i(\eta)=\sum_g\<\eta,\iota_g\>(g-1)\]
as $\iota_g\in \Lambda^1{}^*_g$ picks out the $\Lambda^1_g$ component of $\eta$. The module map property  $i(\eta\ra h)=h^{-1}i(\eta)h$ requires $h\la \iota_g=\iota_{hgh^{-1}}$ for all $h\in G$.  As in the proof of Proposition~\ref{w_c}, this is specified by a subset of values, one for each nontrivial conjugacy class as stated. \endproof

\begin{proposition}\label{kGdua} Let $A=kG$. The data $(\Lambda^1,\theta)$ for an inner generalised bicovariant differential calculus in Lemma~\ref{propinner} amounts to $\Lambda^1$ as in Proposition~\ref{kG} and  $|\CC|$ elements
\[ \{\theta_e\in \Lambda^1_e\}\cup \{(c,\theta_c)\ |\ c\in C, \theta_c\in (\Lambda^1_c)_{Z_c}\}_{C\in\CC, C\ne\{e\}}\]
where $Z_c$ is the centralizer of $c\in C$. Up to isomorphism the data is $[\theta]\in \Lambda^1_e/(\Lambda^1_e)_G$.  The calculus in Proposition~\ref{kG} is  inner if $G$ is finite and $|G|$ is invertible. If $G$ is finite a codifferential structure is coinner with $\iota_g=\theta^*_g$  (the component in $\Lambda^1{}^*_g$) for some $\theta^*\in \Lambda^1{}^*$.
\end{proposition}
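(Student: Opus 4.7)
The plan is to dualise Proposition~\ref{clagroup} throughout, exploiting that $A=kG$ is cocommutative and that the coaction on $\Lambda^1$ is its $G$-grading. By Lemma~\ref{propinner} innerness requires $\Delta_R\theta-\theta\tens 1\in\Lambda^1\square A$. Writing $\theta=\sum_g\theta_g$ with $\theta_g\in\Lambda^1_g$, this element equals $\sum_{g\neq e}\theta_g\tens(g-1)$, the $g=e$ term dropping out. For $A=kG$ the auxiliary right action on the second factor $b\ra a=S^{-1}a\t b a\o$ reduces to the adjoint action $b\ra h=h^{-1}bh$, so the $\Lambda^1\square A$ condition is $G$-invariance under the diagonal action. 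After the change of variables $g\mapsto h^{-1}gh$ it yields $\theta_g\ra h=\theta_{h^{-1}gh}$ for every $g\neq e$ and $h\in G$. Thus the nontrivial components are determined by a single $\theta_c$ in each nontrivial conjugacy class $C$, propagated by the action; specialising to $h\in Z_c$ forces $\theta_c\in(\Lambda^1_c)_{Z_c}$. The grade-$e$ component $\theta_e\in\Lambda^1_e$ is unconstrained because no $g=e$ term survives.

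For the isomorphism classification, Proposition~\ref{propinner-iso} reduces matters to computing $\Lambda^1_A$. Applying the same graded analysis to the condition $\eta\ra h=\eta$ for all $h\in G$ shows that $\Lambda^1_A$ is parametrised by $\eta_e\in(\Lambda^1_e)_G$ together with $\eta_c\in(\Lambda^1_c)_{Z_c}$ per nontrivial conjugacy class, with remaining components propagated exactly as for $\theta$. Subtracting such an $\eta$ from $\theta$ can therefore zero out every $\theta_c$ with $c\neq e$ and shift $\theta_e$ by any element of $(\Lambda^1_e)_G$, leaving $[\theta_e]\in\Lambda^1_e/(\Lambda^1_e)_G$ as the sole isomorphism invariant.

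For innerness of the calculus of Proposition~\ref{kG} when $G$ is finite and $|G|$ is invertible in $k$, decompose $\omega(g-1)=\theta\ra g-\theta$ by grade: nontrivial grades only reproduce the bicovariance constraint (trivially satisfiable by $\theta_c=0$), whereas the grade-$e$ component is the coboundary equation $\omega_g=\theta_e\ra g-\theta_e$ for the $1$-cocycle $\{\omega_g\}$ on $G$ valued in the right $kG$-module $\Lambda^1_e$. Setting $\theta_e:=-|G|^{-1}\sum_{h\in G}\omega_h$ and using the cocycle identity $\omega_{hg}=\omega_g+\omega_h\ra g$ together with the change of variables $h\mapsto hg^{-1}$ yields $\theta_e\ra g-\theta_e=\omega_g$, dualising the averaging used for $\theta^*$ in Proposition~\ref{clagroup}. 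This is a Maschke-type argument and is where both hypotheses enter.

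Finally, for the coinner statement, given codifferential data $(c,\iota_c)$ with $\iota_c\in{}_{Z_c}\Lambda^1{}^*_c$ propagated via $\iota_{hch^{-1}}=h\la\iota_c$, finiteness of $G$ yields the direct-sum decomposition $\Lambda^1{}^*=\bigoplus_{g\in G}\Lambda^1{}^*_g$ and allows us to define $\theta^*\in\Lambda^1{}^*$ whose components are $\theta^*_g=\iota_g$ (with $\iota_e=0$). Evaluating the coinner formula of Lemma~\ref{ilambda} on $\eta\in\Lambda^1_g$, where $\eta_0\tens\eta_1=\eta\tens g$, collapses to $i(\eta)=\<\theta^*,\eta\>(g-1)=\<\eta,\iota_g\>(g-1)$, matching the original $i$. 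The main subtlety throughout is the asymmetry at grade $e$ between the $\theta$ data (a free $\theta_e\in\Lambda^1_e$) and $\Lambda^1_A$ (a $G$-invariant $\eta_e$), which is precisely what produces the isomorphism invariant $[\theta_e]\in\Lambda^1_e/(\Lambda^1_e)_G$; the rest is bookkeeping parallel to the dual arguments for $k(G)$.
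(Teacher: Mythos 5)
Your proposal is correct and follows essentially the same route as the paper's proof: the graded decomposition of $\theta$ with the conjugation-equivariance condition $\theta_g\ra h=\theta_{h^{-1}gh}$ reducing the nontrivial grades to one $Z_c$-invariant $\theta_c$ per class, the observation that the non-identity part of $\theta$ lies in $\Lambda^1_A$ so only $[\theta_e]\in\Lambda^1_e/(\Lambda^1_e)_G$ survives up to isomorphism, the Maschke-type average $\theta_e=-|G|^{-1}\sum_h\omega_h$ for innerness, and assembling $\theta^*$ from the components $\iota_g$ for the coinner statement. You are somewhat more explicit than the paper (e.g.\ in computing $\Lambda^1\square A$ and fully parametrising $\Lambda^1_A$), but the argument is the same.
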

\proof The data in Lemma~\ref{propinner} for the inner case is $\theta=\sum_{h\in G} \theta_h$ where $\theta_h\in \Lambda^1_h$ such that $\theta_h=\theta_{ghg^{-1}}\ra g$ for all $g\in G$ and all $h\in G\setminus\{e\}$ for the bicovariance condition. This means that it is given by a set of pairs $\{(c,\theta_c)\}$ as stated and a free value $\theta_e$. However, $\sum_{h\ne e}\theta_h\in\Lambda^1_G$ by a change of variables in the sum, so up to isomorphism we can take $\theta\in\Lambda^1_e$ with further equivalence as stated. As inner data, given a bicovariant calculus, we can take $\theta= -|G|^{-1}\sum_{g\in G} \omega_g$ where we consider $\omega_e=0$.  For the codifferential structure we take $\iota_g$ in the form stated. Given $\{\iota_g\}_{g\ne e}$ we set $\theta^*=\sum_{g\ne e}\iota_g$.   \endproof

The coinner codifferential calculus corresponds to the the inner differential calculus on $k(G)$ in Proposition~\ref{clagroup}. Note that the component $\theta^*_e$ is irrelevant to the codifferential structure, while only the class of  $\theta_e$ in $\Lambda^1_e/(\Lambda^1_e)_G$ is relevant to the differential structure.

\begin{corollary}\label{kGOmega} Let $A=kG$ and $(\Lambda^1,\theta)$ define an inner bicovariant calculus. The bimodule relations and exterior derivative are
\[  \eta. g= g (\eta\ra g),\quad \extd g= g(\theta\ra g-\theta),\quad\forall\eta\in\Lambda^1,\ g\in G.\]
The conditions in Proposition~\ref{genworon} for a differential exterior algebra require $\Delta_R\theta=\theta\tens 1$ if the calculus is connected. The super-Hopf algebra structure of $\Omega(kG)=kG\rbiprod B_-(\Lambda^1)$ and exterior derivative  are
\[ \Delta \eta=1\tens\eta+\eta_0\tens\eta_1,\quad \Delta g=g\tens g,\quad \forall \eta\in \Lambda^1,\ g\in G,\quad \extd=[\theta,(\ )\}.\]
\end{corollary}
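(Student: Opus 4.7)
The plan is to specialise the general theory to $A=kG$, where the grouplike nature of the generators and cocommutativity make every ingredient explicit. For the first-order structure: by Theorem~\ref{Hopfcalc} we have $\Omega^1\isom kG\tens\Lambda^1$ with right multiplication $\eta\cdot g=g\o\tens(\eta\ra g\t)$, which since $g$ is grouplike collapses to $g(\eta\ra g)$ in the cross product. Similarly $\extd g=g\o\tens\omega(\pi(g\t))=g\tens\omega(g-1)$, and innerness $\omega(a)=\theta\ra a$ from Lemma~\ref{propinner} gives $\extd g=g(\theta\ra g-\theta)$.

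For the implication that connectedness forces $\Delta_R\theta=\theta\tens 1$, write $\theta=\theta_e+\sum_{h\ne e}\theta_h$ with $\theta_h\in\Lambda^1_h$ as in Proposition~\ref{kGdua}. The bicovariance relation $\theta_h\ra g=\theta_{g^{-1}hg}$, combined with the bijection $h\mapsto g^{-1}hg$ on $G\setminus\{e\}$, gives $\sum_{h\ne e}(\theta_h\ra g-\theta_h)=0$, hence $\extd g=g(\theta_e\ra g-\theta_e)$. The $kG$-kernel of $\extd$ is then spanned by the stabiliser of $\theta_e$ in $G$, so connectedness forces $\mathrm{Stab}_G(\theta_e)=\{e\}$. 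Now unfold the first hypothesis of Proposition~\ref{genworon}: the crossed-module braiding yields $\Psi(\eta\tens\theta)=\sum_g\theta_g\tens(\eta\ra g)$, and equating with $\theta\tens\eta$ forces $\sum_g\theta_g\tens(\eta\ra g-\eta)=0$. Since the $\theta_g$ lie in distinct graded components $\Lambda^1_g$, each summand vanishes, so $\theta_g\ne 0$ implies $\eta\ra g=\eta$ for every $\eta\in\Lambda^1$, in particular $\theta_e\ra g=\theta_e$. Connectedness then forces $g=e$, whence $\theta_g=0$ for all $g\ne e$ and $\Delta_R\theta=\theta\tens 1$. Conversely, this condition makes both hypotheses of Proposition~\ref{genworon} trivial, yielding $\Omega(kG)=kG\rbiprod B_-(\Lambda^1)$.

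Finally, the bosonisation coproduct $\Delta(g\tens\eta)=g\o\tens(\eta^1)_0\tens g\t(\eta^1)_1\tens\eta^2$ specialises at $\eta=1$ to $\Delta g=g\tens g$ by grouplikeness, and at $g=1$, $\eta\in\Lambda^1$ (for which $\und\Delta\eta=1\tens\eta+\eta\tens 1$ in $B_-(\Lambda^1)$) splits into the two primitive contributions to give $\Delta\eta=1\tens\eta+\eta_0\tens\eta_1$. The formula $\extd=[\theta,\ \}$ is inherited directly from Proposition~\ref{genworon}. The main obstacle is the connectedness step in the second paragraph, where one must rule out non-identity contributions to $\theta$; the remaining parts are routine bookkeeping from the general theory.
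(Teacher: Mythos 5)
Your proof is correct and follows essentially the same route as the paper: both extract $\sum_g\theta_g\tens(\eta\ra g-\eta)=0$ from the braiding condition of Proposition~\ref{genworon}, use the $G$-grading to kill each summand separately, and invoke connectedness to rule out $\theta_g\ne 0$ for $g\ne e$. Your repackaging of the connectedness step via $\extd g=g(\theta_e\ra g-\theta_e)$ and the triviality of $\mathrm{Stab}_G(\theta_e)$ is equivalent to the paper's choice of setting $\eta=\theta$ and observing $\sum_g\theta_g\tens\extd g=0$.
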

\proof The condition $\Psi(\theta\tens\eta)=\eta\tens\theta$ for all $\eta$ means $\sum_g \theta_g\tens (\eta\ra g-\eta)=0$ for all $\eta$. This requires the action of $g$ to be the identity whenever $\theta_g\ne 0$. This is a strong condition and among other things requires $g$ where $\theta_g\ne 0$ to commute with all $h$ where $\Lambda^1_h\ne 0$. It also needs that such $g$ commute with all $\eta$ in $\Omega^1$ as stated. Finally, setting $\eta=\theta$ it also requires $\sum_g \theta_g\tens\extd g=0$ which for a connected calculus (where $\ker\ \extd$ is spanned by $1$) means $\theta=\theta_e$.  In this case we have an exterior super-Hopf algebra $\Omega(kG)=kG\rbiprod B_-(\Lambda^1)$,  where we extend
the above with the relations of $B_-(\Lambda^1)$, the  super homomorphism property of $\Delta$ and the graded-derivation property of $\extd$.\endproof

Note that the standard part $\bar\Lambda^1\subseteq\Lambda^1_e$ in this case and hence $\bar\Lambda$ is the usual Grassmann algebra on $\bar\Lambda^1$ in keeping with the known theory of standard bicovariant calculi on $kG$.

\begin{proposition} Both $\Omega(G)=k(G)\rbiprod B_-(\Lambda^1)$ in Corollary~\ref{clagroupOmega} and $\Omega(kG)=kG\rbiprod B_-(\Lambda^1{}^*)$ in Corollary~\ref{kGOmega} for the dual crossed module $\Lambda^{1*}$, are augmented and are mutually dual as graded super-Hopf algebras.
\end{proposition}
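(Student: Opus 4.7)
My approach is to apply the general duality machinery of Section~5.3 to the nondegenerately dually paired Hopf algebras $H = kG$ and $A = k(G)$, which are finite dimensional (so automatically regular) with bijective antipode. The three sub-claims---both are augmented, both are strongly bicovariant, and they are mutually dual---all fall out of a single chain of invocations, provided one first matches up the crossed-module conventions.

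First I would verify that the right $k(G)$-crossed module $\Lambda^1$ of Proposition~\ref{w_c} is mutually adjoint, in the sense of Section~5.3, to the right $kG$-crossed module $\Lambda^{1*}$ of Proposition~\ref{kG}. Concretely, the $G$-grading on $\Lambda^1$ (equivalently, the right $k(G)$-action $v\ra\delta_g$) is adjoint under the evaluation pairing to the right $kG$-action on $\Lambda^{1*}$, while the left $G$-action $\la$ on $\Lambda^1$ (equivalently, the right $k(G)$-coaction) is adjoint to the $G$-grading on $\Lambda^{1*}$ (equivalently, the right $kG$-coaction). This is essentially the standard identification of Yetter--Drinfeld modules over $k(G)$ with those over $kG$, and the cocycle data of Proposition~\ref{w_c}(3) on the $k(G)$ side is adjoint to the cocycle data of Proposition~\ref{kG}(4) on the $kG$ side, and vice versa.

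Second, I apply Lemma~\ref{wordual} (in the super case) with $\Lambda^{1*}$ in place of the dual crossed module there. Its conclusion is that $\Omega(G) = k(G) \rbiprod B_-(\Lambda^1)$ and $\Omega(kG) = kG \rbiprod B_-(\Lambda^{1*})$ are nondegenerately dually paired as super-Hopf algebras via $\<h\tens\phi, a\tens\eta\> = \<h,a\>\<\<\phi,\eta\>\>$, where $\<\<\ ,\ \>\>$ is the braided-factorial pairing on $B_-$. This delivers the mutual duality claim.

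Third, I invoke Proposition~\ref{dualext} to produce the augmentations. The first-order differential of $\Omega(kG)$ (Corollary~\ref{kGOmega}) is given by the cocycle $\omega:(kG)^+\to\Lambda^{1*}$ of Proposition~\ref{kG}(3), and taking adjoints under the $kG\mbox{-}k(G)$ pairing converts it into a crossed-module map $i:\Lambda^1\to k(G)^+$ of exactly the form of Lemma~\ref{ilambda}, namely the codifferential data of Proposition~\ref{w_c}(4) on the $k(G)$ side; symmetrically in the other direction. Since by Corollaries~\ref{clagroupOmega} and~\ref{kGOmega} the differentials on each side extend uniquely to $B_-$ (via Corollary~\ref{deltawor}), Proposition~\ref{dualext} then forces the adjoint codifferentials to extend to all of $\Omega(G)$ and $\Omega(kG)$ respectively, giving augmentations on both, with the augmentation on each side being the transpose of the differential on the other.

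The main obstacle is the higher-degree compatibility: one needs the unique extension $\delta$ on $B_-(\Lambda^1)$ and the unique extension $\mathfrak{i}$ on $B_-(\Lambda^{1*})$ to be adjoint under $\<\<\ ,\ \>\>$ at every degree, not merely in degree $1$. The cleanest way I see to close this gap is to verify adjointness at degree $1$ directly from the adjointness of $\omega$ and $i$, and then appeal to the uniqueness clause of Corollary~\ref{deltawor} on both sides: any two super-derivations/super-coderivations extending the same degree-$1$ map must agree, so the adjoint of $\delta$ and $\mathfrak{i}$ coincide in every degree by induction along the braided coproduct of $B_-$.
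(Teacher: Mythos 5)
Your proposal is correct and follows essentially the same route as the paper: the mutual duality comes from Lemma~\ref{wordual}, the augmentations come from Proposition~\ref{dualext} once one knows the differentials extend to all degrees on both sides (which the paper gets from innerness via Proposition~\ref{genworon}, i.e.\ exactly your Corollaries~\ref{clagroupOmega} and~\ref{kGOmega}). Your final worry about higher-degree adjointness is already absorbed into the statement and proof of Proposition~\ref{dualext}, where the codifferential on the dual side is \emph{defined} as the adjoint of $\delta$ and nondegeneracy of $\<\<\ ,\ \>\>$ fixes it uniquely, so the extra appeal to Corollary~\ref{deltawor} is harmless but not needed.
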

\proof  This is now clear from Proposition~\ref{dualext} and Lemma~\ref{wordual}. As both sides of $\Omega^1$ are inner they both extend to $\Omega$ by Proposition~\ref{genworon} and hence both sides are augmented.  \endproof

Now we analyse the sub-shuffle calculus in Proposition~\ref{subshuffle-codiff}. Let $(\Lambda^1,\theta^*)$ defines a bicovariant coinner codifferential calculus on $kG.$
Choose a basis $\{e^{(i)}_g\}_{i=1}^{R_C}$ for each $\Lambda^1_g$ with $g$ belongs to some $C$ such that $e^{(1)}_g=(\theta^*_g)^*$ whenever $\theta^*_g\neq 0$. Then
\[ B_{\theta^*}(\Lambda^1)=\{\sum_{g_1,\dots,g_n\in G \atop i_1,\dots,i_n}\lambda_{g_1,\dots,g_n}^{i_1,\dots,i_n}e^{(i_1)}_{g_1}\tens e^{(i_2)}_{g_1}\tens\cdots \tens e^{(i_n)}_{g_n}\in \Sh_-(\Lambda^1)\ |\ 
\lambda_{g_1,\dots,g_n}^{i_1,\dots,i_n} \text{obey (A), (B)}\},
\]
where the conditions (A) are \[\sum_{a,b\in\bar{C}\atop ab=g}\lambda_{a,b,g_3,\dots,g_n}^{1,1,i_3\dots,i_n}=
\sum_{a,b\in\bar{C}\atop ab=g}\lambda_{g_1,a,b,g_4,\dots,g_n}^{i_1,1,1,i_4\dots,i_n}=\dots =
\sum_{a,b\in\bar{C}\atop ab=g}\lambda_{g_1,g_2,\dots,g_{n-2},a,b}^{i_1,i_2\dots,i_{n-2},1,1}=0,\]
for any $g_1,\dots g_{n},g\in G$ with $g\neq e$, and the conditions (B) are \[
\sum_{a,b\in\bar{C}\atop ab=e}\lambda_{a,b,g_1,\dots,g_{n-2}}^{1,1,i_1\dots,i_{n-2}}=
\sum_{a,b\in\bar{C}\atop ab=e}\lambda_{g_1,a,b,g_2,\dots,g_{n-2}}^{i_1,1,1,i_2\dots,i_{n-2}}=\dots =
\sum_{a,b\in\bar{C}\atop ab=e}\lambda_{g_1,g_2,\dots,g_{n-2},a,b}^{i_1,i_2\dots,i_{n-2},1,1},\] for any fixed $g_1,\dots g_{n-2}\in G$ and their fixed indices $i_1,\dots,i_{n-2}.$
The super-Hopf algebra $\Omega(kG)=kG\rbiprod B_{\theta^*}(\Lambda^1)$ is then a strongly bicovariant coinner codifferential exterior algebra on $kG$ with $i$ given from Proposition~\ref{subshuffle-codiff} by
\[ i(h\tens v_1\tens\cdots\tens v_n)=\<\theta^*,v_1\>hg\tens(v_2\tens\cdots \tens v_n)+(-1)^n h\tens(v_1\tens\cdots \tens v_{n-1})\<\theta^*,v_n\>,\]
for all $h\in G$, $v_1\tens\cdots\tens v_n\in B_{\theta^*}(\Lambda^1)$ and $v_1\in \Lambda^1_g$ for some $g\in G$. 
From Proposition~\ref{subshuffle-aug}, if $\theta^*_e=0,$ any cocycle data $\{w_g\in\Lambda^1_e\}_{g\in G}$ makes $kG\rbiprod B_{\theta^*}(\Lambda^1)$ augmented as $\<\theta^*,\tilde{\omega}(g)\>=0.$ Otherwise, if $\theta^*_e\neq 0$, we have
\begin{proposition}\label{kGaug}
Let $A=kG$ and $(\Lambda^1,\theta,\theta^*)$ define an augmented inner first order bicovariant differential calculus with $\theta\in \Lambda^1_e,\,\theta^*\in \Lambda^{1*}.$
Suppose $\<\theta^*,v\ra g\>=\<\theta^*,v\>$ for any $v\in \Lambda^1,\,g\in G,$ i.e. $\theta^*_e\in {}_G \Lambda^{1*}_e,$ then the coinner bicovariant codifferential calculus $kG\rbiprod B_{\theta^*}(\Lambda^1)$ is augmented with $\extd=[\theta,\ \}$ given in Proposition~\ref{shuffle-diff}.
\end{proposition}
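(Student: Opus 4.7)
The plan is to reduce the statement to the two abstract conditions of Proposition~\ref{subshuffle-aug} and show that the hypothesis on $\theta^*$ makes them hold trivially. Since $A=kG$ is cocommutative and the right coaction on $\Lambda^1$ is nothing but the $G$-grading, the subscripts $(v)_0\otimes(v)_1$ appearing in those conditions specialise to $v\otimes g$ for $v\in\Lambda^1_g$, and the scalar pairings with $\theta^*\otimes\theta^*$ collapse to quite concrete expressions.

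First I would identify $\tilde\omega$ in the inner case. By Lemma~\ref{propinner} we have $\omega_g:=\tilde\omega(g)=\omega(g-1)=\theta\triangleleft g-\theta$, and $\theta\in\Lambda^1_e$ together with $\Lambda^1_e\triangleleft g\subseteq \Lambda^1_{g^{-1}eg}=\Lambda^1_e$ ensures $\omega_g\in\Lambda^1_e$ for every $g\in G$. The crucial computation is then to apply the hypothesis $\langle\theta^*,v\triangleleft g\rangle=\langle\theta^*,v\rangle$ to the vector $v=\theta\in\Lambda^1_e$, which gives
\[ \langle\theta^*,\omega_g\rangle=\langle\theta^*,\theta\triangleleft g\rangle-\langle\theta^*,\theta\rangle=0\qquad\forall g\in G.\]
This is the single identity that drives everything else.

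Next I would unwind the two conditions of Proposition~\ref{subshuffle-aug}. For the first condition applied to a homogeneous $w\in\Lambda^1_g\subseteq B_{\theta^*}(\Lambda^1)$, the scalar $\langle\theta^*\otimes\theta^*,w_0\otimes\tilde\omega(w_1)\rangle$ factors as $\langle\theta^*,w\rangle\langle\theta^*,\omega_g\rangle$, which is zero by the above; hence both sides of the condition vanish identically. For the second condition applied to a basis tensor $v_1\otimes\cdots\otimes v_n$ with $v_j\in\Lambda^1_{g_j}$, the left-hand side acquires the factor $\langle\theta^*,v_1\rangle\langle\theta^*,\omega_{g_1}\rangle$ and the right-hand side the factor $\langle\theta^*,v_n\rangle\langle\theta^*,\omega_{g_n}\rangle$, and both vanish for the same reason. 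By linearity these conclusions extend to arbitrary elements of $B_{\theta^*}(\Lambda^1)$, so both conditions are satisfied trivially and Proposition~\ref{subshuffle-aug} delivers the augmentation with $\extd=[\theta,\ \}$ inherited from Proposition~\ref{shuffle-diff}.

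The only real obstacle is notational: one must correctly interpret the iterated coaction subscripts in Proposition~\ref{subshuffle-aug} when specialising to $kG$, and keep track of the grading of $\theta$ together with the fact that $\theta^*$ need only be $G$-invariant on the $\Lambda^1_e$-component (the only component that $\omega$ ever hits). Once those points are settled, the entire content of the proof is the one-line vanishing $\langle\theta^*,\omega_g\rangle=0$, which is exactly what the hypothesis $\theta^*_e\in{}_G\Lambda^{1*}_e$ is designed to give.
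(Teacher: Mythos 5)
Your proof is correct and follows essentially the same route as the paper: reduce to the two conditions of Proposition~\ref{subshuffle-aug} and observe that each side carries the scalar factor $\<\theta^*,\tilde\omega(g)\>=\<\theta^*,\theta\ra(g-1)\>$, which vanishes because $\theta\in\Lambda^1_e$ and $\theta^*_e$ is $G$-invariant. The paper's proof is exactly your ``one-line vanishing'' applied to homogeneous $v\in\Lambda^1_g$, so there is nothing to add.
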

\proof It is suffices to show that the conditions in Proposition~\ref{subshuffle-aug} are satisfied.
First, for any $v\in \Lambda^1,\,g\in G$, $\<\theta^*,v\ra g\>=\<\theta^*,v\>$  if and only if $\<\theta^*_e,v\ra g\>=\<\theta^*_e,v\>$ i.e. $\<\theta^*_e,v\ra(g-1)\>=0.$ For any $v\in \Lambda^1,$ without loss of generality, we assume $v\in \Lambda^1_g$ for some $g\in G.$ Then $\<\theta^*\tens\theta^*,v_0\tens\tilde{\omega}(v_1)\>=\<\theta^*\tens\theta^*,v\tens\tilde{\omega}(g)\>=\<\theta^*\tens\theta^*,v\tens\theta\ra(g-1)\>=\<\theta^*,v\>\<\theta^*_e,\theta\ra(g-1)\>=0.$ This completes the proof.\endproof

And starting with a calculus on $k(G)$, it immediately follows from Corollary~\ref{univdual} that we have
\begin{corollary}
Let $(\Lambda^1,\theta,\theta^*)$ define an augmented first order bicovariant differential calculus on $k(G)$ for a finite group $G$ with $\theta\in\Lambda^1$ right invariant and $\theta^*\in{\Lambda^{1*}}_e.$ Then $\Omega_\theta(k(G))$ in Corollary~\ref{kofGunivaug} and $kG\rbiprod B_{\theta}({\Lambda^1}^*)$ in Proposition~\ref{kGaug} are mutually dual as super-Hopf-algebra with the differential on one side dual to the codifferential on the other side.
\end{corollary}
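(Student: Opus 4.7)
The plan is to deduce this corollary as a direct application of Corollary~\ref{univdual} to the dually paired Hopf algebras $A=k(G)$ and $H=kG$ with the natural pairing $\<g,\delta_h\>=\delta_{g,h}$, which is non-degenerate and has bijective antipode on both sides.

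First, I would identify the input data on the two sides with the framework of Corollary~\ref{univdual}. By Propositions~\ref{w_c} and~\ref{kG}, a right $k(G)$-crossed module $\Lambda^1$ (a $G$-graded left $G$-module) is the same data as a right $kG$-crossed module $\Lambda^{1*}$ (a $G$-graded right $G$-module) under the natural vector-space duality, and the mutual adjointness relations $\<\phi\ra h,v\>=\<\phi,v_0\>\<v_1,h\>$ and $\<\phi,v\ra\delta_g\>=\<\phi_0,v\>\<\delta_g,\phi_1\>$ hold automatically, since the action on one side is dual to the coaction (= grading) on the other. Thus $\Lambda^1$ and $\Lambda^{1*}$ form a mutually adjoint pair of crossed modules in the sense of Section~4.3.

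Second, I would check that the data $(\theta,\theta^*)$ are compatible with the invariance hypotheses of Lemma~\ref{univpair} and Corollary~\ref{univdual}. Since $\theta\in\Lambda^1$ is right invariant in the $k(G)$-crossed module sense (which by Corollary~\ref{clagroupOmega} amounts to $\theta\in{}_G\Lambda^1_e$) and $\theta^*\in\Lambda^{1*}_e$ satisfies $\<\theta^*,v\ra\delta_g\>=\delta_{g,e}\<\theta^*,v\>=\epsilon_{k(G)}(\delta_g)\<\theta^*,v\>$, the element $\theta^*$ is right invariant in the $kG$-crossed module sense required for Proposition~\ref{subshuffle-codiff}. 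Then Lemma~\ref{univpair} provides the non-degenerate super-Hopf-algebra pairing between $k(G)\rbiprod\Lambda_\theta(\Lambda^1)=\Omega_\theta(k(G))$ and $kG\rbiprod B_{\theta^*}(\Lambda^{1*})$.

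Third, I would apply Corollary~\ref{univdual} to obtain the matching of augmentations: the codifferential on $\Omega_\theta(k(G))$ provided by Corollary~\ref{kofGunivaug} is adjoint under this pairing to the differential on $kG\rbiprod B_{\theta^*}(\Lambda^{1*})$ provided by Proposition~\ref{kGaug}, and vice versa. Concretely, the inner differential $\extd=[\theta,\ \}$ on the $k(G)$ side dualises to the coinner codifferential $i$ on the $kG$ side via the formula in Proposition~\ref{subshuffle-codiff}, while the codifferential $i$ on the $k(G)$ side (given by $\theta^*$ through the cocycle $\iota_g=\theta^*\ra(g-1)$ of Proposition~\ref{clagroup}) dualises to $\extd=[\theta,\ \}$ on the $kG$ side.

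The only subtle step is the last bookkeeping: verifying that the specific cocycle $\{\iota_g\}$ arising from $\theta^*$ on the $k(G)$ side really corresponds under the pairing to the element $\theta$ viewed as generating the differential on the $kG$ side, and symmetrically. This is a direct dualisation using $(kG)^+$ spanned by $\{g-1\}$ with $\<g-1,\delta_h\>=\delta_{g,h}-\delta_{h,e}$, and comparing the explicit recursive formula (\ref{univ-codiff}) with the braided shuffle formula in Proposition~\ref{shuffle-diff}, exactly as done in the proof of Corollary~\ref{univdual}.
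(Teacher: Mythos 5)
Your proposal is correct and takes exactly the paper's route: the paper's entire proof is ``this follows immediately from Corollary~\ref{univdual}'', and you supply precisely the verifications (mutual adjointness of $\Lambda^1$ and $\Lambda^{1*}$, the invariance hypotheses, and the matching of $\extd$ with $i$ under the pairing of Lemma~\ref{univpair}) that make that citation legitimate. One notational slip to fix: the algebra on the $kG$ side is $B_{\theta}(\Lambda^{1*})$, cut out by $\theta\in\Lambda^1=(\Lambda^{1*})^*$ viewed as a right-invariant functional on $\Lambda^{1*}$ (this is exactly the hypothesis ``$\theta$ right invariant'' that Lemma~\ref{univpair} consumes), whereas $\theta^*\in\Lambda^{1*}_e$ is the element of the dual crossed module generating the inner differential $[\theta^*,\ \}$ there (it plays the role of the ``$\theta$'' in the internal notation of Proposition~\ref{kGaug}); your writing $B_{\theta^*}(\Lambda^{1*})$ and attributing the invariance check on $\theta^*$ to Proposition~\ref{subshuffle-codiff} swaps these two roles, although both facts you actually verify ($\theta\in{}_G\Lambda^1_e$ and $\theta^*\in\Lambda^{1*}_e$) are the ones genuinely needed.
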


We conclude by interpreting some of our results in terms of quivers. We start with the codifferentials on the path coalgebra $kQ^c$, which is dual to the differentials on the path algebra on $k(G).$ Note that an arrow in $kQ^c$ here is dual to an arrow in $kQ^a$ studied in Section 6.1.

Let $G$ be a group, $R,r: \mathcal{C}\to \mathbb{N}_0$ be two class functions such that $r_C\le R_C$ for each $C\in\mathcal{C}$, $r_C\in\{0,1\}$ and $r_{\{e\}}=0.$ Denote $\mathcal{C}$ the set of all the conjugacy classes of $G$, $\bar{C}=\cup_{C\in\mathcal{C}\atop r_C=1}C,$ and $e$ the identity of group $G.$ Let $Q=Q(G,R)$ and $\bar{Q}=Q(G,r)$ be the corresponding Hopf quivers. Colour this pair as before, then we have a digraph-quiver pair $\bar{Q}\subseteq Q.$ In this case, we have canonical left $G$-action $\cdot$ (not $\ast$) on $kQ_1$ defined by $h\cdot x \xrightarrow[]{(i)} y=hx\xrightarrow[]{(i)}hy,$ and canonical right $G$-action $\cdot$ on $k\bar{Q}_1$ as $x\xrightarrow[]{(1)}y\cdot h=xh\xrightarrow[]{(1)}yh.$

Now, we define a right-handed Hopf digraph-quiver triple $(\bar{Q}\subseteq Q,\cdot)$ (dual to $(\bar{Q}\subset Q,\ast)$) to be the above data together with a right $G$-action $\cdot$ on $kQ_1$ such that
\begin{itemize}
\item[1)] ${}^xkQ_1^y\cdot h={}^{xh}kQ_1^{yh}$,
\item[2)] $\cdot$ restricts on $\bar{Q}_1$ to the canonical right action,
\item[3)] $\cdot$ commutes with the canonical left action on $kQ_1.$
\end{itemize}

\begin{proposition}\label{pathcoalgebra}
If $(\bar{Q}\subseteq Q,\cdot)$ is a right-handed Hopf digraph-quiver triple on a finite group $G$ 
then there is an associated  bicovariant coinner codifferential calculus on $kG$,  and every bicovariant codifferential calculus on $kG$ is isomorphic to one of this form. Its structure is given by
\begin{gather*}
\Omega^1=kQ_1,\quad\Delta_L(x\xrightarrow[]{(i)}y)=x\tens x\xrightarrow[]{(i)}y,\quad\Delta_R(x\xrightarrow[]{(i)}y)=x\xrightarrow[]{(i)}y\tens y\\
\theta^*=\sum_{a\in\bar{C}}\sum_{x\in G} p_{x\xrightarrow[]{(1)} xa},\quad
g\cdot x\xrightarrow[]{(i)}y=gx\xrightarrow[]{(i)} gy,\quad x\xrightarrow[]{(i)}y\cdot g= \sum_j \lambda_{ij}(x^{-1}y,g)xg\xrightarrow[]{(j)}yg,
\end{gather*}
where $\{p_{x\xrightarrow[]{(i)} y}\}$ denotes the dual basis to $\{x\xrightarrow[]{(i)}y\}$ and the coefficients $\lambda_{ij}(a,g)$ are determined by $e\xrightarrow[]{(i)}a\cdot g=\sum_j\lambda_{ij}(a,g)g\xrightarrow[]{(j)}ag.$
\end{proposition}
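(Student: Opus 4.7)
The plan is to dualize the proof of Proposition~\ref{Hopfquivercalc} using the self-dual nature of crossed modules and the automatic coinnerness of codifferentials on $kG$ for finite $G$ (Proposition~\ref{kGdua}). Two claims must be verified: the forward direction, that a right-handed Hopf digraph-quiver triple yields a bicovariant coinner codifferential calculus on $kG$ of the stated form, and the converse, that every such calculus arises this way up to isomorphism.

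For the forward direction, first observe that the coactions $\Delta_L,\Delta_R$ (by source and target of an arrow) are manifestly coassociative, commute with one another, and extend the coproduct on $kG=kQ_0$. The prescribed left and right $G$-actions make $\Omega^1=kQ_1$ a $kG$-bimodule, with commutativity of the two actions being axiom~(3) of the triple. Compatibility of the bimodule structure with $\Delta_L$ reduces to $g\cdot{}^xkQ_1^y={}^{gx}kQ_1^{gy}$ from the canonical form of the left action, and ${}^xkQ_1^y\cdot h={}^{xh}kQ_1^{yh}$ is axiom~(1); similarly for $\Delta_R$. The codifferential $i$ is then defined by the coinner formula of Lemma~\ref{ilambda} via $\theta^*$, giving $i(x\xrightarrow[]{(1)}xa)=xa-1$ for $a\in\bar{C}$ and zero on all other arrows; it extends as a bimodule map and satisfies $\Delta\circ i=(i\tens\id)\Delta_R+(\id\tens i)\Delta_L$ by direct computation using the definition of $\theta^*$ and the ad-stability of $\bar{C}$ (axiom~(2)).

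For the converse, given any bicovariant codifferential calculus on $kG$ with $G$ finite, Proposition~\ref{kG} yields crossed-module data $(\Lambda^1,\iota)$, and Proposition~\ref{kGdua} shows that $\iota_g=\theta^*_g$ for some $\theta^*\in\Lambda^{1*}$ (this is where finiteness of $G$ enters). Define $R_C:=\dim_k\Lambda^1_c$ for $c\in C$ (well-defined on classes because $\ra h$ gives isomorphisms $\Lambda^1_c\to\Lambda^1_{h^{-1}ch}$), and let $\bar{C}$ be the union of nontrivial classes on which $\theta^*$ is nonzero (ad-stable by equivariance of $\theta^*$). Choose for each $g\ne e$ a basis $\{e^{(i)}_g\}_{i=1}^{R_{C_g}}$ of $\Lambda^1_g$ with $e^{(1)}_g$ dual to $\theta^*_g$ whenever $g\in\bar{C}$. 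The Hopf-module isomorphism $\Omega^1\cong kG\tens\Lambda^1$ then identifies $x\tens e^{(i)}_g\leftrightarrow x\xrightarrow[]{(i)}xg$, transferring the bimodule to the canonical left action and a right action computed from $(x\tens v)\cdot h=xh\tens v\ra h$. Verifying the three axioms of a right-handed triple and that $\theta^*$ matches the stated sum of dual basis vectors completes the correspondence.

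The main obstacle is making the basis choices in the converse direction consistent across each conjugacy class, so that the distinguished vectors $e^{(1)}_g$ at all $g$ in a class $C\subseteq\bar{C}$ yield a coherent collection matching the arrows of the Cayley digraph $\bar{Q}(G,\bar{C})$; this uses the equivariance property of $\theta^*$ inherited from $\iota$ being a crossed-module morphism, which allows the basis at a chosen representative $c$ to be propagated along $\ra$-orbits up to isomorphism of calculi that may freely permute the remaining basis vectors $e^{(i)}_g$ with $i\ge 2$.
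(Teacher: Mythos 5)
Your argument follows the paper's proof essentially verbatim: the forward direction identifies $\Lambda^1$ with the arrows out of $e$ and uses axiom (2) of the triple to obtain the equivariance $h\la\theta^*_a=\theta^*_{hah^{-1}}$ that makes $\{\iota_g=\theta^*_g\}$ valid data for Propositions~\ref{kG} and~\ref{kGdua}, while the converse chooses bases of each $\Lambda^1_g$ adapted to $\theta^*_g$ and transfers the structure exactly as the paper does. The only slip is the explicit formula $i(x\xrightarrow[]{(1)}xa)=xa-1$: since $i$ is extended as a left $kG$-module map from $i(e\xrightarrow[]{(1)}a)=a-1$, the correct value is $xa-x$; this does not affect the rest of the argument.
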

\proof Obviously, $kQ_1$ is a $kG$-Hopf bimodule and $\Lambda^1=\oplus_{C\in\mathcal{C}\atop R_C\neq 0}\oplus_{a\in C}\Lambda^1_a$ with $\Lambda^1_a=k\text{-span}\{e\xrightarrow[]{(i)}a\}_{i=1}^{R_C}$. Note that $\theta^*_a:=\sum_{x\in G}p_{x\xrightarrow[]{(1)} xa}$ belongs to $\Lambda_a^*.$ By construction, $(e\xrightarrow[]{(1)} hah^{-1})\ra h=e\xrightarrow[]{(1)} a,$ for any $a\in\bar{C}$ and $h\in G.$
Then $\<h\la\theta^*_a,e\xrightarrow[]{(1)} g\>=\<\theta^*_a,(e\xrightarrow[]{(1)} g)\ra h\>=\<\theta^*_a,e\xrightarrow[]{(1)} h^{-1}gh\>=\delta_{hah^{-1},g}$ implies $h\la\theta^*_a=\theta^*_{hah^{-1}}$ for any $a\in\bar{C},h\in G.$ Hence $\{\iota_g=\theta^*_g\}_{g\in\bar{C}}$ provides the data for a coinner bicovariant codifferential calculus as in Proposition~\ref{kG} and \ref{kGdua}.

Conversely, suppose $(kG\rcross\Lambda^1,i)$ is a bicovariant codifferential calculus on $kG.$ Then $i$ is always coinner with $\theta^*\in{\Lambda^1}^*$ right-invariant and $\theta^*_e=0.$ We first associated a Hopf digraph-quiver triple to it. Define $R,r:\mathcal{C}\to\mathbb{N}_0$ as follows. Set $R_C=\dim \Lambda^1_g$ for $g\in C$, set $r_C=1$ if there exists $\theta^*_g\neq 0$ for some $g\in C$ and $r_C=0$ if otherwise.  Choose base $\{e^{(i)}_g\}$ for each $\Lambda^1_g$ such that $e^{(1)}_g=(\theta^*_g)^*$ whenever $\theta^*_g\neq 0.$ Consider the matrix representation: $e_g^{(i)}\ra h=\sum_j\lambda_{ij}(g,h)e^{(j)}_{h^{-1}gh}.$ Then $(x\tens e_g^{(i)}). h=\sum_j\lambda_{ij}(g,h)xh\tens e^{(j)}_{h^{-1}gh}.$ In particular, since $h\la\theta^*_g=\theta^*_{hgh^{-1}}$ for all $h\in G,$ then $e^{(1)}_a\ra h=e^{(1)}_{h^{-1}ah}$ for all $a\in\bar{C},h\in G$ where $\bar{C}=\{g\in G|\ \theta^*_g\neq0\},$ we have $x\tens e^{(1)}_a.h=xh\tens e^{(1)}_{h^{-1}ah}.$ Define right $G$-action $\cdot$ on $kQ_1$ by viewing $x\xrightarrow[]{(i)} xg$ as $x\tens e^{(i)}_g,$ it restricts to $k\bar{Q}_1$ a canonical action. Now, clearly, the map $\varphi:kG\rcross\Lambda^1\to kQ_1$, which maps $x\tens e^{(i)}_g$ to $x\xrightarrow[]{(i)}xg,$ provide the isomorphism between these two codifferential calculi.\endproof

Meanwhile, associated to $(Q,\cdot)$ i.e. the Hopf quiver $Q=Q(G,R)$ and $kG$-Hopf bimodule structure on $kQ_1,$ one has a length-graded super-Hopf algebra on $kQ^c\cong\mathrm{CoT}_{kQ_0}kQ_1$, see \cite{Hua}. The coproduct is de-concatenation and the product is given by quantum shuffle product. In particular, the product of paths $kQ_0$ of length $0$ and the paths $kQ_1$ of length $1$ is given by the bimodule structure. The product between the arrows in $kQ_1$ can be computed by the following formula $\alpha\cdot \beta=[\alpha\cdot s(\beta)][t(\alpha)\cdot\beta]-[s(\alpha)\cdot\beta][\alpha\cdot t(\beta)],$ where $s(\alpha),t(\alpha)$ denotes the source and target vertices of each arrow $\alpha$ and $[\ ]$'s connected by concatenation. For the formulae for higher orders, see (3.1) in~\cite{Hua}.
Note that the super-Hopf algebra $kQ^c$ is isomorphic to the super-Hopf algebra $kG\rbiprod \Sh_-(\Lambda^1)$ discussed before, where the left $1$-forms of $kQ_1$, i.e. the subspace spanned by all the arrows starting at the identity $e$ corresponds to $\Lambda^1$, and the super-Hopf algebra isomorphism is given by the natural mapping $x\xrightarrow[]{(i)}xg$ to $x\tens e^{(i)}_g.$

Start with the coinner bicovariant codifferential calculus given by the right-handed triple $(\bar{Q}\subseteq Q,\cdot)$ in Proposition~\ref{pathcoalgebra}, one can interpret $kG\rbiprod B_{\theta^*}(V)$ as a sub-super-Hopf algebra of $kQ^c,$ which has similar 'universal property' as in Proposition ~\ref{subshuffle-codiff}. Any element $\theta\in {}^ekQ_1{}^e$ defines an inner strongly bicovariant differential on this sub-super Hopf algebra, thus making it augmented as $\theta^*_e=0$. 

We illustrate the construction of Proposition~\ref{subshuffle-codiff},~\ref{subshuffle-aug} in terms of quivers by the following example.

\begin{example}\label{example}
Let $G=\mathbb{Z}_2=\<g\>$ with $Q=Q(\Z_2,R)$ and $\bar{Q}=Q(\Z_2,r)$ , where ramification data are given by $R=\{e\}+2\{g\}$ and $r=\{g\}.$ Denote the arrows $\alpha_i:e\xrightarrow[]{(i)} g,\ \beta_i:g\xrightarrow[]{(i)} e,\ \gamma:e\to e$ and $\rho:g\to g,\ i=1,2.$ Consider the right-handed Hopf digraph-quiver triple $(\bar{Q}\subseteq Q,\cdot)$ with $\cdot$ given by \[\alpha_i\cdot g=(-1)^i\beta_i,\ \beta_i\cdot g=(-1)^i\alpha_i,\ \gamma\cdot g=-\rho,\text{ and }\ \rho\cdot g=-\gamma.\]The path super-Hopf algebra $kQ^c$  is the $k$-space with grading $|\alpha_i|=|\beta_i|=|\gamma|=|\rho|=1$ spanned by all the paths (e.g. $\alpha_2\rho\rho\beta_1$) of $Q$ with super-coproduct given by de-concatenation
\begin{gather*}
\Delta e=e\tens e,\quad \Delta g=g\tens g,\quad \Delta\alpha_i=e\tens\alpha_i+\alpha_i\tens g,\quad \Delta\beta_i=g\tens\beta_i+\beta_i\tens e\\
\Delta\gamma=e\tens \gamma+\gamma\tens e,\quad \Delta\rho=g\tens\rho+\rho\tens g,\quad \Delta\beta_i\gamma=g\tens\beta_i\gamma+\beta_i\tens\gamma+\beta_i\gamma\tens e,\ {\rm etc}\\
\epsilon(e)=1,\quad\epsilon(g)=1,\quad\epsilon(p)=0, \text{ for any path $p$ of length greater than zero.}
\end{gather*}
The super-product of $kQ^c$ is given by the quantum shuffle product. Between arrows in $kQ_1$, we have
\begin{gather*}
\alpha_1\cdot\alpha_1=0,\quad\alpha_1\cdot\alpha_2=\alpha_1\beta_2-\alpha_2\beta_1,
\quad\alpha_2\cdot\alpha_1=\alpha_1\beta_2+\alpha_2\beta_1,
\quad\alpha_2\cdot\alpha_2=2\alpha_2\beta_2,\\
\quad\gamma\cdot\gamma=0,\quad\gamma\cdot\alpha_i=\alpha_i\rho+\gamma\alpha_i,
\quad\alpha_i\cdot\gamma=\alpha_i\rho-\gamma\alpha_i,\\
\quad\gamma\cdot\beta_i=(-1)^i(\rho\beta_i+\beta_i\gamma),
\quad\beta_i\cdot\gamma=\beta_i\gamma-(-1)^i\rho\beta_2,\ {\rm etc}.\\
\end{gather*}
The left invariant $1$-forms are $\Lambda^1=k\textrm{-span}\{\gamma,\alpha_i\}$ where $\Lambda^1_e=k\{\gamma\}$ and $\Lambda^1_g=k\{\alpha_1,\alpha_2\}$ with the coaction $\gamma\ra g=-\gamma,\ \alpha_i\ra g=(-1)^i\alpha_i,\ i=1,2.$ Then $kQ^c\cong k\Z_2\rbiprod \Sh_-(\Lambda^1)$ has cross relation $\gamma.g=-g.\gamma$ and $\alpha_i.g=(-1)^i g.\alpha_i.$

Now we compute $\Omega=k\Z_2\rbiprod B_{\theta^*}(\Lambda^1)$ from the analysis before Proposition~\ref{kGaug}. Here $\theta^*=p_{\alpha_1}+p_{\beta_1}\in {\Lambda^1_g}^*.$ So $\bar{C}=\{g\},$ and $\bar{C}\bar{C}=\{e\},$ then
\begin{align*}
B_{\theta^*}(\Lambda^1)=\{&\sum_{g_1,\dots,g_n\in \Z_2 \atop i_1,\dots,i_r}\lambda_{g_1,\dots,g_n}^{i_1,\dots,i_n}e^{(i_1)}_{g_1}\tens e^{(i_2)}_{g_1}\tens\cdots \tens e^{(i_n)}_{g_n}\in \Sh_-(\Lambda^1)\ |\ & \\
\lambda_{g,g,g_1,\dots,g_{n-2}}^{1,1,i_1\dots,i_{n-2}} &=\lambda_{g_1,g,g,g_2,\dots,g_{n-2}}^{i_1,1,1,i_2\dots,i_{n-2}}=\dots =\lambda_{g_1,g_2,\dots,g_{n-2},g,g}^{i_1,i_2\dots,i_{n-2},1,1},\
\forall g_1,\dots g_{n-2},  i_1,\dots,i_{n-2}.\ \}
\end{align*} Here   $e^{(i)}_{g_i}\in\{\gamma,\alpha_1,\alpha_2\}.$ One can show easily that $\Omega^2=kQ_2,$ $\Omega^n\subsetneq kQ_n$ for any $n\ge 3$ and write down a specific basis for each degree. For instance, if we denote the set of paths of length three by $Q_3,$ then the set
\begin{align*}
(Q_3\setminus\{\alpha_1\beta_1\gamma,\,\gamma\alpha_1\beta_1,\,\alpha_1\beta_1\alpha_2,\,\alpha_2\beta_1\alpha_1,\linebreak \beta_1\alpha_1\rho,\,\rho\beta_1\alpha_1,\,\beta_1\alpha_1\beta_2,\,\beta_2\alpha_1\beta_2\})\\
\cup \{\alpha_1\beta_1\gamma-\gamma\alpha_1\beta_1,\,\alpha_1\beta_1\alpha_2-\alpha_2\beta_1\alpha_1,\,\beta_1\alpha_1\rho-\rho\beta_1\alpha_1,\linebreak \beta_1\alpha_1\beta_2-\beta_2\alpha_1\beta_2\}
\end{align*}
provides a basis of $\Omega^3,$ hence $\dim \Omega^3=50<54.$  Similarly, one can show
$\Omega^4$ contains combinations of paths like $\alpha_1\beta_1\gamma\alpha_2-\gamma\alpha_1\beta_1\alpha_2,\,\alpha_1\beta_1\gamma\alpha_2-\gamma\alpha_2\alpha_1\beta_1$, etc, and $\dim\Omega^4=138<162.$

Thus $\Omega=k\Z_2\rbiprod B_{\theta^*}(\Lambda^1)$ is an infinite-dimensional coinner bicovariant  codifferential exterior algebra on $k\Z_2$ with the codifferential $i$ given by
\begin{gather*}
i(\alpha_1)=g-e,\quad i(\beta_1)=e-g,\quad i(\gamma)=i(\alpha_2)=i(\beta_2)=0\\
i(\alpha_1\beta_1)=\beta_1+\alpha_1,\quad i(\alpha_1\beta_2)=\beta_2,\quad i(\alpha_1\rho)=\rho,\ {\rm etc.}\\
\end{gather*}

Suppose in addition that  $(\Lambda^1,\theta)$ defines a non-trivial inner differential calculus on $k\Z_2$, where without of loss of generality we
assume $\theta=\gamma\in\Lambda^1_e.$ By Proposition~\ref{subshuffle-aug}, the exterior derivation extends to whole $\Omega$ automatically. So $(\Omega,i)$ is augmented with
inner differential given by $\extd=[\gamma,\ \}.$ In particular,
\begin{gather*}
\extd e=0,\quad\extd g=-2\rho\\
\extd\gamma=\extd\rho=0,\quad\extd \alpha_i=2\alpha_i\rho,\quad \extd\beta_i=(-1)^i2\rho\beta_i,\ \forall\,i=1,2,\cdots
\end{gather*}

\end{example}

\end{document}